\theoremstyle{plain}
\newtheorem{theorem}{Theorem}[section]
\newtheorem{proposition}[theorem]{Proposition}
\newtheorem{lemma}[theorem]{Lemma}
\newtheorem{corollary}[theorem]{Corollary}
\newtheorem{conjecture}[theorem]{Conjecture}
\theoremstyle{remark}
\newtheorem{remark}[theorem]{Remark}
\DeclareMathOperator{\sign}{sign}
\DeclareMathOperator{\dis}{dist}
\DeclareMathOperator{\isDistr}{\stackrel{d}{=}}
\newcommand{\floor}[1]{\left\lfloor #1 \right\rfloor}
\newcommand{\Z}{\mathbb Z}
\newcommand{\R}{\mathbb R}
\newcommand{\N}{\mathbb N}
\newcommand{\F}{\mathbb F}
\newcommand{\1}{\mathbf 1}
\newcommand{\eps}{\varepsilon}
\newcommand{\dd}{\text{\upshape{d}}}
\providecommand*{\cupdot}{\mathbin{\mathpalette\@cupdot{}}}
\newcommand*{\@cupdot}[2]{\ooalign{$\m@th#1\cup$\cr\hidewidth$\m@th#1\cdot$\hidewidth}}
\newcommand{\B}{\mathcal B}
\renewcommand{\L}{\mathcal L}
\newcommand{\Ru}{\overline R}
\newcommand{\Rl}{\underline R}
\newcommand{\xiu}{\overline\xi}
\newcommand{\xil}{\underline \xi}
\newcommand{\chiu}{\overline\chi}
\newcommand{\chil}{\underline\chi}
\newcommand{\Aupper}{\overline A}
\newcommand{\Alower}{\underline A}
\newcommand{\Pu}{\overline U}
\newcommand{\Pl}{\underline U}
\newcommand{\U}{\mathcal U}
\newcommand{\oomega}{\overline\omega}
\newcommand{\prev}{\operatorname{prev}}
\begin{document}

\begin{frontmatter}
\title{Anomalous scaling regime for\\one-dimensional Mott variable-range hopping}
\runtitle{Anomalous one-dimensional Mott variable-range hopping}

\begin{aug}
\author[A]{\fnms{David A.}~\snm{Croydon}\ead[label=e1]{croydon@kurims.kyoto-u.ac.jp}\orcid{0000-0002-1520-1468}},
\author[B]{\fnms{Ryoki}~\snm{Fukushima}\ead[label=e2]{ryoki@math.tsukuba.ac.jp}\orcid{0000-0002-7582-6793}}
\and
\author[C]{\fnms{Stefan}~\snm{Junk}\ead[label=e3]{sjunk@tohoku.ac.jp}\orcid{0000-0002-5151-8008}}

\address[A]{Research Institute for Mathematical Sciences, Kyoto University\printead[presep={,\ }]{e1}}

\address[B]{Institute of Mathematics, University of Tsukuba\printead[presep={,\ }]{e2}}

\address[C]{Advanced Institute for Materials Research, Tohoku University\printead[presep={,\ }]{e3}}
\end{aug}

\begin{abstract}
We derive an anomalous, sub-diffusive scaling limit for a one-dimen-sional
version of the Mott random walk. The limiting process can be viewed heuristically as a one-dimensional diffusion with an absolutely continuous speed measure and a discontinuous scale function, as given by a two-sided stable subordinator. Corresponding to intervals of low conductance in the discrete model, the discontinuities in the scale function act as barriers off which the limiting process reflects for some time before crossing. We also discuss how, by incorporating a Bouchaud trap model element into the setting, it is possible to combine this `blocking' mechanism with one of `trapping'. Our proof relies on a recently developed theory that relates the convergence of processes to that of associated resistance metric measure spaces.
\end{abstract}

\begin{keyword}[class=MSC]
\kwd[Primary ]{60K37}
\kwd[; secondary ]{60F17, 60G52, 60J27, 82A41, 82D30}
\end{keyword}


\begin{keyword}
\kwd{random walk in random environment}
\kwd{disordered media}
\kwd{sub-diffusivity}
\kwd{Mott variable-range hopping}
\kwd{Bouchaud trap model}
\kwd{bi-generalized diffusion process}
\end{keyword}

\end{frontmatter}

\tableofcontents

\section{Introduction}

\subsection{Definition of the model} Mott variable-range hopping is a model of low-temperature conduction in a disordered medium in the Anderson localisation regime. In Mott's original paper, the hopping of electrons between localisation sites was assumed to depend on the spatial and energy separation of sites \cite{mott}. As is set-out precisely below, such a phenomenon can be described by a suitable random walk in a random environment. In this article, we study a one-dimensional version of the Mott random walk in a regime where the inhomogeneity of the environment persists asymptotically, leading to an anomalous, sub-diffusive scaling limit for the discrete process. Our arguments will demonstrate that the sub-diffusivity observed is due to a certain `blocking' mechanism, which can be viewed as a natural counterpart to the `trapping' seen in the Bouchaud trap model.

We start by introducing the model of interest. Let
\[\dots <\omega_{-2}< \omega_{-1}<\omega_0=0<\omega_1<\omega_2<\cdots\]
be the atoms of a homogeneous Poisson process on $\mathbb{R}$ with intensity $\rho\in(0,\infty)$, conditioned to have an atom at zero (i.e.\ sampled according to the relevant Palm distribution). The points $\omega=(\omega_i)_{i\in\mathbb{Z}}$ represent electron localisation sites, and to capture the corresponding energy marks, we suppose $E=(E_i)_{i\in\Z}$ is an independent and identically distributed (i.i.d.) family of random variables on $\mathbb{R}$, independent of $(\omega_i)_{i\in\Z}$. For a given realisation of the environment variables $(\omega, E)$, we define conductances $(c^{\beta,\lambda}(x,y))_{x,y\in \omega}$ by setting
\begin{equation}\label{cdef}
c^{\beta,\lambda}(\omega_i,\omega_j):=\exp\left(-|\omega_i-\omega_j|-\beta U(E_i,E_j)+\lambda(\omega_i+\omega_j)\right),
\end{equation}
where $U\colon \R\times\R\to[0,1]$ is a symmetric function and $\beta\geq 0$, $\lambda\in[0,1)$ are parameters. Note that, in addition to the terms depending on the spatial separation and energy marks (i.e.\ $|\omega_i-\omega_j|$ and $\beta U(E_i,E_j)$, respectively), we include the term $\lambda(\omega_i+\omega_j)$ to model the effect of an external field. The version of the Mott random walk studied here is then the continuous-time Markov chain $X^{\beta,\lambda}=(X^{\beta,\lambda}_t)_{t\geq 0}$ on $\omega$ with generator given by
\begin{align}\label{generator}
(L^{\beta,\lambda}f)(\omega_i)&:=\sum_{j\in\Z}\frac{c^{\beta,\lambda}(\omega_i,\omega_j)}{c^{\beta,\lambda}(\omega_i)}\left(f(\omega_j)-f(\omega_i)\right),
\end{align}
where $c^{\beta,\lambda}(\omega_i):=\sum_{j\in\Z}c^{\beta,\lambda}(\omega_i,\omega_j)$ is the invariant measure. A few remarks are in order. First, since $\lambda$ is assumed to take values in $[0,1)$, the random variables $c^{\beta,\lambda}(\omega_i)$ are readily checked to be almost-surely finite. Second, taking $\beta=0$ for simplicity, we can rewrite the jump-rate from $i$ to $j$ as
\begin{equation*}
\frac{c^{0,\lambda}(\omega_i,\omega_j)}{c^{0,\lambda}(\omega_i)}
=\frac{\exp(-|\omega_i-\omega_j|)}{\sum_{k\in\Z}\exp(-|\omega_i-\omega_k|+\lambda(\omega_k-\omega_i))}e^{\lambda(\omega_j-\omega_i)};
\end{equation*}
the role of $\lambda$ might be clearer in this form. Third, the process $X^{\beta,\lambda}$ is the so-called constant-speed random walk; see Remark~\ref{rem:CSvsVS} for comparison with the related variable-speed random walk with jump rates given by $c^{\beta,\lambda}(\omega_i,\omega_j)e^{-2\lambda \omega_i}$. We write $P^{\beta,\lambda}$ for the law of $X^{\beta,\lambda}$ started from $0$, conditional on $(\omega,E)$; this is the so-called quenched law of $X^{\beta,\lambda}$. The corresponding annealed law is obtained by integrating out the randomness of the environment, i.e.\
\begin{equation}\label{annealed}
\mathbb{P}^{\beta,\lambda}:=\int P^{\beta,\lambda}\left(\cdot\right)\mathbf{P}(\dd\omega\dd E),
\end{equation}
where $\mathbf{P}$ is the probability measure on the probability space upon which the pair $(\omega,E)$ is built. To be more specific, we assume that both $P^{\beta,\lambda}$ and $\mathbb{P}^{\beta,\lambda}$ are probability measures on the space of c\`adl\`ag functions $D([0,\infty),\mathbb{R})$, which we will assume throughout is equipped with the usual Skorohod $J_1$-topology.

\subsection{Diffusivity/sub-diffusivity phase transition} It is known that when the density of localisation sites is suitably high, that is, when $\rho>1$, the symmetric Mott random walk undergoes homogenisation. Indeed, in this case, one has that, for any value of $\beta\geq 0$ and $\mathbf{P}$-a.e.\ realisation of $(\omega,E)$, under the quenched law,
\begin{equation}\label{homog}
\left(n^{-1}X^{\beta,0}_{n^2t}\right)_{t\geq 0}\xrightarrow[n\to\infty]{} (B_{\sigma^2t})_{t\geq 0},
\end{equation}
in distribution, where $(B_t)_{t\geq 0}$ is a standard Brownian motion, and $\sigma^2\in(0,\infty)$ is a deterministic constant \cite{CF}. Other homogenization statements for certain elliptic and parabolic equations associated with this model appear in \cite{Fnew}. On the other hand, it was also established in \cite{CF} that when $\rho\leq 1$, the limit at \eqref{homog} is valid with respect to the annealed law (and indeed in a slightly stronger sense), but with a limiting diffusion constant $\sigma^2=0$. Our principal goal is to describe the appropriate scaling for the Mott random walk in this sub-diffusive regime in the symmetric case ($\lambda=0$). We will henceforth consider the case where $\rho\leq1$. Note that we will state results and present proofs only in the case $\rho<1$ for technical convenience. For the boundary case $\rho=1$, see Remark \ref{gaps} below.
\begin{remark}

\label{rem:blocking}
Sub-diffusivity in the regime $\rho<1$ can be heuristically explained as follows. Within the interval $[-\epsilon n, \epsilon n]$ for arbitrarily small $\epsilon>0$, one finds a pair $(\omega_i,\omega_{i+1})$ with $\omega_{i+1}-\omega_i\ge (\rho^{-1}+o(1))\log n$ with high probability as $n\to\infty$, on both sides of the origin. Now by \eqref{generator}, it is reasonable to believe that the random walk has to make $n^{\rho^{-1}+o(1)}$ trials from a neighborhood of $\omega_i$ or $\omega_{i+1}$ to get over such gaps. But since the invariant measure is approximately uniform on the spatial scale $n$, the Mott random walk is able to make only $n^{1+o(1)}$ visits to the above neighborhood up to time $n^2$. Thus when $\rho<1$, the random walk cannot make enough trials to get over such gaps by time $n^2$, and hence $n^{-1}X^{\beta,0}_{n^2 t}=o(1)$. This `blocking' by large gaps in $\omega$ is the main feature of this model in the sub-diffusive regime. Taking the above argument slightly further, one might conjecture that the time scale on which the process $X^{\beta,0}$ is able to cross gaps of the magnitude described is $n^{1+1/\rho}$, and this is indeed what we see in our main result below.
\end{remark}

\subsection{The main result}

The following result characterizes the scaling for the Mott random walk when $\rho<1$. We additionally include a `weak' bias, which, although vanishing for the discrete model, impacts the limiting process that arises.

\begin{theorem}
\label{thm:main}
For every $\rho<1$ and $\beta, \lambda\geq 0$, it holds that as $n\to\infty$,
\[\mathbb{P}^{\beta,\lambda/n}\left((n^{-1}X_{n^{1+1/\rho}t})_{t\geq 0}\in\cdot\right)\]
converge weakly as probability measures on $D([0,\infty),\mathbb{R})$ to the law of the continuous process $Z^{\beta,\lambda}$ defined below.
\end{theorem}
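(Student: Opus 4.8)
The plan is to realise the weakly biased Mott random walk as the stochastic process canonically associated with a random resistance metric measure space, to identify the scaling limit of that space, and then to invoke the general theory promoting convergence of resistance metric measure spaces to convergence of the associated processes. Since $X^{\beta,\lambda/n}$ is the constant-speed random walk on the weighted graph $(\omega,c^{\beta,\lambda/n})$, it is the process associated with the resistance metric measure space $(\omega,R^{\beta,\lambda/n},m^{\beta,\lambda/n},0)$, where $R^{\beta,\lambda/n}$ denotes the effective resistance metric of the network and $m^{\beta,\lambda/n}:=\sum_{i\in\Z}c^{\beta,\lambda/n}(\omega_i)\,\delta_{\omega_i}$ is the (reversible) invariant measure. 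Rescaling space by $n^{-1}$, resistance by $n^{-1/\rho}$ and mass by $n^{-1}$ --- a combination which, since the natural time scale of a resistance metric measure space is the product of its resistance and mass scales, speeds the walk up by $n^{1/\rho}\cdot n=n^{1+1/\rho}$, exactly the rate anticipated heuristically in Remark~\ref{rem:blocking} --- the process $(n^{-1}X^{\beta,\lambda/n}_{n^{1+1/\rho}t})_{t\ge 0}$ is precisely the one associated with
\[
  \mathcal X_n:=\bigl(n^{-1}\omega,\ n^{-1/\rho}R^{\beta,\lambda/n},\ n^{-1}m^{\beta,\lambda/n},\ 0\bigr),
\]
with $\omega$ embedded in $\R$ via $\omega_i\mapsto n^{-1}\omega_i$. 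It therefore suffices to show that $\mathcal X_n$ converges in distribution, in the appropriate (spatial, Gromov--Hausdorff-vague) topology on pointed resistance metric measure spaces embedded in $\R$, to the resistance metric measure space underlying $Z^{\beta,\lambda}$, and then to feed this into the abstract convergence theorem, which also asks for a mild non-explosion/tightness input, addressed at the end.

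For the convergence of the measures, write $c^{\beta,\lambda/n}(\omega_i)=e^{2\lambda\omega_i/n}\bigl(\xi_i+o(1)\bigr)$, where $\xi_i:=\sum_{j\in\Z}\exp\bigl(-|\omega_i-\omega_j|-\beta U(E_i,E_j)\bigr)$. Since $U$ is bounded and the gaps $\Delta_i:=\omega_{i+1}-\omega_i$ are i.i.d.\ $\mathrm{Exponential}(\rho)$, the sequence $(\xi_i)_{i\in\Z}$ is stationary with finite mean --- its dominant part being the nearest-neighbour terms $e^{-\Delta_{i-1}}+e^{-\Delta_i}$, plus an exponentially summable remainder --- and the $o(1)$ error gathers the $(\lambda/n)(\omega_j-\omega_i)$ contributions, which are negligible over the $O(\log n)$ spatial range that matters. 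The ergodic theorem then gives $n^{-1}m^{\beta,\lambda/n}\to\mathsf c\,e^{2\lambda x}\,\mathrm{d}x$ vaguely on $\R$, for an explicit constant $\mathsf c>0$; up to normalisation this is the (absolutely continuous) speed measure of $Z^{\beta,\lambda}$.

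For the convergence of the metrics, the essential point is that the effective resistance is governed by the large gaps of $\omega$. First, standard electrical-network estimates --- Rayleigh monotonicity together with Nash--Williams-type cut bounds, using the exponential decay of $c^{\beta,\lambda/n}$ in $|\omega_i-\omega_j|$ --- show that $R^{\beta,\lambda/n}$ is comparable to the nearest-neighbour resistance metric $\widetilde R_n(\omega_i,\omega_j):=\sum_k c^{\beta,\lambda/n}(\omega_k,\omega_{k+1})^{-1}$ (the sum over the edges between $\omega_i$ and $\omega_j$), the discrepancy being asymptotically negligible on the large-gap contributions that dominate at scale $n^{1/\rho}$. The edge resistances are $c^{\beta,\lambda/n}(\omega_k,\omega_{k+1})^{-1}=e^{-2\lambda\omega_k/n+o(1)}\,e^{\Delta_k+\beta U(E_k,E_{k+1})}$; since $\mathbf P(e^{\Delta_k}>t)=t^{-\rho}$ with $\rho<1$, the variables $e^{\Delta_k}$ lie in the domain of attraction of a one-sided $\rho$-stable law, the bounded factor $e^{\beta U}$ only adjusts the norming constants, and the slowly varying factor $e^{-2\lambda\omega_k/n}$ contributes, for an edge near spatial position $xn$, the asymptotically deterministic tilt $e^{-2\lambda x}$. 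A functional limit theorem for this heavy-tailed array (the consecutive edges being only $1$-dependent through the shared energies, which is harmless), applied jointly for $x\ge 0$ and $x\le 0$, then shows that $n^{-1/\rho}R^{\beta,\lambda/n}$, transported to $\R$ via the embedding, converges to (the completion of) the resistance metric $R(x,y)=|\mathcal S^{\beta,\lambda}_x-\mathcal S^{\beta,\lambda}_y|$, where $\mathcal S^{\beta,\lambda}_x:=\int_0^x e^{-2\lambda y}\,\mathrm{d}\mathcal W_y$ for a two-sided $\rho$-stable subordinator $\mathcal W=(\mathcal W_x)_{x\in\R}$. The jumps of $\mathcal S^{\beta,\lambda}$ --- located on a countable dense subset of $\R$ --- are the macroscopic images of the gaps of size $(\rho^{-1}+o(1))\log n$ flagged in Remark~\ref{rem:blocking}, and are precisely the barriers off which the limit process reflects.

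Carrying out the previous two steps jointly on a single probability space supporting the environment and its scaling limit yields convergence in distribution of $\mathcal X_n$ to the resistance metric measure space of the bi-generalized diffusion with (discontinuous) scale function $\mathcal S^{\beta,\lambda}$ and (absolutely continuous) speed measure proportional to $e^{2\lambda x}\,\mathrm{d}x$ --- that is, of $Z^{\beta,\lambda}$. Since $\mathcal S^{\beta,\lambda}_x\to\pm\infty$ as $x\to\pm\infty$ and the speed measure is locally finite, $Z^{\beta,\lambda}$ is conservative, which supplies the non-explosion condition, and its paths are almost surely continuous since the speed measure has no atoms. Feeding this into the resistance-form-to-process convergence theorem --- after checking its residual equicontinuity/tightness hypotheses via a priori control of $R^{\beta,\lambda/n}$-diameters of bounded spatial regions and of the $m^{\beta,\lambda/n}$-mass of $R^{\beta,\lambda/n}$-balls --- produces the asserted annealed weak convergence of $(n^{-1}X^{\beta,\lambda/n}_{n^{1+1/\rho}t})_{t\ge 0}$ to $Z^{\beta,\lambda}$ in the $J_1$-topology on $D([0,\infty),\R)$. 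The step I expect to be hardest is the metric convergence in a topology strong enough for the abstract machinery: because $\mathcal S^{\beta,\lambda}$ is discontinuous, two points of $\R$ lying just either side of one of its jumps sit at macroscopic resistance distance, the limiting space fails to be homeomorphic to $\R$, and one must work with a spatial Gromov--Hausdorff-vague topology adapted to such $\R$-embedded spaces, taking care that the near-optimal discrete current across a macroscopic gap neither allows mass to escape nor identifies distinct limiting points. The heavy-tailed functional limit theorem combined with the ergodic theorem is then the analytic core of the argument, and --- for the stated annealed conclusion --- no quenched control of the small gaps is needed.
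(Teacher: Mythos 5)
Your overall architecture (resistance metric plus invariant measure, then the resistance-form convergence machinery of \cite{croydon2018}) is the same as the paper's, and your treatment of the measure is essentially the paper's Theorem \ref{thm:measure}. However, there are two genuine gaps. The first is your claim that the full effective resistance is, at scale $n^{1/\rho}$, asymptotically equivalent to the nearest-neighbour resistance $\widetilde R_n$, with the discrepancy ``negligible on the large-gap contributions''. This is false, and it changes the limit. Across a big gap $\Delta_k\approx\rho^{-1}\log n$, every edge joining a site $\omega_j$, $j\leq k$, to a site $\omega_{k'}$, $k'\geq k+1$, has resistance $e^{\Delta_k}\,e^{(\omega_k-\omega_j)+(\omega_{k'}-\omega_{k+1})+\beta U(E_j,E_{k'})}$, i.e.\ of the \emph{same order} as the nearest-neighbour edge; their parallel combination multiplies the crossing resistance by the $O(1)$ random factor $\chi^{\beta,0}(k)\in(0,1)$ of \eqref{eq:chi}, which does not tend to $1$ (even for $\beta=0$ it is strictly below $1$ a.s.). Consequently the limiting L\'evy measure carries the constant $C_\beta=\mathbf{E}\bigl(\chi^{\beta,0}(0)^\rho\bigr)$ of \eqref{eq:cbeta}, whereas your Nash--Williams/Rayleigh comparison would deliver a tilted stable process with constant $\mathbf{E}(e^{\rho\beta U(E_0,E_1)})$ (constant $1$ when $\beta=0$), i.e.\ a process different from $Z^{\beta,\lambda}$. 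Making the bridging corrections rigorous (the two network surgeries giving \eqref{eq:defRu} and \eqref{eq:defRl}, and the decorrelation of the $\chi$'s at well-separated big edges) is the main content of the paper's Theorem \ref{thm:approx}; your sketch simply discards it, which the paper explicitly warns cannot be done.

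The second gap is the non-explosion step when $\lambda>0$. You justify conservativeness by asserting that $\mathcal S^{\beta,\lambda}_x\to\pm\infty$ as $x\to\pm\infty$, but for $\lambda>0$ one has $\int_0^{\infty}e^{-2\lambda y}\,\dd \mathcal W_y<\infty$ almost surely, so the image of the scale function is bounded above and the un-time-changed process $((S^{\beta,\lambda})^{-1}(B_t))_{t\geq0}$ reaches the accumulation point in finite time; conservativeness holds only because the speed measure blows up exponentially there, and since the scale function is discontinuous one cannot simply quote a classical Feller-type test. Relatedly, the non-compact (Gromov--Hausdorff-vague) form of the convergence theorem you invoke requires a resistance-growth/recurrence condition that fails in this transient regime: $n^{-1/\rho}R^{\beta,\lambda/n}(\omega_0,\cdot)$ stays bounded in the direction of the bias. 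This is precisely why the paper works with the reflected walks on $\{\oomega_{-Kn},\dots,\oomega_{Kn}\}$, proves Proposition \ref{prop:fixedK} for each fixed $K$, and then removes the truncation via the excess-time comparison of Lemma \ref{lem:limsup} and the genuinely probabilistic estimate of Lemma \ref{lem:nonexpl} (largest gap plus a Ray--Knight local-time lower bound). For $\lambda=0$ your route could be repaired, but as written the $\lambda>0$ case does not go through, and in all cases the resistance approximation must be corrected as above.
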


In order to define the limiting process $Z^{\beta,\lambda}$, we introduce two objects: a standard Brownian motion $(B_t)_{t\geq 0}$ and an independent two-sided $\rho$-stable L\'evy process $(S^{\beta,0}(u))_{u\in\R}$ (i.e.\ $(S^{\beta,0}(u))_{u\geq0}$ and $(-S^{\beta,0}((-u)^-))_{u\geq0}$ are independent $\rho$-stable L\'evy processes, each started from 0) with L\'evy measure given by
\begin{equation}\label{Levy}
C_\beta \rho x^{-\rho-1}\mathbf{1}_{\{x>0\}}\dd x,
\end{equation}
where $C_\beta\in(0,\infty)$ is a constant that is defined below at \eqref{eq:cbeta}. We also define an exponentially `tilted' version $(S^{\beta,\lambda}(u))_{u\in\R}$ of the L\'evy process by setting
\begin{equation}\label{eq:stilt}
S^{\beta,\lambda}(u):=\int_0^ue^{-2\lambda v/\rho}\dd S^{\beta,0}( v),
\end{equation}
in the sense of the Stieltjes integral, and a measure $\mu^{\beta,\lambda}$ supported on the closure of its image $\overline{S^{\beta,\lambda}(\R)}\subseteq\R$ by
\begin{equation}
\label{eq:mudef}
\mu^{\beta,\lambda}\left((a,b]\right):=\mathbf{E}\left(c^{\beta,0}(\omega_0)\right)\int_{(S^{\beta,\lambda})^{-1}(a)}^{(S^{\beta,\lambda})^{-1}(b)}e^{2\lambda r/\rho}\dd r,
\end{equation}
where $(S^{\beta,\lambda})^{-1}$ denotes the right-continuous inverse of $S^{\beta,\lambda}$, i.e.\
\[(S^{\beta,\lambda})^{-1}(u):=\inf\left\{v\in\mathbb{R}:\:S^{\beta,\lambda}(v)>u\right\},\]
and we will later check that $\mathbf{E}(c^{\beta,0}(\omega_0))\in(0,\infty)$. Next, writing $(L^B_t(x))_{t\geq 0,x\in\R}$ for the local time of $(B_t)_{t\geq 0}$, we let
\begin{equation}\label{hdef}
H_t^{\beta,\lambda}:=\inf\left\{s\geq 0:\int_\R L^B_s(x)\mu^{\beta,\lambda}(\dd x)>t\right\},
\end{equation}
and define a process $Z^{\beta,\lambda}=(Z^{\beta,\lambda}_t)_{t\geq 0}$ by
\begin{equation}\label{zdef}
Z^{\beta,\lambda}_t:=\left(S^{\beta,\lambda}\right)^{-1}\left(B_{H^{\beta,\lambda}_t}\right).
\end{equation}
The process $Z^{\beta,\lambda}$ is obtained by applying a time change and a scale transformation to Brownian motion. Thus it can be regarded as the one-dimensional diffusion process with scale function $S^{\beta,\lambda}$ and speed measure $e^{2\lambda r/\rho}\dd r$, but in the generalised sense of \cite{MR984748} since the scale function is not continuous. That $Z^{\beta,\lambda}$ is continuous will be checked below, as will the fact that, conditional on $S^{\beta,\lambda}$, it is Markov when started from 0 (see Lemma \ref{zbllem}). As is already mentioned in \cite{MR984748}, however, such a generalised process may not have the strong Markov property, and indeed does not in the present case. (This is in contrast to the process $(B_{H^{\beta,\lambda}_t})_{t\geq 0}$, which is strong Markov.) We elaborate on this in Section \ref{sec:simulation} with some simulations.

\begin{remark}
\label{rem:CSvsVS}
The process $X^{\beta,\lambda}$ has unit mean exponential holding times, and so its long-time behaviour closely matches the discrete-time random walk with transition probabilities given by $c^{\beta,\lambda}(\omega_i,\omega_j)/c^{\beta,\lambda}(\omega_i)$. In the earlier works such as \cite{FGS, FSS}, it has been common to study the so-called variable-speed random walk, whose jump rate from $\omega_i$ to $\omega_j$ is given by
\begin{equation*}
\exp\left(-|\omega_i-\omega_j|-\beta U(E_i,E_j)+\lambda(\omega_j-\omega_i)\right)=c^{\beta,\lambda}(\omega_i,\omega_j)e^{-2\lambda \omega_i}.
\end{equation*}
The conclusion of Theorem~\ref{thm:main} holds for this variant, except that in the description of limiting process, the factor $\mathbf{E}(c^{\beta,0}(\omega_0))$ in~\eqref{eq:mudef} is replaced by one. In fact, the proof for this case would be simpler; see Remark~\ref{rem:VSproof} for further discussion on this point.
\end{remark}

\begin{remark}
The paper \cite{CF} contains results for the model where the spatial separation term $|\omega_i-\omega_j|$ in \eqref{cdef} is replaced by $|\omega_i-\omega_j|^\alpha$. When $\alpha<1$, for any density $\rho>0$, the quenched limit at \eqref{homog} is shown to hold with $\sigma^2>0$. On the other hand, when $\alpha>1$, sub-diffusivity is observed (specifically, the annealed limit at \eqref{homog} is trivial). It is an interesting question to determine the asymptotic behaviour of the Mott random walk in the latter case. We conjecture that the qualitative `blocking' behaviour of the model is similar to, but more extreme than, that seen in the present article, and plan to describe this precisely in a future work. To provide further context for these comments, we note that in higher dimensions, the qualitative behaviour of the symmetric version of the model does not depend on $\rho$ and $\alpha$, with quenched homogenisation occurring regardless of the particular value of these parameters \cite{CFP}.
\end{remark}

\begin{remark}
In the case of a non-vanishing bias, ballisticity/sub-ballisticity for the Mott random walk is explored in \cite{FGS}. See also the related work \cite{BS}, which identifies the appropriate scaling in the sub-ballistic phase. According to the later work \cite[Remark 1.3]{BS2}, it should be possible to identify the scaling limit.
\end{remark}

\subsection{Comment on the method}\label{sec:method}

Let us briefly comment on the method used in the proof of Theorem \ref{thm:main}. This has a twofold aim. First, it elucidates how the limiting process arises. Second, it indicates a more general theory behind our proof that is applicable to other problems in random media.

To these ends, we appeal to the well-known connection between random walks and electrical networks. In the present model, we can view $\omega$ as nodes in a resistor network, where the resistance of edge $\{\omega_i,\omega_j\}$ is given by
\begin{align*}
r^{\beta,\lambda/n}(\omega_i,\omega_j):=c^{\beta,\lambda/n}(\omega_i,\omega_j)^{-1}.
\end{align*}
Moreover, the effective resistance between disjoint sets $A,B\subseteq \omega$ is defined by
\begin{align}
\lefteqn{R^{\beta,\lambda/n}(A,B)^{-1}}\label{effres}\\
&:=\inf\left\{\frac 12\sum_{i,j} c^{\beta,\lambda/n}(\omega_i,\omega_j)\left(f(\omega_i)-f(\omega_j)\right)^2:\:f\colon \omega\to [0,1]\text{ with }f|_A\equiv 0,\:f|_B\equiv 1\right\}.\nonumber
\end{align}
In the following, we write $R^{\beta,\lambda/n}(x,y)$ in place of $R^{\beta,\lambda/n}(\{x\},\{y\})$. (Later in the article, we consider other examples of graphs equipped with symmetric conductances, and the effective resistance upon these is defined similarly.) By standard theory, the restriction of $R$ to singleton sets defines a metric on $\omega$ (see \cite[Theorem 2.64]{Barlowbook} or \cite[Theorem 1.6]{Kigdendrite}, for example). Moreover, it turns out that our model is close enough to the one-dimensional setting for the function $R^{\beta,\lambda/n}(0,x)$ to take on the role of a scale function in the theory of one-dimensional diffusion. In particular, we have that $\sign(X_k)R^{\beta,\lambda/n}(0,X_k)$, which is the random walk on the \emph{resistance space} $(\omega, R^{\beta,\lambda/n}(\cdot,\cdot))$, behaves approximately like a time-changed Brownian motion. (As usual $\sign(x):=1$ for $x>0$, $\sign(x):=-1$ for $x<0$, and $\sign(0):=0$.) As a consequence, the limiting process will be determined once we understand the scaling limit of the effective resistance and the invariant measure.

Now, to begin with the symmetric ($\lambda=0$) and infinite temperature ($\beta=0$) case, it is straightforward to observe that the nearest-neighbor resistance is heavy-tailed:
\begin{align}\label{eq:heavy}
\mathbf{P}\left(r^{0,0}(\omega_0,\omega_1)\geq u\right)=\mathbf{P}\left(\omega_1-\omega_0\geq \log u\right)=u^{-\rho}.
\end{align}
Since the collection $(r^{0,0}(\omega_i,\omega_{i+1}))_{i\in\Z}$ is i.i.d., the resistance along the nearest-neighbor path between $\omega_0$ and $\omega_{\floor{tn}}$ is therefore, after suitable normalization, well-approximated by a $\rho$-stable process. With some additional work to take into account the non-nearest neighbor edges in the model, the energy marks, and the non-zero bias, we establish in Theorem \ref{thm:approx} that the rescaled resistances
\begin{equation}\label{rrr}
\left(n^{-1/\rho}\sign(v-u)R^{\beta,\lambda/n}(\omega_{\floor{un}},\omega_{\floor{vn}})\right)_{u,v\in\mathbb{R}}
\end{equation}
converge to the increment process $(S^{\beta,\lambda}(v)-S^{\beta,\lambda}(u))_{u,v\in\mathbb{R}}$, which is a precise statement of the intuition that the jumps of the tilted L\'evy process capture the asymptotic inhomogeneity in the resistance environment. We stress that the incorporation of non-nearest neighbor edges in particular is by no means trivial. For instance, we cannot simply cut all non-nearest neighbor edges as they affect the scaling limit through the constant $C_\beta$ that appears in the L\'evy measure in \eqref{Levy}.

As for the invariant measure, which places mass $c^{\beta,\lambda/n}(\omega_i)$ at site $\omega_i$, one can readily show that on the `physical' space $(\R, |\cdot|)$, it converges under scaling to the measure $\mathbf{E}(c^{\beta,0}(\omega_0)) e^{2\lambda r/\rho}\dd r$, see Theorem \ref{thm:measure}.

Putting these conclusions together, we can determine the limiting process as follows: First, deform the space by changing the metric to $R^{\beta,\lambda/n}(\cdot,\cdot)$. On this new `resistance' space, the invariant measure is approximated by $\mu^{\beta,\lambda}$ and thus the process behaves approximately like a Brownian motion time-changed by $\mu^{\beta,\lambda}$, which is $(B_{H^{\beta,\lambda}_t})_{t\geq 0}$ in our above notation. Reverting back to `physical space' requires the reversal of the resistance scaling, and thus leads us to see that $Z^{\beta,\lambda}$ should be the limiting process.

To make these steps precise, we appeal to the recent general result of \cite{croydon2018}, which is based on the theory of resistance forms initiated and developed by Kigami (see Section~\ref{sec:mr} for more details). Roughly speaking, in a result that is particularly well-suited to `low-dimensional' settings, \cite{croydon2018} shows that if the resistance metric associated with a random walk and its invariant measure suitably converge, then so does the random walk. (See also the closely related \cite{CHK}.) Thus the question of the scaling limit of a stochastic process is reduced to a question about the convergence of metric measure spaces. Despite there being various classical results relating convergence of scale functions and speed measures to that of one-dimensional processes, such as \cite{stone}, we find this recent resistance form approach useful because the Mott walk is not a genuinely one-dimensional process.

\begin{remark}\label{gaps}
In this article, we only consider the case when $\omega$ is given by a Poisson point process of intensity $\rho\in(0,1)$ for convenience. Indeed, the same arguments would also apply to other configurations for which the distribution of gaps between sites has a suitably heavy tail. More precisely, if $(\omega_{i+1}-\omega_i)_{i\in\mathbb{Z}}$ are i.i.d.\ and $e^{\omega_{i+1}-\omega_i}$ has an infinite mean and falls into the domain of attraction of a $\rho$-stable random variable with $\rho\in(0,1]$, then a modified version of the conclusion of Theorem \ref{thm:main} will hold. The main difference would be that the statement corresponding to \eqref{rrr} for the scaling of the resistance would be in terms of $n^{-1/\rho}\ell(n)^{-1}$ for some slowly varying function $\ell$, and as a consequence, the correct scaling of the Mott random walk would be given by
\[\left(n^{-1}X^{\beta,\lambda/n}_{n^{1+1/\rho}\ell(n)t}\right)_{t\geq 0}.\]
Note that this more general statement would include the case when $\omega$ is given by a Poisson point process of intensity $\rho=1$, with $\ell(n)=\log n$.
\end{remark}

\begin{remark}\label{homogrem}
Similarly to the previous remark, if $(\omega_{i+1}-\omega_i)_{i\in\mathbb{Z}}$ are i.i.d.\ and $e^{\omega_{i+1}-\omega_i}$ has a finite mean, then one can recover the homogenisation result given at \eqref{homog} by the argument of this paper. Indeed, in this case, one can prove that
\[\left(n^{-1}\sign(v-u)R^{\beta,0}(\omega_{\floor{un}},\omega_{\floor{vn}})\right)_{u,v\in\mathbb{R}}\]
converges almost-surely to $(C(v-u))_{u,v\in\mathbb{R}}$ for some deterministic constant $C\in(0,\infty)$. It follows that, for almost-every environment, the quenched laws of $(n^{-1}X^{\beta,0}_{n^{2}t})_{t\geq 0}$ converge to that of a one-dimensional Brownian motion with a non-trivial, deterministic diffusion constant. This argument covers the case when $\omega$ is given by a Poisson point process of intensity $\rho>1$. We provide the details in the \hyperref[homogsec]{Appendix}.
\end{remark}

\subsection{An extension with random holding times}\label{sec:extension} That the process $Z^{\beta,0}$ can be regarded as a generalised one-dimensional diffusion with scale function $S^{\beta,0}$ and Lebesgue speed measure makes it a something of a dual to the Fontes-Isopi-Newman (FIN) diffusion of \cite{FIN}, which is a process in natural scale and with purely atomic speed measure, where the sizes and positions of atoms are given by the jumps of a subordinator. The latter process arises naturally as the scaling limit of the Bouchaud trap model on $\mathbb{Z}$, the simplest case of which is a symmetric continuous time random walk with spatially inhomogeneous holding times whose means obey a heavy-tailed distribution. It is straightforward to generalise the Mott random walk to include both the `blocking' described in Remark \ref{rem:blocking} and the `trapping' of the FIN diffusion. Indeed, suppose that $\tau=(\tau_i)_{i\in\mathbb{Z}}$ is a sequence of i.i.d.\ random variables, independent of $(\omega,E)$, satisfying
\begin{equation}\label{holding}
\mathbf{P}\left(\tau_i\geq t\right)=t^{-\kappa}
\end{equation}
for $t\geq 1$, where $\kappa\in(0,1)$ is a fixed parameter. Given $(\omega,E,\tau)$, consider the continuous-time Markov chain $\tilde{X}^{\beta,\lambda}=(\tilde{X}^{\beta,\lambda}_t)_{t\geq 0}$ on $\omega$ with generator given by
\[(\tilde{L}^{\beta,\lambda}f)(\omega_i):=\frac{1}{\tau_i}\sum_{j\in\Z}\frac{c^{\beta,\lambda}(\omega_i,\omega_j)}{c^{\beta,\lambda}(\omega_i)}\left(f(\omega_j)-f(\omega_i)\right).\]
We will write the quenched and annealed laws of this process as $\tilde{P}^{\beta,\lambda}$ and $\tilde{\mathbb{P}}^{\beta,\lambda}$, respectively. Note that, under its quenched law, $\tilde{X}^{\beta,\lambda}$ is simply a time change of ${X}^{\beta,\lambda}$, with holding times at a site $i$ having mean $\tau_i$, rather than 1. We have the following generalisation of Theorem \ref{thm:main}.

\begin{theorem}\label{thm:main2}
For every $\rho,\kappa\in(0,1)$ and $\beta, \lambda\geq 0$, it holds that as $n\to\infty$,
\[\tilde{\mathbb{P}}^{\beta,\lambda/n}\left((n^{-1}\tilde{X}_{n^{1/\kappa+1/\rho}t})_{t\geq 0}\in\cdot\right)\]
converge weakly as probability measures on $D([0,\infty),\mathbb{R})$ to the law of the continuous process $\tilde{Z}^{\beta,\lambda}$ defined below.
\end{theorem}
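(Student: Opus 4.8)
The plan is to run the argument behind Theorem~\ref{thm:main} with a single modification, exploiting the fact that, under its quenched law, $\tilde X^{\beta,\lambda}$ is a pure time change of $X^{\beta,\lambda}$ and therefore lives on \emph{exactly the same} resistor network, with the same conductances, resistances and effective resistance metric. In particular, the delicate resistance estimate of Theorem~\ref{thm:approx} --- which is where the non-nearest-neighbour edges, the energy marks and the constant $C_\beta$ all enter --- requires no change: the rescaled resistances $(n^{-1/\rho}\sign(v-u)R^{\beta,\lambda/n}(\omega_{\floor{un}},\omega_{\floor{vn}}))_{u,v\in\R}$ still converge to the tilted stable increment process $(S^{\beta,\lambda}(v)-S^{\beta,\lambda}(u))_{u,v\in\R}$, which will again play the role of scale function in the limit. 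What changes is the invariant (speed) measure: the time change replaces the mass $c^{\beta,\lambda/n}(\omega_i)$ at $\omega_i$ by $\tau_i c^{\beta,\lambda/n}(\omega_i)$. So I would replace Theorem~\ref{thm:measure} by a functional limit theorem adapted to these heavy-tailed weights and then feed the two inputs into the resistance-form convergence theorem of \cite{croydon2018} exactly as before.

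For the new measure limit, one shows that, after rescaling space as in \eqref{rrr} and mass by $n^{-1/\kappa}$, the measure $\sum_i\tau_i c^{\beta,\lambda/n}(\omega_i)\delta_{\omega_i}$ converges vaguely --- jointly with the resistance convergence above --- to a purely atomic, Fontes--Isopi--Newman-type measure $\tilde\nu^{\beta,\lambda}$. Since the $\tau_i$ are i.i.d., regularly varying of index $\kappa$ and independent of $(\omega,E)$, while $c^{\beta,\lambda/n}(\omega_i)\approx e^{2\lambda i/(\rho n)}c^{\beta,0}(\omega_i)$ with $(c^{\beta,0}(\omega_i))_{i}$ stationary and $\mathbf{E}(c^{\beta,0}(\omega_0)^\kappa)\in(0,\infty)$ (the latter because $c^{\beta,0}(\omega_0)\geq e^{-\beta}$), a standard point-process/marking computation identifies $\tilde\nu^{\beta,\lambda}$ as the exponential tilt by $r\mapsto e^{2\lambda r/\rho}$ of the jump measure of a $\kappa$-stable subordinator whose L\'evy measure carries the constant $\mathbf{E}(c^{\beta,0}(\omega_0)^\kappa)$; in the language of \eqref{eq:mudef}, this amounts to replacing the term $\mathbf{E}(c^{\beta,0}(\omega_0))\dd r$ by the corresponding random $\kappa$-stable subordinator measure. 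Because deep traps --- sites with anomalously large $\tau_i$ --- sit, in the limit, at locations that are almost surely disjoint from the equally sparse jump locations of $S^{\beta,\lambda}$, and because the $O(1)$ gap environment at such a site contributes negligibly to the path of $S^{\beta,\lambda}$, one simultaneously obtains that $\tilde\nu^{\beta,\lambda}$ is independent of $S^{\beta,\lambda}$. The limit $\tilde Z^{\beta,\lambda}$ is then defined through \eqref{hdef}--\eqref{zdef} with $\mu^{\beta,\lambda}$ replaced by the corresponding measure $\tilde\mu^{\beta,\lambda}$ on the resistance space, so that it fuses the `blocking' of $Z^{\beta,\lambda}$, coming from the discontinuities of $S^{\beta,\lambda}$, with the `trapping' of the FIN diffusion, coming from the atoms of $\tilde\nu^{\beta,\lambda}$; its continuity follows as for $Z^{\beta,\lambda}$, the scale transform $(S^{\beta,\lambda})^{-1}$ again collapsing the jumps that the time-changed Brownian motion makes across the gaps of $\overline{S^{\beta,\lambda}(\R)}$.

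With both inputs in hand, the conclusion follows from \cite{croydon2018} exactly as for Theorem~\ref{thm:main}: the joint convergence of the resistance metric (scaled by $n^{-1/\rho}$) and the invariant measure (scaled by $n^{-1/\kappa}$), in the Gromov--Hausdorff-vague-type topology of \cite{croydon2018} with the marked point recording the origin, yields convergence of the associated processes, and the time change built into that theorem runs on the product scale $n^{1/\rho}\cdot n^{1/\kappa}=n^{1/\kappa+1/\rho}$. One still has to verify the standing assumptions --- recurrence, non-explosion, local finiteness and full support of the limiting measure, and the quenched-Markov property of $\tilde Z^{\beta,\lambda}$ from the origin conditionally on $(S^{\beta,\lambda},\tilde\nu^{\beta,\lambda})$ --- but these go through as in the proof of Theorem~\ref{thm:main}, with the only genuinely new point being control of the now random, heavy-tailed mass that $\tilde\nu^{\beta,\lambda}$ assigns to neighbourhoods of the origin (which, since $\tau_0=O(1)$ after rescaling, causes no macroscopic effect).

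The step I expect to be the main obstacle is precisely this functional limit theorem for the trap-depth measure \emph{jointly} with the resistance convergence, and within it the decoupling of $S^{\beta,\lambda}$ and $\tilde\nu^{\beta,\lambda}$: the resistances and the masses $c^{\beta,\lambda/n}(\omega_i)$ are functionals of the \emph{same} point configuration $\omega$, so one must show that the local gap environment seen at a deep trap is asymptotically independent of the entire path of the stable scale function, so that the weight $c^{\beta,0}(\omega_i)$ attached to that trap contributes only the deterministic constant $\mathbf{E}(c^{\beta,0}(\omega_0)^\kappa)$ rather than a spurious correlation. A secondary difficulty is that, whereas in Theorem~\ref{thm:main} the speed measure converges deterministically via an ergodic-theorem argument, here one needs a genuine stable limit theorem for a randomly weighted, slowly-varying-modulated sum of heavy-tailed variables, upgraded to vague convergence of the associated measures uniformly on compact sets.
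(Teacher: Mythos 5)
Your proposal is correct and follows essentially the same route as the paper: the resistance convergence of Theorem \ref{thm:approx} is reused unchanged, the speed-measure statement is upgraded to a \emph{joint} stable limit theorem with constant $\mathbf{E}(c^{\beta,0}(\omega_0)^\kappa)$ (the proposition in Section \ref{sec:btm}), the decoupling of $S^{\beta,\lambda}$ from the trap measure being obtained exactly as you suggest --- showing that sites of big resistance and sites of big holding time are disjoint and well separated with high probability and then substituting independent copies of the local environments --- after which everything is fed into the framework of \cite{croydon2018} as for Theorem \ref{thm:main}. (One cosmetic slip: $c^{\beta,0}(\omega_0)\geq e^{-\beta}$ is false since the gap $\omega_1-\omega_0$ may be large, but $\mathbf{E}(c^{\beta,0}(\omega_0)^\kappa)\in(0,\infty)$ still holds because $c^{\beta,0}(\omega_0)>0$ almost surely and $\mathbf{E}(c^{\beta,0}(\omega_0))<\infty$.)
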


To describe the scaling limit, let $S^{\beta,\lambda}$ and $B$ be as before. Additionally, independent of these, let $S^\kappa$ denote a two-sided subordinator with L\'evy measure given by $\kappa x^{-\kappa-1}\mathbf{1}_{\{x>0\}}\dd x$, and define
\[\tilde{\mu}^{\beta,\lambda}\left((a,b]\right):=\mathbf{E}\left(c^{\beta,0}(\omega_0)^\kappa\right)\int_{(S^{\beta,\lambda})^{-1}(a)}^{(S^{\beta,\lambda})^{-1}(b)}e^{2\lambda r/\rho}\dd S^\kappa( r).\]
Next, analogously to \eqref{hdef}, suppose $\tilde{H}^{\beta,\lambda}$ is given by $B$ and $\tilde{\mu}^{\beta,\lambda}$, and, similarly to \eqref{zdef}, set
\[\tilde{Z}^{\beta,\lambda}_t:=\left(S^{\beta,\lambda}\right)^{-1}\left(B_{\tilde{H}^{\beta,\lambda}_t}\right).\]
A simulation of the limiting process will be given in Section \ref{sec:simulation}.

\begin{remark}
It should be possible, and in fact simpler, to show that (up to constant factors) the limiting process $\tilde{Z}^{0,0}$ is the scaling limit of a constant-speed version of a one-dimensional nearest-neighbor random conductance model, where the individual edge resistances $(r(i,i+1))_{i\in\mathbb{Z}}$ are i.i.d.\ and satisfy
\begin{equation}\label{rrrrr}
\mathbf{P}\left(r(i,i+1)\geq u\right)\sim u^{-\rho},\qquad \mathbf{P}\left(r(i,i+1)\leq u^{-1}\right)\sim u^{-\kappa}.
\end{equation}
Indeed, that the tail at zero of the edge resistances gives the same trapping behaviour as holding times with a tail of the form \eqref{holding} can be seen by comparing the FIN diffusion scaling limit of the random conductance model that appears in \cite{Cerny} with the original result of Fontes, Isopi and Newman \cite{FIN}. Moreover, one could readily see $\tilde{Z}^{\beta,\lambda}$ (again, with suitably modified constants) as a scaling limit by adding a tilt to the resistances, whereby $r(i,i+1)$ is replaced by $e^{-2\lambda i/n}r(i,i+1)$ in the scale $n$ model, and incorporating some version of energy marks. The description of both `blocking' and `trapping' in the scaling limit $\tilde{Z}^{\beta,\lambda}$ gives a (near-)symmetric analogue to the `walls' and `wells' seen in the non-vanishing bias case considered in \cite{BS,BS2}.

Similar to Remark \ref{rem:CSvsVS}, one might also consider the corresponding variable-speed version of the one-dimensional nearest-neighbor random conductance model with the distribution of $r(i,i+1)$ having tails at 0 as on the right-hand side at \eqref{rrrrr}. This was studied in \cite{KK}, where it was shown that $Z^{0,0}$ (with suitably modified constants in \eqref{Levy} and \eqref{eq:mudef}) was the scaling limit. The same result could be recovered using the techniques of this article.
\end{remark}

\subsection{Quenched fluctuations}

In the definition of the limit process $Z^{\beta,0}$, it is clear that the subordinator $S^{\beta,0}$ can be interpreted as the contribution of the random environment, while the Brownian motion $B$ corresponds to the random walk. One might therefore be tempted to conjecture that one can construct a coupling between $\omega$ and $S^{\beta,0}$ such that the quenched law of $(n^{-1}X_{n^{1+1/\rho}t})_{t\geq 0}$ converges weakly to $\mathbb P((Z^{\beta,0})_{t\geq 0}\in\cdot\:|\: S^{\beta,0})$, for almost all $\omega$. A moment of thought reveals that this is false. Indeed, the subordinator $S$ is obtained as the scaling limit of the effective resistance, which, as we described above, behaves like a sum of i.i.d.\ heavy-tailed random variables, and hence the convergence exhibits LIL-type fluctuations. More precisely, almost-surely there exists a (random) subsequence $(n_k(\omega))_{k\in\N}$ such that $R(\omega_0,\omega_{n_k})$ is either atypically large or small. In the case of an atypically small resistance, we expect that the random walk diffuses faster than predicted by Theorem \ref{thm:main}, and indeed this is the case.

\begin{proposition}\label{prop:quenched}
Let $\sigma_n=\inf\{t\geq 0:X(t)\in\{...,\omega_{-n-1},\omega_{-n}\}\cup\{\omega_n,\omega_{n+1},...\}\}$.  There exists a constant $M>0$ such that, $\mathbf P$-a.s.,
\begin{align*}
\limsup_{n\to\infty}P^{\beta,0}\left(\sigma_n\leq \frac{Mn^{\frac{1}{\rho}+1}}{\log\log^{\frac{1}{\rho}-1} n}\right)>0.
\end{align*}
\end{proposition}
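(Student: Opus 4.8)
The plan is to bound the quenched mean exit time $E^{\beta,0}_{\omega_0}[\sigma_n]$ from above by an effective resistance times the invariant mass of the interior region, to note that this product is anomalously small along a random sequence of scales because of the small-deviation (lower-envelope) behaviour of the nearest-neighbour resistance sum, and finally to convert this into a bound on the quenched probability via Markov's inequality. Throughout, write $A_n:=\{\omega_i:|i|\ge n\}$ and $\Lambda_n:=\{\omega_i:|i|<n\}$, so that $\sigma_n=\tau_{A_n}$; since each jump of $X^{\beta,0}$ either stays in $\Lambda_n$ or lands in $A_n$, the walk is confined to the \emph{finite} set $\Lambda_n$ for all $t<\sigma_n$, which is precisely what makes the non-nearest-neighbour nature of the model harmless here.

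First I would establish the electrical estimate $E^{\beta,0}_{\omega_0}[\sigma_n]\le R^{\beta,0}(\omega_0,A_n)\sum_{|i|<n}c^{\beta,0}(\omega_i)$. Since $X^{\beta,0}$ has unit-mean holding times, $E^{\beta,0}_{\omega_0}[\sigma_n]=\sum_{|i|<n}g_{A_n}(\omega_0,\omega_i)$, where $g_{A_n}$ is the Green's function of the walk killed on $A_n$; combining the escape-probability identity $g_{A_n}(\omega_0,\omega_0)=c^{\beta,0}(\omega_0)R^{\beta,0}(\omega_0,A_n)$ with reversibility gives $g_{A_n}(\omega_0,\omega_i)\le c^{\beta,0}(\omega_i)R^{\beta,0}(\omega_0,A_n)$, and the estimate follows (that these identities remain valid for the infinite target $A_n$ is routine, e.g.\ by exhausting it by finite subsets). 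Since $\omega_n\in A_n$ and an effective resistance is at most the series resistance along the nearest-neighbour path, and using $U\le 1$,
\[
R^{\beta,0}(\omega_0,A_n)\le R^{\beta,0}(\omega_0,\omega_n)\le\sum_{i=0}^{n-1}r^{\beta,0}(\omega_i,\omega_{i+1})\le e^{\beta}W_n,\qquad W_n:=\sum_{i=0}^{n-1}e^{\omega_{i+1}-\omega_i}.
\]
Finally, $(c^{\beta,0}(\omega_i))_{i\in\Z}$ is stationary and ergodic with $\mathbf E(c^{\beta,0}(\omega_0))<\infty$, so by the ergodic theorem $\mathbf P$-a.s.\ $\sum_{|i|<n}c^{\beta,0}(\omega_i)\le 3\mathbf E(c^{\beta,0}(\omega_0))\,n$ for all large $n$. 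Combining the three displays, $\mathbf P$-a.s.\ $E^{\beta,0}_{\omega_0}[\sigma_n]\le 3e^{\beta}\mathbf E(c^{\beta,0}(\omega_0))\,nW_n$ for all large $n$.

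Next I would bring in the key probabilistic input, the lower-envelope behaviour of $W_n$. The variables $e^{\omega_{i+1}-\omega_i}$ are i.i.d.\ with $\mathbf P(e^{\omega_{i+1}-\omega_i}>u)=u^{-\rho}$, so $W_n$ is a partial sum of i.i.d.\ nonnegative random variables in the domain of attraction of a $\rho$-stable law. By the lower-function theory of Fristedt and Pruitt for increasing random walks, $\mathbf P$-a.s.\ $\liminf_{n\to\infty}W_n\big/\big(n^{1/\rho}(\log\log n)^{-(1/\rho-1)}\big)$ equals a finite positive constant $c_\rho$; for us only the upper bound on this $\liminf$ is needed, which---with an arbitrary finite constant in place of $c_\rho$---also follows from a Borel--Cantelli argument along a geometric subsequence based on the small-deviation estimate $\mathbf P(W_n\le\eps\, n^{1/\rho})\le\exp(-C\eps^{-\rho/(1-\rho)})$ (itself read off from $\mathbf E(e^{-\theta W_n})\approx e^{-cn\theta^\rho}$) and the independence of disjoint-block increments. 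In particular, $\mathbf P$-a.s.\ there is a (random) sequence $n_k\to\infty$ with $W_{n_k}\le 2c_\rho\,n_k^{1/\rho}(\log\log n_k)^{-(1/\rho-1)}$ for all $k$; combining with the previous paragraph, $\mathbf P$-a.s.\ $E^{\beta,0}_{\omega_0}[\sigma_{n_k}]\le 6c_\rho e^{\beta}\mathbf E(c^{\beta,0}(\omega_0))\,n_k^{1/\rho+1}(\log\log n_k)^{-(1/\rho-1)}$ for all large $k$. Setting $M:=12\,c_\rho e^{\beta}\mathbf E(c^{\beta,0}(\omega_0))$, which is a deterministic constant, Markov's inequality gives
\[
P^{\beta,0}\Big(\sigma_{n_k}\le\frac{Mn_k^{1/\rho+1}}{(\log\log n_k)^{1/\rho-1}}\Big)\ge P^{\beta,0}\big(\sigma_{n_k}\le 2E^{\beta,0}_{\omega_0}[\sigma_{n_k}]\big)\ge\frac12,
\]
and hence $\limsup_{n\to\infty}P^{\beta,0}\big(\sigma_n\le Mn^{1/\rho+1}/(\log\log n)^{1/\rho-1}\big)\ge\frac12>0$.

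The main obstacle is the lower-envelope statement for $W_n$: it is exactly the $\log\log$ correction there that produces the factor $(\log\log n)^{1/\rho-1}$ in the proposition, and making it rigorous requires either invoking the (somewhat delicate) lower-function theory for heavy-tailed partial sums or carrying out the Borel--Cantelli argument sketched above, in which the small-deviation asymptotics and the control of the mild dependence between the events $\{W_{n_k}\le\cdots\}$ are the steps that demand care. Everything else is soft: the electrical estimate is routine once one observes that the walk is trapped in the finite set $\Lambda_n$ before $\sigma_n$, and the passage from the mean to the probability is immediate.
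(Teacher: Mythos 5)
Your proposal is correct and follows essentially the same route as the paper: bound the quenched mean exit time by an effective resistance times the invariant mass (the paper does this via the commute-time identity on the truncated network, you via the Green's-function/escape-probability bound), bound the resistance by the nearest-neighbour series sum, invoke the $\log\log$ lower-envelope LIL for heavy-tailed sums (the paper cites Lipschutz where you cite Fristedt--Pruitt or a Borel--Cantelli sketch), and finish with the ergodic theorem for the conductances and Markov's inequality. A minor point in your favour: by using $U\le 1$ to reduce to $W_n=\sum_i e^{\omega_{i+1}-\omega_i}$ you apply the LIL to a genuinely i.i.d.\ sum, whereas the paper applies it directly to $(r^{\beta,0}(\omega_i,\omega_{i+1}))_i$, which for $\beta>0$ is only $1$-dependent because consecutive terms share an energy mark.
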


It readily follows that there cannot be a quenched version of Theorem \ref{thm:main}. Since the proof of the subsequent result is straightforward given Proposition \ref{prop:quenched}, it is omitted.

\begin{corollary}
$\mathbf P$-a.s., the sequence $(P^{\beta,0}((n^{-1}X_{n^{1+1/\rho}t})_{t\geq 0}\in\cdot))_{n\in\N}$ is not tight.
\end{corollary}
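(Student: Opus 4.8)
The plan is to derive the non-tightness directly from Proposition~\ref{prop:quenched}, which says that, along a random subsequence, the rescaled walk travels a macroscopic distance in a time that is $o(n^{1+1/\rho})$ with probability bounded away from $0$; this is incompatible with the uniform control on the Skorohod modulus of continuity that tightness would force. It is worth noting that Theorem~\ref{thm:main} itself is not needed: the only ingredients are Proposition~\ref{prop:quenched}, the trivial fact that the walk starts at $0$, and the law of large numbers $\omega_{\pm n}/n\to\pm\rho^{-1}$, which holds $\mathbf{P}$-a.s.

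Write $Y^{(n)}:=(n^{-1}X_{n^{1+1/\rho}s})_{s\ge0}$ and $\mu_n:=P^{\beta,0}(Y^{(n)}\in\cdot)$, and fix $a_0\in(0,\rho^{-1})$. The first step is to rephrase $\sigma_n$ as an exit event for $Y^{(n)}$. Since $X$ takes values in $\{\omega_j\}_{j\in\Z}$ and $j\mapsto\omega_j$ is increasing, $\sigma_n=\inf\{t\ge0:X_t\notin(\omega_{-n},\omega_n)\}$ and $|X_{\sigma_n}|\ge\min(\omega_n,-\omega_{-n})$; by the law of large numbers this minimum is at least $a_0n$ for all large $n$, $\mathbf{P}$-a.s. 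Consequently, setting $T_n:=Mn^{1/\rho+1}/(\log\log n)^{1/\rho-1}$ as in Proposition~\ref{prop:quenched} and $\delta_n:=T_n/n^{1+1/\rho}=M/(\log\log n)^{1/\rho-1}$ (which tends to $0$ because $\rho<1$), the event $\{\sigma_n\le T_n\}$ is contained in $\{\sup_{0\le s\le\delta_n}|Y^{(n)}_s|\ge a_0\}$ for all large $n$. Proposition~\ref{prop:quenched} therefore gives, $\mathbf{P}$-a.s.,
\[
c(\omega)\ :=\ \limsup_{n\to\infty}P^{\beta,0}(\sigma_n\le T_n)\ >\ 0
\qquad\text{and hence}\qquad
\limsup_{n\to\infty}P^{\beta,0}\!\left(\sup_{0\le s\le\delta_n}|Y^{(n)}_s|\ge a_0\right)\ \ge\ c(\omega).
\]

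Next I would bring in the $J_1$ modulus of continuity $w_1'(\cdot,\delta)$ on $[0,1]$, which is non-decreasing in $\delta$. The elementary observation is that a c\`adl\`ag path $\xi$ with $\xi_0=0$ and $\sup_{s\le\delta}|\xi_s|\ge a_0$ (for $\delta<1$) must have $w_1'(\xi,\delta)\ge a_0$: in any admissible partition $0=t_0<t_1<\dots$ one has $t_1>\delta$, so the first block $[0,t_1)$ contains $[0,\delta]$ and its oscillation is at least $\sup_{s\le\delta}|\xi_s-\xi_0|\ge a_0$. Applying this to $Y^{(n)}$ (which starts at $0$) and using that $w_1'$ is non-decreasing in its scale parameter, the previous display upgrades to: for \emph{every} fixed $\delta>0$,
\[
\limsup_{n\to\infty}P^{\beta,0}\!\left(w_1'(Y^{(n)},\delta)\ge a_0\right)\ \ge\ c(\omega)\ >\ 0 .
\]
To close the argument, suppose for contradiction that $(\mu_n)_{n\ge1}$ were tight for some $\omega$ in the full-measure set above. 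Then there is a compact $K\subseteq D([0,\infty),\R)$ with $\mu_n(K)\ge1-c(\omega)/2$ for all $n$, and by the standard characterisation of compact subsets of the Skorohod space one has $\sup_{\xi\in K}w_1'(\xi,\delta)\to0$ as $\delta\downarrow0$; picking $\delta^*>0$ with $w_1'(\xi,\delta^*)<a_0$ for all $\xi\in K$ then forces $P^{\beta,0}(w_1'(Y^{(n)},\delta^*)\ge a_0)\le\mu_n(K^c)\le c(\omega)/2$ for all $n$, contradicting the last display. Hence $(\mu_n)_{n\ge1}$ is not tight, $\mathbf{P}$-a.s.

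I do not anticipate any genuine obstacle: once Proposition~\ref{prop:quenched} is available the argument is essentially bookkeeping, and the only point requiring a little care is the passage from the discrete hitting time $\sigma_n$ to a functional of the continuous rescaled path, which the law of large numbers for $\omega_n/n$ settles. One could alternatively run a Prohorov-type argument---a tight sequence would admit a subsequential limit that charges, with probability at least $c(\omega)$, c\`adl\`ag paths starting at $0$ but leaving $(-a_0,a_0)$ in arbitrarily short time, which is impossible---but since the running-supremum functional is neither upper nor lower semicontinuous in $J_1$, the route through the modulus of continuity above is cleaner.
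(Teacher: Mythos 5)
Your argument is correct: the passage from $\{\sigma_n\le T_n\}$ to a lower bound on the modified $J_1$ modulus at vanishing time scales (via the a.s.\ law of large numbers $\omega_{\pm n}/n\to\pm\rho^{-1}$) and the contradiction with the compactness criterion in $D([0,\infty),\R)$ are all sound. The paper omits the proof precisely because it is "straightforward given Proposition \ref{prop:quenched}", and your write-up is exactly the intended argument, so nothing further is needed.
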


\begin{remark}
For a sum of i.i.d.\ heavy-tailed random variables with a tail as considered here, it is known that the order of the poly-loglogarithmic fluctuation in Proposition \ref{prop:quenched} is optimal, see \cite[Theorem 1]{lipschutz}. We therefore conjecture that, almost-surely, there are no exceptional times where the random walk moves faster than in Proposition \ref{prop:quenched}, i.e. that, $\mathbf{P}$-a.s.,
\begin{align*}
\limsup_{n\to\infty}P^{\beta,0}(\sigma_n\leq f_n)=0
\end{align*}
whenever $\lim_{n\to\infty}f_n\frac{\log\log^{\frac{1}{\rho}-1} n}{n^{\frac{1}{\rho}+1}}=0$. To prove such a result, it would be helpful to establish more quantitative statements connecting the effective resistance and the approximating i.i.d.\ process than those proved in this paper (see \eqref{eq:PMprobconv} and \eqref{eq:Pprobconv}). We leave this as a problem for further research.
\end{remark}

\begin{remark}
We further conjecture that there are exceptional times where the random walk diffuses slower than expected, i.e. that, $\mathbf P$-a.s.,
\begin{align*}
\limsup_{n\to\infty}P^{\beta,0}(\sigma_n>Kn^{1+1/\rho})=1
\end{align*}
for every $K>0$. We expect that such a slowdown is caused by an atypically large resistance, and hence one needs to examine the upper deviations in the LIL for sums of heavy-tailed random variables. In the i.i.d.\ case, these are of poly-logarithmic order, rather than the poly-loglogarithmic order seen in the lower deviations (again, see \cite{lipschutz}). In the Mott random walk model, for $\sigma_n$ to be large, one would need to control the resistance on both the left- and right-hand sides of the origin simultaneously, and thus it is not immediately clear how the i.i.d.\ result transfers. As a result, we do not have a precise prediction for the correct order of the slowdown. (By contrast, if one were to consider instead the hitting time $\sigma^+_n=\inf\{t\geq 0:X(t)\in\{\omega_n,\omega_{n+1},...\}\}$, it might be reasonable to expect a quenched poly-logarithmic fluctuation of the same order as in the case of the corresponding i.i.d.\ sum.)
\end{remark}

\subsection{Conjecture on aging}

In the one-dimensional Bouchaud trap model with heavy-tailed holding time means, one sees an aging phenomenon, whereby there is a non-trivial probability that the random walker will be found in the current trap after a time interval that is of the order of the length of time for which the system has already been running \cite{BaC,FIN}. Moreover, as is discussed in \cite{BaC2}, this property is natural in the context of models with trapping more generally. For the Mott random walk considered in the present article, the same version of the aging property will not apply. However, we expect that the running maximum of the process will exhibit the following aging behaviour.

\begin{conjecture} For every $\rho<1$ and $\beta, \lambda\geq 0$, it holds that
\begin{align}\label{eq:conj}
\lim_{n\rightarrow\infty}\mathbb{P}^{\beta,\lambda/n}\left(\sup_{s\leq n^{1+1/\rho}}X_{s}=\sup_{s\leq n^{1+1/\rho}h}X_{s}\right)=\theta(h):=\mathbb{P}\left(\sup_{s\leq 1}Z^{\beta,\lambda}_{s}=\sup_{s\leq h}Z^{\beta,\lambda}_{s}\right),\:\:\: \forall h>1,
\end{align}
where $\lim_{h\rightarrow 1}\theta(h)=1$ and $\lim_{h\rightarrow \infty}\theta(h)=0$.
\end{conjecture}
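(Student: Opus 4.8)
The natural route is to deduce \eqref{eq:conj} from the process convergence in Theorem~\ref{thm:main} and then to read off the behaviour of $\theta$ from path properties of $Z^{\beta,\lambda}$. A warning at the outset: the relevant functional of the path is \emph{not} continuous at the limit law, precisely because of the ``blocking'' mechanism, and so the soft argument gets only part of the way.

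\emph{Reduction to the limit process.} Fix $h>1$, write $\hat X=(n^{-1}X_{n^{1+1/\rho}s})_{s\geq0}$ (depending on $n$), and for $x\in D([0,\infty),\R)$ set $g_h(x):=\sup_{s\in[1,h]}x_s-\sup_{s\in[0,1]}x_s$. Since time-rescaling is monotone, the event in \eqref{eq:conj} equals $\{g_h(\hat X)\leq0\}$ and $\theta(h)=\mathbb P(g_h(Z^{\beta,\lambda})\leq0)$. The functional $g_h$ is continuous at every continuous path and $Z^{\beta,\lambda}$ is almost-surely continuous, so the continuous mapping theorem and Theorem~\ref{thm:main} give $g_h(\hat X)\Rightarrow g_h(Z^{\beta,\lambda})$, and the portmanteau theorem yields
\begin{equation*}
\mathbb P\bigl(g_h(Z^{\beta,\lambda})<0\bigr)\leq\liminf_{n\to\infty}\mathbb P^{\beta,\lambda/n}\bigl(g_h(\hat X)\leq0\bigr)\leq\limsup_{n\to\infty}\mathbb P^{\beta,\lambda/n}\bigl(g_h(\hat X)\leq0\bigr)\leq\theta(h).
\end{equation*}
The defect between the bounds is $\mathbb P(g_h(Z^{\beta,\lambda})=0)$, and this turns out to be genuinely positive, so something more than soft convergence is needed.

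\emph{Identifying and closing the defect -- the main obstacle.} Using the representation $Z^{\beta,\lambda}=(S^{\beta,\lambda})^{-1}(W)$ with $W=B_{H^{\beta,\lambda}}$ the strong Markov generalised diffusion in natural scale with speed measure $\mu^{\beta,\lambda}$ (Lemma~\ref{zbllem}), and the facts that $(S^{\beta,\lambda})^{-1}$ is continuous and $\mu^{\beta,\lambda}$ is carried by $\overline{S^{\beta,\lambda}(\R)}$, one checks that $\{g_h(Z^{\beta,\lambda})=0\}$ equals, up to a null set, the event that $\sup_{s\leq1}W_s$ is the lower endpoint of a gap of $\overline{S^{\beta,\lambda}(\R)}$ and $W$ does not cross that gap during $[1,h]$ -- equivalently, that $Z^{\beta,\lambda}$ reflects off a barrier at level $\sup_{s\leq1}Z^{\beta,\lambda}_s$ throughout $[1,h]$ without getting over it, which has positive probability. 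A gambler's-ruin estimate in natural scale shows that once $W$ does cross such a gap it overshoots the upper endpoint immediately, so $\{g_h(Z^{\beta,\lambda})=0\}$ contains no ``barely crossed'' paths; the discrete counterpart is that if the walk ever advances its rightmost visited site past $\omega_{r_1}$ (the rightmost site reached by time $n^{1+1/\rho}$) it advances it forward by a macroscopically non-negligible amount before time $hn^{1+1/\rho}$. The precise input required is
\begin{equation*}
\lim_{\delta\downarrow0}\;\limsup_{n\to\infty}\;\mathbb P^{\beta,\lambda/n}\Bigl(0<\sup_{s\leq hn^{1+1/\rho}}X_s-\sup_{s\leq n^{1+1/\rho}}X_s\leq\delta n\Bigr)=0.
\end{equation*}
Granting this, $\mathbb P^{\beta,\lambda/n}(g_h(\hat X)\leq0)\geq\mathbb P^{\beta,\lambda/n}(g_h(\hat X)\leq\delta)-\mathbb P^{\beta,\lambda/n}(0<g_h(\hat X)\leq\delta)$, where $\liminf_n$ of the first term is at least $\mathbb P(g_h(Z^{\beta,\lambda})<\delta)$ by portmanteau and $\mathbb P(g_h(Z^{\beta,\lambda})<\delta)\to\theta(h)$ as $\delta\downarrow0$, while $\limsup_n$ of the second term vanishes in the iterated limit; letting $\delta\downarrow0$ then closes the sandwich and establishes the convergence in \eqref{eq:conj}. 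Proving the displayed estimate is the heart of the matter: it is a statement about how quickly the walk escapes a blocking gap once it has crossed it, together with the fact that the crossing time has no atom at $h$, and I expect it to require the quantitative resistance-form estimates developed in the paper rather than only the conclusion of Theorem~\ref{thm:main}.

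\emph{Behaviour of $\theta$ at the endpoints.} As $h\downarrow1$ we have $\sup_{s\in[1,h]}Z^{\beta,\lambda}_s\downarrow Z^{\beta,\lambda}_1$ by continuity, so $\theta(h)\geq\mathbb P(\sup_{s\in[1,h]}Z^{\beta,\lambda}_s<\sup_{s\leq1}Z^{\beta,\lambda}_s)\uparrow\mathbb P(Z^{\beta,\lambda}_1<\sup_{s\leq1}Z^{\beta,\lambda}_s)=1$, the last equality because $Z^{\beta,\lambda}_1=\sup_{s\leq1}Z^{\beta,\lambda}_s$ forces $W_1=\sup_{s\leq1}W_s$, which has probability zero since the running maximum of the time-changed Brownian motion $W$ is almost-surely not attained at the deterministic time $1$. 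As $h\to\infty$, monotone convergence gives $\theta(h)\downarrow\mathbb P(\sup_{s\geq0}Z^{\beta,\lambda}_s=\sup_{s\leq1}Z^{\beta,\lambda}_s)$, and this is $0$ because $\limsup_{t\to\infty}Z^{\beta,\lambda}_t=+\infty$ almost surely: the clock $t\mapsto\int_\R L^B_t(x)\mu^{\beta,\lambda}(\dd x)$ diverges as $t\to\infty$ since Brownian local time is unbounded and $\mu^{\beta,\lambda}\neq0$, hence $W$ reaches arbitrarily large heights in $\overline{S^{\beta,\lambda}(\R)}$ while $(S^{\beta,\lambda})^{-1}$ is unbounded above. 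Thus $\lim_{h\downarrow1}\theta(h)=1$ and $\lim_{h\to\infty}\theta(h)=0$, completing the conjectured statement.
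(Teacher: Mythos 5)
First, a point of comparison: the paper does not prove this statement at all — it is stated as a conjecture, and the authors explicitly remark that the $J_1$-convergence of Theorem \ref{thm:main} is not fine enough to imply \eqref{eq:conj}, that continuity of $\theta$ at $h=1$ seems delicate, and that a proof would require a more careful sample path analysis than they pursue. Measured on its own merits, your proposal correctly diagnoses the obstruction: $\mathbb{P}(g_h(Z^{\beta,\lambda})=0)>0$ because the running maximum can sit at a blocking barrier, so continuous mapping plus portmanteau only yields $\limsup_n\leq\theta(h)$ together with a lower bound that misses by exactly this boundary mass. But the proposal does not overcome the obstruction: the entire content of \eqref{eq:conj} is shifted onto the displayed estimate $\lim_{\delta\downarrow0}\limsup_{n\to\infty}\mathbb{P}^{\beta,\lambda/n}\bigl(0<\sup_{s\leq hn^{1+1/\rho}}X_s-\sup_{s\leq n^{1+1/\rho}}X_s\leq\delta n\bigr)=0$, which you introduce with ``granting this'' and yourself call the heart of the matter. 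No proof of it is offered, and it does not follow from anything established in the paper: Theorems \ref{thm:approx} and \ref{thm:measure} and the resistance-form machinery only deliver weak $J_1$-convergence, whereas the needed estimate is a uniform-in-$n$, quantitative statement about the prelimit walk (that once its running maximum advances past the scale-$n$ barrier it advances by a macroscopic amount before time $hn^{1+1/\rho}$). The gambler's-ruin picture you invoke concerns the limit process $W=B_{H^{\beta,\lambda}}$ and does not transfer to the walk without genuinely new work. So what you have is a plausible reduction scheme, not a proof; the conjecture remains open.

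There is also a flaw in the endpoint argument for $\lim_{h\downarrow1}\theta(h)=1$. You claim that $Z^{\beta,\lambda}_1=\sup_{s\leq1}Z^{\beta,\lambda}_s$ forces $W_1=\sup_{s\leq1}W_s$. This implication is false: since $(S^{\beta,\lambda})^{-1}$ collapses each gap, it can happen that $\sup_{s\leq1}W_s$ equals the upper endpoint $S^{\beta,\lambda}(t)$ of a gap while $W_1$ sits at the lower endpoint $S^{\beta,\lambda}(t^-)$; both map to $t$, so $Z_1=\sup_{s\leq1}Z_s$ although $W_1<\sup_{s\leq1}W_s$. To repair this you must in addition show that at the deterministic time $1$ the process $W$ is almost surely not located exactly at a gap endpoint — for instance via the absolute continuity of the law of $B_{H_1}$ with respect to the atomless measure $\mu^{\beta,\lambda}$, conditionally on $S^{\beta,\lambda}$, as in the proof of Lemma \ref{zbllem} — and you should also justify the asserted fact that the running maximum of $W$ is a.s.\ not attained at time $1$, since $W$ is a discontinuous time change of Brownian motion and this is not simply the corresponding statement for $B$. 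These points look repairable, but together with the unproven central estimate they confirm that the argument as written does not establish \eqref{eq:conj}.
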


Note that the weak convergence in Theorem \ref{thm:main} is with respect to the Skorohod $J_1$-topology, which is not fine enough to immediately imply \eqref{eq:conj}. Moreover, establishing the continuity of $\theta(h)$ at $h=1$ seems delicate due to the dependence between the time change $H^{\beta,\lambda}$ and the Brownian motion $B$. We expect that both parts of the conjecture can be proved with a more careful sample path analysis of the Mott random walk than we pursue here.

\subsection{Simulations}
\label{sec:simulation}
Illustrating the above discussion, in Figures \ref{sim1} and \ref{sim2} we present some simulations of the Mott random walk, which by Theorem \ref{thm:main} approximates the process $Z^{\beta,\lambda}$. (Time runs upwards in the figures.)

We highlight that, even under its quenched law, that is, conditional on the subordinator, $Z^{\beta,\lambda}$ is not a strong Markov process, due to the `blocking' resulting from jumps in the subordinator. Indeed, the left- and right-hand sides of subordinator jump locations in physical space are separated by gaps in the support of the measure $\mu^{\beta,\lambda}$ in resistance space. Since the Brownian motion $B$ accumulates local time at each side of such a gap before returning to the other side, one sees in physical space that the process $Z^{\beta,\lambda}$ is reflected from the relevant jump location for some time before it crosses, when the reflection then occurs on the other side of the site. As a consequence, at the hitting time of a subordinator jump location, the future evolution of the process will depend upon whether this location was approached from the right or from the left. Note however that, since at any fixed time the process $Z^{\beta,\lambda}$ is almost-surely not at a subordinator jump location, it will be Markov whenever it is started away from the set of such. In particular, this will be the case when started from 0, say (see Lemma \ref{zbllem}).

\begin{figure}
\begin{minipage}{\textwidth}
\begin{center}
\begin{tikzpicture}
\node[draw,very thick,anchor=south west,inner sep=.5] at (0,0) {\includegraphics[width=.45\textwidth, height=.4\textheight]{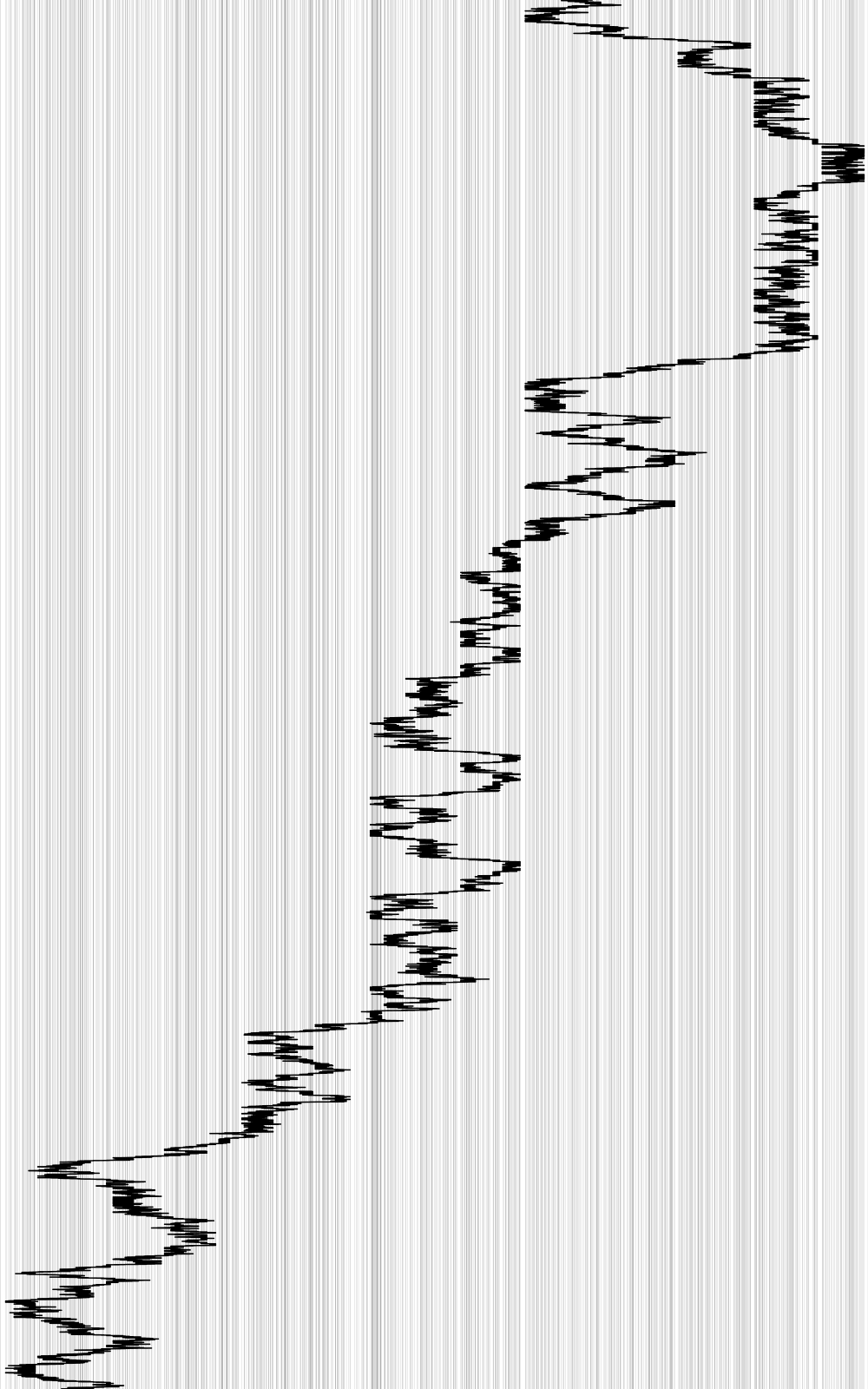}};

\node[draw,very thick,anchor=south west,inner sep=.5] at (6,0) {\includegraphics[width=.45\textwidth, height=.4\textheight]{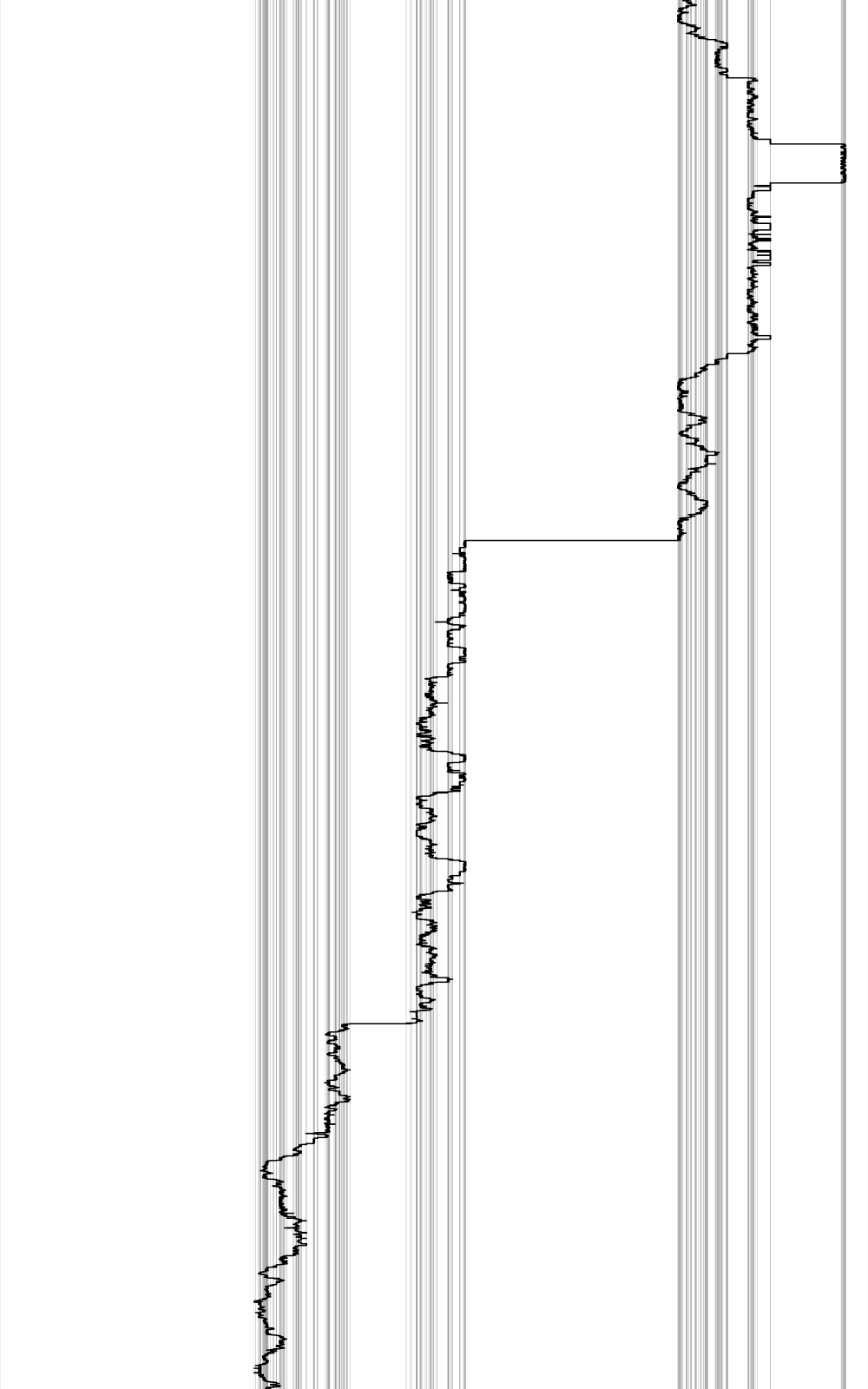}};

\node[draw,very thick,anchor=south west,inner sep=.5] at (0,-9.7) {\includegraphics[width=.45\textwidth, height=.4\textheight]{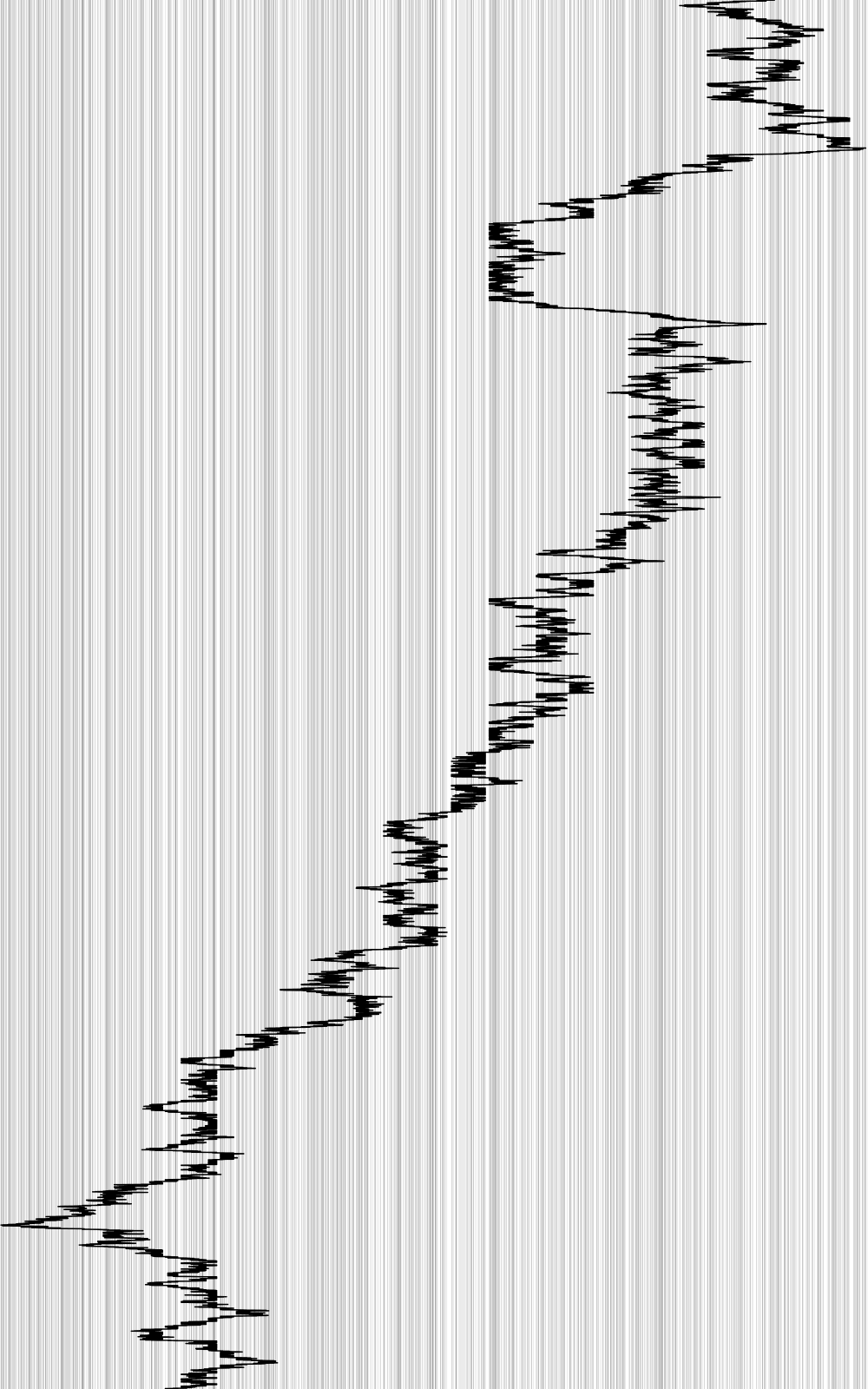}};

\node[draw,very thick,anchor=south west,inner sep=.5] at (6,-9.7) {\includegraphics[width=.45\textwidth, height=.4\textheight]{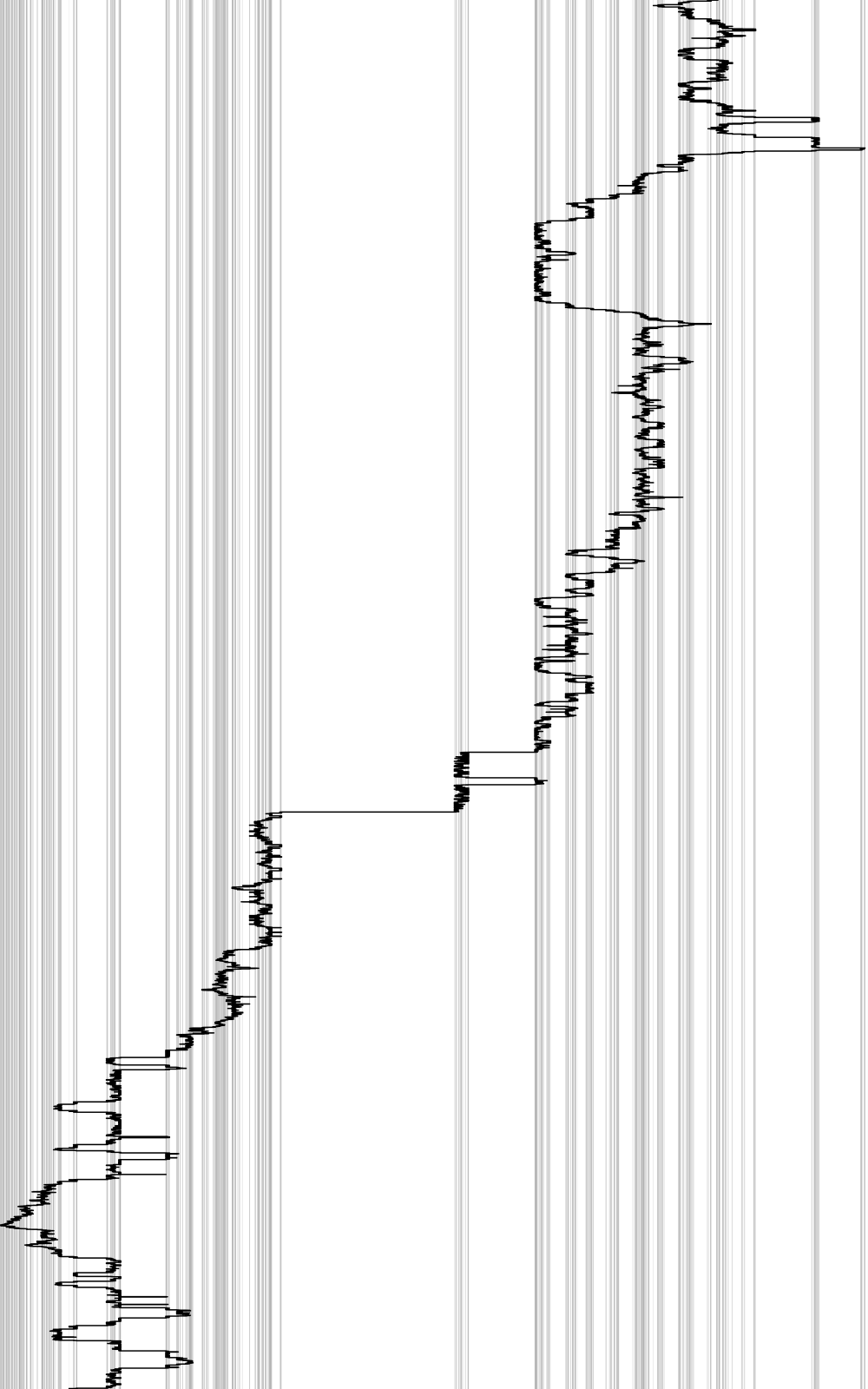}};
\end{tikzpicture}
\end{center}
\end{minipage}
\caption{Simulation of $(X_t)_{t\geq0}$ in the cases $\rho=0.7$ (top row) and $\rho=0.95$ (bottom row), for $\beta=\lambda=0$ and $3\cdot 10^6$ steps. The left column shows the process in physical space, with vertical lines indicating the environment $\{\omega_i:i\in\Z\}$. The vertical lines in the right column denote the coordinates $\{\sign(i)R^{0,0}(\omega_0,\omega_i):i\in\Z\}$ in resistance space. In resistance space, the process behaves like the trace of Brownian motion, meaning it cannot easily cross large gaps. In physical space, the gaps in the environment $\omega$ disappear, but their effect on the path is still visible.}\label{sim1}
\end{figure}

\begin{figure}
\makebox[\textwidth][c]
{
\begin{minipage}{\textwidth}
\begin{center}
\begin{tikzpicture}
\node[draw,very thick,anchor=south west,inner sep=.5] at (0,0) {\includegraphics[width=.45\textwidth, height=.25\textheight]{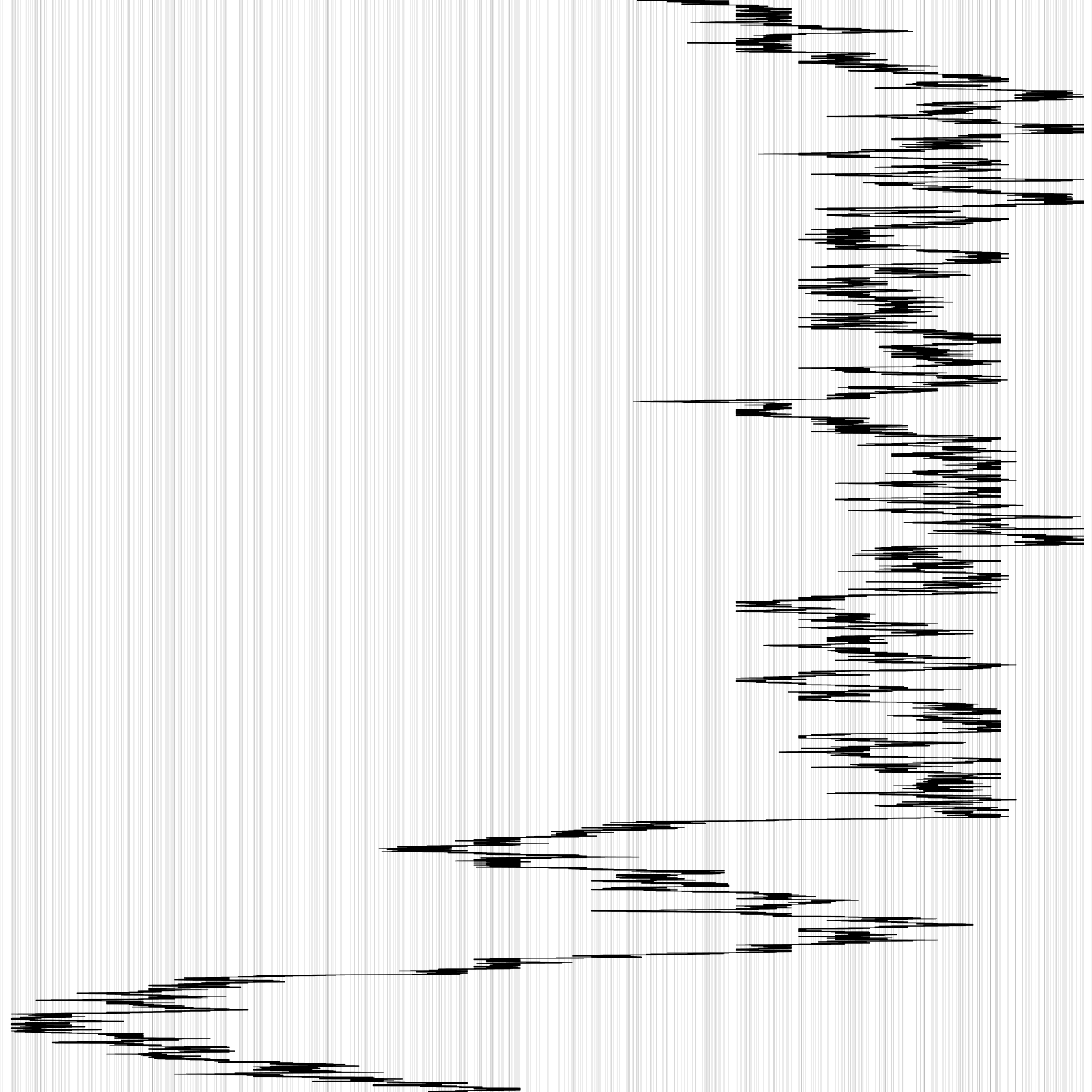}};

\node[draw,very thick,anchor=south west,inner sep=.5] at (6,0) {\includegraphics[width=.45\textwidth, height=.25\textheight]{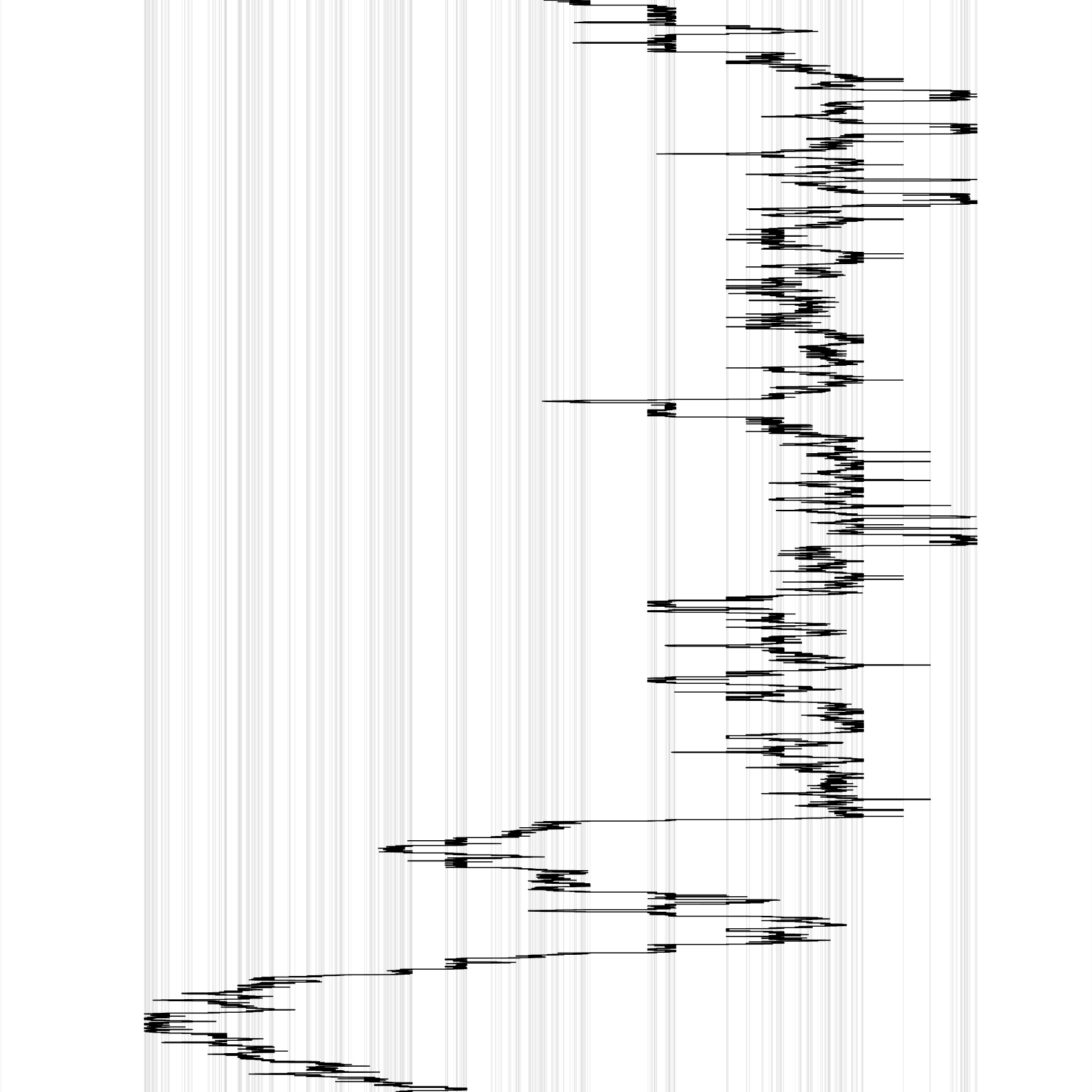}};

\node[draw,very thick,anchor=south west,inner sep=.5] at (0,-6.6) {\includegraphics[width=.45\textwidth, height=.25\textheight]{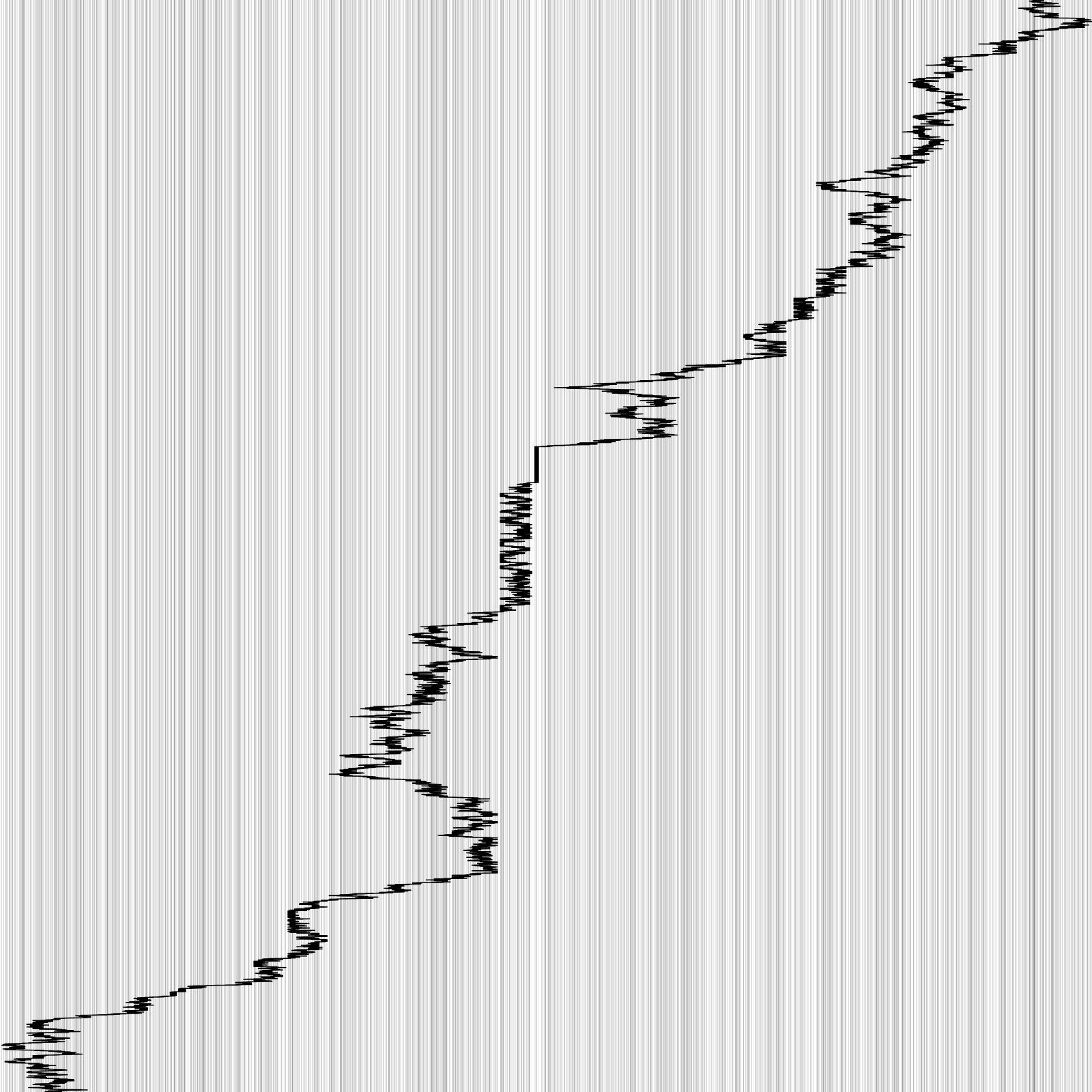}};

\node[draw,very thick,anchor=south west,inner sep=.5] at (6,-6.6) {\includegraphics[width=.45\textwidth, height=.25\textheight]{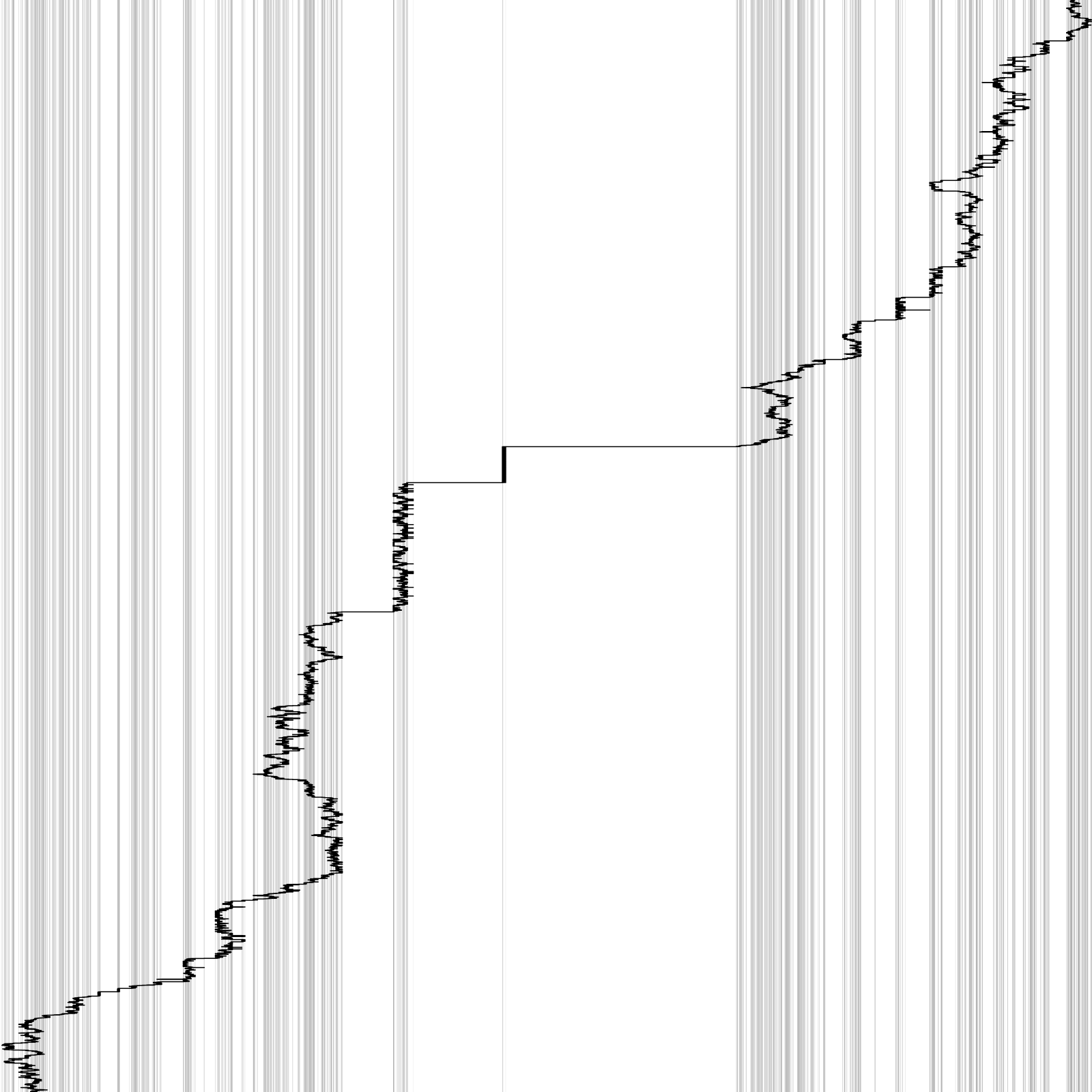}};

\node[draw,very thick,anchor=south west,inner sep=.5] at (0,-13.2) {\includegraphics[width=.45\textwidth, height=.25\textheight]{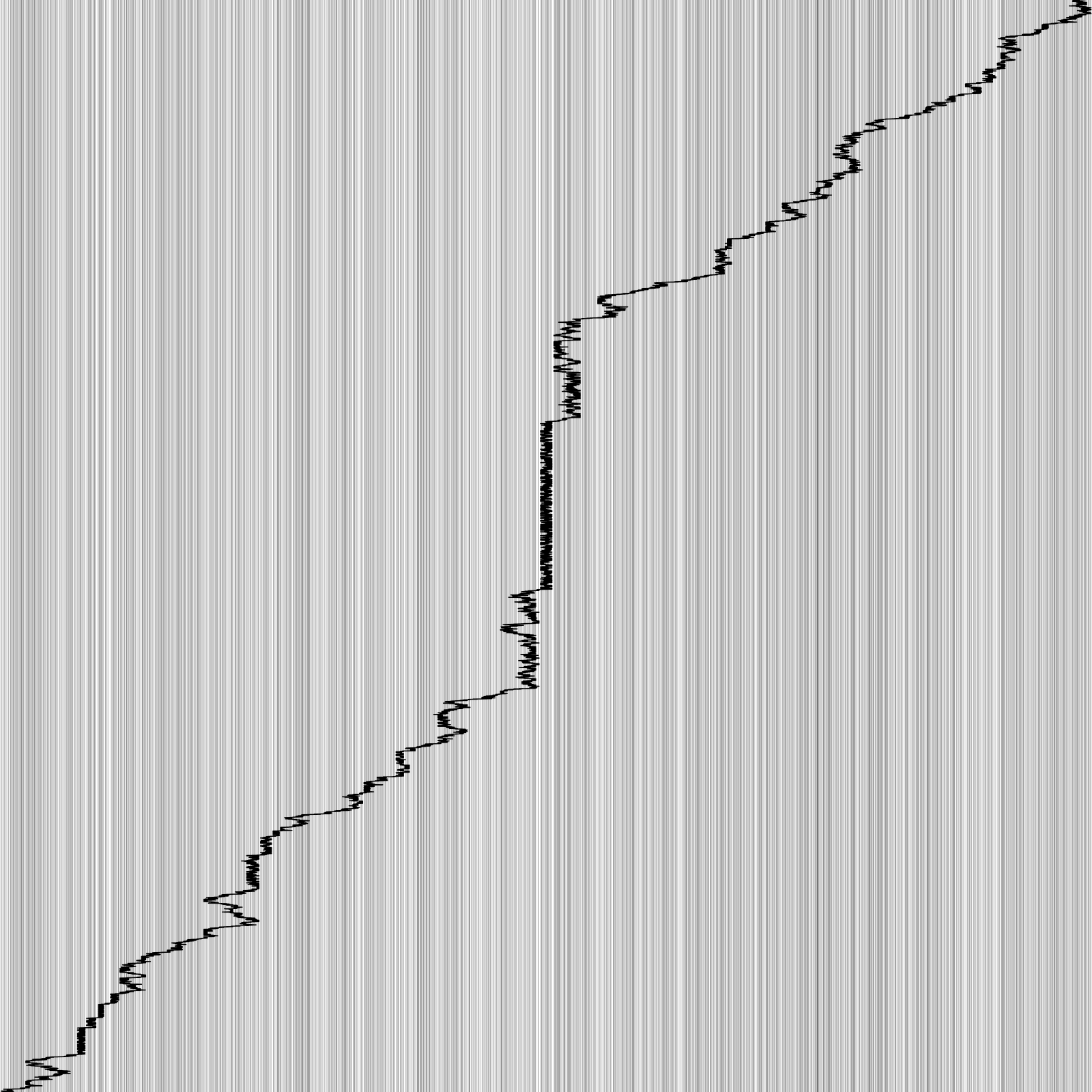}};

\node[draw,very thick,anchor=south west,inner sep=.5] at (6,-13.2) {\includegraphics[width=.45\textwidth, height=.25\textheight]{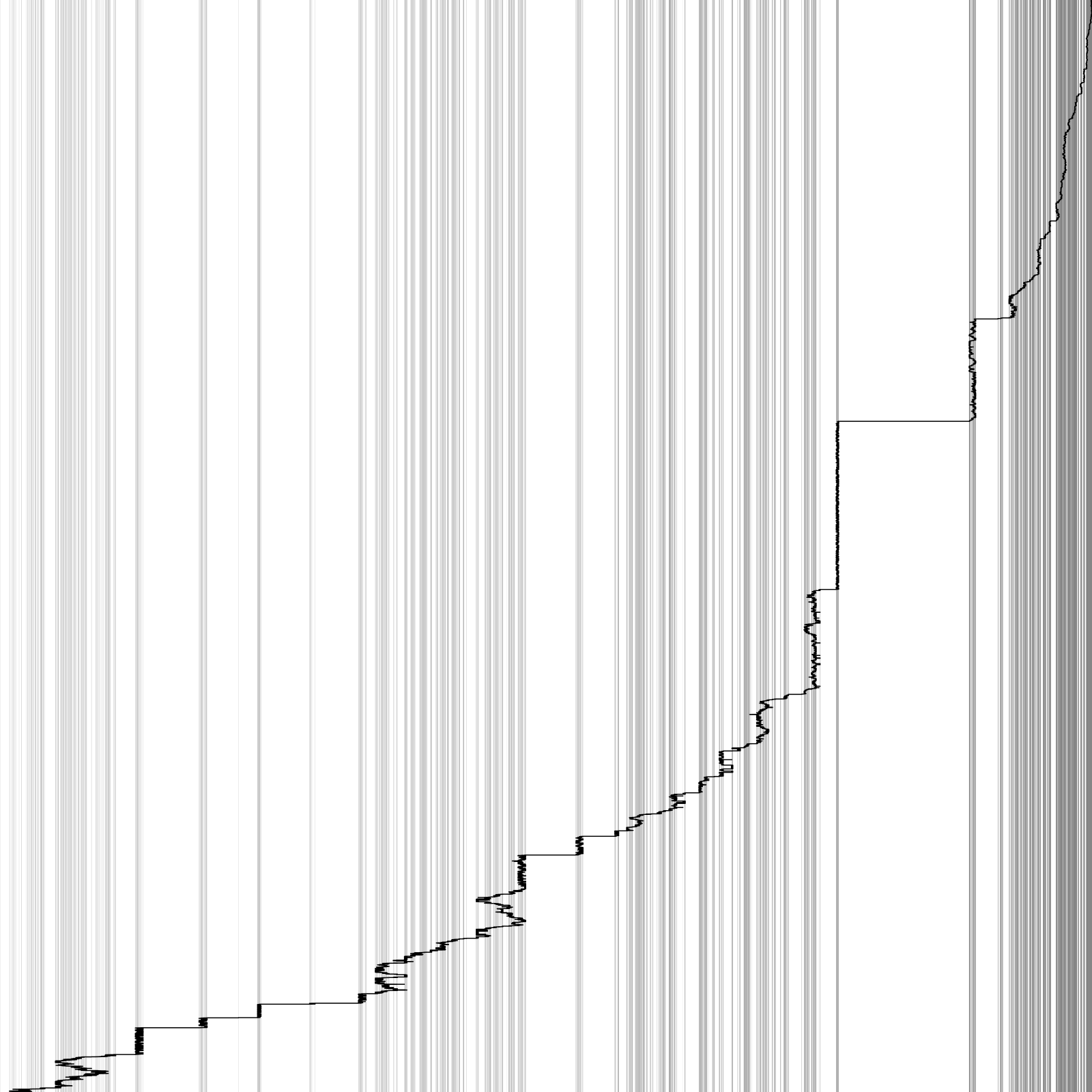}};
\end{tikzpicture}
\end{center}
\end{minipage}
}
\caption{Simulation of $(X_t)_{t\geq 0}$ in the cases $\lambda=100$ (top row), $\lambda=500$ (middle row) and $\lambda=2000$ (bottom row), with $\rho=0.9$, $\beta=0$ and $3\cdot 10^3$ steps. Note that the resistance space in bounded from the right, where vertical lines become infinitely dense. The process in resistance space still behaves like the trace of Brownian motion, but time-changed so that it slows down as it approaches the accumulation point.}\label{sim2}
\end{figure}

Figure \ref{fig:btm} shows a simulation of the random walk with random holding times from Section \ref{sec:extension}, which according to Theorem \ref{thm:main2} has $\tilde{Z}^{\beta,\lambda}$ as its scaling limit. These simulations illustrate that both the `blocking' and the `trapping' mechanisms contribute to its sub-diffusivity.

\begin{figure}
\makebox[\textwidth][c]
{

\begin{minipage}{\textwidth}
\begin{center}
\begin{tikzpicture}
\node[draw,very thick,anchor=south west,inner sep=.5] at (0,0) {\includegraphics[width=.45\textwidth, height=.23\textheight]{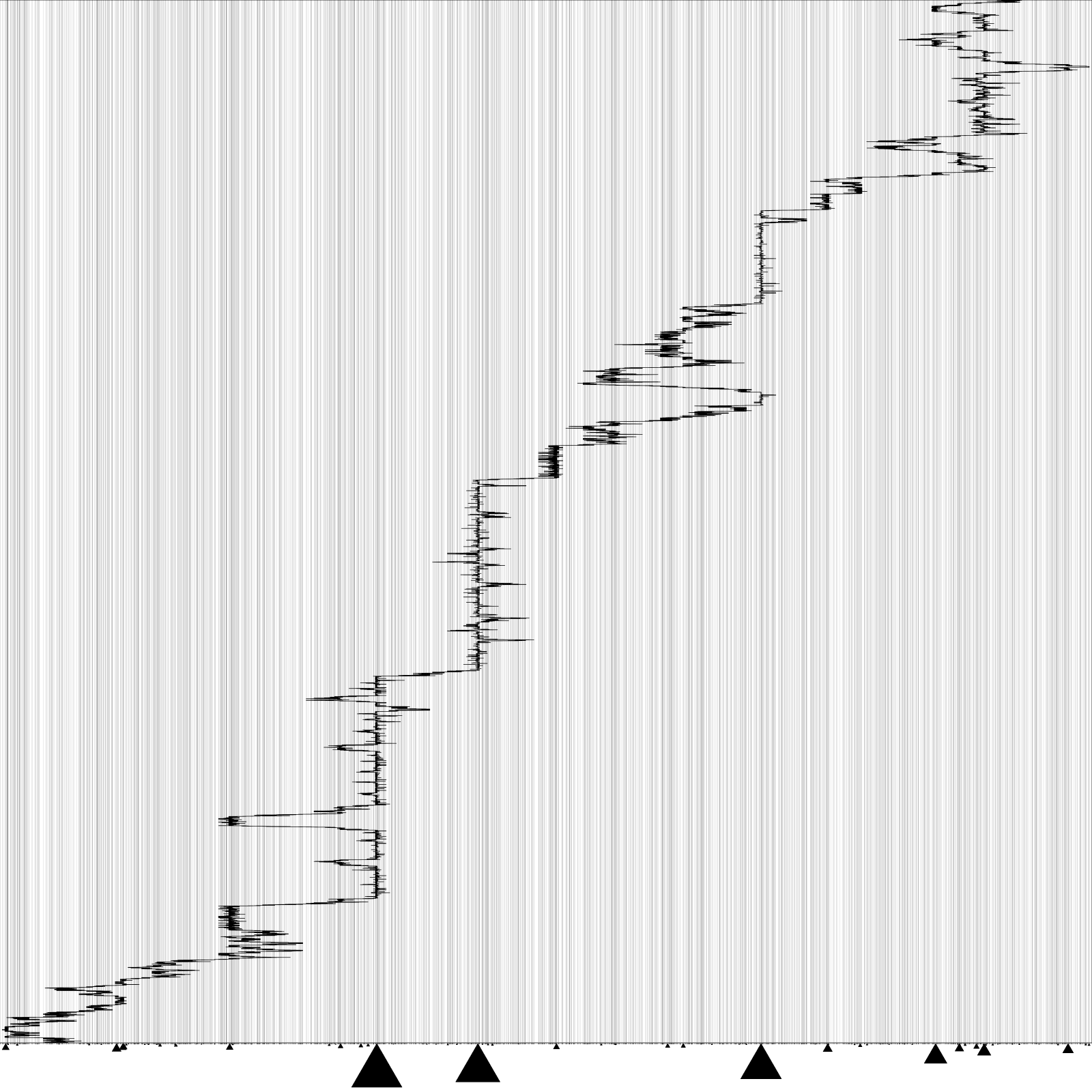}};

\node[draw,very thick,anchor=south west,inner sep=.5] at (6,0) {\includegraphics[width=.45\textwidth, height=.23\textheight]{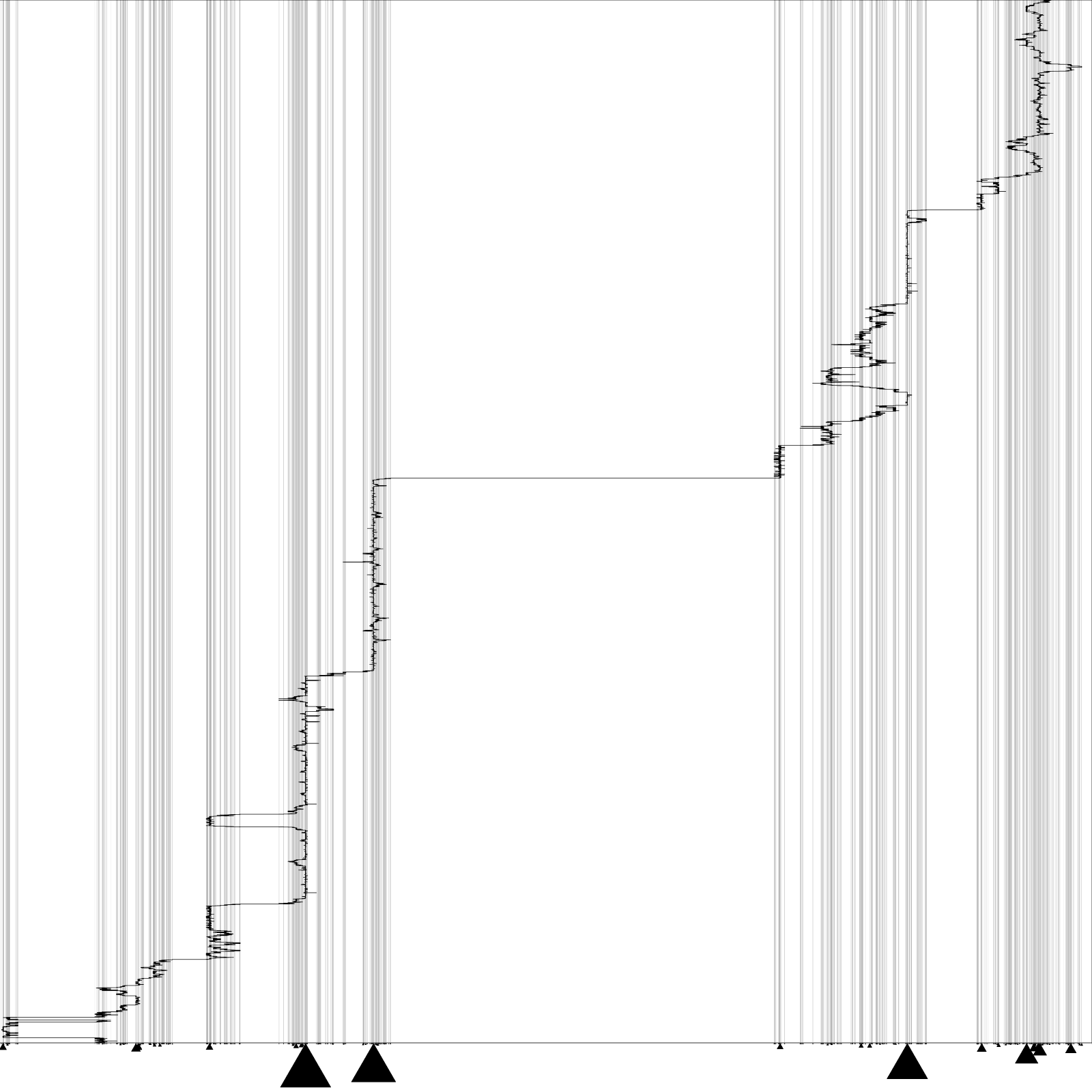}};

\node[draw,very thick,anchor=south west,inner sep=.5] at (0,-6.8) {\includegraphics[width=.45\textwidth, height=.23\textheight]{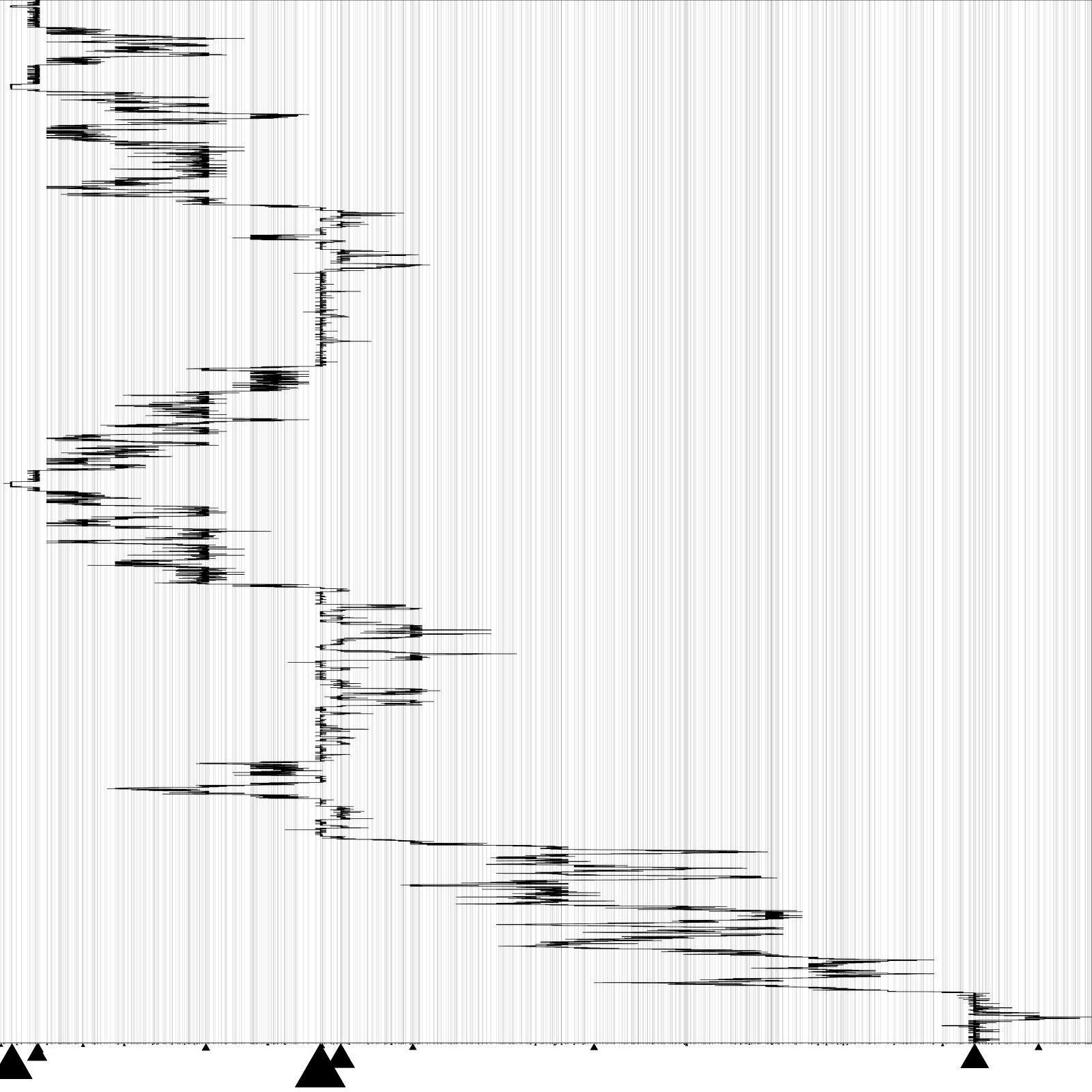}};

\node[draw,very thick,anchor=south west,inner sep=.5] at (6,-6.8) {\includegraphics[width=.45\textwidth, height=.23\textheight]{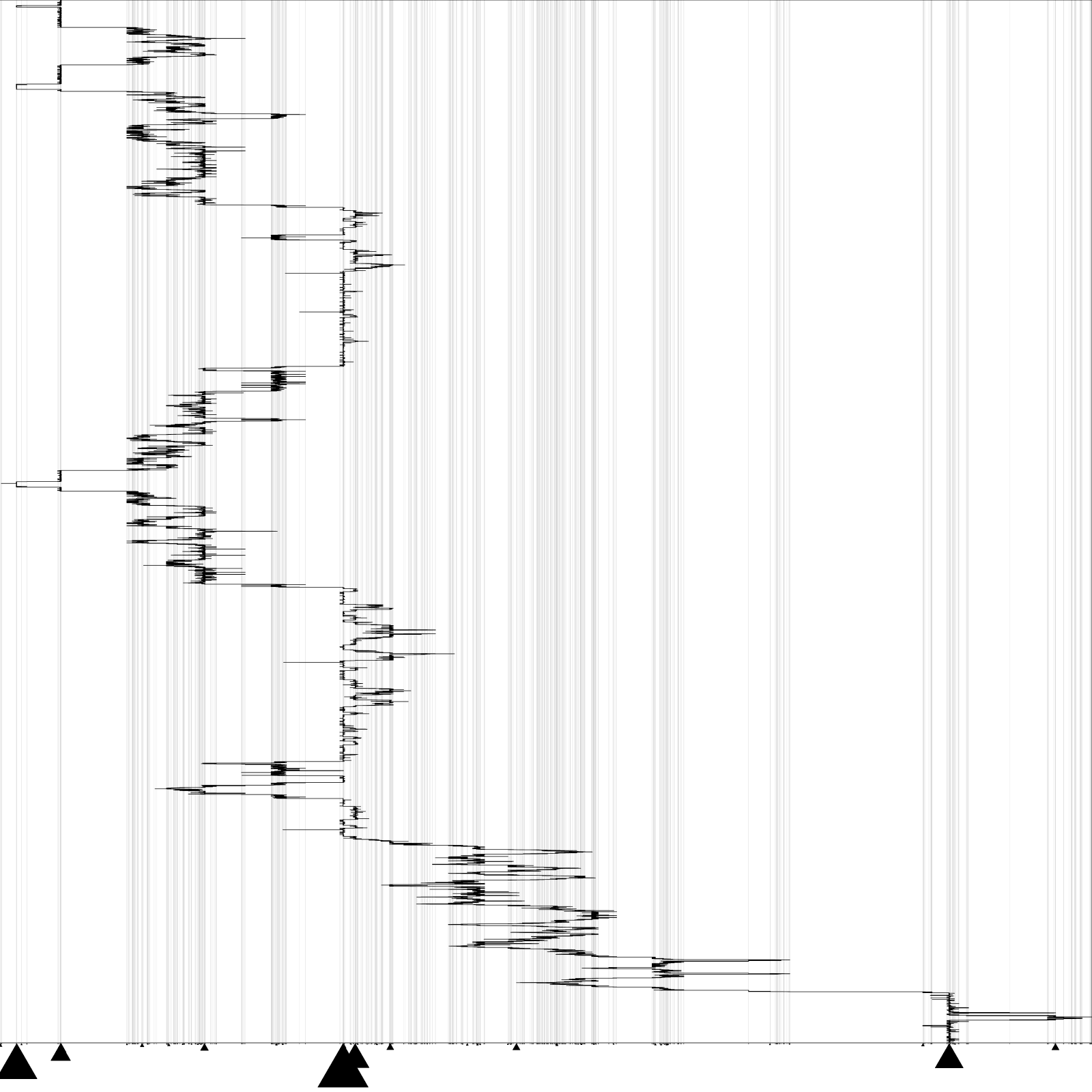}};

\node[draw,very thick,anchor=south west,inner sep=.5] at (0,-13.6) {\includegraphics[width=.45\textwidth, height=.23\textheight]{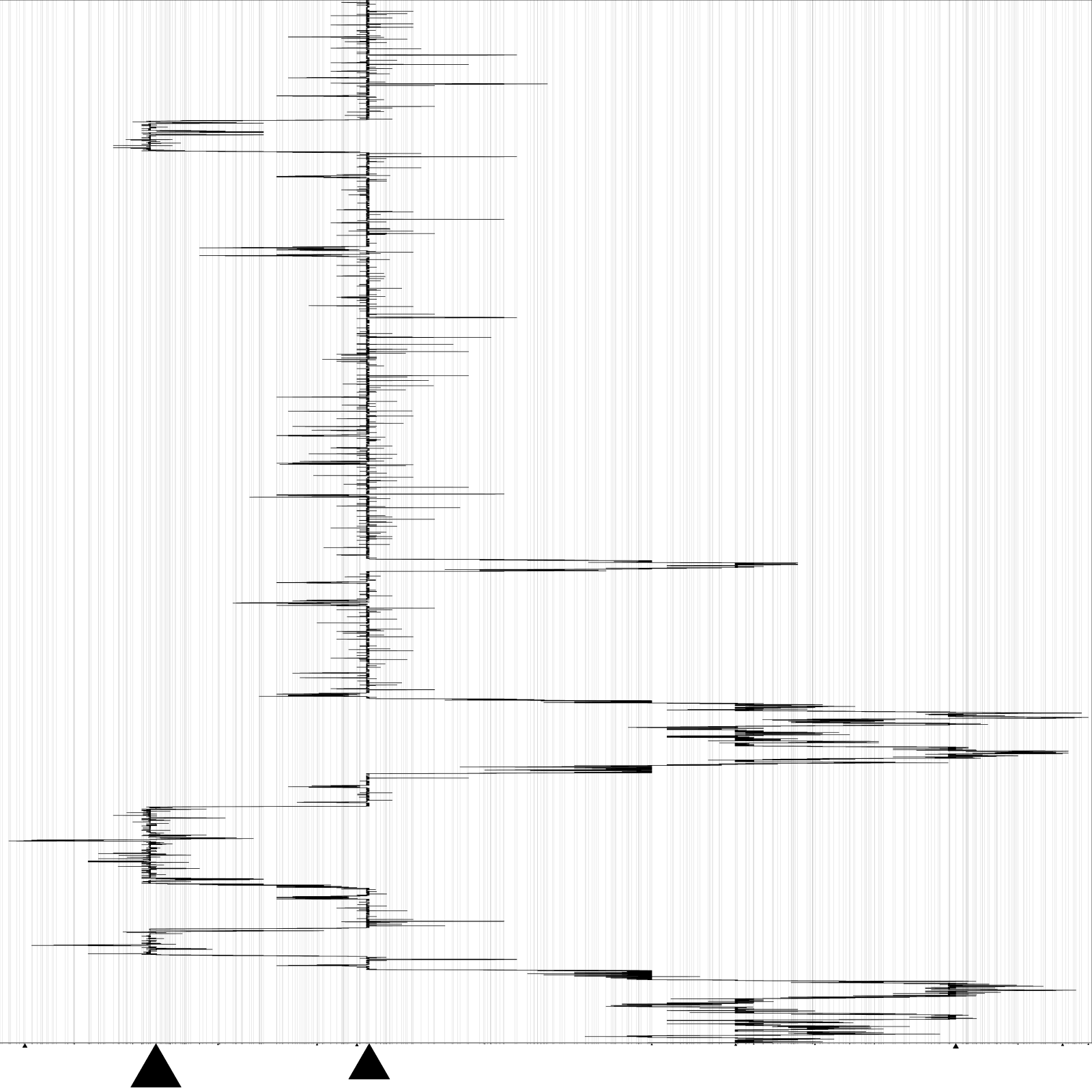}};

\node[draw,very thick,anchor=south west,inner sep=.5] at (6,-13.6) {\includegraphics[width=.45\textwidth, height=.23\textheight]{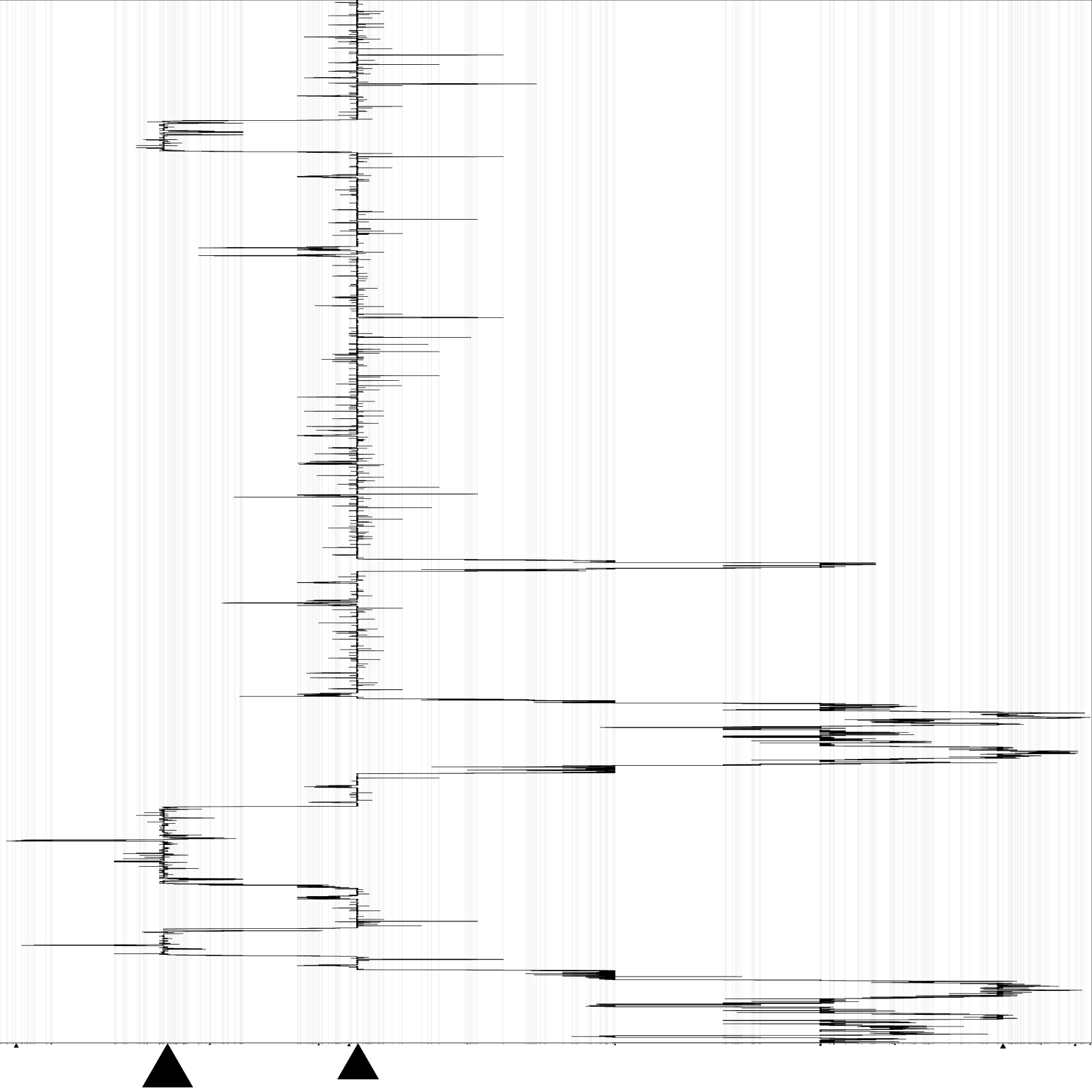}};
\end{tikzpicture}
\end{center}
\end{minipage}
}

\caption{Simulation of the process with random holding times in physical space (left column) and in resistance space (right column), with parameters $(\rho,\kappa)$ equal to $(\frac {17}{22},\frac {17}{18})$ (top row), $(\frac {17}{20},\frac {17}{20})$ (middle row) and $(\frac {17}{18},\frac {17}{22})$ (bottom row), and $\beta=\lambda=0$. The values are chosen such that $1/\rho+1/\kappa$ is constant, hence we expect the same spatial scaling for all three realizations. The size of the triangles is proportional to the holding time $\tau_i$ at the site.}
\label{fig:btm}
\end{figure}

\subsection{Outline and notational conventions} The remainder of the article is organised as follows. In Section \ref{sec:res} we establish a functional convergence statement for effective resistance, before going on in Section \ref{sec:meas} to deduce weak convergence of the invariant measure of the Mott random walk. These results are put together in Section \ref{sec:mms} to deduce a metric measure convergence result for compact versions of the spaces, and extended to the original non-compact setting in Section \ref{sec:mr}, which is where the main result of Theorem \ref{thm:main} is established. Following this, in Section \ref{sec:btm}, we explain the adaptations needed to deduce Theorem \ref{thm:main2}, and, in Section \ref{sec:quenched}, we prove that the Mott random walk exhibits quenched fluctuations, as described in Proposition \ref{prop:quenched}. Finally, in the \hyperref[homogsec]{Appendix}, we detail how our approach also applies in the homogenisation regime.

Regarding notation, throughout the article, we write $i\wedge j:=\min\{i,j\}$ and $i\vee j:=\max\{i,j\}$. We will sometimes consider sums of the form $\sum_{j=0}^{i-1}$, where $i$ can take an arbitrary value in $\mathbb{Z}$. In such cases, we suppose $\sum_{j=0}^{i-1}=0$ if $i=0$, and $\sum_{j=0}^{i-1}=-\sum_{j=i}^{-1}$ if $i\leq -1$. Moreover, we will sometimes use a continuous variable, $x$ say, where a discrete argument is required, with the understanding that it should be treated as $\lfloor x\rfloor$.

\section{Convergence of the effective resistance}\label{sec:res}

As already noted in the introduction, the collection $(r^{0,0}(\omega_i,\omega_{i+1}))_{i\in\Z}$ is i.i.d., and the marginal distribution falls into the domain of a $\rho$-stable random variable (see \eqref{eq:heavy}). Thus the rescaled partial sums, which give the effective resistances in the network that only includes nearest-neighbor resistors, readily admit a $\rho$-stable approximation. The aim of this section is to show that essentially the same holds true for effective resistances in the full model, with our main result being Theorem \ref{thm:approx} below.

The presence of non-nearest-neighbor edges has two consequences. First, it decreases the resistance between neighboring sites $\omega_i$ and $\omega_{i+1}$, because it is possible to reach $\omega_{i+1}$ from $\omega_i$ by visiting a sequence of other sites first. Second, the effective resistances $R^{\beta,0}(\omega_i,\omega_{i+1})$ and $R^{\beta,0}(\omega_j,\omega_{j+1})$ are not independent for $i\neq j$. To deal with the first difficulty, we introduce the random variables
\begin{equation}
\begin{split}
\chi^{\beta,\lambda}(i):=&\left(\sum_{j\leq i< i+1\leq k} \frac{r^{0,\lambda}(\omega_i,\omega_{i+1})}{r^{\beta,\lambda}(\omega_j,\omega_k)}\right)^{-1}\\
=&\left(\sum_{j\leq i< i+1\leq k} e^{-(1+\lambda)(\omega_i-\omega_j)-(1-\lambda)(\omega_k-\omega_{i+1})-\beta U(E_j,E_k)}\right)^{-1}.
\end{split}
\label{eq:chi}
\end{equation}
Since the increments $(\omega_{j+1}-\omega_j)_{j\in\Z}$ are i.i.d., $\chi^{\beta,\lambda}(i)$ is independent of $\omega_{i+1}-\omega_i$ for each $i$. Intuitively, $\chi^{\beta,\lambda}(i)$ is a correction that captures non-nearest-neighbor edges, in the sense that it is possible to check that
\begin{align*}
\lim_{u\to\infty}\frac{\mathbf{P}(R^{\beta,0}(\omega_0,\omega_1)\geq u)}{\mathbf{P}(r^{0,0}(\omega_0,\omega_1)\chi^{\beta,0}(0)\geq u)}=1.
\end{align*}
(Although we will not need to prove this exact statement for our argument.) It will further transpire that, since
\[\mathbf{P}(r^{0,0}(\omega_0,\omega_1)\chi^{\beta,0}(0)\geq u)\sim \mathbf{E}\left(\chi^{\beta,0}(0)^\rho\right)u^{-\rho}\]
as $u\rightarrow\infty$ (cf.\ \eqref{xxx}), the random variables $\chi^{\beta,\lambda}(i)$ only influence the scaling limit of the resistance through the constant
\begin{equation}\label{eq:cbeta}
C_\beta:=\mathbf{E}\left(\chi^{\beta,0}(0)^\rho\right).
\end{equation}
(That $C_\beta$ takes a value in $(0,\infty)$ is a straightforward consequence of the fact that $\chi^{\beta,0}(0)$ is a non-zero, bounded random variable.) The second difficulty mentioned above comes down to dealing with the correlations between the random variables in the collection $(\chi^{\beta,0}(i))_{i\in\Z}$. Here, we will show that most of the contribution towards $R^{\beta,0}(\omega_i,\omega_j)$ comes from a few edges with high resistance. Such edges are typically well-separated, and therefore, to derive the desired $\rho$-stable limit, it is enough to control the correlation between $\chi^{\beta,0}(i)$ and $\chi^{\beta,0}(j)$ for $|i-j|$ `large'. See Figure \ref{fig:strategy}. Finally, we remark that the inclusion of a non-zero $\lambda$ does not significantly affect the above discussion, merely resulting in an exponential tilting of the limiting stable process, as at \eqref{eq:stilt}.

\begin{figure}[!t]
 \includegraphics[width=\textwidth]{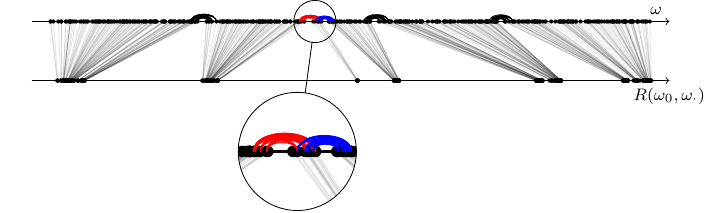}
 \caption{The circles on the upper line denote the sites $(\omega_i)_{i\in\Z}$ of the Poisson process, while on the lower line, the sites have been transformed by $\omega_i\mapsto R^{0,0}(\omega_0,\omega_i)$. The gray lines connect sites with their images. In principle, the random walk can jump between any sites $\omega_i$ and $\omega_j$, but we will see that the process is `almost nearest-neighbor', in the sense that we can disregard all edges except the nearest-neighbor edges and those that help bridge a big edge (shown above). The contribution from the edges of the second type is encoded in the random variables $(\chi(i))_{i\in\Z}$. If two big edges are close, then the bridge-edges can intersect, as shown above in red and blue. However, this will only happen with vanishingly small probability.}\label{fig:strategy}
\end{figure}

On a more technical point, we note that the result of \cite{croydon2018} assumes that the limiting process is recurrent, and hence does not directly apply to our model when $\lambda>0$. For this reason, we will approximate the effective resistance in a truncated state space. More precisely, for given natural numbers $K$ and $n$, we will consider the complete graph on the vertex set
\begin{align*}
\left\{\{...,\omega_{-Kn}\},\omega_{-Kn+1},...,\omega_{Kn-1},\{\omega_{Kn},...\}\right\},
\end{align*}
for which it is convenient to introduce the notation
\begin{align*}
\oomega_i:=
\begin{cases}
\omega_i,&\text{ if }-Kn<i<Kn,\\
\{...,\omega_{-Kn}\},&\text{ if }i={-Kn},\\
\{\omega_{Kn},...\},&\text{ if }i=Kn,
\end{cases}
\end{align*}
and let $R^{\beta,\lambda/n,Kn}$ denote the effective resistance (defined analogously to \eqref{effres}) associated with the conductances given by, for $-Kn<i,j<Kn$,
\begin{equation*}
\begin{split}
c^{\beta,\lambda/n,Kn}(\oomega_i,\oomega_j)&:=c^{\beta,\lambda/n}(\omega_i,\omega_j),\\
c^{\beta,\lambda/n,Kn}(\oomega_i,\oomega_{-Kn})&:=\textstyle \sum_{k\in\{...,-Kn\}}c^{\beta,\lambda/n}(\omega_i,\omega_k),\\
c^{\beta,\lambda/n,Kn}(\oomega_i,\oomega_{Kn})&:=\textstyle \sum_{k\in\{Kn,...\}}c^{\beta,\lambda/n}(\omega_i,\omega_k),\\
c^{\beta,\lambda/n,Kn}(\oomega_{-Kn},\oomega_{Kn})&:=\textstyle \sum_{k\in\{Kn,...\},k'\in\{...,-Kn\}}c^{\beta,\lambda/n}(\omega_{k'},\omega_k).
\end{split}
\end{equation*}
(It is straightforward to check that the sums above are almost-surely finite once $\lambda/n<1$.) In other words, the conductances $c^{\beta,\lambda,Kn}$ are obtained by collapsing all sites beyond $Kn$ and $-Kn$ into a single site each, and resolving the resulting parallel edges into a single edge using the parallel law. We will make a suitable choice for the speed measure on this graph, so that the resulting random walk can be interpreted as the original random walk reflected at $-Kn$ and $Kn$. Taking a suitable limit $n\to\infty$, this reflected random walk will converge in distribution to a stochastic process on a compact state space. Finally, we show that the limiting process does not explode in finite time so that we can obtain a limit for the process without reflection by letting $K\to\infty$.

\begin{remark}
There is a condition for non-explosion in \cite{croydon2018} in terms of resistance, but this requires the recurrence of the limiting process. In our model, we need to employ the fact that the speed measure grows rapidly in the direction of transience when $\lambda>0$. \end{remark}

\begin{theorem}\label{thm:approx}
Let $(S^{\beta,0}(u))_{u\in\R}$ denote a two-sided L\'evy process with L\'evy measure given by \eqref{Levy}. Moreover, define $(S^{\beta,\lambda}(u))_{u\in\R}$ as at \eqref{eq:stilt}. Then
\begin{align}\label{eq:convergence}
\left(n^{-1/\rho}\sign(u)R^{\beta,\lambda/n,Kn}(\overline \omega_{0},\overline \omega_{\floor{un}})\right)_{-K\leq u\leq K}\xrightarrow[n\to\infty]d \left(S^{\beta,\lambda}(u)\right)_{-K\leq u\leq K},
\end{align}
where the convergence is with respect to the Skorohod $J_1$-topology. Moreover,
\begin{eqnarray}
\lefteqn{\sup_{-Kn\leq i\leq j\leq Kn}\hspace{-8pt}n^{-1/\rho}\left|R^{\beta,\lambda/n,Kn}(\overline \omega_{i},\overline \omega_j)\vphantom{\left.\left(\sign(j)R^{\beta,\lambda/n,Kn}(\overline \omega_0,\overline \omega_j)-\sign(i)R^{\beta,\lambda/n,Kn}(\overline \omega_0,\overline \omega_i)\right)\right|}\right.}\label{eq:distortion}\\
&&\hspace{100pt}\left.\vphantom{\left|R^{\beta,\lambda/n,Kn}(\overline \omega_{i},\overline \omega_j)-\right.}-\left(\sign(j)R^{\beta,\lambda/n,Kn}(\overline \omega_0,\overline \omega_j)-\sign(i)R^{\beta,\lambda/n,Kn}(\overline \omega_0,\overline \omega_i)\right)\right|\nonumber
\end{eqnarray}
converges to 0 in $\mathbf{P}$-probability as $n\rightarrow\infty$.
\end{theorem}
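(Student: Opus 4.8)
The plan is to reduce the full-model effective resistance to a sum of (almost) independent heavy-tailed contributions, and then invoke a classical stable limit theorem for triangular arrays. First I would fix the basic electrical-network inequalities: since cutting edges only increases resistance and short-circuiting only decreases it, for any $-Kn\le i<j\le Kn$ one has the two-sided bound
\[
R^{\beta,\lambda/n,Kn}(\oomega_i,\oomega_j)\le \sum_{k=i}^{j-1} R^{\beta,\lambda/n,Kn}(\oomega_k,\oomega_{k+1})\le \sum_{k=i}^{j-1} r^{0,\lambda/n}(\omega_k,\omega_{k+1}),
\]
and, using the series law along the nearest-neighbour path together with the Nash--Williams lower bound, a matching lower bound in terms of the quantities $r^{0,\lambda/n}(\omega_k,\omega_{k+1})\chi^{\beta,\lambda/n}(k)$. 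This identifies the single-edge resistance $R^{\beta,\lambda/n,Kn}(\oomega_k,\oomega_{k+1})$ as being sandwiched, up to lower-order multiplicative corrections coming from the tilt $e^{\pm\lambda/n}$ over the range $|k|\le Kn$ (which is $1+o(1)$ uniformly), between $r^{0,0}(\omega_k,\omega_{k+1})\chi^{\beta,0}(k)(1+o(1))$ and $r^{0,0}(\omega_k,\omega_{k+1})$ times a harmless constant. Because $\chi^{\beta,0}$ is bounded and $r^{0,0}(\omega_0,\omega_1)$ has the exact Pareto tail \eqref{eq:heavy}, the product has tail $\sim C_\beta u^{-\rho}$ as at \eqref{eq:cbeta}, so only the edges with resistance of order $n^{1/\rho}$ matter.

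Next I would make precise the statement in Figure~\ref{fig:strategy} that the big edges are well separated. A union bound shows that with probability tending to one there is no pair of indices $k<k'$ with $k'-k\le (\log n)^2$, say, such that both $\omega_{k+1}-\omega_k$ and $\omega_{k'+1}-\omega_{k'}$ exceed $(1-\eps)\rho^{-1}\log n$; on this event the bridging paths defining distinct $\chi^{\beta,0}(k)$ at big edges do not overlap. I would then replace $\chi^{\beta,0}(k)$ by a truncated version $\chi^{\beta,0}_m(k)$ that only uses edges within distance $m=m(n)\to\infty$ with $m=o(\log n)$; the error in doing so is negligible in probability by the exponential decay in \eqref{eq:chi}, and the truncated variables $(r^{0,0}(\omega_k,\omega_{k+1})\chi^{\beta,0}_m(k))_k$ are $m$-dependent with the same tail constant $C_\beta$ up to $o(1)$. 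For an $m$-dependent stationary triangular array in the domain of attraction of a $\rho$-stable law with $\rho<1$, the partial-sum process converges in $J_1$ to the corresponding two-sided $\rho$-stable Lévy process — this is the standard point-process/stable-limit argument (e.g.\ via convergence of the associated point processes of large jumps and negligibility of the small-jump sum, $\rho<1$ removing any centering issue). Reinstating the tilt $e^{-2\lambda i/(n\rho)}$ along the path and passing to the Riemann-sum limit turns the partial sums into the Stieltjes integral \eqref{eq:stilt}, giving $S^{\beta,\lambda}$; this yields \eqref{eq:convergence}. The sign and the reflection-at-$\pm Kn$ bookkeeping are routine once the one-sided statement is in place.

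For the distortion bound \eqref{eq:distortion}, the point is that $R^{\beta,\lambda/n,Kn}(\oomega_i,\oomega_j)$ and the "increment of the scale function" $\sign(j)R(\oomega_0,\oomega_j)-\sign(i)R(\oomega_0,\oomega_i)$ differ only because the network is not exactly a line: paths between $\oomega_i$ and $\oomega_j$ may leave the interval $[i,j]$, and the effective resistance from $\oomega_0$ to $\oomega_i$ "sees" a little of the environment on the far side of $0$. Both discrepancies are controlled by the same separation-of-big-edges event: away from the (finitely many, well-separated) big edges, the resistance over a block of $o(\log n)$ consecutive sites is $o(n^{1/\rho})$ with high probability by a maximal inequality for sums of the small contributions, so localizing each effective resistance to within $m(n)$ of its endpoints costs only $o(n^{1/\rho})$ uniformly. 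A uniform-over-$O(n)$-pairs statement then follows from a union bound, since each individual error is not merely $o(n^{1/\rho})$ but stochastically dominated by a quantity with a stretched-exponential or polynomially small deviation probability.

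The main obstacle, as the authors flag, is handling the non-nearest-neighbour edges honestly: one cannot simply delete them (they change $C_\beta$), so the crux is the two-sided matching of $R^{\beta,\lambda/n,Kn}(\oomega_k,\oomega_{k+1})$ with $r^{0,0}(\omega_k,\omega_{k+1})\chi^{\beta,0}(k)$ at the level of tails \emph{and} the verification that the resulting array is sufficiently weakly dependent (effectively $m$-dependent after truncation) for the stable limit theorem to apply with the correct constant. Getting the truncation scale $m(n)$ and the separation scale to be simultaneously compatible — $m(n)\to\infty$ for the tail constant to be exactly $C_\beta$ in the limit, yet $m(n)$ small enough that big edges remain isolated — is where the care is needed; everything else is electrical-network monotonicity plus a union bound.
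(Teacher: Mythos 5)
Your proposal follows the paper's broad strategy (big edges, the correction $\chi$ for non-nearest-neighbour conductances, separation of big edges, a stable limit, the tilt, and deducing \eqref{eq:distortion} from approximate additivity), but it has a genuine gap at precisely the point you yourself call the crux. Your displayed upper bound $R^{\beta,\lambda/n,Kn}(\oomega_i,\oomega_j)\le\sum_k R^{\beta,\lambda/n,Kn}(\oomega_k,\oomega_{k+1})\le\sum_k r(\omega_k,\omega_{k+1})$ drops the factor $\chi$ and therefore carries the wrong tail constant ($1$ rather than $C_\beta=\mathbf{E}(\chi^{\beta,0}(0)^\rho)<1$ when $\beta=0$), so it can never match your Nash--Williams lower bound; and the "two-sided matching of $R^{\beta,\lambda/n,Kn}(\oomega_k,\oomega_{k+1})$ with $r^{0,0}(\omega_k,\omega_{k+1})\chi^{\beta,0}(k)$ at the level of tails" that you invoke to close the sandwich is never given a mechanism. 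Tail equivalence of a single edge would not even suffice: to run the triangle-inequality upper bound through a functional limit you need the pathwise inequality $R^{\beta,\lambda/n,Kn}(\oomega_k,\oomega_{k+1})\le(1+o(1))\,r^{0,\lambda/n}(\omega_k,\omega_{k+1})\chi(k)$ to hold \emph{simultaneously} for all big edges in the window, on an event of probability tending to one. This is the content of the paper's Proposition \ref{prop:upper}, whose proof is a nontrivial graph surgery (splitting each vertex within $a_n=\lfloor a\log n\rfloor$ of a big edge so the crossings become disjoint, dividing the access conductances by $a_n$, then using Rayleigh monotonicity and the parallel law, with the event $\Aupper_n$ and the $n^{-1/(8\rho)}$ slack making the access resistances negligible). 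Nothing in "electrical-network monotonicity plus a union bound" produces this bound, so the hardest step of the theorem is asserted rather than proved.

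Two further places where your route needs repair. First, Nash--Williams requires disjoint cutsets: the full cutsets at two distinct big edges always share the (complete-graph) edges spanning both gaps, while the edges within distance $m(n)$ of a single gap do not form a cutset at all; the paper resolves this by deleting long edges at a cost controlled by $\mathcal{E}_n$ of \eqref{eq:defEn} (Proposition \ref{prop:lower}, Lemma \ref{lem:aux2}) and then short-circuiting between big edges, and some such error term is needed in your sketch as well. Second, your stable functional limit theorem for an $m(n)$-dependent triangular array with $m(n)\to\infty$ is not an off-the-shelf classical result; it could be established by a point-process/blocking argument, but the paper sidesteps weak dependence entirely by a decoupling step: conditionally on the set $\B_n$ of big edges (well separated on $\Alower_n$), the corrections at big edges are conditionally independent of each other and of the big gaps, so the approximating sums are equal in law to sums of genuinely i.i.d. variables (see \eqref{eq:equalinlaw}), to which the classical i.i.d. stable limit and the result of \cite{kasahara} for the tilt apply. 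So even granting the single-edge matching, the dependence step as you state it is cited rather than argued, whereas the paper's conditioning argument is what makes the limit theorem elementary.
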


The proof of this result is broken up into several steps. In Subsections \ref{sec:upper} and \ref{sec:lower}, respectively, we derive upper and lower bounds for $R^{\beta,\lambda/n,Kn}(\overline \omega_{i},\overline \omega_j)$. These bounds hold on certain likely events, the probability of which is estimated in Subsection \ref{sec:typical}. In Subsection \ref{sec:limits}, we derive a limit as at \eqref{eq:convergence} for approximations to the effective resistance based on i.i.d.\ sums. Finally, in Subsection \ref{sec:distortion}, we tie all the pieces together to complete the proof of Theorem \ref{thm:approx}. Before we proceed, we introduce some notation that will be used throughout. Firstly, we call a pair $\{i,i+1\}$ a \textbf{big edge} if it satisfies $r^{0,0}(\omega_i,\omega_{i+1})\geq n^{3/(4\rho)}$, and we write
\begin{align}
\B_n:=\left\{i\in\{-Kn,...,Kn-1\}:r^{0,0}(\omega_i,\omega_{i+1})\geq n^{3/(4\rho)}\right\}
\label{eq:big}
\end{align}
for the set of indices of big edges. Note that, for ease of notation, we identify a nearest-neighbor edge $\{i,i+1\}$ by its left vertex $i$. From \eqref{eq:heavy}, we can guess that the effective resistance is dominated by contributions from edges of nearest-neighbor resistance at least $n^{1/\rho-\eps}$.  Moreover, we let $\L_n$ denote the \textbf{long edges},
\begin{align*}
\L_n:=\left\{\{i, j\}\subset\{-Kn,...,Kn\}:|i-j|>n^{1/4}\right\},
\end{align*}
and introduce
\begin{align}\label{eq:defEn}
\mathcal{E}_n:=\sum_{\{i,j\}\in\L_n}c^{\beta,\lambda/n,Kn}(\oomega_i,\oomega_j),
\end{align}
which will be used to control the error incurred by dropping them.

\subsection{Upper bound}\label{sec:upper}

The aim of this section is to provide a convenient upper bound for $R^{\beta,\lambda/n,Kn}(\oomega_i,\oomega_j)$, see Proposition \ref{prop:upper}. For this purpose, we will approximate the correction term $\chi^{\beta,\lambda/n}(i)$ in \eqref{eq:chi} by
\begin{align}
\chiu_n(i):=\left[\sum_{j,k=0}^{a_n} \left(n^{-1/8\rho}+e^{(1+\lambda/n)(\omega_i-\omega_{i-j})+(1-\lambda/n)(\omega_{i+1+k}-\omega_{i+1})+\beta U(E_{i-j},E_{i+1+k})}\right)^{-1}\right]^{-1},
\label{eq:chiu}
\end{align}
where $a_n:=\lfloor a\log(n)\rfloor$ for some constant $a > 0$ that will be chosen to satisfy \eqref{eq:LDP} below. Note that $\chiu_n(i)$ is bounded uniformly in $n$, so by the dominated convergence theorem,
\begin{align}\label{eq:limitu}
\lim_{n\to\infty}\mathbf{E}[\chiu_n(i)]=\mathbf{E}[\chi^{\beta,0}(i)].
\end{align}
We moreover introduce the event
\begin{equation}\label{eq:defAu}
\begin{split}
\Aupper_n:=&\left\{\{-Kn,...,-Kn+a_n\}\cap \B_n=\emptyset,\{Kn-a_n-1,...,Kn\}\cap\B_n=\emptyset \right\}\\
&\cap\left\{|k-l|>2a_n\text{ for all }k, l\in\B_n\text{ with }k\neq l\right\}\\
&\cap \bigcap_{k\in\B_n}\left\{\omega_k-\omega_{k-a_n}\leq \log(n)/(2\rho),\omega_{k+1+a_n}-\omega_{k+1}\leq \log(n)/(2\rho)\right\},
\end{split}
\end{equation}
and define, for $i\leq j$,
\begin{align}\label{eq:defRu}
\Ru_n(i,j):=
&\sum_{\substack{k\in \{(i-a_n)\vee(-Kn),...,(j+a_n)\wedge(Kn-1)\}\setminus\B_n}}r^{\beta,\lambda/n}(\omega_k,\omega_{k+1})\\
&+\sum_{k\in \{i,...,j-1\}\cap \B_n}r^{0,\lambda/n}(\omega_k,\omega_{k+1})\chiu_n(k).\nonumber
\end{align}

\begin{proposition}[Upper bound]\label{prop:upper}
On $\Aupper_n$ for suitably large $n$, for all $-Kn\leq i\leq j\leq Kn$,
\begin{align}\label{eq:upper}
R^{\beta,\lambda/n,Kn}(\oomega_i,\oomega_j)\leq \Ru_n(i,j).
\end{align}
\end{proposition}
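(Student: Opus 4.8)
The plan is to bound $R^{\beta,\lambda/n,Kn}(\oomega_i,\oomega_j)$ from above by exhibiting an explicit unit current flow from $\oomega_i$ to $\oomega_j$ and invoking the Thomson/Dirichlet variational principle --- the dual of \eqref{effres}, see e.g.\ \cite{Barlowbook} --- which states that the effective resistance equals the minimal energy $\sum_e r_e\,\theta(e)^2$ over unit flows $\theta$ between the two vertices, so that any particular unit flow furnishes an upper bound. I would work on the event $\Aupper_n$ and assume $n$ is large enough that $a_n<Kn$; fix $-Kn\le i\le j\le Kn$ and list $\B_n\cap\{i,\dots,j-1\}=\{k_1<k_2<\dots<k_r\}$. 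The three conditions defining $\Aupper_n$ ensure, respectively, that each detour window $\{k_s-a_n,\dots,k_s+1+a_n\}$ is contained in $\{-Kn+1,\dots,Kn-1\}$ (so it involves no collapsed vertex), that these windows are pairwise disjoint, and that none of them contains a second big edge --- the last condition being, in fact, immaterial to the present bound.

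I would then construct the flow $\theta$ as follows. One unit of current is sent along the nearest-neighbour spine $\oomega_i-\omega_{i+1}-\dots-\omega_j$, except that each big edge $\{k_s,k_s+1\}$ is by-passed by a local detour bundle: with
\[
\Sigma_{s}:=\sum_{j',l'=0}^{a_n}c^{\beta,\lambda/n}(\omega_{k_s-j'},\omega_{k_s+1+l'}),\qquad p^{(s)}_{j',l'}:=\Sigma_s^{-1}\,c^{\beta,\lambda/n}(\omega_{k_s-j'},\omega_{k_s+1+l'}),
\]
a fraction $p^{(s)}_{j',l'}$ of the current is routed, for each $0\le j',l'\le a_n$, along the path $\omega_{k_s}\to\omega_{k_s-1}\to\dots\to\omega_{k_s-j'}\to\omega_{k_s+1+l'}\to\dots\to\omega_{k_s+1}$; the term $j'=l'=0$ recovers the direct edge $\{k_s,k_s+1\}$ itself. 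Since $\sum_{j',l'}p^{(s)}_{j',l'}=1$, a routine check of Kirchhoff's node law at every site shows that $\theta$ is a unit flow from $\oomega_i$ to $\oomega_j$.

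Next I would evaluate the energy $\sum_e r^{\beta,\lambda/n,Kn}_e\,\theta(e)^2$ edge by edge. Every edge carrying current is either (a) a non-big nearest-neighbour edge whose index lies in the window $\{(i-a_n)\vee(-Kn),\dots,(j+a_n)\wedge(Kn-1)\}$, or (b) one of the long-jump edges $\{k_s-j',k_s+1+l'\}$ of some detour bundle. For a type-(a) edge: because the detour windows are pairwise disjoint, such an edge carries at most the spine current (of modulus $1$) and at most one detour current (of modulus $\le 1$), and when both are present they run antiparallel; hence the net current has modulus at most $1$ and the edge contributes at most $r^{\beta,\lambda/n}(\omega_m,\omega_{m+1})$ --- for the at most two edges incident to a collapsed vertex at $\pm Kn$ one uses in addition that the truncated conductance dominates the corresponding $c^{\beta,\lambda/n}$ --- and each such edge occurs exactly once in the first sum defining $\Ru_n(i,j)$. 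For the type-(b) edges of detour $s$: the long-jump $\{k_s-j',k_s+1+l'\}$ carries precisely $p^{(s)}_{j',l'}$, so by the proportional choice of $p^{(s)}$ they together contribute $\sum_{j',l'}r^{\beta,\lambda/n}(\omega_{k_s-j'},\omega_{k_s+1+l'})(p^{(s)}_{j',l'})^2=\Sigma_s^{-1}$. Finally, the identity $c^{\beta,\lambda/n}(\omega_{k-j'},\omega_{k+1+l'})=r^{0,\lambda/n}(\omega_k,\omega_{k+1})^{-1}e^{-X_{j',l'}}$, with $X_{j',l'}$ the nonnegative exponent appearing in \eqref{eq:chiu} (taking $i,j,k$ there to be $k,j',l'$), together with the elementary bound $(n^{-1/(8\rho)}+e^{X})^{-1}\le e^{-X}$, yields $\Sigma_s^{-1}\le r^{0,\lambda/n}(\omega_{k_s},\omega_{k_s+1})\,\chiu_n(k_s)$. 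Summing the type-(a) contributions over the window and the type-(b) contributions over $s=1,\dots,r$, and recalling $\{k_1,\dots,k_r\}=\B_n\cap\{i,\dots,j-1\}$, shows $\sum_e r_e\,\theta(e)^2\le\Ru_n(i,j)$, which is \eqref{eq:upper}.

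I expect the most delicate point to be the edge-by-edge bookkeeping of the previous paragraph rather than any single estimate: one must verify that the detour bundles do not interfere (this is exactly the purpose of the separation and boundary conditions in $\Aupper_n$), that every non-big nearest-neighbour edge in the window carries net current of modulus at most $1$ --- equivalently, that no edge is traversed "forward" by two distinct pieces of the flow --- and that the degenerate term $j'=l'=0$ (the big edge itself) is accounted for only inside $\Sigma_s^{-1}$ and not a second time in the first sum of $\Ru_n(i,j)$. By contrast, the telescoping of the long-jump energies to $\Sigma_s^{-1}$ is immediate once the detour current is split in proportion to the bottleneck conductances.
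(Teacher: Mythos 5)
Your proof is correct, but it takes a genuinely different route from the paper's. The paper works on the potential side: on $\Aupper_n$ it performs a surgery on the network, splitting each vertex of the windows $\U^\pm_k$ around a big edge into $a_n$ copies so that all crossings of the gap become disjoint paths, compares with the original network via Rayleigh monotonicity, and evaluates the modified network by the parallel law; there each parallel path carries the access resistances $r(\oomega_{k-a_n},\oomega_{i'})a_n+r(\oomega_{j'},\oomega_{k+1+a_n})a_n$ in series with the long edge, and it is precisely to absorb these that the third condition in \eqref{eq:defAu} and the additive $n^{-1/(8\rho)}$ inside \eqref{eq:chiu} are needed; moreover, the cases where $i$ or $j$ lies inside some $\U^\pm_k$ require a separate triangle-inequality argument. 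You argue on the dual (Thomson/flow) side with an explicit unit flow: the nearest-neighbour legs of each detour are charged to the first sum in \eqref{eq:defRu} (net current of modulus at most one on every non-big nearest-neighbour edge of the enlarged window, which is exactly why that window is $\{(i-a_n)\vee(-Kn),\dots,(j+a_n)\wedge(Kn-1)\}$), while the long edges of the $s$-th bundle contribute exactly your $\Sigma_s^{-1}$ by the proportional split, and $\Sigma_s^{-1}\le r^{0,\lambda/n}(\omega_{k_s},\omega_{k_s+1})\chiu_n(k_s)$ follows from the identity you state together with $(n^{-1/(8\rho)}+e^{X})^{-1}\le e^{-X}$. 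Your bookkeeping is sound: the first two conditions in \eqref{eq:defAu} keep the bundles inside $\{-Kn+1,\dots,Kn-1\}$, pairwise disjoint and free of further big edges, so no edge is charged twice, and the corner cases ($i$ or $j$ inside a window, or equal to $\pm Kn$) are handled uniformly by the same flow. In terms of what each approach buys: yours is shorter, avoids both the vertex-splitting construction and the case analysis, and, as you observe, uses neither the third condition of $\Aupper_n$ nor the $n^{-1/(8\rho)}$ slack (which here only enlarges $\chiu_n$, i.e.\ weakens the bound harmlessly, whereas the paper's route genuinely needs it); the paper's cut-and-collapse surgery has the merit of mirroring the structure of the lower bound in Proposition \ref{prop:lower}, so the two estimates are obtained by visibly symmetric modifications of the same network.
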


\begin{proof}
Throughout this proof, we will drop the superscripts, and simply write $R$ and $r$ for $R^{\beta,\lambda/n,Kn}$ and $r^{\beta,\lambda/n,Kn}$. For $k\in \B_n\cap[-Kn,Kn]$, define
\begin{align*}
\U^-_k&:=\{k-a_n+1,...,k\},\\
\U^+_k&:=\{k+1,...,k+a_n\},\\
\U&:=\bigcup_{k\in\B_n} \left(\U^+_k\cup\U^-_k\right).
\end{align*}
Note that, on $\Aupper_n$, the collection $\{\U^\pm_k:k\in \B_n\}$ is disjoint and does not intersect $\{\pm Kn\}$. On that event, we consider the graph with vertex set
\begin{align*}
\left(\{-Kn,...,Kn\}\setminus \U\right)\cup \bigcup_{k\in \B_n}(\U^-_k\times \U^+_k)\cup \bigcup_{k\in \B_n}(\U^+_k\times \U^-_k).
\end{align*}
That is, each vertex $i'$ in $\U^-_k$ has been replaced by $a_n$ new vertices $\{(i',j'):j'\in \U^+_k\}$, each corresponding to a vertex on the `opposite side' of $k$, and vice versa. The conductances $\widehat c$ in the new graph are defined as follows.
\begin{itemize}
 \item Outside of $\U$, we only keep the nearest-neighbor conductances. That is, for $i',j'\in \{-Kn,...,Kn\}\setminus \U$,
\begin{align*}
\widehat c(i',j'):=c(\oomega_{i'},\oomega_{j'})\1_{|i'-j'|=1}.
\end{align*}
\item For every $k\in \B_n$, the edges connecting $\omega_{k-a_n}$ to $\U^-_k$ and $\omega_{k+a_n+1}$ to $\U^+_k$ in the original graph are `split up' among the new vertices: for $i'\in\U^-_k$, $j'\in\U^+_k$,
\begin{align*}
\widehat c\left(k-a_n,(i',j')\right)&:=\frac{c(\oomega_{k-a_n},\oomega_{i'})}{a_n},\\
\widehat c\left((j',i'),k+1+a_n\right)&:=\frac{c(\oomega_{j'},\oomega_{k+1+a_n})}{a_n}.
\end{align*}
\item The edges connecting $\U^-_k$ and $\U^+_k$ in the original graph are `redistributed' among the new vertices: for $i',i''\in\U_k^-$, $j',j''\in \U_k^+$,
 \begin{align*}
\widehat c\left((i',j'),(j'',i'')\right):=c(\oomega_{i'},\oomega_{j'})\1_{j'=j'',i'=i''}.
\end{align*}
\end{itemize}
See Figure \ref{fig:deleting} for an illustration. We can recover the original conductances via the following two steps. First, we `merge' the newly created vertices. This yields parallel edges between $\U^-_k$ and $\U^+_k$, which we replace by a single edge with the same effective resistance. This results in a graph whose conductances agree with the original conductances, except that some edges are missing. We obtain the original graph by adding the missing edges.

\begin{figure}[!t]
\includegraphics[width=.8\textwidth]{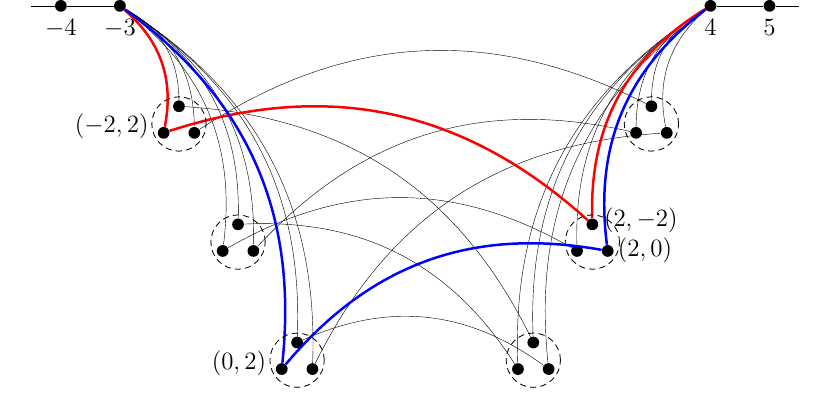}
\includegraphics[width=.8\textwidth]{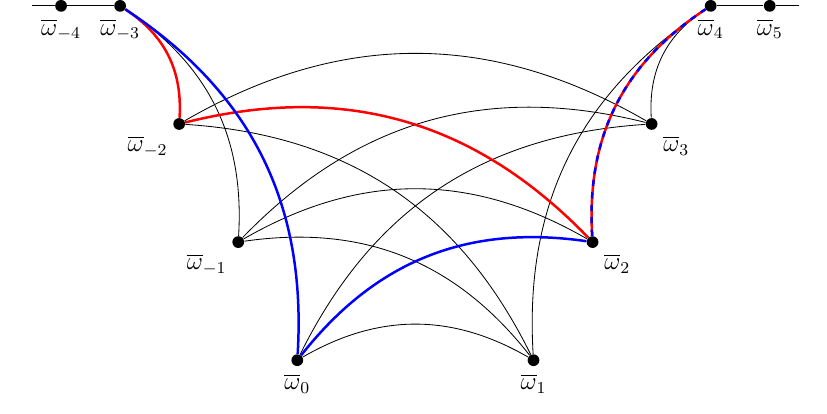}
\caption{The top diagram shows a portion of the new graph for $a_n=3$ in the case $\omega_0\in\B_n$ (shifted vertically for clarity). Note that all paths between $\omega_{-3}$  and $\omega_4$ are disjoint, so we can compute $\widehat R(\omega_{-3},\omega_4)$ by the parallel law. To recover the original graph, we first `merge' every dashed circle into a single vertex, replace the resulting parallel edges by a single edge with the appropriate resistance (red/blue edge in the bottom diagram), and then add missing edges (not shown).}\label{fig:deleting}
\label{fig:modified}
\end{figure}

Importantly, both steps described in the previous paragraph decrease the effective resistance between any two sites (as a consequence of Rayleigh's monotonicity law, see \cite[Section 1.4]{DS}, for example). Writing $\widehat R$ for the effective resistance in the new graph, we therefore have, for $i,j\notin \U$,
\begin{align}
\label{eq:R<hatR}
R(\overline \omega_i,\overline \omega_j)\leq \widehat R(i,j).
\end{align}
Since all paths between $i$ and $j$ in the new graph are disjoint (see Figure \ref{fig:modified}), we can compute $\widehat R(i,j)$ by the parallel law. Specifically, for $i,j\notin \U$ with $i<j$, we have
\begin{equation}\label{eq:Rhat}
\begin{split}
&\widehat R(i,j)=\sum_{k\in \{i,...,j-1\},\{k,k+1\}\cap\U=\emptyset}r(\oomega_k,\oomega_{k+1})\\
&+\sum_{k\in \B_n\cap\{i,...,j-1\}}\left(\sum_{\substack{i'\in\U_k^-,\\j'\in\U_k^+}} \left(r(\oomega_{k-a_n},\oomega_{i'})a_n+r(\oomega_{j'},\oomega_{k+1+a_n})a_n+r(\oomega_{i'},\oomega_{j'})\right)^{-1}\right)^{-1}.
\end{split}
\end{equation}
On $\Aupper_n$, we have for all for $j'\in\U_k^+$,
\begin{align*}
r(\oomega_{j'},\oomega_{k+1+a_n})&=e^{\beta U(E_{j'},E_{k+1+a_n})-\lambda(\omega_{j'}+\omega_{k+1+a_n})/n+(\omega_{k+1+a_n}-\omega_{j'})}\\
&\leq Ce^{-\lambda(\omega_k+\omega_{k+1})/n+(\omega_{k+1+a_n}-\omega_{k+1})}\\
&\leq C e^{-\lambda (\omega_k+\omega_{k+1})/n}n^{1/(2\rho)},
\end{align*}
where we have applied that $\omega_{k+1}\leq \omega_{j'}$ and $\omega_{k+1+a_n}\leq\omega_{k+1}+\log(n)/2\rho$, while for $i'\in\U_k^-$ we similarly have
\begin{align*}
r(\oomega_{k-a_n},\oomega_{i'})&=e^{\beta U(E_{k-a_n},E_{i'})-\lambda(\omega_{k-a_n}+\omega_{i'})/n+(\omega_{i'}-\omega_{k-a_n})}\\
&=e^{\beta U(E_{k-a_n},E_{i'})-\lambda(\omega_{k+1}+\omega_k)/n+\lambda(2\omega_k-\omega_{k-a_n}-\omega_{i'})/n+\lambda(\omega_{k+1}-\omega_k)/n+(\omega_{i'}-\omega_{k-a_n})}\\
&\leq e^{\beta U(E_{k-a_n},E_{i'})-\lambda(\omega_{k+1}+\omega_k)/n+\lambda(\omega_{k+1}-\omega_k)/n+(1+2\lambda/n)(\omega_{k}-\omega_{k-a_n})}\\
&\leq Ce^{-\lambda(\omega_{k+1}+\omega_k)/n+\lambda(\omega_{k+1}-\omega_k)/n}n^{(1+2\lambda/n)/(2\rho)}.
\end{align*}
Moreover, recalling~\eqref{eq:big}, $k\in\B_n$ implies that
\begin{align*}
r^{0,\lambda/n,Kn}(\oomega_k,\oomega_{k+1})\geq e^{-\lambda(\omega_k+\omega_{k+1})/n+\lambda(\omega_{k+1}-\omega_k)/n}n^{3(1-\lambda/n)/(4\rho)},
\end{align*}
so that for all $n$ large enough,
\begin{align*}
\frac{r(\oomega_{k-a_n},\oomega_{i'})a_n}{r^{0,\lambda/n,Kn}(\oomega_k,\oomega_{k+1})}+\frac{r(\oomega_{j'},\oomega_{k+a_n+1})a_n}{r^{0,\lambda/n,Kn}(\oomega_k,\oomega_{k+1})}\leq n^{-1/(8\rho)}.
\end{align*}
On the other hand, we have
\begin{align*}
\frac{r(\oomega_{i'},\oomega_{j'})}{r^{0,\lambda/n,Kn}(\oomega_k,\oomega_{k+1})}=e^{(1+\lambda/n)(\oomega_k-\oomega_{i'})+(1-\lambda/n)(\oomega_{j'}-\oomega_{k+1})+\beta U(E_{i'},E_{j'})}.
\end{align*}
Observe that these are exactly the terms appearing in the definition of $\chiu_n$ in~\eqref{eq:chiu}. From \eqref{eq:Rhat}, we therefore have
\begin{align}
\label{eq:ijnotinU}
\widehat R(i,j)\leq \sum_{k\in \{i,...,j-1\},\{k,k+1\}\cap\U=\emptyset}r(\oomega_k,\oomega_{k+1})+\sum_{k\in \B_n\cap\{i,...,j-1\}}r^{0,\lambda/n,Kn}(\oomega_k,\oomega_{k+1})\chiu_n(k),
\end{align}
which establishes \eqref{eq:upper} in the case $i,j\notin \U$. Note that in the above argument, we had only to modify the graph around $k\in \B_n\cap\{i,\ldots,j\}$. If $i\in\U_k^+$ for some $k\in\B_n$, we can obtain the desired bound by the same argument but not treating the edge $\{k,k+1\}$ as a big edge, i.e. performing the same construction with $\B_n$ replaced by $\B_n\setminus\{k\}$. Similarly if $j\in\U_k^-$ for some $k\in\B_n$.

Finally, in the case where $i\in\U_k^-$ for some $k\in\B_n$ and $j\not\in\mathcal{U}$, we first use the triangle inequality for the effective resistance to deduce that
\[
R(\oomega_i,\oomega_j)\leq R(\oomega_i,\oomega_{k-a_n})+R(\oomega_{k-a_n},\oomega_j).
\]
For the first term on the right-hand side, we use $R(\oomega_i,\oomega_{k-a_n})\leq \sum_{l=k-a_n}^{i-1}r^{\beta,\lambda/n}(\omega_l,\omega_{l+1})$. For the second term, since we know $k-a_n\notin \mathcal{U}$ on $\Aupper_n$, we can use \eqref{eq:R<hatR} and \eqref{eq:ijnotinU} to get
\begin{align*}
&R(\oomega_{k-a_n},\oomega_j) \\
&\quad \leq \sum_{l\in \{k-a_n,...,j-1\},\{l,l+1\}\cap\U=\emptyset}r(\oomega_l,\oomega_{l+1})+\sum_{l\in \B_n\cap\{k-a_n,...,j-1\}}r^{0,\lambda/n,Kn}(\oomega_l,\oomega_{l+1})\chiu_n(l).
\end{align*}
Using that $\{k-a_n+1,\ldots,i-1\}\subseteq \U$, we may restrict the sum in the first term to $l\in \{i,...,j-1\}$ with $\{l,l+1\}\cap\U=\emptyset$. Then combining the last three bounds, we find~\eqref{eq:upper}. The case $i\not\in\mathcal{U}$, $j\in\U_k^+$ can be handled symmetrically. The cases $i\in \U_k^-$ and $j\in \U_{k'}^+$ for some $k,k'\in\B_n$ with $k\le k'$ are also dealt with in a similar way, starting from the bound
\begin{equation*}
R(\oomega_i,\oomega_j)\leq R(\oomega_i,\oomega_{k-a_n})+R(\oomega_{k-a_n},\oomega_{k'+a_n+1})+R(\oomega_{k'+a_n+1},\oomega_j).
\end{equation*}
\end{proof}

\subsection{Lower bound}\label{sec:lower}

We now proceed to deduce a lower bound for the resistance. For this purpose, we now approximate the correction term $\chi^{\beta,\lambda/n}(i)$ in \eqref{eq:chi} by
\begin{align*}
\chil_n(i):=&\left(\sum_{\substack{j\leq i, k\ge i+1,\\
k-j\leq b_n}} e^{-(1+\lambda/n)(\omega_i-\omega_{j})-(1-\lambda/n)(\omega_{k}-\omega_{i+1})-\beta U(E_{j},E_{k})}\right)^{-1},
\end{align*}
where $b_n:=n^{1/4}$. By the dominated convergence theorem,
\begin{align}\label{eq:limitl}
\lim_{n\to\infty}\mathbf{E}[\chil_n(i)]=\mathbf{E}[\chi^{\beta,0}(i)].
\end{align}
Let
\begin{equation}\label{eq:defAl}
\begin{split}
\Alower_n:=&\left\{\{-Kn,...,-Kn+b_n\}\cap\B_n=\emptyset,\{Kn-b_n-1,...,Kn\}\cap\B_n=\emptyset\right\}\\
&\cap \left\{|i-j|>2b_n\text{ for all distinct $i,j$}\in\B_n\right\},
\end{split}
\end{equation}
and define, for $i\leq j$,
\begin{align}\label{eq:defRl}
\Rl_n(i,j):=\sum_{k\in\{i,...,j-1\}\cap\B_n}r^{0,\lambda/n}(\omega_k,\omega_{k+1})\chil_n(k).
\end{align}

\begin{proposition}[Lower bound]\label{prop:lower}
On $\Alower_n$, for all $-Kn\leq i\leq j\leq Kn$,
\begin{align*}
R^{\beta,\lambda/n,Kn}(\oomega_i,\oomega_j)^{-1}\leq \left(\Rl_n(i,j)\right)^{-1}+\mathcal{E}_n,
\end{align*}
where $\mathcal{E}_n$ is defined as at \eqref{eq:defEn}, and the right hand side is interpreted as infinity if the sum in \eqref{eq:defRl} is empty.
\end{proposition}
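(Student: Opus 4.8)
The plan is to obtain the lower bound on $R^{\beta,\lambda/n,Kn}(\oomega_i,\oomega_j)$ by exhibiting a good test function (unit flow / unit potential) in the variational characterisation \eqref{effres}, namely by building a potential that increases only across the big edges in $\{i,\dots,j-1\}\cap\B_n$ and is locally constant elsewhere. The key point is that dropping all edges that are \emph{not} near a big edge, together with all long edges, only decreases the conductance, and hence the energy of a fixed potential. Concretely, consider the network obtained by deleting every long edge (those in $\L_n$) and, for each big edge $k\in\B_n$, keeping only the edges $\{j,k\}$ with $j\le k$ and $\{k+1,k'\}$ with $k'\ge k+1$ satisfying $k'-j\le b_n$, i.e.\ exactly the edges encoded by $\chil_n(k)$. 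On $\Alower_n$ the blocks $\{k-b_n,\dots,k+1+b_n\}$ around distinct big edges $k$ are disjoint and are separated from $\pm Kn$, so in this reduced network the only way to traverse from $\oomega_i$ to $\oomega_j$ is by crossing each big edge in turn through its own local cluster of conductances. By the series and parallel laws, the effective resistance in the reduced network between $\oomega_i$ and $\oomega_j$ equals exactly $\Rl_n(i,j)=\sum_{k\in\{i,\dots,j-1\}\cap\B_n}r^{0,\lambda/n}(\omega_k,\omega_{k+1})\chil_n(k)$, since $r^{0,\lambda/n}(\omega_k,\omega_{k+1})\chil_n(k)$ is precisely the effective resistance of the parallel bundle of edges bridging the $k$-th big edge. (If there are no big edges in $\{i,\dots,j-1\}$, the two vertices are disconnected in the reduced network and the bound is vacuous.)

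Next I would account for the deleted edges. Writing $G$ for the reduced network and $G'$ for the network obtained from the full truncated network $G^{Kn}$ by deleting only the long edges $\L_n$, Rayleigh monotonicity gives $R_{G^{Kn}}(\oomega_i,\oomega_j)^{-1}\le R_{G'}(\oomega_i,\oomega_j)^{-1}+\mathcal{E}_n$, because the long edges, viewed as extra conductors added in parallel between the two sets, can increase the effective conductance by at most their total conductance $\mathcal{E}_n=\sum_{\{i,j\}\in\L_n}c^{\beta,\lambda/n,Kn}(\oomega_i,\oomega_j)$ (this is the standard bound: adding an edge set of total conductance $\mathcal E$ raises the effective conductance by at most $\mathcal E$, obtained by using the same optimal potential for $G'$ as a test function for $G$). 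Then, since $G$ is obtained from $G'$ by further edge deletions, Rayleigh monotonicity again gives $R_{G'}(\oomega_i,\oomega_j)^{-1}\le R_{G}(\oomega_i,\oomega_j)^{-1}=\Rl_n(i,j)^{-1}$. Combining the two displays yields the claimed inequality
\[
R^{\beta,\lambda/n,Kn}(\oomega_i,\oomega_j)^{-1}\leq \left(\Rl_n(i,j)\right)^{-1}+\mathcal{E}_n .
\]

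A few routine checks remain: one must verify that on $\Alower_n$ the local clusters around distinct big edges genuinely do not overlap and do not touch the collapsed endpoints $\oomega_{\pm Kn}$ (immediate from the two defining events in \eqref{eq:defAl}, using $k-j\le b_n$), so that the series/parallel computation of $R_G$ is valid and gives exactly $\Rl_n(i,j)$; and one should note that the edges retained for the $k$-th big edge in $G$ are a subset of the edges of $G^{Kn}$ incident to that block, so no edge is counted twice and the reduction is legitimate. The main obstacle, modest as it is, is bookkeeping: carefully identifying the bundle of bridging edges across a big edge with the quantity $r^{0,\lambda/n}(\omega_k,\omega_{k+1})\chil_n(k)$ — that is, checking that the series combination (resistor $\omega_j\!-\!\omega_k$, the big edge itself need not be used, resistor $\omega_{k+1}\!-\!\omega_{k'}$) put in parallel over all admissible $(j,k')$ with $k'-j\le b_n$ has effective resistance equal to $\bigl(\sum_{j\le k,\,k'\ge k+1,\,k'-j\le b_n} r^{0,\lambda/n}(\omega_k,\omega_{k+1})/r^{\beta,\lambda/n}(\omega_j,\omega_{k'})\bigr)^{-1}$, which is $r^{0,\lambda/n}(\omega_k,\omega_{k+1})\chil_n(k)$ by the definition of $\chil_n$ — and making sure the treatment of the truncated endpoints and of the case of no big edges is airtight. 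Everything else is a direct application of Rayleigh monotonicity and the parallel-edge bound.
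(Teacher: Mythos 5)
Your announced plan — lower-bound the resistance by restricting the variational problem \eqref{effres} to potentials that only vary across the big edges — is the right idea and is, in network language, exactly the paper's argument. But your execution of the middle step contains a genuine error. The reduced network $G$ you construct keeps only the bridging edges across each big edge and \emph{deletes} everything else; in that graph the bundles around distinct big edges, and in general the vertices $\oomega_i,\oomega_j$ themselves, are mutually disconnected, so its effective resistance is typically infinite rather than $\Rl_n(i,j)$, and no series/parallel computation is available. Worse, the inequality you then invoke, $R_{G'}(\oomega_i,\oomega_j)^{-1}\le R_{G}(\oomega_i,\oomega_j)^{-1}$ ``by Rayleigh monotonicity since $G$ is obtained from $G'$ by further edge deletions'', is backwards: deleting edges can only increase effective resistance, i.e.\ decrease effective conductance, so deletion gives $R_{G'}^{-1}\ge R_{G}^{-1}$, which is useless for the bound you want. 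What you need is the opposite operation: \emph{short} (contract) all vertices lying between consecutive big edges instead of deleting the edges among them. Shorting can only increase conductance, so $R_{G'}(\oomega_i,\oomega_j)$ is bounded below by the resistance in the collapsed graph; there, on $\Alower_n$ (big edges more than $2b_n$ apart and at least $b_n$ from $\pm Kn$, so no retained edge crosses two big edges), the short edges crossing a given big edge become genuine parallel edges between two merged vertices, and the parallel and series laws give exactly $\Rl_n(i,j)$. This collapsing step, preceded by your first step (removing the long edges at a cost of at most $\mathcal{E}_n$ in conductance, which you justify correctly and which coincides with the paper's \eqref{eq:that}), is precisely the paper's proof.

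Alternatively, you could carry out your stated test-function plan directly and avoid graph surgery altogether: take $f$ constant on each block between consecutive big edges, with jumps across the big edges $k\in\{i,\dots,j-1\}\cap\B_n$ proportional to $r^{0,\lambda/n}(\omega_k,\omega_{k+1})\chil_n(k)$, normalised so that $f(\oomega_i)=0$ and $f(\oomega_j)=1$. Intra-block edges then contribute nothing to the energy in \eqref{effres}, the short crossing edges contribute exactly $\Rl_n(i,j)^{-1}$ after the standard optimisation over the jump sizes, and the long edges contribute at most $\mathcal{E}_n$ because $f$ takes values in $[0,1]$. Either repair yields the proposition; as written, however, the deletion step together with the reversed Rayleigh inequality does not.
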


\begin{proof}
We obtain a lower bound on $R^{\beta,\lambda/n,Kn}$ in two steps: first, we consider a modified graph
\begin{align*}
G^{\L_n^c}:=\big(\{\oomega_{-Kn},...,\oomega_{Kn}\},\{\oomega_i,\oomega_j\}_{\{i,j\}\in\L_n^c,\:i\neq j}\big)
\end{align*}
obtained by removing the long edges $\L_n$. We then further modify $G^{\L_n^c}$ into a graph
\begin{align*}
G^{\L_n^c,\textup{coll.}}:=\big(\{\oomega_k\}_{k\in\B_n\cup\{Kn\}},\{\oomega_{k},\oomega_{l}\}_{k,l\in\B_n\cup\{Kn\}\colon k=\prev(l)}\big)
\end{align*}
by collapsing all sites between consecutive big edges into a single site, where we write $\prev(k)$ for the previous element of $\B_n$ (if it exists), i.e.\
\begin{align*}
\prev(k):=\max\{\B_n\cap\{-Kn,...,k-1\}\},
\end{align*}
where we use the convention $\max\emptyset\coloneqq-\infty$. We now carry out this program. For the first step, we define conductances on $G^{\L_n^c}$ by
\begin{align*}
c^{\L_n^c}(\oomega_{i'},\oomega_{j'}):=c^{\beta,\lambda/n,Kn}(\oomega_{i'},\oomega_{j'})\1_{\{i',j'\}\notin\L_n}
\end{align*}
and note that the resulting effective resistance is given by
\begin{align*}
R^{\L_n^c}(\oomega_{i'},\oomega_{j'})^{-1}&:=\inf_{\substack{f\colon \oomega\to[0,1]\\f(\oomega_{i'})=0,f(\oomega_{j'})=1}}\left\{\frac 12\sum_{\{k,l\}\notin\L_n}c^{\beta,\lambda/n,Kn}(\oomega_k,\oomega_l)(f(\oomega_k)-f(\oomega_l))^2\right\}.
\end{align*}
Using the definition of effective resistance, we get
\begin{equation}\label{eq:that}
\begin{split}
R^{\beta,\lambda/n,Kn}(\oomega_i,\oomega_j)^{-1}&=\inf_{\substack{f\colon \oomega\to[0,1]\\f(\oomega_i)=0,f(\oomega_j)=1}}\left\{\frac 12\sum_{k,l}c^{\beta,\lambda/n,Kn}(\oomega_k,\oomega_l)(f(\oomega_k)-f(\oomega_l))^2\right\}\\
&\leq R^{\L_n^c}(\oomega_i,\oomega_j)^{-1}+\mathcal{E}_n,
\end{split}
\end{equation}
where $\mathcal E_n$ is defined at \eqref{eq:defEn}. For the second step, let $I_k:=\{\oomega_{-Kn\vee(\prev(k)+1)},...,\oomega_k\}$ denote the set of vertices between $k\in\B_n\cup\{Kn\}$ and its previous element. To define the conductances on $G^{\L_n^c,\textup{coll.}}$, we formally define conductances on $G^{\L_n^c}$ by
\begin{align*}
\widetilde c(\oomega_{i'},\oomega_{j'}):=\begin{cases}\infty,&\text{if }i',j'\in I_k\text{ for some }k\in\B_n\cup\{Kn\},\\
c^{\L_n^c}(\oomega_{i'},\oomega_{j'}),&\text{otherwise},
\end{cases}
\end{align*}
and then identify the set $I_k$, $k\in\B_n\cup\{Kn\}$, with the element $\oomega_k$. Note that in this way, the index $-Kn$ is always identified with the smallest element of $\B_n\cup\{Kn\}$. The conductances $c^{\L_n^c,\textup{coll.}}$ can now be computed from $\widetilde c$ and the parallel law. More precisely, if $k,l\in\B_n\cup\{Kn\}$ with $k=\prev(l)$, then, on $\Alower_n$,
\begin{equation}\label{eq:resolved}
\begin{split}
c^{\L_n^c,\textup{coll.}}(\oomega_k,\oomega_l)&=\sum_{k'\in I_k,l'\in I_l} c^{\L_n^c}(\oomega_{k'},\oomega_{l'})\\
&= \sum_{\substack{k'\in I_k,l'\in I_l\\ k'-l'\leq b_n}} c^{\beta,\lambda/n,Kn}(\oomega_{k'},\oomega_{l'})\\
&= \left(r^{0,\lambda/n}(\omega_k,\omega_{k+1})\chil_n(k)\right)^{-1}.
\end{split}
\end{equation}
Next we claim that the network defined by $c^{\L_n^c,\textup{coll.}}$ is nearest-neighbor in the sense that
\begin{align}\label{eq:nearest-neighbor}
c^{\L_n^c,\textup{coll.}}(\oomega_k,\oomega_l)=0\text{ unless }k=\prev(l)\text{ or }l=\prev(k).
\end{align}
Indeed, by definition, $c^{\L_n^c}(\oomega_{i'},\oomega_{j'})=0$ whenever $|i'-j'|> n^{1/4}$ and, on $\Alower_n$, there are no two big edges within distance $2b_n$ of each other or $b_n$ to the boundary $\{-Kn,Kn\}$. In particular, if $i'\leq k<l<j'$ with $k,l\in\B_n\cup\{Kn\}$, we have $c^{\L_n^c}(\omega_{i'},\omega_{j'})=0$. In other words, there are no edges in $\L_n^c$ skipping over more than one big edge, as the red edges in Figure \ref{fig:crossing} do.

\begin{figure}[!t]
\includegraphics[width=\textwidth]{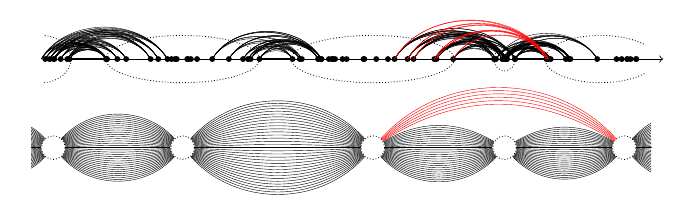}
\caption{We first modify the graph by removing long edges -- in the top diagram we have displayed a portion of the resulting graph $G^{\L_n^c}$. The second graph $G^{\L_n^c,\textup{coll.}}$ is obtained by collapsing all sites between big edges, $\{\omega_k,\omega_{k+1}\}$ for $k\in\B_n$ (shown as thick lines in both diagrams), into single sites (shown as dotted lines in both diagrams). If $\B_n$ is well-separated, then $G^{\L_n^c,\textup{coll.}}$ has only nearest-neighbor edges (unlike the above configuration).}\label{fig:crossing}
\end{figure}

We can now conclude: Let $k,l\in\B_n\cup\{Kn\}$ be such that $i\in I_k$ and $j\in I_l$. There is nothing to prove if $k=l$, so we assume $k<l$. We have
\begin{align*}
R^{\L_n^c}(\oomega_i,\oomega_j)&\geq R^{\L_n^c,\textup{coll.}}(\oomega_k,\oomega_l)\\
&=\sum_{\substack{k'\in \B_n\cup\{Kn\}\\k<k'\leq l}}\left(c^{\L_n^c,\textup{coll.}}(\oomega_{\prev(k')},\oomega_{k'})\right)^{-1}\\
&=\Rl(\oomega_i,\oomega_j).
\end{align*}
The inequality is due to the definition of $c^{\L_n^c,\textup{coll.}}$ and Rayleigh's monotonicity principle. The first equality is the series law for the effective resistance, which we can use due to \eqref{eq:nearest-neighbor}. The final equality is \eqref{eq:resolved}, and together with \eqref{eq:that} the proof is finished.
\end{proof}

\subsection{Unlikely configurations}\label{sec:typical}

We next show that the events $\Alower_n$ and $\Aupper_{n}$ described in the previous two subsections occur with high probability, and give a tail estimate for the quantity $\mathcal{E}_n$. Recall that $a_n=\floor{ a\log(n)}$ and $b_n=n^{1/4}$.

\begin{lemma}\label{lem:aux}
Recall $\Aupper_{n}$ and $\Alower_n$ from \eqref{eq:defAu} and \eqref{eq:defAl}, respectively. It holds that
\begin{align*}
\lim_{n\to\infty}\mathbf{P}(\Aupper_n)=\lim_{n\to\infty}\mathbf{P}(\Alower_n)=1.
\end{align*}
\end{lemma}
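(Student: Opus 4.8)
\textbf{Proof plan for Lemma \ref{lem:aux}.} The plan is to estimate the probability of the complements of $\Aupper_n$ and $\Alower_n$ by a union bound over the relevant indices, using that a site $k$ is a big edge with probability $\mathbf{P}(r^{0,0}(\omega_k,\omega_{k+1})\geq n^{3/(4\rho)})=n^{-3/4}$ by \eqref{eq:heavy}, independently over $k$, so that $\#\B_n$ is binomially distributed with mean of order $Kn\cdot n^{-3/4}=O(n^{1/4})$. All the defining events of $\Aupper_n$ and $\Alower_n$ are of the following three types, and I would treat each in turn: (i) no big edge lies within distance $a_n$ (resp.\ $b_n$) of the boundary $\{\pm Kn\}$; (ii) no two distinct big edges lie within distance $2a_n$ (resp.\ $2b_n$) of each other; (iii) for every $k\in\B_n$, the spatial gap $\omega_k-\omega_{k-a_n}$ and $\omega_{k+1+a_n}-\omega_{k+1}$ do not exceed $\log(n)/(2\rho)$ (this appears only in $\Aupper_n$).

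For (i), since $a_n=O(\log n)$ and $b_n=n^{1/4}$, the relevant window contains $O(\log n)$ (resp.\ $O(n^{1/4})$) indices, and each is a big edge with probability $n^{-3/4}$; a union bound gives a probability bounded by $O(n^{-3/4}\log n)$ (resp.\ $O(n^{-1/2})$), which vanishes. For (ii), there are $\binom{\#\B_n}{2}$ pairs; conditionally on the positions being distinct, the probability that two independently $n^{-3/4}$-sparse points land within distance $2a_n$ (resp.\ $2b_n$) can be bounded by summing over all pairs $\{k,l\}\subseteq\{-Kn,\dots,Kn\}$ with $0<|k-l|\leq 2a_n$ (resp.\ $\leq 2b_n$) of $\mathbf{P}(k,l\in\B_n)=n^{-3/2}$; the number of such pairs is $O(Kn\cdot a_n)=O(n\log n)$ (resp.\ $O(Kn\cdot b_n)=O(n^{5/4})$), so the total is $O(n^{-1/2}\log n)$ (resp.\ $O(n^{-1/4})$), again vanishing. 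For (iii), I would first note that $\chi$ and the gap $\omega_{k+1}-\omega_k$ are independent of the gaps on either side (as remarked after \eqref{eq:chi}), so conditionally on $k\in\B_n$ the increments $\omega_k-\omega_{k-a_n}$ and $\omega_{k+1+a_n}-\omega_{k+1}$ are each a sum of $a_n$ i.i.d.\ $\mathrm{Exp}(\rho)$ variables, hence $\mathrm{Gamma}(a_n,\rho)$; a standard Chernoff/large-deviations bound gives $\mathbf{P}(\omega_k-\omega_{k-a_n}>\log(n)/(2\rho))\leq e^{-c a_n}=n^{-ca}$ for a suitable $c=c(\rho)>0$ — and this is precisely the place where the constant $a$ is chosen large enough (the condition \eqref{eq:LDP} referenced after \eqref{eq:chiu}) so that $ca>1$. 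Multiplying by $\#\B_n=O(n^{1/4})$ via a union bound (after conditioning on $\B_n$, or bounding $\mathbf{E}[\#\B_n \cdot n^{-ca}]$) then makes this term vanish as well.

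Collecting the three estimates, $\mathbf{P}((\Aupper_n)^c)$ and $\mathbf{P}((\Alower_n)^c)$ are each bounded by a sum of terms tending to $0$, which proves the claim. The only genuinely delicate point is (iii): one must be careful about the dependence introduced by conditioning on $k\in\B_n$ (the event $\{r^{0,0}(\omega_k,\omega_{k+1})\geq n^{3/(4\rho)}\}$ constrains only the single gap $\omega_{k+1}-\omega_k$, leaving the flanking gaps i.i.d.\ as required), and one must verify that the constant $a$ fixed via \eqref{eq:LDP} indeed yields an exponent $ca>1$ that beats the $O(n^{1/4})$ factor from $\#\B_n$; the rest is routine union bounds over $O(\mathrm{poly}(n))$ events each of probability at most $n^{-3/4+o(1)}$.
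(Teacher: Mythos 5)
Your overall strategy is the same as the paper's: union bounds over the $O(n)$ indices, the observation that $\{k\in\B_n\}$ depends only on the single gap $\omega_{k+1}-\omega_k$ and is therefore independent of the flanking increments, and a large-deviation bound for the $\mathrm{Gamma}(a_n,\rho)$ sums. Your estimates for the boundary events and for the separation of big edges (items (i) and (ii)) are correct and match the paper's computations, which give bounds of order $n^{-1/2}$ and $Kn^{-1/4}$ respectively.

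There is, however, a genuine error in item (iii): you propose to choose $a$ \emph{large} so that the per-edge probability $n^{-ca}$ has $ca>1$. This is impossible, and in the wrong direction. The event $\{\omega_k-\omega_{k-a_n}>\log(n)/(2\rho)\}$ concerns a sum of $a_n\approx a\log n$ i.i.d.\ $\mathrm{Exp}(\rho)$ variables with mean $a\log(n)/\rho$; it is an upper-deviation event only when $a<1/2$. If $a\geq 1/2$ the probability tends to one, and since $|\B_n|\asymp n^{1/4}$ with high probability, the event $\Aupper_n$ would then \emph{fail} with high probability — so the lemma itself would be false for that choice of $a_n$. Even for $a<1/2$, the Chernoff exponent is $aI\bigl(1/(2\rho a)\bigr)=\tfrac12-a+a\log(2a)$, which is strictly below $1/2$ and approaches $1/2$ only as $a\to 0$; an exponent exceeding $1$ is unattainable. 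The paper's choice (this is exactly what \eqref{eq:LDP} encodes) is the opposite of yours: take $a$ \emph{small} so that the exponent is at least $3/8$, i.e.\ $\mathbf{P}(A'(k))\leq 2n^{-3/8}$. Your argument then still closes, because you do not need $ca>1$: against the expected number $O(n^{1/4})$ of big edges an exponent strictly larger than $1/4$ suffices (equivalently, per site, $n\cdot n^{-3/8}\cdot n^{-3/4}=n^{-1/8}\to 0$ by independence of $A'(k)$ and $\{k\in\B_n\}$). So the conclusion survives with this correction, but the stated choice of $a$ must be reversed — and note that the same constant $a$ is fixed once and for all in the definition \eqref{eq:chiu} of $\chiu_n$, so getting its size right matters beyond this lemma.
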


\begin{proof}
We start by showing that $\lim_{n\to\infty} \mathbf{P}(\Alower_n^c)=0$. Recalling~\eqref{eq:heavy} and~\eqref{eq:big}, the union bound gives that
\begin{align}
\label{eq:nobigedge}
 \mathbf{P}\left(\{-Kn,...,-Kn+b_n\}\cap\B_n\neq\emptyset\right)\leq b_n \mathbf{P}\left(0\in\B_n\right)=b_nn^{-3/4}\xrightarrow[n\to\infty]{} 0.
\end{align}
Similarly, note that
\begin{align*}
\{|i-j|>2b_n\text{ for all distinct $i, j$}\in\B_n\}^c\subseteq \textstyle \bigcup_{i=-Kn}^{Kn}\left(A(i)\cap\{i\in\B_n\}\right),
\end{align*}
where
\begin{align*}
A(i):=\left\{\exists j\in \{i-2b_n,...,i+2b_n\}\setminus\{i\}\text{ such that }j\in\mathcal{B}_n\right\}.
\end{align*}
Since $A(i)$ is independent of $\{i\in\B_n\}$ and
\begin{equation}
\label{eq:anotherbigedge}
 \mathbf{P}(A(i))\le 4b_n n^{-3/4} \le 4n^{-1/2}
\end{equation}
as in~\eqref{eq:nobigedge}, by using the union bound we find that
\begin{align*}
\mathbf{P}\left(\{|i-j|>2b_n\text{ for all distinct $i, j$}\in\B_n\}^c\right)&\leq 2Kn \mathbf{P}(A(0))\mathbf{P}(0\in\B_n)\\
&\leq 8Kn^{-1/4}.
\end{align*}
This establishes that $\lim_{n\to\infty}\mathbf{P}(\Alower_n^c)=0$.

Next, we prove that $\lim_{n\to\infty}\mathbf{P}(\Aupper_n^{c})=0$. Since $a_n\leq b_n$, the result of the previous paragraph takes care of the events in the first two lines of~\eqref{eq:defAu}. Thus it suffices to show that
\begin{align*}
\mathbf{P}\left(\bigcup_{k\in\{-Kn,...,Kn\}}\left(A'(k) \cap\left\{k\in \B_n\right\}\right)\right)\xrightarrow[n\to\infty]{}0,
\end{align*}
where
\begin{equation*}
 A'(k):=\left\{\max\{\omega_k-\omega_{k-a_n},\omega_{k+1+a_n}-\omega_{k+1}\}>\log(n)/(2\rho)\right\}.
\end{equation*}
Note that the events $A'(k)$ and $\{k\in\B_n\}$ are independent, and that
\begin{align*}
\mathbf{P} \left(A'(k)\right)&\le 2\mathbf P\left(\omega_k-\omega_{k-a_n}>\log(n)/(2\rho)\right)\notag\\
\quad &\leq 2 e^{-\log(n) aI(1/(2\rho a))}\notag
\end{align*}
where $I$ is the large deviation rate function of the exponential distribution with rate $\rho$. Since $\lim_{a\to 0}aI(1/(2a\rho))=1/2$, we can find $a>0$ such that
\begin{equation}
\mathbf P\left(A'(k)\right) \leq 2 n^{-3/8}.
\label{eq:LDP}
\end{equation}
Using $\mathbf{P}(k\in \B_n)=n^{-3/4}$ and the union bound over $k$, we can complete the proof.
\end{proof}

\begin{lemma}\label{lem:aux2}
Recall the definition of $\mathcal{E}_n$ from \eqref{eq:defEn}. For every $\beta$, $\lambda>0$ and $K$, there exist $c_1,c_2,c_3>0$ such that
\begin{align*}
\mathbf{P}\left(\mathcal{E}_n\geq c_1n^2 e^{-c_2 n^{1/4}} \right)\leq e^{-c_3n^{1/4}}.
\end{align*}
\end{lemma}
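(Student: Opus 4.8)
The quantity $\mathcal{E}_n=\sum_{\{i,j\}\in\L_n}c^{\beta,\lambda/n,Kn}(\oomega_i,\oomega_j)$ is a sum over pairs of indices in $\{-Kn,\dots,Kn\}$ at distance more than $b_n=n^{1/4}$, and each conductance is of the form $\exp(-|\omega_i-\omega_j|-\beta U(E_i,E_j)+\lambda(\omega_i+\omega_j)/n)$, possibly further summed over the collapsed boundary sites. The plan is to bound $\mathcal{E}_n$ by discarding the harmless $\beta U\in[0,1]$ factor and the at most $e^{2\lambda K}$ factor coming from the bias (recall $|\omega_i|,|\omega_j|\le\omega_{\pm Kn}$, which is itself $O(n)$ with overwhelming probability, contributing only a polynomial prefactor), so that the dominant behaviour is governed by $\sum_{|i-j|>b_n}e^{-|\omega_i-\omega_j|}$. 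Since $\omega_{i}-\omega_{j}=\sum_{l=j}^{i-1}(\omega_{l+1}-\omega_l)$ is a sum of at least $b_n$ i.i.d.\ $\mathrm{Exp}(\rho)$ random variables, $|\omega_i-\omega_j|$ is typically of order $b_n/\rho$, which produces the $e^{-c_2 n^{1/4}}$ decay, while the crude $O(n^2)$ count of pairs gives the $n^2$ prefactor.

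\textbf{Key steps, in order.} First, I would reduce to the symmetric infinite-temperature sum: on the event $\{\omega_{Kn}\le c'n,\ \omega_{-Kn}\ge -c'n\}$, which holds with probability at least $1-e^{-c''n}$ by a standard Poisson concentration bound, every conductance in the sum is at most $e^{2\lambda c' K}\cdot e^{-|\omega_i-\omega_j|}$, and the collapsed boundary conductances are likewise controlled by a geometric tail in the displacement (the sum over $k\in\{Kn,\dots\}$ of $e^{-(\omega_k-\omega_i)}$ telescopes into a convergent series bounded by a constant times $e^{-(\omega_{Kn}-\omega_i)}$). Second, for a genuinely interior pair $\{i,j\}$ with $j<i$ and $i-j=m>b_n$, I would write $\mathbf{E}(e^{-(\omega_i-\omega_j)})=\mathbf{E}(e^{-\mathrm{Exp}(\rho)})^{m}=(\rho/(\rho+1))^m$, so that $\mathbf{E}(\mathcal{E}_n)\le C n^2 (\rho/(\rho+1))^{b_n}$ after summing the geometric series in $m\ge b_n$ and accounting for the $O(n^2)$ choices of the pair. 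Third, to upgrade this first-moment estimate into the stated exponential tail bound, I would either apply Markov's inequality directly with a suitably chosen threshold $c_1 n^2 e^{-c_2 b_n}$ where $e^{-c_2}>\rho/(\rho+1)$ (then $\mathbf{P}(\mathcal{E}_n\ge c_1 n^2 e^{-c_2 n^{1/4}})\le \mathbf{E}(\mathcal{E}_n)/(c_1 n^2 e^{-c_2 n^{1/4}})\le C' e^{-(c_2'-c_2)n^{1/4}}$, absorbing constants), or, to be safe against the heavy upper tail of individual summands, use a union bound over pairs: $\mathbf{P}(e^{-(\omega_i-\omega_j)}\ge t)=\mathbf{P}(\omega_i-\omega_j\le \log(1/t))$, and for $t$ at scale $e^{-c_2 b_n/2}$ per summand this probability is itself exponentially small in $b_n$ by the lower large-deviation bound for a sum of $m\ge b_n$ exponentials, so summing over the $O(n^2)$ pairs and the boundary terms still leaves an $e^{-c_3 n^{1/4}}$ bound. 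I would present whichever of these is cleanest; the Markov route suffices and is shortest.

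\textbf{Main obstacle.} The only genuinely delicate point is the treatment of the collapsed boundary conductances $c^{\beta,\lambda/n,Kn}(\oomega_i,\oomega_{\pm Kn})$ and $c^{\beta,\lambda/n,Kn}(\oomega_{-Kn},\oomega_{Kn})$, since these involve infinite sums over $\omega_k$ with $k$ beyond $\pm Kn$; one must check that these series converge and are dominated by a constant multiple of a single $e^{-|\omega_i-\omega_{\pm Kn}|}$-type term, which uses $\lambda/n<1$ and, again, the event that $\omega_{\pm Kn}=O(n)$. Everything else is a routine combination of Poisson tail bounds for $\omega_{\pm Kn}$, the elementary moment generating function of the exponential distribution, and Markov's inequality; no subtle cancellation or sharp constant is needed, since the statement only asks for the existence of $c_1,c_2,c_3$. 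A minor bookkeeping point is to verify that the polynomial prefactors generated along the way (the $O(n^2)$ count of pairs, the $O(n)$ boundary terms, the constant $e^{2\lambda c'K}$) are all absorbed into the stated $c_1 n^2$, which they are since $e^{-c_2 n^{1/4}}$ dominates any polynomial correction.
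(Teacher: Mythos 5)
Your proposal is correct in substance but reaches the bound by a different route than the paper. The paper argues pathwise: it introduces a single event $A_n$ on which $\omega_{Kn}\le (K+1)n/\rho$, every block of $n^{1/4}$ consecutive points has width at least $n^{1/4}/(2\rho)$, and the displacements beyond $\pm(Kn-n^{1/4})$ grow linearly with \emph{both} a lower bound $j/(2\rho)$ and an upper bound $2j/\rho$; it shows $\mathbf{P}(A_n^c)\le e^{-c_3 n^{1/4}}$ by standard large deviation estimates and a union bound, and then checks deterministically on $A_n$ that every conductance indexed by a long edge, including the collapsed boundary conductances, is at most $Ce^{-cn^{1/4}}$, so that $\mathcal{E}_n\le c_1n^2e^{-c_2n^{1/4}}$ holds on $A_n$. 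Your main route (restrict to a linear-growth event, bound the first moment using $\mathbf{E}(e^{-\mathrm{Exp}(\rho)})=\rho/(\rho+1)$, then apply Markov) is shorter and perfectly adequate here, since only the existence of $c_1,c_2,c_3$ is claimed; your alternative (per-pair lower-deviation bounds plus a union bound) is essentially the paper's argument in different packaging. What the paper's pathwise formulation buys is a bound that holds on an explicit event, but for this lemma that extra structure is not needed.

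One point to tighten: in your first step, the claim that on $\{\omega_{\pm Kn}=O(n)\}$ alone the collapsed boundary conductance $\sum_{k\ge Kn}e^{-(\omega_k-\omega_i)+\lambda(\omega_i+\omega_k)/n}$ is dominated by a constant times $e^{-(\omega_{Kn}-\omega_i)}$ is not quite right as a pathwise statement, because the factors $e^{\lambda\omega_k/n}$ for $k>Kn$ are not controlled by that event. You need either an upper linear-growth bound on $\omega_{Kn+j}-\omega_{Kn}$ (this is exactly why the paper's $A_n$ contains the two-sided condition $j/(2\rho)\le\cdot\le 2j/\rho$), or, within your Markov route, to take the expectation of the tail series $\sum_{j\ge 0}e^{-(1-\lambda/n)(\omega_{Kn+j}-\omega_{Kn})}$, which factorises from the rest by independence of the increments beyond $Kn$ and is bounded by $\sum_{j\ge0}\bigl(\rho/(\rho+1-\lambda/n)\bigr)^{j}\le C$ once $n\ge 2\lambda$. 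With that repair (and the usual ``for $n$ large enough, then adjust constants'' step, which the paper's proof also uses implicitly), your argument goes through.
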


\begin{proof}
Let
\begin{align*}
A_n:=&\left\{\omega_{Kn}\leq (K+1)n/\rho\right\}\cap\bigcap_{i\in\{-Kn,...,Kn-n^{1/4}\}}\left\{|\omega_i-\omega_{i+n^{1/4}}|\geq n^{1/4}/(2\rho)\right\}\\
&\cap\bigcap_{j\geq n^{1/4}}\left\{j/(2\rho)\leq |\omega_{-Kn+n^{1/4}}-\omega_{-Kn+n^{1/4}-j}|\leq 2j/\rho \right\}\\
&\cap\bigcap_{j\geq n^{1/4}}\left\{j/(2\rho)\leq |\omega_{Kn-n^{1/4}+j}-\omega_{Kn-n^{1/4}}|\leq 2j/\rho \right\}.
\end{align*}
Towards obtaining a lower bound for the probability of the events in the last two lines, we note that the union bound and standard large deviation estimates (e.g.\ \cite[Theorem 2.2.3 and Exercise 2.2.23(c)]{DZ98}) imply, for some $c>0$ and $n$ large enough,
\begin{align*}
\mathbf P\left(\exists j\geq n^{1/4}:\omega_j\leq j/(2\rho)\text{ or }\omega_j\geq 2j/\rho\right)\leq \sum_{j\geq n^{1/4}} e^{-cj}\leq ce^{-cn^{1/4}}.
\end{align*}
Together with the translation invariance of the model, and similar estimates for the remaining events, we consequently find that there exists a constant $c_3>0$ such that $\mathbf{P}(A_n^c)\leq e^{-c_3n^{1/4}}$. Moreover, on $A_n$, for every $i\in\{-Kn,...,Kn-n^{1/4}\}$ and $n\geq 4\lambda$,
\begin{align*}
c^{\beta,\lambda/n,Kn}(\oomega_i,\oomega_{Kn})&=\sum_{j\geq 0}c^{\beta,\lambda/n}(\omega_i,\omega_{Kn+j})\\
&\leq \sum_{j\geq 0}e^{2(K+1)\lambda/\rho+2(j+n^{1/4})\lambda/(n\rho)-(j+n^{1/4})/(2\rho)}\\
&\leq Ce^{-cn^{1/4}}.
\end{align*}
A similar argument shows that, for any $i\in\{-Kn+n^{1/4},...,Kn\}$,
\begin{align*}
c^{\beta,\lambda,Kn}(\oomega_i,\oomega_{-Kn})\leq Ce^{-cn^{1/4}}.
\end{align*}
And, for $i,j\in\{-Kn+1,...,Kn-1\}$ with $|i-j|\geq n^{1/4}$,
\begin{align*}
c^{\beta,\lambda/n,Kn}(\oomega_i,\oomega_j)\leq e^{2(K+1)\lambda}e^{-cn^{1/4}}.
\end{align*}
This shows that, on $A_n$, it holds that $\mathcal{E}_n\leq c_1n^2e^{-c_2 n^{1/4}}$.
\end{proof}

\subsection{Convergence of an auxiliary process}\label{sec:limits}

In our next result, we establish a scaling limit for two auxiliary processes that are sums of independent random variables and capture the behaviour of $R^{\beta,\lambda/n,Kn}(\oomega_i,\oomega_j)$. To this end, we introduce i.i.d.\ copies $((\omega^{(i)},E^{(i)}))_{i\in\Z}$ of $(\omega,E)$, independent of $(\omega,E)$, and define
\begin{align*}
\lefteqn{\xiu_n(i):=}\\
&\left(\sum_{j,k=0}^{a_n} \left(n^{-1/(8\rho)}+e^{(1+\lambda/n)(\omega_i^{(i)}-\omega_{i-j}^{(i)})+(1-\lambda/n)(\omega_{i+1+k}^{(i)}-\omega_{i+1}^{(i)})+\beta U(E_{i-j}^{(i)},E_{i+1+k}^{(i)})}\right)^{-1}\right)^{-1},
\end{align*}
\begin{align*}
\xil_n(i)&:=\left(\sum_{\substack{j\leq i, k\geq i+1\\
k-j\leq b_n}} e^{-(1+\lambda/n)(\omega_i^{(i)}-\omega_{j}^{(i)})-(1-\lambda/n)(\omega_{k}^{(i)}-\omega_{i+1}^{(i)})-\beta U(E^{(i)}_{j},E^{(i)}_{k})}\right)^{-1}.
\end{align*}
Then $\xil_n=(\xil_n(i))_{i\in\mathbb{Z}}$ and $\xiu_n=(\xiu_n(i))_{i\in\mathbb{Z}}$ are i.i.d.\ sequences with the same marginals as $\chil_n$ and $\chiu_n$. We then have the following result.

\begin{proposition}\label{prop:limits}
For $i\in\{-Kn,...,Kn\}$, define
\begin{align*}
\Pu_n(i)&:=\sum_{k=0}^{i-1} e^{-2 \lambda k/ (\rho n)}r^{0,0}(\omega_k,\omega_{k+1})\xiu_n(k)\mathbf{1}_{\B_n}(k),\\
\Pl_n(i)&:=\sum_{k=0}^{i-1} e^{-2 \lambda k/ (\rho n)}r^{0,0}(\omega_k,\omega_{k+1})\xil_n(k)\mathbf{1}_{\B_n}(k).
\end{align*}
Then $(n^{-1/\rho} \Pu(\floor{un}))_{u\in[-K,K]}$ and $(n^{-1/\rho} \Pl(\floor{un}))_{u\in[-K,K]}$ both converge in distribution to $(S^{\beta,\lambda}(u))_{u\in[-K,K]}$. Moreover,
\begin{align}\label{eq:dist_u}
\sup_{i\in\{-Kn,...Kn\}}n^{-1/\rho}\left|\Pu_n(i)-\Pl_n(i)\right|\xrightarrow[n\to\infty]{\mathbf{P}}0.
\end{align}
\end{proposition}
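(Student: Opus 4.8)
The plan is to recognise both $\Pu_n(\floor{un})$ and $\Pl_n(\floor{un})$ as partial sums of a triangular array of independent, non-negative random variables with a common heavy tail, and to extract the stable limit through convergence of the associated point processes; this is transparent because $\rho<1$, so the limiting object has almost surely summable jumps and no centering is required. Write $R_k:=r^{0,0}(\omega_k,\omega_{k+1})$, so the $R_k$ are i.i.d.\ with $\mathbf{P}(R_k\geq u)=u^{-\rho}$ for $u\geq1$, and note that the pairs $(\xiu_n(k),\xil_n(k))_{k\in\Z}$ are i.i.d.\ in $k$ and, for each $k$, independent of $(R_k,\1_{\B_n}(k))$. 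I would prove the convergence statement for $\Pu_n$; that for $\Pl_n$ follows by the identical argument (with \eqref{eq:limitl} in place of \eqref{eq:limitu}), or from the $\Pu_n$ statement together with \eqref{eq:dist_u}. By the independence of the environment on the two half-lines, and the continuity of $S^{\beta,\lambda}$ at $0$, it suffices to treat the restriction to $u\in[0,K]$, the part on $[-K,0]$ being symmetric.

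The key input is a tail estimate. By \eqref{eq:chiu}, $\xiu_n(0)$ is bounded uniformly in $n$ and converges almost surely to $\chi^{\beta,0}(0)$, so the dominated-convergence argument behind \eqref{eq:limitu} also yields $\mathbf{E}[\xiu_n(0)^\rho]\to\mathbf{E}[\chi^{\beta,0}(0)^\rho]=C_\beta$. Conditioning on $\xiu_n(0)$ and integrating the Pareto tail of $R_0$ then gives, for fixed $x>0$ and uniformly for $s$ in compacts, $n\,\mathbf{P}(e^{-2\lambda\floor{sn}/(\rho n)}R_0\xiu_n(0)>x\,n^{1/\rho})\to C_\beta e^{-2\lambda s}x^{-\rho}$; moreover, since $\xiu_n(k)$ is bounded, for each fixed $\delta>0$ and all large $n$ the event $\{R_k\xiu_n(k)>\delta n^{1/\rho}\}$ forces $R_k\geq n^{3/(4\rho)}$, i.e.\ $k\in\B_n$, so the large summands of $\Pu_n$ automatically carry $\1_{\B_n}(k)=1$. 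Consequently, by Kallenberg's theorem (the negligibility of the individual terms being immediate from the tail estimate), the point process $N_n:=\sum_{k=0}^{\floor{Kn}}\delta_{(k/n,\,n^{-1/\rho}e^{-2\lambda k/(\rho n)}R_k\xiu_n(k))}$ on $[0,K]\times(0,\infty]$ converges in distribution to the Poisson point process $N$ with intensity $C_\beta e^{-2\lambda u}\rho\,x^{-\rho-1}\,\dd u\,\dd x$, which a change of variables identifies with the point process of jump times and sizes of $S^{\beta,\lambda}$ on $[0,K]$.

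Given this, the functional limit follows by truncating. For $\delta>0$ let $\Pu_n^\delta$ be the contribution to $\Pu_n$ of the summands with $e^{-2\lambda k/(\rho n)}R_k\xiu_n(k)>\delta n^{1/\rho}$, so that $n^{-1/\rho}\Pu_n^\delta(\floor{un})=\sum_{(s,z)\in N_n,\,s\leq u,\,z>\delta}z$. The summation functional $M\mapsto(\sum_{(s,z)\in M,\,s\leq u,\,z>\delta}z)_{u\in[0,K]}$ is continuous, from the vague topology on $[0,K]\times(\delta,\infty]$ to the $J_1$-topology on $D([0,K])$, at configurations with pairwise distinct first coordinates and no atom at height $\delta$, which $N$ has almost surely; so the continuous mapping theorem gives $(n^{-1/\rho}\Pu_n^\delta(\floor{un}))_{u\in[0,K]}\xrightarrow{d}(S^{\beta,\lambda,\delta}(u))_{u\in[0,K]}$, where $S^{\beta,\lambda,\delta}(u)$ is the sum of the jumps of $S^{\beta,\lambda}$ on $[0,u]$ exceeding $\delta$. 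Since $S^{\beta,\lambda}$ is a purely discontinuous increasing process with $S^{\beta,\lambda}(K)<\infty$ almost surely (here $\rho<1$), $\sup_{u\in[0,K]}|S^{\beta,\lambda}(u)-S^{\beta,\lambda,\delta}(u)|\to0$ almost surely as $\delta\downarrow0$. On the prelimit side, $n^{-1/\rho}\sup_{u\in[0,K]}(\Pu_n(\floor{un})-\Pu_n^\delta(\floor{un}))$ is $n^{-1/\rho}$ times a sum of $O(n)$ non-negative terms, each at most $R_k\xiu_n(k)\1\{n^{3/(4\rho)}\leq R_k,\ R_k\xiu_n(k)\leq\delta' n^{1/\rho}\}$ with $\delta':=\delta e^{2\lambda K/\rho}$; conditioning on $\xiu_n(k)$ and integrating the Pareto density over $[n^{3/(4\rho)},\,\delta' n^{1/\rho}/\xiu_n(k)]$ — finite precisely because $\rho<1$ — this has expectation $O(n^{1/\rho}(\delta')^{1-\rho}\mathbf{E}[\xiu_n(0)^\rho])$ over the $O(n)$ values of $k$, so $n^{-1/\rho}$ times it is $O(\delta^{1-\rho})\to0$ as $\delta\downarrow0$. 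A standard approximation argument then upgrades the $\Pu_n^\delta$-convergence to $(n^{-1/\rho}\Pu_n(\floor{un}))_{u\in[0,K]}\xrightarrow{d}(S^{\beta,\lambda}(u))_{u\in[0,K]}$ in the $J_1$-topology.

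It remains to prove \eqref{eq:dist_u}. Since $\Pu_n(i)-\Pl_n(i)=\sum_{k=0}^{i-1}e^{-2\lambda k/(\rho n)}R_k(\xiu_n(k)-\xil_n(k))\1_{\B_n}(k)$ for $i\geq0$ (and analogously for $i<0$), the left-hand side of \eqref{eq:dist_u} is at most $e^{2\lambda K/\rho}n^{-1/\rho}\sum_{|k|\leq Kn}R_k|\xiu_n(k)-\xil_n(k)|\1_{\B_n}(k)$. As $\xiu_n(k),\xil_n(k)$ are bounded and bounded away from $0$, $|\xiu_n(k)-\xil_n(k)|\lesssim|\xiu_n(k)^{-1}-\xil_n(k)^{-1}|$, and, up to the $n^{-1/(8\rho)}$-regularisation appearing in $\xiu_n$, both reciprocals are partial sums of one and the same almost surely finite series $\sum_{j,l\geq0}e^{-(1+\lambda/n)(\omega_0-\omega_{-j})-(1-\lambda/n)(\omega_{1+l}-\omega_1)-\beta U(E_{-j},E_{1+l})}$ built from the $k$-th independent environment, namely $\xiu_n(k)^{-1}$ over $\{j,l\leq a_n\}$ and $\xil_n(k)^{-1}$ over $\{j+l\leq b_n-1\}$. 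Comparing each with the full series and using that, for $n$ large, $\mathbf{E}[e^{-(1+\lambda/n)(\omega_0-\omega_{-j})}]\leq q^j$ and $\mathbf{E}[e^{-(1-\lambda/n)(\omega_{1+l}-\omega_1)}]\leq q^l$ with $q:=\rho/(\rho+1/2)<1$, together with the deterministic bound $(a_n+1)^2n^{-1/(8\rho)}$ for the regularisation discrepancy, one gets $\mathbf{E}[|\xiu_n(k)-\xil_n(k)|]=O(n^{-c})$ for some $c>0$, uniformly in $k$ (recall $a_n=\floor{a\log n}$ and $b_n=n^{1/4}$). Finally, on the high-probability event that $R_k\leq n^{1/\rho}\log n$ for all $k$ with $|k|\leq Kn$, one has $R_k\1_{\B_n}(k)=R_k\1\{n^{3/(4\rho)}\leq R_k\leq n^{1/\rho}\log n\}$, with $\mathbf{E}[R_0\1\{n^{3/(4\rho)}\leq R_0\leq n^{1/\rho}\log n\}]=O(n^{(1-\rho)/\rho}(\log n)^{1-\rho})$ (again using $\rho<1$); since $R_k$ and $\1_{\B_n}(k)$ are independent of $\xiu_n(k)-\xil_n(k)$, multiplying out gives $n^{-1/\rho}\cdot O(n)\cdot O(n^{(1-\rho)/\rho}(\log n)^{1-\rho})\cdot O(n^{-c})=O((\log n)^{1-\rho}n^{-c})\to0$, and \eqref{eq:dist_u} follows by Markov's inequality; combined with the first part, this also yields the convergence of $\Pl_n$. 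The step I expect to be most delicate is the point-process/continuous-mapping argument — verifying Kallenberg's criterion and the $J_1$-continuity of the summation functional, and keeping the hierarchy of scales $n^{3/(4\rho)}\ll\delta n^{1/\rho}\ll n^{1/\rho}\log n$, $a_n$, $b_n$, $n^{-1/(8\rho)}$ straight — while everything else is routine once the tail estimate $\mathbf{E}[\xiu_n(0)^\rho]\to C_\beta$ and the comparison between $\xiu_n$ and $\xil_n$ are in place, the crucial structural point throughout being that $\rho<1$ removes any need for centering.
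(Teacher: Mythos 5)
Your proposal is correct, but it follows a genuinely different route from the paper. The paper introduces the limit variables $\xi(i)=\lim_n\xiu_n(i)=\lim_n\xil_n(i)$ and the comparison process $V_n^\lambda(i)=\sum_{j<i}e^{-2\lambda j/(\rho n)}r^{0,0}(\omega_j,\omega_{j+1})\xi(j)$, notes that $V_n^0$ is an exactly i.i.d.\ sum with tail $t^{-\rho}C_\beta$ so the classical stable FCLT applies, imports the weighted-sum theorem of Kasahara--Maejima to absorb the tilt $e^{-2\lambda j/(\rho n)}$, and then proves $\sup_i n^{-1/\rho}|\Pu_n(i)-V_n^\lambda(i)|\to0$ (and likewise for $\Pl_n$) by a soft dominated-convergence argument for $\mathbf{E}|\xiu_n(0)-\xi(0)|\to0$ together with the standard fact that the off-$\B_n$ terms are negligible; \eqref{eq:dist_u} then comes for free by the triangle inequality through $V_n^\lambda$. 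You instead work directly with the truncated sums: you prove convergence of the point process of rescaled summands to the Poisson process of jumps of $S^{\beta,\lambda}$ (the tilt entering through the intensity $C_\beta e^{-2\lambda u}\rho x^{-\rho-1}\,\dd u\,\dd x$), apply the summation functional above level $\delta$, and control the small-jump contribution by a first-moment bound of order $\delta^{1-\rho}$; for \eqref{eq:dist_u} you establish the quantitative estimate $\mathbf{E}|\xiu_n(k)-\xil_n(k)|=O(n^{-c})$ and combine it with a truncation of $r^{0,0}$ at $n^{1/\rho}\log n$. Both arguments are sound; the paper's is shorter because it outsources the functional limit to \cite{kasahara} and never needs rates, while yours is self-contained (no appeal to Kasahara--Maejima or to \eqref{eq:wkf}, since the indicator $\mathbf{1}_{\B_n}$ is built into your truncation), makes the origin of the tilted intensity transparent, and yields a polynomial rate in the $\Pu_n$--$\Pl_n$ comparison, which is more information than the paper extracts at this point. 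Two cosmetic remarks: the phrase ``bounded and bounded away from $0$'' is not needed (and not literally true uniformly) — the inequality $|\xiu_n-\xil_n|\le\xiu_n\xil_n|\xiu_n^{-1}-\xil_n^{-1}|$ only uses the uniform upper bounds $\xiu_n\le 1+e^\beta$, $\xil_n\le e^\beta$; and the identification of $n^{-1/\rho}\Pu_n^\delta(\floor{un})$ with the summation functional of $N_n$ has a harmless off-by-one at $k=\floor{un}$ that disappears in the $J_1$ limit.
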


\begin{proof}
In light of \eqref{eq:limitu} and \eqref{eq:limitl}, we also define
\begin{align*}
\xi(i):=\lim_{n\to\infty}\xiu_n(i)=\lim_{n\to\infty}\xil_n(i)=\left(\sum_{j,k=0}^{\infty} e^{-(\omega_i^{(i)}-\omega_{i-j}^{(i)})-(\omega_{i+1+k}^{(i)}-\omega_{i+1}^{(i)})-\beta U(E_{i-j}^{(i)},E_{i+1+k}^{(i)})}\right)^{-1}
\end{align*}
and
\begin{align*}
V_n^\lambda(i):=\sum_{j=0}^{i-1}  e^{-2 \lambda j/ (\rho n)}r^{0,0}(\omega_j,\omega_{j+1})\xi(j).
\end{align*}
Note that $V_n^0(i)$ is a sum of i.i.d.\ random variables whose law does not depend on $n$, and whose increments satisfy
\begin{align}
\mathbf{P}(r^{0,0}(\omega_0,\omega_{1})\xi(0)\geq t)&=\mathbf{E}\left[\mathbf{P}(r^{0,0}(\omega_{0},\omega_{1})\geq t/\xi(0)|\xi(0))\right]\nonumber\\
&=\mathbf{E}\left[\mathbf{P}(\omega_1-\omega_0\geq \log( t/\xi(0))|\xi(0))\right]\nonumber\\
&=\mathbf{E}[e^{-\rho\log(t/\xi(0))}]\nonumber\\
&=t^{-\rho}\mathbf{E}[(\chi^{\beta,0}(0))^\rho],\label{xxx}
\end{align}
where we have applied the fact that $\xi(0)\isDistr \chi^{\beta,0}(0)$. It readily follows that the rescaled process $(n^{-1/\rho}V_n^0(\floor{un}))_{u\in[-K,K]}$ converges to $(S^{\beta,0}(u))_{u\in[-K,K]}$ in distribution with respect to the Skorohod $J_1$-topology. Combining this convergence and \cite[Theorem 3.1]{kasahara} yields that
\begin{align*}
\left(n^{-1/\rho}V_n^\lambda(\floor{un})\right)_{u\in[-K,K]}\xrightarrow[n\to\infty]d(S^{\beta,\lambda}(u))_{u\in[-K,K]}.
\end{align*}
To complete the proof, it will thus be sufficient to show that
\begin{align}
\sup_{i\in\{-Kn,...,Kn\}}n^{-1/\rho}\left|\Pu_{n}(i)-V^{\lambda}_n(i)\right|&\xrightarrow[n\to\infty]{\mathbf{P}}0,\label{eq:PMprobconv}\\
\sup_{i\in\{-Kn,...,Kn\}}n^{-1/\rho}\left|\Pl_{n}(i)-V^{\lambda}_n(i)\right|&\xrightarrow[n\to\infty]{\mathbf{P}}0\label{eq:Pprobconv}.
\end{align}
We only check \eqref{eq:PMprobconv}, since the proof of \eqref{eq:Pprobconv} is similar. By definition, we have that
\begin{equation}
  \label{eq:U-V}
 \begin{split}
  &n^{-1/\rho} \sup_{i\in\{-Kn,...,Kn\}}\left|\Pu_{n}(i)-V^{\lambda}_n(i)\right|\\
  &\quad\leq e^{2 \lambda K/ \rho }n^{-1/\rho}\sum_{i={-Kn}}^{Kn-1}r^{0,0}(\omega_i,\omega_{i+1})|\xiu_n(i)-\xi(i)|\\
  &\qquad + e^{2 \lambda K/ \rho }n^{-1/\rho}\sum_{i\in\{-Kn,\dots,Kn-1\}\setminus \B_n}r^{0,0}(\omega_i,\omega_{i+1})|\xi(i)|
 \end{split}
\end{equation}
For the first term on the right-hand side, we note that \eqref{eq:heavy} implies that the sequence $n^{-1/\rho}\sum_{i=-Kn}^{Kn}r^{0,0}(\omega_i,\omega_{i+1})$ is tight. Thus for the event
\begin{align*}
  A_{2,n,L}:=\left\{n^{-1/\rho}\textstyle  \sum_{i=-Kn}^{Kn}r^{0,0}(\omega_i,\omega_{i+1})\leq L\right\},
\end{align*}
we have $\lim_{L\to \infty}\limsup_{n\to\infty} \mathbf{P}(A_{2,n,L}^c)=0$. On the other hand, since $\xi$ and $\xiu_n$ are bounded, the dominated convergence theorem yields
\begin{eqnarray*}
\lefteqn{\limsup_{n\rightarrow\infty}\mathbf{E}\left(  \1_{A_{2,n,L}}n^{-1/\rho}\sum_{i={-Kn}}^{Kn-1}r^{0,0}(\omega_i,\omega_{i+1})|\xiu_n(i)-\xi(i)|\right)}\\
&=&\limsup_{n\rightarrow\infty}\mathbf{E}\left(  \1_{A_{2,n,L}}n^{-1/\rho}\sum_{i={-Kn}}^{Kn-1}r^{0,0}(\omega_i,\omega_{i+1})\mathbf{E}\left(|\xiu_n(i)-\xi(i)|\:\vline\:\omega\right)\right)\\
&=&\limsup_{n\rightarrow\infty}\mathbf{E}\left(  \1_{A_{2,n,L}}n^{-1/\rho}\sum_{i={-Kn}}^{Kn-1}r^{0,0}(\omega_i,\omega_{i+1})\right)\mathbf{E}\left(|\xiu_n(0)-\xi(0)|\right)\\
&\leq &L\limsup_{n\rightarrow\infty}\mathbf{E}\left(|\xiu_n(0)-\xi(0)|\right)\\
&=&L\mathbf{E}\left(\lim_{n\rightarrow\infty}|\xiu_n(0)-\xi(0)|\right)\\
&=&0
\end{eqnarray*}
for each $L>0$. Putting the latter two observations together, we readily conclude that
 \begin{align*}
 n^{-1/\rho}\sum_{i={-Kn}}^{Kn-1}r^{0,0}(\omega_i,\omega_{i+1})|\xiu_n(i)-\xi(i)| \xrightarrow[\mathbf{P}]{n\to\infty} 0,
 \end{align*}
as desired. For the second term on the right-hand side of \eqref{eq:U-V}, we use the boundedness of $\xi$ and the well-known fact
 \begin{align}\label{eq:wkf}
  n^{-1/\rho}\sum_{i\in\{-Kn,\dots,Kn-1\}\setminus \B_n}r^{0,0}(\omega_i,\omega_{i+1})\xrightarrow[\mathbf{P}]{n\to\infty} 0,
 \end{align}
see, e.g., \cite[(3.7.4)]{durrett}. This completes the proof of \eqref{eq:PMprobconv}.
\end{proof}

\subsection{Proof of Theorem \ref{thm:approx}}\label{sec:distortion}
In this subsection we provide the proof of Theorem \ref{thm:approx} after presenting a preliminary technical result in Lemma \ref{lem:lower'}. Towards this end, we recall the definitions of $\Ru_n$ and $\Rl_n$ from \eqref{eq:defRu} and \eqref{eq:defRl}, respectively. We will prove that $\Ru_n$ and $\Rl_n$ can be approximated by the following, defined for $-Kn\le i\leq j\le Kn$:
\begin{align*}
  \Ru'_n(i,j):=\sum_{k\in \{i,...,j-1\}\cap \B_n}e^{-2\lambda k/(\rho n)}r^{0,0}(\omega_k,\omega_{k+1})\chiu_n(k),\\
  \Rl'_n(i,j):=\sum_{k\in \{i,...,j-1\}\cap \B_n}e^{-2\lambda k/(\rho n)}r^{0,0}(\omega_k,\omega_{k+1})\chil_n(k).
\end{align*}
\begin{lemma}
  \label{lem:lower'}
  \begin{align}
    \label{eq:lower'}
    &n^{-1/\rho}\sup_{-Kn\leq i\leq j\leq Kn}\left|\Rl_n(i,j)-\Rl_n'(i,j)\right|\xrightarrow[n\to\infty]{\mathbf{P}} 0,\\
    \label{eq:upper'}
    &n^{-1/\rho}\sup_{-Kn\leq i\leq j\leq Kn}\left|\Ru_n(i,j)-\Ru_n'(i,j)\right|\xrightarrow[n\to\infty]{\mathbf{P}} 0.
  \end{align}
\end{lemma}
\begin{proof}
  Let us define
    \begin{align*}
    A_{3,n}:=\left\{i/\rho-n^{3/4}\leq \omega_i\leq i/\rho+n^{3/4}\text{ for all }i=-Kn,...,Kn\right\},
    \end{align*}
  on which we have
    \begin{align}
      \label{eq:A_3}
    \sup_{k\in\{-Kn,...,Kn\}}\left|e^{-\lambda(\omega_k+\omega_{k+1})/n}-e^{-2\lambda k/(\rho n)}\right|\leq Cn^{-1/4}.
    \end{align}
  Standard moderate deviation estimates (see, e.g., \cite[Theorem 3.7.1]{DZ98}) and the union bound show that $\mathbf{P}(A_{3,n})\to 1$ as $n\to\infty$.
  Combining \eqref{eq:A_3} with
    \begin{align*}
      |\Rl_n(i,j)-\Rl'_n(i,j)|\le \sum_{k\in\{-Kn,...,Kn\}}\left|e^{-\lambda(\omega_k+\omega_{k+1})/n}-e^{-2\lambda k/(\rho n)}\right|r^{0,0}(\omega_k,\omega_{k+1})|\chil_n(k)|,
    \end{align*}
  the boundedness of $\chil_n$ and the tightness of $n^{-1/\rho}\sum_{k\in\{-Kn,...,Kn\}}r^{0,0}(\omega_k,\omega_{k+1})$ as in the proof of Proposition \ref{prop:limits}, we can verify \eqref{eq:lower'}.

  To show \eqref{eq:upper'}, note first that
  \begin{align*}
  &n^{-1/\rho}\sup_{-Kn\leq i\leq j \leq Kn}\left|\Ru_n(i,j)-\Ru'_n(i,j)\right|\\
  &\quad\leq
  Cn^{-1/\rho}\sum_{k\in\{-Kn,...,Kn\}}\left|e^{-\lambda(\omega_k+\omega_{k+1})/n}-e^{-2\lambda k/(\rho n)}\right|r^{0,0}(\omega_k,\omega_{k+1})\\
  &\quad\quad+Cn^{-1/\rho} \sum_{k\in\{-Kn,\dots,Kn-1\}\setminus\B_n}r^{0,\lambda/n}(\omega_k,\omega_{k+1})
  \end{align*}
  since $\chiu_n$ and $U$ are bounded. The first term here can be dealt with as in the first part of the proof. To handle the second term, we start by noting that, on $A_{3,n}$, we can apply the bound \eqref{eq:A_3} to deduce that $\sup_{k\in\{-Kn,...,Kn\}}e^{-\lambda(\omega_k+\omega_{k+1})/n}$ is bounded. Therefore, by taking $C$ larger, $r^{0,\lambda/n}$ can be replaced by $r^{0,0}$ on $A_{3,n}$. Hence we obtain from \eqref{eq:wkf} that the sum in question converges to 0 in $\mathbf{P}$-probability.
\end{proof}

\begin{proof}[Proof of Theorem \ref{thm:approx}]
To apply the convergence results for $\Pu_n$ and $\Pl_n$ from Proposition \ref{prop:limits}, we need to justify that $\chil_n$ and $\chiu_n$ in the definitions of $\Rl'_n$ and $\Ru'_n$ can be replaced by $\xil_n$ and $\xiu_n$ without changing the law too much.

To this end, we first restrict ourselves to an event $\{\B_n=B\}$, where $B$ is a possible realisation of $\mathcal{B}_n$ on the event $\Alower_n$, as defined at \eqref{eq:defAl}. In particular, $B$ is a subset of $\{-Kn,\dots,Kn\}$ whose elements are at least $2b_n+1$ away from each other (and also suitably far from the boundaries $-Kn$ and $Kn$). Note that conditionally on $\{\B_n=B\}$, the random variables $(\omega_{i+1}-\omega_{i})_{i\in \{-Kn,...,Kn\}\setminus B}$ are independent and each has a parameter $\rho$ exponential distribution conditioned to be smaller than $\log n^{3/4\rho}$. Moreover, recalling that both $\chil_n(i)$ and $\chiu_n(i)$ only depend on $(\omega_{i+k+1}-\omega_{i+k})_{k\in \{-b_n,...,b_n\}\setminus\{0\}}$ and the energy marks $(E_{i+k})_{k\in \{-b_n,...,b_n\}}$, we find that the pairs $(\chil_n(i),\chiu_n(i))$, $i\in B$, are conditionally independent on $\{\B_n=B\}$. It follows that the distribution of $((\chil_n(i),\chiu_n(i)))_{i\in B}$ conditioned on $\{\B_n=B\}$ is the same as that of $((\xil_n(i),\xiu_n(i)))_{i\in B}$ conditioned on $\{\B_n=B\}\cap \bigcap_{i\in B}\Alower_n(i)$, where
\begin{align*}
\Alower_n(i):=\left\{\omega^{(i)}\colon \omega^{(i)}_{i+j+1}-\omega^{(i)}_{i+j}<\log n^{3/(4\rho)}\text{ all }j\in\{-b_n,...,b_n\}\setminus\{0\}\right\}.
\end{align*}
Similarly to the explanation after \eqref{eq:chi}, we additionally have that $(\chil_n(i),\chiu_n(i))_{i\in B}$ are independent of $(\omega_{i+1}-\omega_{i})_{i\in B}$, and hence also of $(r^{0,0}(\omega_i,\omega_{i+1}))_{i\in B}$. From these considerations, we obtain that, for each $B$ of the relevant form,
\begin{align*}
\lefteqn{\mathbf P\left(\bigl\{\bigl(\Rl'_n(i,j),\Ru'_n(i,j)\bigr)_{-Kn\leq i\leq j\leq Kn}\in\cdot \bigr\}\cap
\{\B_n=B\}\cap \textstyle{\bigcap_{i\in B}}\Alower_n(i)\right)}\\
&=\mathbf P\left(\bigl\{\bigl(\Pl_n(j)-\Pl_n(i),\Pu_n(j)-\Pu_n(i)\bigr)_{-Kn\leq i\leq j\leq Kn}\in\cdot \bigr\}\cap \{\B_n=B\}\cap\textstyle{\bigcap_{i\in B}}\Alower_n(i)\right).
\end{align*}
Summing over the $B$ that satisfy the conditions of $\Alower_n$, we conclude that
\begin{equation}\label{eq:equalinlaw}
\begin{split}
&\mathbf P\left(\bigl\{\bigl(\Rl'_n(i,j),\Ru'_n(i,j)\bigr)_{-Kn\leq i\leq j\leq Kn}\in\cdot \bigr\}\cap\Alower_n\cap\textstyle{\bigcap_{i\in \B_n}}\Alower_n(i)\right)\\
&=\mathbf P\left(\bigl\{\bigl(\Pl_n(j)-\Pl_n(i),\Pu_n(j)-\Pu_n(i)\bigr)_{-Kn\leq i\leq j\leq Kn}\in\cdot \bigr\} \cap \Alower_n\cap\textstyle{\bigcap_{i\in \B_n}}\Alower_n(i)\right).
\end{split}
\end{equation}
We now prove that the probability of $\Alower_n\cap\textstyle{\bigcap_{i\in \B_n}}\Alower_n(i)$ tends to one as $n\to\infty$, so that the effect of restriction onto this event is negligible. From Lemma \ref{lem:aux}, we know that $\lim_{n\to\infty}\mathbf P\left(\Alower_n\right)=1$. In addition, by arguing as at \eqref{eq:anotherbigedge}, we can check that $\mathbf{P}(\Alower_n(i)) \ge 1-2n^{-1/2}$ for each $i$. Combining this with the readily checked fact that $\mathbf{P}(|\B_n| \le n^{1/4+\varepsilon})\to 1$ for any $\varepsilon>0$ and the union bound, we obtain
\begin{align}
\label{eq:large}
\lim_{n\to\infty}\mathbf P\left(\bigcap_{i\in \B_n}\Alower_n(i)\right)=1.
\end{align}
Therefore we can use Proposition \ref{prop:limits} and \eqref{eq:lower'} to conclude that
\begin{align}\label{eq:tight}
\left(n^{-1/\rho}\sign(u)\Rl_n(0,\floor{un})\right)_{u\in[-K,K]}\xrightarrow[n\to\infty]d (S^{\beta,\lambda}(u))_{u\in[-K,K]}
\end{align}
in the Skorohod $J_1$-topology, where we define $\Rl_n(0,i):=\Rl_n(i,0)$ for $i<0$.

Next, we show that
\begin{align}\label{eq:next}
n^{-1/\rho}\sup_{-Kn\leq i\leq j\leq Kn} \left|\Rl_n(i,j)-R^{\beta,\lambda/n,Kn}(\oomega_i,\oomega_j)\right|\xrightarrow[n\to\infty]{\mathbf{P}}0.
\end{align}
Recall that the uniform topology is stronger than the $J_1$-topology, so that \eqref{eq:tight} and \eqref{eq:next} imply \eqref{eq:convergence}. Moreover, since $\Rl_n$ satisfies, for $i\leq j$,
\begin{align*}
\Rl_n(i,j)=\sign(j)\Rl_n(0,j)-\sign(i)\Rl_n(0,i),
\end{align*}
we also obtain \eqref{eq:distortion} from \eqref{eq:next}. To prove \eqref{eq:next}, observe that by Propositions \ref{prop:upper} and \ref{prop:lower}, on $\Aupper_n\cap\Alower_n$, for every $-Kn\leq i\leq j\leq Kn$,
\begin{align}
&\left|R^{\beta,\lambda/n,Kn}(\oomega_i,\oomega_j)-\Rl_n(i,j)\right|\nonumber\\
&\leq \max\left\{\Rl_n(i,j)-\left(\Rl_n(i,j)^{-1}+\mathcal{E}_n\right)^{-1},|\Ru_n(i,j)-\Rl_n(i,j)|\right\}.\label{eq:here}
\end{align}
Towards estimating the first term, let
\begin{align*}
A_{4,n}:=\left\{\mathcal{E}_n\leq c_1n^2e^{-c_2n^{1/4}},\: \Rl_n(-Kn,Kn)\leq n^{1/2+1/\rho}\right\}.
\end{align*}
We know from Lemma \ref{lem:aux2} and \eqref{eq:tight} that
\begin{equation}
\mathbf{P}(A_{4,n})\xrightarrow[n\to\infty]{}1.
\label{eq:A_4}
\end{equation}
Observe that on $A_{4,n}$, for $n$ large enough, we have $\mathcal{E}_n\leq \Rl_n(i,j)^{-1}/n$ for all $-Kn\leq i\leq j\leq Kn$, and therefore
\begin{align*}
\Rl_n(i,j)-\left(\Rl_n(i,j)^{-1}+\mathcal{E}_n\right)^{-1}\leq \Rl_n(i,j)/n\leq n^{1/\rho-1/2}.
\end{align*}
Thus, by \eqref{eq:A_4}, for any $\eps>0$,
\begin{align*}
&\mathbf{P}\left(n^{-1/\rho}\sup_{-Kn\leq i\leq j\leq Kn}\left(\Rl_n(i,j)-(\Rl_n(i,j)^{-1}+\mathcal{E}_n)^{-1}\right)>\eps\right)\le\mathbf{P}(A_{4,n}^c)\xrightarrow[n\to\infty]{}0.
\end{align*}
Recalling \eqref{eq:here} and Lemma \ref{lem:aux}, to complete the proof it will thus suffice to show that
\begin{align*}
  n^{-1/\rho}\sup_{-Kn\leq i\leq j\leq Kn}\left|\Ru_n(i,j)-\Rl_n(i,j)\right|\xrightarrow[n\to\infty]{\mathbf{P}}0.
\end{align*}
By the triangle inequality, we have
\begin{align*}
&n^{-1/\rho}\sup_{-Kn\leq i\leq j\leq Kn} |\Ru_n(i,j)-\Rl_n(i,j)|
\leq n^{-1/\rho}\sup_{-Kn\leq i\leq j \leq Kn} |\Ru_n(i,j)-\Ru'_n(i,j)|\\
&+n^{-1/\rho}\sup_{-Kn\leq i\leq j\leq Kn} |\Rl_n(i,j)-\Rl'_n(i,j)|+n^{-1/\rho}\sup_{-Kn\leq i\leq j\leq Kn} |\Ru'_n(i,j)-\Rl_n'(i,j)|.
\end{align*}
The first two terms on the right-hand side tend to zero in $\mathbf{P}$-probability as $n\to\infty$ by \eqref{eq:lower'} and \eqref{eq:upper'}. For the final term, using \eqref{eq:equalinlaw} and \eqref{eq:large}, together with Lemma \ref{lem:aux} and \eqref{eq:dist_u}, we have
\begin{align*}
&\mathbf{P}\left(n^{-1/\rho}\sup_{-Kn\leq i\leq j\leq Kn}|\Ru'_n(i,j)-\Rl'_n(i,j)|>\eps\right)\\
&\quad\leq \mathbf{P}(\Alower_n^c)+\mathbf P\left(\bigcup_{i\in \B_n}\Alower_n(i)^c\right)+\mathbf{P}\left(2n^{-1/\rho}\sup_{i\in\{-Kn,...,Kn\}}|\Pu_n(i)-\Pl_n(i)|>\eps\right)\\
&\quad\xrightarrow[n\to\infty]{} 0.
\end{align*}
This completes the proof of Theorem \ref{thm:approx}.
\end{proof}

\section{Convergence of the invariant measure}\label{sec:meas}
Next, we derive a scaling limit for the speed measure associated with our reflected process. In particular, with
\begin{align*}
c^{\beta,\lambda/n,Kn}(\oomega_i)&:=\sum_{j\in\{-Kn,...,Kn\}}c^{\beta,\lambda/n,Kn}(\oomega_i,\oomega_j),
\end{align*}
we let $\mu^{\beta,\lambda/n,Kn}$ denote the measure on $\{\oomega_{-Kn},...,\oomega_{Kn}\}$ given by
\begin{align*}
\mu^{\beta,\lambda/n,Kn}(\{\oomega_i\}):=c^{\beta,\lambda/n,Kn}(\oomega_i).
\end{align*}
For this measure, we are able to prove the following.

\begin{theorem}\label{thm:measure}
$\mathbf{P}$-a.s., for every $-K\leq a<b\leq K$,
\begin{align}\label{mmm}
\lim_{n\to\infty}\frac 1n\mu^{\beta,\lambda/n,Kn}\left(\left\{\oomega_{an},...,\oomega_{bn}\right\}\right)=\mathbf{E}\left(c^{\beta,0}(\omega_0)\right)\int_a^b e^{2\lambda r/\rho}\dd r,
\end{align}
where $c^{\beta,0}(\omega_0)$ is defined below \eqref{generator} and satisfies $\mathbf{E}(c^{\beta,0}(\omega_0))\in(0,\infty)$.
\end{theorem}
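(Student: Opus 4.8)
The plan is to reduce \eqref{mmm} to the ergodic theorem for the stationary sequence $(c^{\beta,0}(\omega_i))_{i\in\Z}$, after discarding the two boundary vertices and the exponential tilt. \textbf{Step 1 (reduction to an interior sum).} First I would note that, for $-Kn<i<Kn$, summing the defining relations of $c^{\beta,\lambda/n,Kn}(\oomega_i,\cdot)$ over all vertices of the truncated graph gives $c^{\beta,\lambda/n,Kn}(\oomega_i)=\sum_{k\in\Z}c^{\beta,\lambda/n}(\omega_i,\omega_k)=c^{\beta,\lambda/n}(\omega_i)$, since the two collapsed endpoints aggregate exactly the conductances to the omitted sites. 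The boundary conductances $c^{\beta,\lambda/n,Kn}(\oomega_{\pm Kn})$ are, on the almost-sure event on which $\omega_{\pm Kn}=(\pm K/\rho+o(1))n$, bounded by a constant independent of $n$: in e.g.\ $\sum_{-Kn<i<Kn}\sum_{k\geq Kn}c^{\beta,\lambda/n}(\omega_i,\omega_k)$ one has $-(\omega_k-\omega_i)+(\lambda/n)(\omega_i+\omega_k)=-(1-\lambda/n)\omega_k+(1+\lambda/n)\omega_i\leq 2\lambda K/\rho+o(1)$, and the remaining dependence on $\omega_i-\omega_{Kn-1}$ and $\omega_k-\omega_{Kn}$ decays geometrically; the $\oomega_{-Kn}$--$\oomega_{Kn}$ edge is even smaller. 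Dividing by $n$, only $n^{-1}\sum_{i=\floor{an}}^{\floor{bn}}c^{\beta,\lambda/n}(\omega_i)$ contributes in the limit.

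\textbf{Step 2 (removing the tilt).} Next I would write $c^{\beta,\lambda/n}(\omega_i)=e^{2\lambda\omega_i/n}\sum_j e^{-|\omega_i-\omega_j|-\beta U(E_i,E_j)}e^{(\lambda/n)(\omega_j-\omega_i)}$ and apply $|e^{(\lambda/n)\Delta}-1|\leq(\lambda/n)|\Delta|e^{(\lambda/n)|\Delta|}$, valid with $\lambda/n<1/2$ for large $n$, to obtain
\[
\bigl|c^{\beta,\lambda/n}(\omega_i)-e^{2\lambda\omega_i/n}c^{\beta,0}(\omega_i)\bigr|\leq \frac{\lambda}{n}\,e^{2\lambda\omega_i/n}\,Y_i,\qquad Y_i:=\sum_j e^{-|\omega_i-\omega_j|/2}|\omega_j-\omega_i|,
\]
where $(Y_i)_{i\in\Z}$ is stationary with $\mathbf{E}(Y_0)<\infty$ (Campbell's formula and Slivnyak's theorem). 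On the event where $\sup_{|u|\leq K}|\omega_{\floor{un}}|/n\to K/\rho$, the prefactor $e^{2\lambda\omega_i/n}$ is uniformly bounded over $i\in\{\floor{an},\dots,\floor{bn}\}$ for large $n$, so $n^{-1}\sum_i|c^{\beta,\lambda/n}(\omega_i)-e^{2\lambda\omega_i/n}c^{\beta,0}(\omega_i)|\leq C\frac{\lambda}{n}\cdot n^{-1}\sum_i Y_i\to0$ by the ergodic theorem for $(Y_i)$.

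\textbf{Step 3 (Riemann sum, and the constant).} It remains to prove $n^{-1}\sum_{i=\floor{an}}^{\floor{bn}}e^{2\lambda\omega_i/n}c^{\beta,0}(\omega_i)\to\mathbf{E}(c^{\beta,0}(\omega_0))\int_a^b e^{2\lambda r/\rho}\,dr$. Fixing $M\in\N$ and splitting $[a,b]$ into $M$ equal subintervals $[u_k,u_{k+1}]$, on each one $\omega_i/n$ lies within $o(1)$ of $[u_k/\rho,u_{k+1}/\rho]$ uniformly in $i$, so $e^{2\lambda\omega_i/n}=e^{2\lambda u_k/\rho}(1+O(1/M)+o(1))$, while $n^{-1}\sum_{i=\floor{u_kn}}^{\floor{u_{k+1}n}}c^{\beta,0}(\omega_i)\to\frac{b-a}{M}\mathbf{E}(c^{\beta,0}(\omega_0))$ by the ergodic theorem for the stationary ergodic sequence $(c^{\beta,0}(\omega_i))_{i\in\Z}$, which is a factor of the i.i.d.\ gap-and-mark sequence $((\omega_{i+1}-\omega_i,E_i))_{i\in\Z}$. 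Summing over $k$ and letting first $n\to\infty$, then $M\to\infty$, yields the Riemann sum for $\mathbf{E}(c^{\beta,0}(\omega_0))\int_a^b e^{2\lambda r/\rho}\,dr$. Taking a countable intersection over rational $a,b$ and rational partitions, and using monotonicity of both sides in $a$ and $b$ to squeeze in the remaining endpoints, gives \eqref{mmm} $\mathbf{P}$-a.s.\ for all $-K\leq a<b\leq K$. Finally, $\mathbf{E}(c^{\beta,0}(\omega_0))>0$ because $c^{\beta,0}(\omega_0)\geq e^{-|\omega_1|}$, and $\mathbf{E}(c^{\beta,0}(\omega_0))\leq 1+\mathbf{E}\bigl(\sum_{j\neq0}e^{-|\omega_j|}\bigr)=1+\rho\int_\R e^{-|x|}\,dx=1+2\rho<\infty$, once more by Slivnyak's theorem and Campbell's formula.

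\textbf{Main obstacle.} Nothing here is deep; the only non-bookkeeping ingredient is the ergodicity invoked in Steps 2--3 (immediate from the i.i.d.\ gap structure), and the actual work is making the two approximations uniform over $i$ in the window $\{\floor{an},\dots,\floor{bn}\}$ and justifying the $M$-then-$n$ double limit. Should one prefer to bypass the ergodic theorem entirely, Step 3 can be run with $c^{\beta,0}(\omega_i)$ replaced by its truncation $\sum_{|i-j|\leq R}e^{-|\omega_i-\omega_j|-\beta U(E_i,E_j)}$, whose partial sums obey a strong law by a finite-range-dependence variance bound and Borel--Cantelli, after which one lets $R\to\infty$ using the exponential localisation of $c^{\beta,0}$.
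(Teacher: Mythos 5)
Your proposal is correct and follows essentially the same route as the paper: both reduce \eqref{mmm} to the Birkhoff ergodic theorem for the stationary ergodic sequence $(c^{\beta,0}(\omega_i))_{i\in\Z}$, handle the tilt by a subdivision/Riemann-sum argument (the paper brackets $e^{\lambda(\omega_i+\omega_j)/n}$ between $e^{2\lambda a/\rho}$ and $e^{2\lambda b/\rho}$ on subintervals, you extract $e^{2\lambda\omega_i/n}$ per site with an error controlled by a second ergodic average), show the truncation/boundary contributions are $o(n)$ almost surely, and bound $\mathbf{E}(c^{\beta,0}(\omega_0))$ by geometric decay. The only imprecision is in Step 1: the event $\omega_{\pm Kn}=(\pm K/\rho+o(1))n$ alone does not control the sums over sites beyond $\pm Kn$, so your claimed $O(1)$ bound on the collapsed boundary masses needs an additional moment/Borel--Cantelli control of the configuration past the truncation points (this is what the paper's event $A_{5,n}$ supplies); since only an $o(n)$ bound is required, this is harmless.
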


\begin{proof} It is clear that $\mathbf{E}(c^{\beta,0}(\omega_0))>0$. Moreover,
\[\mathbf{E}\left(c^{\beta,0}(\omega_0)\right)=\sum_{j\in\mathbb{Z}\setminus\{0\}}\mathbf{E}\left(c^{\beta,0}(\omega_0,\omega_j)\right)\leq C\sum_{j=1}^\infty \mathbf{E}\left(e^{-\omega_j}\right)=C\sum_{j=1}^\infty \mathbf{E}\left(e^{-\omega_1}\right)^j<\infty.\]

Since the limiting measure is continuous, it will suffice to prove the limit at \eqref{mmm} for fixed $-K\leq a<b\leq K$. This essentially follows from an ergodic theorem but we need a little more work to take into account the fact that the bias $\lambda/n$ and the truncation at $\{-Kn,Kn\}$ makes $c^{\beta,\lambda/n,Kn}(\oomega_i)$ a non-stationary sequence. Let
\begin{align*}
A_{5,n}:=&\left\{\omega_{an}-\omega_{an-n^{1/3}}\geq n^{1/4},\omega_{bn+n^{1/3}}-\omega_{bn}\geq n^{1/4}\right\}\\
&\cap\left\{\omega_{-Kn+n^{1/3}}-\omega_{-Kn}\geq n^{1/4},\omega_{Kn}-\omega_{Kn-n^{1/3}}\geq n^{1/4}\right\}\\
&\cap\bigcap_{j\in\{-Kn,...,Kn\}}\left\{ \omega_{j}\in j/\rho +[-n^{2/3},n^{2/3}]\right\}\\
&\cap\bigcap_{j\geq n^{1/3}}\left\{\omega_{Kn+j}-\omega_{Kn}\geq j/(2\rho),\omega_{-Kn}-\omega_{-Kn-j}\geq j/(2\rho)\right\}.
\end{align*}
One can check that $\sum_n\mathbf{P}(A_{5,n}^c)<\infty$ by standard moderate deviations estimates, so that $A_{5,n}$ holds almost surely for $n$ large enough by the Borel-Cantelli lemma. Moreover, on $A_{5,n}$, for every $i,j\in\{-Kn+1,...,Kn-1\}$,
\begin{align}
c^{\beta,\lambda/n,Kn}(\oomega_i,\oomega_{Kn})&
\leq Ce^{-|\omega_{Kn}-\omega_i|+2\lambda \omega_{Kn}/n}\sum_{j\geq 0}e^{-(1-\lambda/n)|\omega_{Kn+j}-\omega_{Kn}|}\label{eq:boundary1}\\
&\leq Ce^{-|\omega_{Kn}-\omega_i|}n^{1/3},\nonumber
\end{align}
\begin{align}
c^{\beta,\lambda/n,Kn}(\oomega_{-Kn},\oomega_i)&
\leq Ce^{-|\omega_i-\omega_{-Kn}|}\sum_{j\geq 0}e^{-|\omega_{-Kn}-\omega_{-Kn-j}|}
\leq Ce^{-|\omega_i-\omega_{-Kn}|}n^{1/3},\nonumber\\
c^{\beta,\lambda/n,Kn}(\oomega_i,\oomega_j)&\leq Ce^{-|\omega_i-\omega_j|}\label{eq:middle}.
\end{align}
Assume that $-K<a<b<K$. Then, on $A_{5,n}$,
\begin{align*}
&\mu^{\beta,\lambda/n,Kn}\left(\{\oomega_{an},...,\oomega_{bn}\}\right)\\
=&\sum_{i=an}^{bn}\left(c^{\beta,\lambda/n,Kn}(\oomega_i,\oomega_{Kn})+c^{\beta,\lambda/n,Kn}(\oomega_i,\oomega_{-Kn})+\sum_{j=-Kn+1}^{Kn-1}c^{\beta,\lambda/n}(\omega_i,\omega_j)\right)\\
\leq &Cn^{2}e^{-n^{1/4}}+\sum_{i=an}^{bn}\sum_{j=an-n^{1/2}}^{bn+n^{1/2}}c^{\beta,\lambda/n}(\omega_i,\omega_j)\\
\leq& Cn^{2}e^{-n^{1/4}}+e^{2\lambda (b+n^{-1/3})/\rho}\sum_{i=an}^{bn}\sum_{j=an-n^{1/2}}^{bn+n^{1/2}}c^{\beta,0}(\omega_i,\omega_j)\\
\leq& C'n^{2}e^{-n^{1/4}}+e^{2\lambda (b+n^{-1/3})/\rho}\sum_{i=an}^{bn}c^{\beta,0}(\omega_i),
\end{align*}
where we have applied \eqref{eq:boundary1}--\eqref{eq:middle} to deduce the first two inequalities. We observe that $(c^{\beta,0}(\omega_i))_{i\in\Z}$ is stationary and ergodic, so that almost surely,
\begin{align}\label{eq:ergodic_upper}
\limsup_{n\to\infty}\frac 1n\mu^{\beta,\lambda/n,Kn}(\{\oomega_{an},...,\oomega_{bn}\})\leq e^{2\lambda b/\rho}(b-a)\mathbf{E}\left(c^{\beta,0}(\omega_0)\right).
\end{align}
A similar estimate shows
\begin{align*}
\liminf_{n\to\infty}\frac 1n\mu^{\beta,\lambda/n,Kn}\left(\{\oomega_{an},...,\oomega_{bn}\}\right)\geq e^{2\lambda a/\rho}(b-a)\mathbf{E}\left(c^{\beta,0}(\omega_0)\right).
\end{align*}
To obtain the claim, take $m\in\N$ and subdivide $\{\oomega_{an},...,\oomega_{bn}\}$ into $m$ sub-intervals of (approximately) equal length. Applying the previous bounds to each of them shows that almost surely,
\begin{align*}
\frac 1m\sum_{k=0}^{m-1}e^{2\lambda (a+(b-a)k/m)/\rho}\mathbf{E}\left(c^{\beta,0}(\omega_0)\right)
&\leq \liminf_{n\to\infty}\frac 1n\mu^{\beta,\lambda/n,Kn}(\oomega_{an},...,\oomega_{bn})\\
& \leq \limsup_{n\to\infty}\frac 1n\mu^{\beta,\lambda/n,Kn}(\oomega_{an},...,\oomega_{bn})\\
&\leq \frac 1m\sum_{k=1}^{m}e^{2\lambda (a+(b-a)k/m)/\rho}\mathbf{E}\left(c^{\beta,0}(\omega_0)\right).
\end{align*}
Now take $m\to\infty$ to see that both sides converge to $\mathbf{E}\left(c^{\beta,0}(\omega_0)\right)\int_a^be^{2\lambda r/\rho}\dd r$, which completes the proof in the case $-K<a$, $b<K$.

We next explain how to obtain \eqref{eq:ergodic_upper} when $-K<a<b=K$. In this case, we have that
\begin{align*}
&\mu^{\beta,\lambda/n,Kn}(\{\oomega_{an},...,\oomega_{Kn-n^{1/3}}\})\\
=&\sum_{i=an}^{Kn-n^{1/3}}\left(c^{\beta,\lambda/n,Kn}(\oomega_i,\oomega_{Kn})+c^{\beta,\lambda/n,Kn}(\oomega_i,\oomega_{-Kn})+\sum_{j=-Kn+1}^{Kn-1}c^{\beta,\lambda/n}(\omega_i,\omega_j)\right)\\
\leq &Cn^{2}e^{-n^{1/4}}+\sum_{i=an}^{Kn-n^{1/3}}\sum_{j=an-n^{1/2}}^{Kn}c^{\beta,\lambda/n}(\omega_i,\omega_j)\\
\leq& C'n^{2}e^{-n^{1/4}}+e^{2\lambda (K+n^{-1/3})/\rho}\sum_{i=an}^{Kn-n^{1/3}}c^{\beta,0}(\omega_i),
\end{align*}
where we used \eqref{eq:boundary1}--\eqref{eq:middle} as before. Moreover, from \eqref{eq:boundary1},
\begin{align*}
\mu^{\beta,\lambda/n,Kn}\left(\{\oomega_{Kn-n^{1/3}},...,\oomega_{Kn}\}\right)\leq Cn^{2/3}.
\end{align*}
Putting these estimates and the ergodic theorem for $(c^{\beta,0}(\omega_i))_{i\in\Z}$ together gives the desired result. The argument for the remaining case $-K=a$ is similar.
\end{proof}
\begin{remark}
\label{rem:VSproof}
To prove the scaling limit result for the variable-speed random walk mentioned in Remark~\ref{rem:CSvsVS}, we only have to modify the argument in this section. Indeed, it is the random walk with the same conductance $c^{\beta,\lambda/n}$, but a different speed measure, with the mass of $\omega_i$ given by $e^{\lambda\omega_i/n}$. The scaling limit of this speed measure can be understood by an argument that is similar to, but simpler than, that used to deduce Theorem \ref{thm:measure}.
\end{remark}

\section{Compact metric measure space convergence}\label{sec:mms}

The goal of this section is to give a metric measure space convergence statement that combines the results of the previous two sections.  The main conclusion is stated below as Theorem \ref{thm:metric}.

To present this, we let $\F_c$ denote the set of elements $(M,d,\mu,\rho,\Phi)$, where:
\begin{itemize}
 \item $(M,d)$ is a compact metric space;
 \item $\mu$ is a locally finite Borel regular measure on $M$;
 \item $\rho$ is a distinguished point in $M$;
 \item $\Phi:M\to\R$ is continuous.
\end{itemize}
In the current context, we will generally think of $(M,d)$ as the deformed `resistance space' mentioned in Section \ref{sec:method}, $\mu$ the invariant measure of the process, $\rho$ its initial position, and the function $\Phi$ an embedding that reverts the process back to `physical space'.

To define a notion of convergence on $\F_c$, we recall the spatial Gromov-Hausdorff-Prohorov topology of \cite{croydon2018}, which builds on the classical notion of the Gromov-Hausdorff topology (see \cite{bbi} for introductory material in this direction). Specifically, we introduce a metric $\Delta$ on $\F_c$ by defining $\Delta((M_1,d_1,\mu_1,\rho_1,\Phi_1),(M_2,d_2,\mu_2,\rho_2,\Phi_2))$ as
\begin{align*}
\inf_{\substack{\psi_1,\psi_2,\\(M,d),\mathcal C}}\left\{d_{\text{P}}(\mu_1\circ\psi_1^{-1},\mu_2\circ\psi_2^{-1})+\sup_{x_1,x_2\in\mathcal C}\left(d(\psi_1(x_1),\psi_2(x_2))+|\Phi_1(x_1)-\Phi_2(x_2)|\right)\right\},
\end{align*}
where the infimum is taken over metric spaces $(M,d)$, isometries $\psi_1:(M_1,d_1)\to(M,d)$ and $\psi_2:(M_2,d_2)\to(M,d)$, correspondences $\mathcal C\subseteq M_1\times M_2$ (i.e.\ subsets of $M_1\times M_2$ whose projections onto both $M_1$ and $M_2$ are surjective), and $d_{\text{P}}$ denotes the Prohorov metric on probability measures on $(M,d)$, as defined by
\begin{align*}
d_{\text{P}}(\nu_1,\nu_2):=\inf\left\{\eps>0:\nu_1(A)\leq \nu_2(A^\eps)+\eps\text{ for all }A\in\mathcal B(M)\right\},
\end{align*}
where $A^\eps:=\{x\in M:d(x,A)<\eps\}$ is the $\eps$-neighborhood of $A$, and $\mathcal B(M)$ is the Borel $\sigma$-algebra associated with $(M,d)$. As is noted in \cite[Section 7]{croydon2018}, it is possible to check that $(\F_c,\Delta)$ is a separable metric space, and it is with respect to this framework that the distributional convergence of the following result is stated. (Actually, in \cite{croydon2018}, the topology was presented for `resistance metric' spaces and the measures assumed to have full support. These restrictions are in fact met by all the spaces we consider in this section, but since they are not needed in the present discussion or in the proof of the separability of the space $(\F_c,\Delta)$, we omit them. Moreover, in \cite{croydon2018}, non-compact spaces were also considered, and the suitably extended topology called the spatial Gromov-Hausdorff-vague topology, but we do not need this generality here.) In this section, we write $S$ in place of $S^{\beta,\lambda}$ for simplicity.

\begin{theorem}\label{thm:metric}
Consider the spaces $\mathcal X_n\coloneqq\{\oomega_{-Kn},...,\oomega_{Kn}\}$ and $\mathcal X:=\overline{S([-K,K])}$ equipped with metrics $d_n\colon\mathcal X_n\times\mathcal X_n\to[0,\infty)$ and $d\colon\mathcal X\times\mathcal X\to[0,\infty)$, respectively, where
\begin{align*}
d_n(\oomega_i,\oomega_j)&:= n^{-1/\rho}R^{\beta,\lambda/n,Kn}(\oomega_i,\oomega_j)
\end{align*}
and $d$ is the restriction of the Euclidean metric to $\mathcal X$,
measures $\mu_n$ and $\mu$ given by
\begin{align*}
\mu_n\left(\{\oomega_i,...,\oomega_j\}\right)&:=n^{-1}\mu^{\beta,\lambda/n,Kn}\left(\left\{\oomega_i,...,\oomega_j\right\}\right),\\
\mu\left((l,r]\right)&:=\mathbf{E}\left(c^{\beta,0}(\omega_0)\right)\int_{S^{-1}(l)}^{S^{-1}(r)} e^{2\lambda x/\rho}\dd x,
\end{align*}
and embeddings $\Phi_n:\mathcal X_n\to\R$ and $\Phi:\mathcal X\to\R$ determined by
\begin{align*}
\Phi_n(\oomega_i)&:= n^{-1}\omega_i,\\
\Phi(u)&:=S^{-1}(u),
\end{align*}
with $S^{-1}$ denoting the right-continuous inverse of $S$. It is then the case that
\begin{align}\label{eq:distconv}
\left(\mathcal X_n,d_n,\mu_n,\oomega_0,\Phi_n\right)\xrightarrow[n\to\infty]d\left(\mathcal X,d,\mu,0,\Phi\right)
\end{align}
in the space $(\F_c,\Delta)$.
\end{theorem}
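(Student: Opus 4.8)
The plan is to deduce the claimed distributional convergence in $(\F_c,\Delta)$ from the two main inputs already established, namely the resistance convergence of Theorem \ref{thm:approx} and the measure convergence of Theorem \ref{thm:measure}. The natural approach is to use a Skorohod representation: since \eqref{eq:convergence} gives convergence in distribution of the rescaled two-sided resistance process $(n^{-1/\rho}\sign(u)R^{\beta,\lambda/n,Kn}(\oomega_0,\oomega_{\floor{un}}))_{u\in[-K,K]}$ to $(S^{\beta,\lambda}(u))_{u\in[-K,K]}$ in the $J_1$-topology, and since the measure convergence holds almost surely (hence in probability) and depends continuously on the same environment, we may pass to a probability space on which the resistance processes converge almost surely in $J_1$ (while retaining the joint law with the environment driving $\mu_n$). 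On that space it then suffices to construct, for each $n$, an explicit common metric space $(M,d)$, isometric embeddings $\psi_n\colon\mathcal X_n\to M$ and $\psi\colon\mathcal X\to M$, and a correspondence $\mathcal C_n\subseteq\mathcal X_n\times\mathcal X$ such that the three quantities controlled by $\Delta$ — the Prohorov distance between push-forward measures, the sup of $d(\psi_n(x_n),\psi(x))$ over $\mathcal C_n$, and the sup of $|\Phi_n(x_n)-\Phi(x)|$ over $\mathcal C_n$ — all tend to $0$.

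The key steps, in order, would be as follows. First, I would take $M=\R$ itself and $\psi=\mathrm{id}$, identifying $\mathcal X=\overline{S^{\beta,\lambda}([-K,K])}$ with its canonical embedding in $\R$; the point is that $(\mathcal X,d)$ is already a subset of $\R$ with the Euclidean metric, and by \eqref{eq:distortion}, $(\mathcal X_n,d_n)$ is, up to an error that is uniformly $o(1)$, also isometric to the subset $\{n^{-1/\rho}\sign(i)R^{\beta,\lambda/n,Kn}(\oomega_0,\oomega_i):-Kn\le i\le Kn\}$ of $\R$. So I would let $\psi_n$ send $\oomega_i$ to this point, which is an \emph{approximate} isometry with distortion tending to $0$; the genuine embedding into a common metric space can be arranged by the standard trick of taking $M$ to be the disjoint union of $\mathcal X_n$ and $\R$ glued along these approximately-matched points, absorbing the $o(1)$ distortion into the final estimate. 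Second, I would define the correspondence $\mathcal C_n$ by pairing $\oomega_i$ with the point $S^{\beta,\lambda}(i/n)\in\mathcal X$ (interpreting endpoints/accumulation points appropriately for $i=\pm Kn$); surjectivity onto $\mathcal X_n$ is immediate, and surjectivity onto $\mathcal X=\overline{S^{\beta,\lambda}([-K,K])}$ follows because the càdlàg path $S^{\beta,\lambda}$ has image whose closure is covered by the values at points $i/n$ together with left limits, up to mesh $o(1)$. Third, for the $d$-term in $\mathcal C_n$: the distance $d(\psi_n(\oomega_i),\psi(S^{\beta,\lambda}(i/n)))$ is bounded by $|n^{-1/\rho}\sign(i)R^{\beta,\lambda/n,Kn}(\oomega_0,\oomega_i)-S^{\beta,\lambda}(i/n)|$ plus the distortion from \eqref{eq:distortion}, and the first term is $o(1)$ uniformly in $i$ by $J_1$-convergence combined with the fact that $S^{\beta,\lambda}$ has no fixed discontinuities — one must argue that $J_1$-closeness plus tightness of jump locations forces uniform closeness of the piecewise-constant interpolation; alternatively one slightly perturbs the correspondence near the (finitely many, for fixed $\varepsilon$) large jumps of $S^{\beta,\lambda}$ to avoid the mismatch windows. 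Fourth, for the $\Phi$-term: $|\Phi_n(\oomega_i)-\Phi(S^{\beta,\lambda}(i/n))|=|n^{-1}\omega_i-(S^{\beta,\lambda})^{-1}(S^{\beta,\lambda}(i/n))|$, and since $n^{-1}\omega_i\to i/n$ uniformly (by the law of large numbers / moderate deviations estimate, $\omega_i = i/\rho + o(n)$ uniformly over $|i|\le Kn$, after the correct rescaling — here one uses $\Phi_n(\oomega_i)=n^{-1}\omega_i$ and the fact that $(S^{\beta,\lambda})^{-1}\circ S^{\beta,\lambda}$ fixes all but the countably many points in jump intervals), this term is $o(1)$ uniformly. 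Fifth, for the Prohorov term: $\mu_n\circ\psi_n^{-1}$ is supported near the points $n^{-1/\rho}\sign(i)R(\oomega_0,\oomega_i)$ with mass $n^{-1}c^{\beta,\lambda/n,Kn}(\oomega_i)$, and Theorem \ref{thm:measure} identifies the limit of the mass in any resistance-interval as $\mu$ of the corresponding interval; combined with the uniform convergence of the support points this gives weak convergence of $\mu_n\circ\psi_n^{-1}$ to $\mu$, hence $d_{\mathrm P}\to 0$ (after normalising to probability measures, which is harmless since total masses converge to the finite total mass of $\mu$).

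The main obstacle I expect is the interplay between the $J_1$-topology in the resistance convergence and the \emph{uniform}-type control that $\Delta$ demands through the correspondence. A $J_1$-convergent sequence need not be uniformly close to its limit near a jump, and a jump of $S^{\beta,\lambda}$ corresponds precisely to a large gap in resistance space which is the whole point of the model; so one cannot naively pair $\oomega_i$ with $S^{\beta,\lambda}(i/n)$ and expect $d(\psi_n(\oomega_i),\psi(S^{\beta,\lambda}(i/n)))$ to be small for the index $i$ straddling a big edge. The resolution, which I would spell out carefully, is that a correspondence is allowed to be \emph{any} surjective relation: near each of the finitely many jumps of $S^{\beta,\lambda}$ exceeding a threshold $\delta$, one re-pairs the $O(1)$ indices $\oomega_i$ lying inside the corresponding big edge with the appropriate \emph{endpoint} of the jump in $\mathcal X$ (choosing left or right endpoint according to which side of the big edge $i$ lies on, as dictated by the $J_1$ time-change), so that the paired points are genuinely $d$-close; the contribution of the (vanishingly small, by Theorem \ref{thm:measure}) measure sitting strictly inside a big edge to the Prohorov distance is then absorbed into the $\delta\to 0$ limit. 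Making this re-pairing precise — tracking which $\oomega_i$ go to which endpoint, and checking that the number and locations of big edges are controlled well enough to do this simultaneously for all large $n$ — is the technical heart of the argument; the rest is bookkeeping with the already-established convergences \eqref{eq:convergence}, \eqref{eq:distortion}, and \eqref{mmm}.
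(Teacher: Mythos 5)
Your proposal follows essentially the same route as the paper's own proof: the paper likewise pushes $(\mathcal X_n,d_n)$ forward to the real line via $\pi_n(\oomega_i)=\sign(i)n^{-1/\rho}R^{\beta,\lambda/n,Kn}(\oomega_0,\oomega_i)$, absorbing the resulting distortion through \eqref{eq:distortion} and the disjoint-union metric of Lemma \ref{thm:dist}, then applies Skorohod representation to the pair $(S_n,\nu_n)$ (joint convergence being automatic since the measure limit is deterministic) and builds the correspondence from a near-optimal $J_1$ time change $\lambda_n$ together with the left limits of $S^{\beta,\lambda}$ --- which is exactly your ``re-pair at the appropriate jump endpoint as dictated by the time change'' carried out systematically, and which also supplies the exact surjectivity onto $\overline{S([-K,K])}$ that your ``up to mesh $o(1)$'' remark only approximates. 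The remaining estimates (Prohorov distance via the continuity of the limit measure, the $\Phi$-term via the strict increase of $S^{\beta,\lambda}$ and the law of large numbers for $\omega_i$) are handled in your sketch at essentially the same level of detail as in the paper, so the approach matches in all essentials.
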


One convenient choice for the common metric space $(M,d)$ in the definition of $\Delta$ is to take the disjoint union of $M_1$ and $M_2$. We recall how the corresponding isometries are constructed. Let $(M_1,d_1)$ and $(M_2,d_2)$ be two metric spaces and let $\pi:M_1\to M_2$ be surjective. The distortion of $\pi$ is defined as follows:
\begin{align*}
\dis(\pi):=\sup_{x,x'\in M_1}|d_1(x,x')-d_2(\pi(x),\pi(x'))|.
\end{align*}
We then have the following (see the proof of \cite[Theorem 7.3.25]{bbi}).

\begin{lemma}\label{thm:dist}
Let $M:=(\{1\}\times M_1)\cup(\{2\}\times M_2)$ be the disjoint union of $M_1$ and $M_2$, and define
\begin{align*}
d((i,z),(j,z')):=
\begin{cases}
d_1(z,z'),&\text{ if }i=j=1,\\
d_2(z,z'),&\text{ if }i=j=2,\\
\frac 12\dis(\pi)+\inf_{x\in M_1}\{d_1(z,x)+d_2(\pi(x),z')\}&\text{ if }i=1,j=2,\\
\frac 12\dis(\pi)+\inf_{x\in M_1}\{d_1(z',x)+d_2(\pi(x),z)\}&\text{ if }i=2,j=1.
\end{cases}
\end{align*}
This function $d\colon M\to[0,\infty)$ is a metric, and moreover the canonical embeddings $M_1\to \{1\}\times M_1\subset M$ and $M_2\to \{2\}\times M_2\subset M$ are isometries.
\end{lemma}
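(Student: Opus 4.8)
\textbf{Plan of proof of Lemma \ref{thm:dist}.} The plan is to verify directly that the four-case formula defines a metric, the only non-trivial points being the triangle inequality and non-degeneracy for pairs of points coming from different copies, and then to observe that the restriction of $d$ to $\{i\}\times M_i$ is by construction $d_i$, which makes the canonical embeddings isometric. Throughout I write $\delta:=\tfrac12\dis(\pi)$ and, for $z\in M_1$, $z'\in M_2$, set $D(z,z'):=\inf_{x\in M_1}\{d_1(z,x)+d_2(\pi(x),z')\}$, so that $d((1,z),(2,z'))=d((2,z'),(1,z))=\delta+D(z,z')$. Symmetry is immediate from the symmetry of this definition and of $d_1,d_2$. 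For non-negativity and the identity of indiscernibles, the same-copy cases are clear; for the cross-copy case one has $d((1,z),(2,z'))=\delta+D(z,z')\geq \delta\geq 0$, and this is strictly positive as soon as $\dis(\pi)>0$, while if $\dis(\pi)=0$ then $\pi$ is an isometry of $M_1$ onto $M_2$ and $D(z,z')\geq d_2(\pi(z),z')$ with equality achievable only in a limit, but in any case $D(z,z')=0$ would force $z'$ to lie in the closure of $\pi(M_1)=M_2$ at distance $0$, i.e.\ $z'=\pi(z)$; thus $d$ vanishes only on the diagonal once we identify $(1,z)$ with $(2,\pi(z))$ in the degenerate case (and in the typical case $\dis(\pi)>0$ there is nothing further to check).

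The heart of the argument is the triangle inequality $d(p,q)\leq d(p,r)+d(r,q)$, which splits according to which copies $p,q,r$ lie in. If all three lie in the same copy this is just the triangle inequality for $d_i$. If $p,q$ lie in copy $1$ and $r=(2,y)$ lies in copy $2$, we must show $d_1(z,z')\leq (\delta+D(z,y))+(\delta+D(z',y))$; using the definition of $D$, pick near-optimal $x,x'\in M_1$ with $d_1(z,x)+d_2(\pi(x),y)\approx D(z,y)$ and $d_1(z',x')+d_2(\pi(x'),y)\approx D(z',y)$, and estimate
\[
d_1(z,z')\leq d_1(z,x)+d_1(x,x')+d_1(x',z')\leq d_1(z,x)+d_1(x',z')+d_2(\pi(x),\pi(x'))+\dis(\pi),
\]
then bound $d_2(\pi(x),\pi(x'))\leq d_2(\pi(x),y)+d_2(y,\pi(x'))$ and collect terms, using $\dis(\pi)=2\delta$; letting the approximations tend to the infima finishes this case. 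The symmetric situation with two points in copy $2$ and one in copy $1$ is identical with the roles of the two spaces interchanged (here one uses surjectivity of $\pi$ to realise $D$ via preimages). Finally, the mixed case $p=(1,z)$, $q=(2,z')$, $r=(1,y)$ (and its mirror $r$ in copy $2$) reduces to showing $\delta+D(z,z')\leq d_1(z,y)+\delta+D(y,z')$, i.e.\ $D(z,z')\leq d_1(z,y)+D(y,z')$, which is immediate from the definition of $D$ as an infimum over $M_1$ (any competitor $x$ for $D(y,z')$ is a competitor for $D(z,z')$ via $d_1(z,x)\leq d_1(z,y)+d_1(y,x)$); likewise $\delta+D(z,z')\leq \delta+D(z,y)+\delta+d_2(y',z')$ in the mirror case follows from $D(z,z')\leq D(z,y')+d_2(y',z')$ plus $\delta\geq 0$.

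Once $d$ is known to be a metric on $M$, the statement about isometries is essentially a tautology: the canonical map $\iota_1:M_1\to M$, $z\mapsto(1,z)$ satisfies $d(\iota_1(z),\iota_1(z'))=d((1,z),(1,z'))=d_1(z,z')$ by the first line of the defining formula, so $\iota_1$ is distance-preserving, hence an isometry onto its image $\{1\}\times M_1\subseteq M$; the same applies verbatim to $\iota_2:M_2\to M$. I do not expect any genuine obstacle here — the result is classical and the only mild subtlety is keeping track of the constant $\delta=\tfrac12\dis(\pi)$ so that the cross-copy triangle inequalities with two points in one copy close exactly (the factor $\tfrac12$ is precisely what is needed so that two copies of $\delta$ reconstitute a full $\dis(\pi)$). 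For this reason one may simply cite \cite[proof of Theorem 7.3.25]{bbi} as indicated, since that reference carries out exactly this verification.
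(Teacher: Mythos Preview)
Your approach matches the paper's: the paper gives no proof beyond citing \cite[proof of Theorem~7.3.25]{bbi}, and your sketch of the direct verification (together with the same citation) is correct. You are right to flag the degenerate case $\dis(\pi)=0$, in which $d$ is only a pseudometric (since $(1,z)$ and $(2,\pi(z))$ then have distance zero); this is a minor imprecision in the lemma's statement rather than a gap in your argument, and it does not affect the application in Theorem~\ref{thm:metric}.
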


\begin{proof}[Proof of Theorem \ref{thm:metric}]
We introduce a map $\pi_n:\mathcal X_n\to\R$ by setting
\begin{align*}
\pi_n(\oomega_i):= \sign(i)n^{-1/\rho}R^{\beta,\lambda/n,Kn}(\oomega_0,\oomega_i),
\end{align*}
and an element $(\mathcal X_n',d_n',\mu_n',0,\Phi_n')$ of $\F_c$ by defining
\begin{align*}
\mathcal X_n'&:=\pi_n(\mathcal X_n),\\
d_n'(\pi_n(\oomega_i),\pi_n(\oomega_j))&:=|\pi_n(\oomega_i)-\pi_n(\oomega_j)|,\\
\mu_n'&:=\mu_n\circ \pi_n^{-1},\\
\Phi_n'(\pi_n(\oomega_i))&:=\inf_{j:\:\pi_n(\oomega_j)=\pi_n(\oomega_i)}\Phi_n(\oomega_j).
\end{align*}
We will prove \eqref{eq:distconv} by showing
\begin{align}
\Delta\left((\mathcal X_n,d_n,\mu_n,\oomega_0,\Phi_n),(\mathcal X_n',d_n',\mu_n',0,\Phi_n')\right)&\xrightarrow[n\to\infty]{\mathbf{P}}0\label{eq:distconv1},\\
(\mathcal X_n',d_n',\mu_n',0,\Phi_n')&\xrightarrow[n\to\infty]d(\mathcal X,d,\mu,0,\Phi).\label{eq:distconv2}
\end{align}

For \eqref{eq:distconv1}, we choose the metric space $M:=(\{1\}\times\mathcal X_n)\cup(\{2\}\times\mathcal X_n')$ and metric $d$ as described in Lemma \ref{thm:dist}, where $\pi_n$ plays the role of $\pi$. Let
\begin{align*}
\mathcal C:=\left\{((1,\oomega_i),(2,\pi_n(\oomega_i))):i\in\{-Kn,...,Kn\}\right\}
\end{align*}
denote the associated correspondence between $\{1\}\times\mathcal X_n$ and $\{2\}\times\mathcal X_n'$, and observe that
\begin{align}\label{eq:close}
d((1,\oomega_i),(2,\pi_n(\oomega_i))=\frac 12\dis(\pi_n)
\end{align}
for every $i\in\{-Kn,...,Kn\}$. Writing $\psi_1$ and $\psi_2$ for the isometric embeddings of $(\mathcal{X}_n,d_n)$ and  $(\mathcal{X}'_n,d'_n)$ into $(M,d)$, it readily follows from \eqref{eq:close} that, for every $\eps> \tfrac12\dis(\pi_n)$ and $A\subseteq M$: if $\pi_n(\oomega_i)\in \psi_2^{-1}(A)$, then $\oomega_i\in \psi_1^{-1}(A^\varepsilon)$. That is, $\pi_n^{-1}\circ\psi_2^{-1}(A)\subseteq  \psi_1^{-1}(A^\varepsilon)$, which implies in turn that
\begin{align*}
\mu_n\circ\psi_1^{-1}(A^\eps)\geq \mu_n'\circ\psi_2^{-1}(A),
\end{align*}
and therefore
\begin{align*}
d_{\text{P}}(\mu_n\circ \psi_1^{-1},\mu_n'\circ\psi_2^{-1})\leq \frac 12\dis(\pi_n).
\end{align*}
It follows that
\begin{align*}
\Delta\left((\mathcal X_n,d_n,\mu_n,\oomega_0,\Phi_n),(\mathcal X_n',d_n',\mu_n',0,\Phi_n')\right)\leq \dis(\pi_n) +\varepsilon_n,
\end{align*}
where
\begin{align*}
\varepsilon_n&:=\sup_{i\in\{-Kn,...,Kn\}}|\Phi_n(\oomega_i)-\Phi_n'(\pi_n(\oomega_i))|.
\end{align*}
Now, since the limiting process in \eqref{eq:convergence} is strictly increasing, it holds that
\begin{align*}
\varepsilon_n&\leq \sup_{i\in\{-Kn,...,Kn\}}\sup_{j:\:\pi_n(\oomega_j)=\pi_n(\oomega_i)}|\Phi_n(\oomega_i)-\Phi_n(\oomega_j)|\xrightarrow[n\to\infty]{\mathbf{P}}0.
\end{align*}
Consequently, to complete the proof of \eqref{eq:distconv1}, it suffices to note that
\begin{align*}
\lefteqn{\dis(\pi_n)}\\
&=\max_{i,j\in\{-Kn,...,Kn\}}|d_n(\oomega_i,\oomega_j)-d_n'(\pi_n(\oomega_i),\pi_n(\oomega_j))|\\
&=n^{-1/\rho}\max_{{\textstyle\mathstrut}i,j\in\{-Kn,...,Kn\}}\left|R^{\beta,\lambda/n,Kn}(\oomega_i,\oomega_j)
\vphantom{\left.|\sign(j)R^{\beta,\lambda/n,Kn}(\oomega_0,\oomega_j)-\sign(i)R^{\beta,\lambda/n,Kn}(\oomega_0,\oomega_i)|\right|}\right.\\
&\hspace{100pt}\left.\vphantom{R^{\beta,\lambda/n,Kn}(\oomega_i,\oomega_j)}-|\sign(j)R^{\beta,\lambda/n,Kn}(\oomega_0,\oomega_j)-\sign(i)R^{\beta,\lambda/n,Kn}(\oomega_0,\oomega_i)|\right|\\
&\xrightarrow[n\to\infty]{\mathbf{P}}0,
\end{align*}
where the final line follows from \eqref{eq:distortion}.

It remains to prove \eqref{eq:distconv2}. We observe that $\mathcal X_n'$ and $\mathcal X$ are subsets of $\R$ equipped with the Euclidean distance, so we can choose $(M,d)=(\R,|\cdot|)$. The injections $\psi_1:\mathcal X_n'\to M$ and $\psi_2:\mathcal X\to M$ are isometries. To conclude, it suffices to construct a sequence
\begin{align*}
(\widetilde{\mathcal X},|\cdot|,\widetilde\mu,0,\widetilde\Phi),\:(\widetilde{\mathcal X}'_1,|\cdot|,\widetilde\mu'_1,0,\widetilde\Phi'_1),\:(\widetilde{\mathcal X}'_2,|\cdot|,\widetilde\mu'_2,0,\widetilde\Phi'_2),...
\end{align*}
satisfying, for all $n\in\N$,
\begin{align}
(\mathcal X_n',|\cdot|,\mu_n',0,\Phi_n')&\isDistr (\widetilde{\mathcal X}_n',|\cdot|,\widetilde\mu_n',0,\widetilde \Phi_n')\label{eq:equally1}\\
(\mathcal X,|\cdot|,\mu,0,\Phi)&\isDistr (\widetilde{\mathcal X},|\cdot|,\widetilde\mu,0,\widetilde \Phi)\label{eq:equally2}.
\end{align}
such that, almost surely,
\begin{align}
d_{\text{P}}(\widetilde\mu_n',\widetilde\mu)&\xrightarrow[n\to\infty]{}0\label{eq:weak_conv}\\
\inf_{\substack{\mathcal C_n\subseteq\widetilde{\mathcal X}'_n\times\widetilde{\mathcal X}\\\text{correspondence}}}\sup_{(x,y)\in\mathcal C_n}\left(|x-y|+|\widetilde\Phi_n'(x)-\widetilde\Phi(y)|\right)&\xrightarrow[n\to\infty]{}0.\label{eq:conv_hausdorff}
\end{align}
To construct this coupling, let us write
\begin{align*}
S_n&:=\left(\pi_n(\oomega_{\floor{un}})\right)_{u\in[-K,K]},
\end{align*}
which we interpret as a random variable on the space $D([-K,K],\mathbb{R})$ of c\'adl\'ag functions equipped with the $J_1$-topology. Recall that this topology is generated by the metric
\begin{align*}
d_{J_1}(f,g):=\inf_{\lambda\in\Lambda}\sup_{u\in[-K,K]}|\lambda(u)-u|+\sup_{u\in[-K,K]}|f\circ\lambda(u)-g(u)|,
\end{align*}
where $\Lambda$ is the set of strictly increasing, surjective functions $\lambda\colon[-K,K]\to[-K,K]$ such that $\lambda$ and $\lambda^{-1}$ are continuous. Moreover, define a measure $\nu_n$ on $\{i/n:i=-Kn,...,Kn\}$ by setting
\begin{align*}
\nu_n(\{i/n\}):=\mu_n(\{\oomega_{i}\}),
\end{align*}
which we interpret as a random variable in the space $\mathcal M([-K,K])$ of finite measures on $[-K,K]$, equipped with the topology of weak convergence. Observe that for every $u\in[-K,K]$,
\begin{align}\label{eq:product}
\mu_n'(\{S_n(u)\})=\nu_n\circ S_n^{-1}(\{S_n(u)\}),
\end{align}
where $S_n^{-1}$ is the usual set inverse, i.e.\ $S_n^{-1}(A):=\{v:\:S_n(v)\in A \}$. Finally, recall from Theorems \ref{thm:approx} and \ref{thm:measure} that $S_n\to S:=S^{\beta,\lambda}$ in distribution and $\nu_n\to \nu$ almost surely, where $\nu\in\mathcal M([-K,K])$ is defined by
\begin{align*}
\nu(\dd r)=\mathbf{E}\left(c^{\beta,0}(\omega_0)\right)e^{2\lambda r/\rho}\1_{r\in[-K,K]}\dd r.
\end{align*}
Since the latter limit is deterministic, we can conclude that $(S_n,\nu_n)\xrightarrow d (S,\nu)$ jointly in $D([-K,K],\mathbb{R})\times\mathcal M[-K,K]$. This space is separable, so by the Skorohod embedding theorem, there exists a probability space supporting $(\widetilde S,\widetilde\nu),(\widetilde S_1,\widetilde\nu_1),(\widetilde S_2,\widetilde\nu_2),...$ such that
\begin{align}
(S_n,\nu_n)&\isDistr (\widetilde S_n,\widetilde\nu_n),\label{eq:coupling1}\\
(S,\nu)&\isDistr (\widetilde S,\widetilde\nu),\label{eq:coupling2}
\end{align}
and such that almost surely
\begin{align}\label{eq:coupling3}
d_{J_1}(\widetilde S_n,\widetilde S)+d_{\text{P}}(\widetilde \nu_n,\widetilde\nu)\xrightarrow[n\to\infty]{}0.
\end{align}
To construct the desired coupling, let $(\widetilde {\mathcal X},|\cdot|,\widetilde \mu,0,\widetilde \Phi)$ be defined as $(\mathcal X,|\cdot|,\mu,0,\Phi)$ with $S$ replaced by $\widetilde S$, and set
\begin{align*}
\widetilde{\mathcal X}_n'&:=\widetilde S_n([-K,K]),\\
\widetilde \mu_n'(\{S_n(u)\})&:=\widetilde \nu_n\circ S_n^{-1}(\{\widetilde S_n(u)\}),\\
\widetilde \Phi_n'(\widetilde S_n(u))&:=\inf \widetilde S_n^{-1}(\{\widetilde S_n(u)\}).
\end{align*}
The coupling properties \eqref{eq:equally1} and \eqref{eq:equally2} now follow from \eqref{eq:coupling1} and \eqref{eq:coupling2}, together with \eqref{eq:product}. Next, to verify \eqref{eq:conv_hausdorff}, we construct a suitable correspondence $\mathcal C_n$. Let $\lambda_n\in\Lambda$ be such that
\begin{align}\label{eq:deflambda}
\sup_{u\in[-K,K]}|\lambda_n(u)-u|+\sup_{u\in[-K,K]}|\widetilde S(u)-\widetilde S_n\circ\lambda_n(u)|\leq d_{J_1}(\widetilde S_n,\widetilde S)+n^{-1},
\end{align}
and define $\mathcal C_n:=\mathcal C_n^1\cup\mathcal C_n^2$, where
\begin{align*}
\mathcal C^1&:=\left\{\left(\widetilde S_n(\lambda_n(u)),\widetilde S(u)\right):u\in[-K,K]\right\}\\
\mathcal C_n^2&:=\left\{\left(\widetilde S_n(\lambda_n(u)^-),\widetilde S(u^-)\right):u\in(-K,K]\right\}.
\end{align*}
Note that, since $\widetilde S$ is almost-surely strictly increasing, $\widetilde S^{-1}(\widetilde S(u))=\widetilde S^{-1}(\widetilde S(u^-))=u$ for all $u\in[-K,K]$, where we write $\widetilde S^{-1}$ for the (right-)continuous inverse of $\widetilde S$. We therefore have
\begin{eqnarray*}
\lefteqn{\sup_{(x_1,x_2)\in\mathcal C_n^1}\left|\widetilde\Phi_n(x_1)-\widetilde\Phi(x_2)\right|}\\
&=&\sup_{u\in[-K,K]}\left|\inf \widetilde S_n^{-1}(\{\widetilde S_n(\lambda_n(u))\})-\widetilde S^{-1}(\widetilde S(u))\right|\\
&\leq&\sup_{\substack{u,v\in[-K,K]:\\\tilde{S}_n(\lambda_n(v))=\tilde{S}_n(\lambda_n(u))}}|\lambda_n(v)-u|\\
&\leq & \sup_{\substack{u,v\in[-K,K]:\\|\tilde{S}(v)-\tilde{S}(u)|\leq 2d_{J_1}(\tilde{S}_n,\tilde{S})+2n^{-1}}}|v-u|
+ d_{J_1}(\tilde{S}_n,\tilde{S})+n^{-1}.
\end{eqnarray*}
Similarly, since we also have that $\sup_{u\in(-K,K]}|\widetilde S(u^-)-\widetilde S_n\circ\lambda_n(u^-)|\leq d_{J_1}(\widetilde S_n,\widetilde S)+n^{-1}$,
\begin{eqnarray*}
\sup_{(x_1,x_2)\in\mathcal C_n^2}|\widetilde\Phi_n(x_1)-\widetilde\Phi(x_2)|&\leq&\sup_{\substack{u\in(-K,K],\: v\in[-K,K]:\\\tilde{S}_n(\lambda_n(v))=\tilde{S}_n(\lambda_n(u^-))}}|\lambda_n(v)-u|\\
&\leq & \sup_{\substack{u\in(-K,K],\:v\in[-K,K]:\\|\tilde{S}(v)-\tilde{S}(u^-)|\leq 2d_{J_1}(\tilde{S}_n,\tilde{S})+2n^{-1}}}|v-u|
+ d_{J_1}(\tilde{S}_n,\tilde{S})+n^{-1}.
\end{eqnarray*}
Next, by \eqref{eq:deflambda},
\begin{align*}
\sup_{(x,y)\in\mathcal C_n^1}|x-y|
&=\sup_{u\in[-K,K]}|\widetilde S(u)-\widetilde S_n(\lambda_n(u))|\leq d_{J_1}(\widetilde S_n,\widetilde S)+n^{-1}.
\end{align*}
Finally, since $\lambda_n$ is continuous and strictly increasing, we also have
\begin{align*}
\sup_{(x,y)\in\mathcal C_n^2}|x-y|&=\sup_{u\in(-K,K]}|\widetilde S(u^-)-\widetilde S_n(\lambda_n(u)^-)|\\
&=\sup_{u\in(-K,K]}\lim_{\eps\downarrow 0}|\widetilde S(u-\eps)-\widetilde S_n(\lambda_n(u-\eps))|\\
&\leq d_{J_1}(\widetilde S_n,\widetilde S)+n^{-1}.
\end{align*}
Combining these inequalities with \eqref{eq:coupling3}, and again appealing to the fact that $\tilde{S}$ is almost-surely strictly increasing, we obtain \eqref{eq:conv_hausdorff}, as desired. For \eqref{eq:weak_conv}, take $v\in\mathbb{R}$ and observe
\begin{align*}
\widetilde\mu_n'((-\infty,v])&=\widetilde \nu_n\left(u\in[-K,K]:\:\widetilde S_n(u)\leq v\right)\\
&\rightarrow \widetilde\nu\left(u\in[-K,K]:\:\widetilde S(u)\leq v\right)\\
&=\widetilde \mu((-\infty,v]),
\end{align*}
where we have applied \eqref{eq:coupling1}, \eqref{eq:coupling2}, the fact that $\widetilde S$ is almost surely strictly increasing, and the continuity of the limit measure to deduce the convergence statement. This shows that $\widetilde\mu_n\to \widetilde \mu$ weakly, and hence establishes \eqref{eq:weak_conv}.
\end{proof}

\section{Proof of Theorem \ref{thm:main}}\label{sec:mr}

Before establishing our main result, Theorem \ref{thm:main}, we first give the corresponding result for a random walk on $\{\oomega_{-Kn},...,\oomega_{Kn}\}$, for which a scaling limit readily follows from what we have already proved in conjunction with known results for resistance forms.

To enable us to continue in this direction, let us briefly review the resistance form theory to which we will appeal (see \cite{Cint} for an extended version of this introduction). Resistance forms were introduced in the study of analysis on fractals, where Kigami also formulated the idea of a resistance metric on a general space, see \cite{Kigaof,Kigres} for background. In particular, for $\mathcal{X}$ a set, a function $R:\mathcal{X}\times \mathcal{X}\rightarrow \mathbb{R}$ is a resistance metric on $\mathcal{X}$ if, for every finite $V \subseteq \mathcal{X}$, one can find a weighted (i.e.\ equipped with $(0,\infty)$-valued conductances) connected, simple graph with vertex set $V$ for which $R|_{V\times V}$ is the associated effective resistance. (That $R$ is indeed a metric readily follows from the commute time identity for finite graphs \cite{CRRST,Tet}.) Moreover, Kigami showed that naturally associated with a resistance metric space $(\mathcal{X},R)$, there exists a so-called `resistance form', that is, a quadratic form $(\mathcal{E},\mathcal{F})$ on $\mathcal{X}$ that satisfies certain properties and is characterised by the relation
\[R(x,y)^{-1}=\inf\left\{\mathcal{E}(f,f):\:f\in\mathcal{F},\:f(x)=0,\:f(y)=1\right\},\qquad \forall x,y\in\mathcal{X},\:x\neq y.\]
(Cf.\ \eqref{effres}.) Importantly, from the point of view of probability theory, if a resistance metric space $(\mathcal{X},R)$ is compact, then the corresponding resistance form $(\mathcal{E},\mathcal{F})$ is actually a regular Dirichlet form on $L^2(\mathcal{X},\mu)$ for any finite Borel measure $\mu$ of full support (see \cite[Corollary 6.4 and Theorem 9.4]{Kigres}), and so in turn associated with a Hunt process $((X_t)_{t\geq 0},(P_x)_{x\in\mathcal{X}})$. (NB. The locally compact case is also considered in \cite{Kigres}, but we will not need that in this section.) Now, related to the discussion of the previous section, for metric measure spaces, a natural topology is given by the Gromov-Hausdorff-Prohorov distance, and in \cite{croydon2018} it was shown that the laws of the stochastic processes associated with resistance metric measure spaces are, in a certain sense, continuous with respect to this topology. More precisely, it was shown that if $(\mathcal{X}_n,R_n,\mu_n,\rho_n,\Phi_n )\xrightarrow{d}(\mathcal{X},R,\mu,\rho,\Phi )$ in $\F_c$ and also $(\mathcal{X}_n,R_n)$ and $(\mathcal{X},R)$ are resistance metric spaces, and $\mu_n$ and $\mu$ have full support, then
\[\Phi_n(X^n)\xrightarrow{d}\Phi(X)\]
in $D(\mathbb{R}_+,\mathbb{R})$ with respect to the annealed law, where $X^n$ and $X$,  the Markov processes associated with the relevant spaces, are started from $\rho_n$ and $\rho$, respectively. (This is a simplified version of \cite[Theorem 7.2]{croydon2018}.) We note that the latter result builds on the work \cite{ALW,CHK}, with \cite{ALW} covering the case of tree-like metric spaces, and \cite{CHK} giving a similar result under a uniform volume doubling assumption.

Returning to the setting of the present article, we recall the space $(\mathcal{X}_n,d_n,\mu_n,{\oomega}_0,\Phi_n)$, as defined in the statement of Theorem \ref{thm:metric}. Now, the effective resistance metric on a finite weighted graph is a resistance metric in the sense of Kigami, and it is an elementary exercise to check that the $\Phi_n$-embedding $(Z_n(t))_{t\geq 0}$ of the Markov process associated with $(\mathcal{X}_n,d_n,\mu_n)$ satisfies
\[\left(Z_n(t)\right)_{t\geq 0}\buildrel{d}\over{=}\left(n^{-1}X^{\beta,\lambda/n,Kn}_{n^{1+1/\rho}t}\right)_{t\geq 0},\]
where $X^{\beta,\lambda/n,Kn}$ is the random walk on $\{\oomega_{-Kn},...,\oomega_{Kn}\}$ with generator
\begin{align}\label{eq:truncated_generator}
(L^{\beta,\lambda/n,Kn}f)(\oomega_i)&:=\sum_{j\in\{-Kn,...,Kn\}}\frac{c^{\beta,\lambda/n,Kn}(\oomega_i,\oomega_j)}{c^{\beta,\lambda/n,Kn}(\oomega_i)}(f(\oomega_j)-f(\oomega_i)).
\end{align}
We denote by $\mathbb{P}^{\beta,\lambda/n,Kn}$ the annealed law of $X^{\beta,\lambda/n,Kn}$ (defined similarly to \eqref{annealed}). As for the limiting space $(\mathcal{X},d,\mu,0,\Phi)$ from Theorem \ref{thm:metric}, we have from \cite[Section 16]{Kigres} (and the trace properties for resistance and Dirichlet forms of \cite[Section 8]{Kigres} and \cite[Theorem 6.2.1]{FOT}, respectively) that $(\mathcal{X},d)$ is a resistance metric space and the Markov process corresponding to $(\mathcal{X},d,\mu)$ is Brownian motion time-changed according to $\mu=\mu^{\beta,\lambda,K}$, i.e.\ the process $(B_{H^{\beta,\lambda,K}_t})_{t\geq 0}$, where
\begin{align*}
\mu^{\beta,\lambda,K}\left([a,b]\right)&:= \mathbf{E}\left(c^{\beta,0}(\omega_0)\right)\int_{(S^{\beta,\lambda})^{-1}(a\vee -K)}^{(S^{\beta,\lambda})^{-1}(b\wedge K)}e^{2\lambda r/\rho}\dd r
\end{align*}
and
\begin{align*}
H_t^{\beta,\lambda,K}&:=\inf\left\{s\geq 0:\int_\R L^B_s(x)\mu^{\beta,\lambda,K}(\dd x)>t\right\}.
\end{align*}
Thus the $\Phi$-embedded version of this process is given by
\begin{align*}
Z_t^{\beta,\lambda,K}&:=(S^{\beta,\lambda})^{-1}\left(B_{H^{\beta,\lambda,K}_t}\right),
\end{align*}
and we will write $\mathbb{P}^{\beta,\lambda,K}$ for the law of $Z^{\beta,\lambda,K}$. Note that since $\mu^{\beta,\lambda,K}$ is supported on $\overline {S^{\beta,\lambda}([-K,K])}$, the process $Z^{\beta,\lambda,K}$ takes values in $[-K,K]$ and is reflected at the boundary $\{\pm K\}$. In view of these preparations, the following result is now straightforward to prove.

\begin{proposition}\label{prop:fixedK}
For every $\rho<1$ and $\beta,\lambda,K\geq 0$, it holds that as $n\to\infty$,
\[\mathbb{P}^{\beta,\lambda/n,Kn}\left((n^{-1}X_{n^{1+1/\rho}t})_{t\geq 0}\in\cdot\right)\]
converge weakly as probability measures on $D([0,\infty),\mathbb{R})$ to the law of $Z^{\beta,\lambda,K}$.
\end{proposition}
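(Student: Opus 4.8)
The plan is to read Proposition~\ref{prop:fixedK} off the metric-measure convergence of Theorem~\ref{thm:metric} via the general process-convergence result \cite[Theorem 7.2]{croydon2018}, together with the identifications of the discrete and continuum processes already recorded in the discussion preceding the statement. First I would observe that each $(\mathcal{X}_n,d_n)$ is the effective-resistance metric of a finite weighted graph (namely the truncated network with conductances $n^{1/\rho}c^{\beta,\lambda/n,Kn}$) and is therefore a compact resistance metric space in the sense of Kigami, while $(\mathcal{X},d)$ is a resistance metric space by the trace theory for resistance forms applied to the standard one-dimensional resistance form on $[S^{\beta,\lambda}(-K),S^{\beta,\lambda}(K)]$ (cf.\ \cite[Section 16]{Kigres}). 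Next, $\mu_n$ has full support on $\mathcal{X}_n$ since $\mu_n(\{\oomega_i\})>0$ for every $i$, and $\mu$ has full support on $\mathcal{X}=\overline{S^{\beta,\lambda}([-K,K])}$: using that $S^{\beta,\lambda}$ is strictly increasing, every point of $\overline{S^{\beta,\lambda}([-K,K])}$ — including the two endpoints of each gap arising from a jump of $S^{\beta,\lambda}$ — is a limit of points carrying positive $\mu$-mass. These observations place us exactly in the setting of \cite[Theorem 7.2]{croydon2018}.

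Applying that result to the convergence \eqref{eq:distconv} from Theorem~\ref{thm:metric} then gives $\Phi_n(X^n)\xrightarrow{d}\Phi(X)$ in $D([0,\infty),\R)$ under the annealed law — the annealing over the environment $(\omega,E)$ on the discrete side matching the annealing over the random space in \cite{croydon2018} — where $X^n$ and $X$ are the Markov processes associated with $(\mathcal{X}_n,d_n,\mu_n)$ and $(\mathcal{X},d,\mu)$ started from $\oomega_0$ and $0$, respectively. Here it is important that $\mathcal{X}$ is compact, so that $X=(B_{H^{\beta,\lambda,K}_t})_{t\ge 0}$ is recurrent and does not explode in finite time, which is what \cite[Theorem 7.2]{croydon2018} requires; the non-recurrent situation only becomes an issue when one later lets $K\to\infty$ with $\lambda>0$, and is dealt with separately. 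It then remains to invoke the two identifications already made in the text: the generator of the $\mu_n$-symmetric walk on $(\mathcal{X}_n,d_n)$ equals $n^{1+1/\rho}L^{\beta,\lambda/n,Kn}$ and $\Phi_n(\oomega_i)=n^{-1}\omega_i$, so that $\Phi_n(X^n)\stackrel{d}{=}(n^{-1}X^{\beta,\lambda/n,Kn}_{n^{1+1/\rho}t})_{t\ge 0}$; and the trace description of the time-changed Brownian motion gives $\Phi(X)=Z^{\beta,\lambda,K}$. Substituting yields the claimed weak convergence of $\mathbb{P}^{\beta,\lambda/n,Kn}((n^{-1}X_{n^{1+1/\rho}t})_{t\ge 0}\in\cdot)$ to the law of $Z^{\beta,\lambda,K}$.

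The only point that genuinely requires care — and it is in large part already carried out in the paragraphs preceding the statement — is the verification that the abstract Markov process attached to the resistance form of $(\mathcal{X}_n,d_n,\mu_n)$ coincides, after the $\Phi_n$-embedding and at the correct time scale $n^{1+1/\rho}$, with the concretely-defined reflected Mott walk, and, on the limit side, that the resistance form of $(\mathcal{X},d,\mu)$ has $(B_{H^{\beta,\lambda,K}_t})_{t\ge 0}$ as its associated process, which one reads from the trace theory of \cite{Kigres,FOT}. The remaining prerequisites (the resistance-metric property, full support, compactness, non-explosion) are routine, so apart from this bookkeeping the proposition is essentially a corollary of Theorem~\ref{thm:metric} and \cite[Theorem 7.2]{croydon2018}.
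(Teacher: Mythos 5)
Your argument is correct and is essentially identical to the paper's own proof: the paper likewise deduces the proposition directly from Theorem \ref{thm:metric} and \cite[Theorem 7.2]{croydon2018}, using the identifications (made in the paragraphs preceding the statement) of $\Phi_n(X^n)$ with $(n^{-1}X^{\beta,\lambda/n,Kn}_{n^{1+1/\rho}t})_{t\geq 0}$ and of $\Phi(X)$ with $Z^{\beta,\lambda,K}$. The hypothesis checks you spell out (resistance metric property, full support of the measures, compactness) are exactly the points the paper handles in that preceding discussion, so no gap remains.
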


\begin{proof}
Recall the elements $(\mathcal X_n,d_n,\mu_n,\oomega_0,\Phi_n)$ and $(\mathcal X,d,\mu,0,\Phi)$ from the statement of Theorem \ref{thm:metric}, and let $Z_n$ and $Z$ denote the corresponding processes on $\mathcal X_n$ and $\mathcal X$, as introduced above. Since $\Phi_n(Z_n)$ has annealed law $\mathbb{P}^{\beta,\lambda/n,Kn}$ and the annealed law of $\Phi(Z)$ matches the law of $Z^{\beta,\lambda,K}$, the conclusion follows from Theorem \ref{thm:metric} and \cite[Theorem 7.2]{croydon2018}.
\end{proof}

Our proof of Theorem \ref{thm:main} will be based on three lemmas. The first two lemmas are estimates for excess times under $\mathbb{P}^{\beta,\lambda/n,Kn}$ and under $\mathbb{P}^{\beta,\lambda,K}$. (Recall that we write $\mathbb{P}^{\beta,\lambda,K}$ for the law of $Z^{\beta,\lambda,K}$ and $\mathbb{P}^{\beta,\lambda}$ for the law of $Z^{\beta,\lambda}$.) In particular, in what follows, we let $\tau_a(Z)$ denote the excess time of $a$ for some process $Z$, that is
\begin{align*}
\tau_a(Z):=
\begin{cases}
\inf\{s\geq 0:\:Z_s\geq a\},&\text{ if }a>0,\\
\inf\{s\geq 0:\:Z_s\leq a\},&\text{ if }a<0.
\end{cases}
\end{align*}
Even though the excess time is not continuous as a function on $D([0,\infty),\mathbb{R})$, it is possible to deduce the following bound. We highlight that the proofs of this and the subsequent two lemmas are postponed until the end of the section.

\begin{lemma}\label{lem:limsup}
For every $t>0$,
\begin{equation*}
 \begin{split}
&\limsup_{n\to\infty}\mathbb{P}^{\beta,\lambda/n,Kn}\left(\tau_{Kn}(X)\wedge \tau_{-Kn}(X)\leq tn^{1/\rho}\right)\\
&\qquad \qquad \leq \mathbb{P}^{\beta,\lambda,K}\left(\tau_{K-1}(Z)\wedge\tau_{-K+1}(Z)\leq t+1\right).
 \end{split}
\end{equation*}
\end{lemma}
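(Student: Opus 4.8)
The plan is to deduce this from the weak convergence of Proposition~\ref{prop:fixedK} via the Skorohod representation theorem, using the slack built into the statement — the passage from $\pm K$ to $\pm(K-1)$ and from $t$ to $t+1$ — to circumvent the discontinuity of the first-passage functional $f\mapsto\tau_a(f)$ on $D([0,\infty),\R)$.

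First I would reformulate the event in terms of the rescaled walk. Write $W_n:=(n^{-1}X_{n^{1+1/\rho}u})_{u\ge0}$ for the rescaled walk under $\mathbb{P}^{\beta,\lambda/n,Kn}$. Since the state space $\{\oomega_{-Kn},\dots,\oomega_{Kn}\}$ is truncated, $W_n$ is confined to $[-K,K]$, and it attains the value $K$ (resp.\ $-K$) precisely when $X$ reaches the collapsed boundary state $\oomega_{Kn}$ (resp.\ $\oomega_{-Kn}$); hence, after this rescaling, the event $\{\tau_{Kn}(X)\wedge\tau_{-Kn}(X)\le tn^{1/\rho}\}$ is contained in $\{\tau_K(W_n)\wedge\tau_{-K}(W_n)\le t\}$. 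By Proposition~\ref{prop:fixedK}, $W_n\to Z:=Z^{\beta,\lambda,K}$ in distribution in $(D([0,\infty),\R),J_1)$; as the limit is continuous, the Skorohod representation theorem lets us realise this convergence on a common probability space so that $\sup_{u\in[0,t+1]}|W_n(u)-Z(u)|\to0$ almost surely. (Working only with the $J_1$ definition via a near-optimal time change $\lambda_n$, with $\sup_{[0,t+1]}|\lambda_n-\mathrm{id}|\to 0$, leads to the same conclusion, the extra unit of time absorbing the distortion of $\lambda_n$.)

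Next I would argue pathwise on a realisation with $\sup_{[0,t+1]}|W_n-Z|\to 0$. On $\{\tau_K(W_n)\le t\}$ there is $u_n\le t$ with $W_n(u_n)=K$, so $Z(u_n)\ge K-\sup_{[0,t]}|W_n-Z|>K-1$ for all $n$ large, whence $\tau_{K-1}(Z)\le u_n\le t+1$; the symmetric statement holds on $\{\tau_{-K}(W_n)\le t\}$. Therefore, almost surely, for all large $n$,
\[
\{\tau_{Kn}(X)\wedge\tau_{-Kn}(X)\le tn^{1/\rho}\}\subseteq\{\tau_{K-1}(Z)\wedge\tau_{-K+1}(Z)\le t+1\}.
\]
Taking expectations, applying the reverse Fatou lemma to the uniformly bounded indicator functions, and recalling that $Z$ has law $\mathbb{P}^{\beta,\lambda,K}$, this gives precisely the claimed $\limsup$ bound.

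There is no serious obstacle here; the one point needing care is the topological bookkeeping. If one avoids the uniform-convergence shortcut, one must verify that a first-passage time lying in $[0,t]$ for $W_n$ is carried by $\lambda_n^{-1}$ to a time still below $t+1$, and that the closeness of $W_n\circ\lambda_n$ to $Z$ is exploited exactly at that (moving) time. The interior level $K-1$ is used rather than $K$ because $\pm K$ is the reflecting boundary of $Z^{\beta,\lambda,K}$, where first passage is genuinely ill-behaved, whereas at $K-1$ the bound $Z(u_n)\ge K-o(1)$ comfortably forces $\tau_{K-1}(Z)\le t+1$; the extra unit of time plays the analogous buffering role for the $J_1$ time-distortion.
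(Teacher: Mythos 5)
Your argument is correct, but it implements the key step differently from the paper. Both proofs start from Proposition~\ref{prop:fixedK} and exploit exactly the slack you identify (level $K\to K-1$, time $t\to t+1$) to get around the $J_1$-discontinuity of first-passage functionals; the difference is in how the weak convergence is converted into the inequality. The paper does not use Skorohod representation at all: it constructs the explicit test functional $f(x):=\int_0^1 \1\{|x(u)|\geq K-s\text{ for some }u\in[0,t+s]\}\,\dd s$, checks that averaging the indicator over both the level and the time horizon makes $f$ continuous on $D([0,\infty),\R)$, and notes that $f=1$ on the prelimit event and $f=0$ off the limit event, so the bound follows directly from $\mathbb{E}^{\beta,\lambda/n,Kn}[f(n^{-1}X_{n^{1+1/\rho}\cdot})]\to\mathbb{E}^{\beta,\lambda,K}[f(Z)]$. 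Your route instead couples via Skorohod representation, upgrades $J_1$ convergence to locally uniform convergence using continuity of the limit, and concludes by a pathwise inclusion of events plus reverse Fatou; this is perfectly sound, and arguably more transparent, but it uses one input the paper's proof does not need, namely that $Z^{\beta,\lambda,K}$ is a.s.\ continuous. That fact is available (the argument of Lemma~\ref{zbllem}, which is independent of the present lemma, applies verbatim to the reflected process, so there is no circularity), but you should say so explicitly — or lean on your parenthetical variant with a near-optimal time change $\lambda_n$, which avoids continuity of the limit altogether at the cost of the bookkeeping you describe. Two cosmetic points: on $\{\tau_K(W_n)\le t\}$ you only get $W_n(u_n)\ge K$ (the embedded value at the collapsed boundary state is $n^{-1}\omega_{\pm Kn}$, not exactly $\pm K$), which is all your inequality needs; and the containment of the discrete exit event in $\{\tau_K(W_n)\wedge\tau_{-K}(W_n)\le t\}$ is the same (implicit) translation step the paper itself relies on in order to assert $f=1$ on that event.
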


We are going to derive Theorem \ref{thm:main} from Proposition \ref{prop:fixedK}. It is clear that the laws of $Z^{\beta,\lambda,K}$ and $Z^{\beta,\lambda}$ agree until the first hitting time of $\{-K,K\}$ by $Z^{\beta,\lambda,K}$, and so it remains to show that, as $K\to\infty$, the hitting times of $\{-K,K\}$ by $Z^{\beta,\lambda,K}$ diverge in probability. In other words, the sequence $(\mathbb{P}^{\beta,\lambda,K})_{K\geq 0}$ is tight in $D([0,\infty),\mathbb{R})$. Note that this is not at all obvious: if $\lambda>0$, then the image $S^{\beta,\lambda}(\R)$ is bounded from above, and the Brownian motion $B$ will hit $S^{\beta,\lambda}(\infty):=\lim_{t\to\infty}S^{\beta,\lambda}(t)$ at some finite time $\zeta$. That is, the process $((S^{\beta,\lambda})^{-1}(B_t))_{t\geq 0}$ without a time change diverges in finite time. Note, however, that the mass at $x$ under the speed-measure $\mu^{\beta,\lambda}$ grows exponentially as $x\uparrow S^{\beta,\lambda}(\infty)$, so that the time-changed Brownian motion $B_{H_t}$ is slowed down as it approaches $S^{\beta,\lambda}(\infty)$. The next lemma shows that the explosion time $\zeta$ is `delayed until time $\infty$'.

\begin{lemma}\label{lem:nonexpl}
For any $u>0$,
\begin{align*}
\limsup_{K\to\infty}\mathbb{P}^{\beta,\lambda}\left(\tau_{K}(Z)\wedge\tau_{-K}(Z)\leq u\right)=0.
\end{align*}
\end{lemma}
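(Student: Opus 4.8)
The plan is to work directly with the representation $Z^{\beta,\lambda}_t = (S^{\beta,\lambda})^{-1}(B_{H^{\beta,\lambda}_t})$ and to show that the time change $H^{\beta,\lambda}$ forces the process to spend a divergent amount of time before hitting $\{-K,K\}$. Note that $\tau_K(Z) \wedge \tau_{-K}(Z) \leq u$ means that $B_{H^{\beta,\lambda}_s}$ reaches $S^{\beta,\lambda}(K)$ or $S^{\beta,\lambda}(-K^-)$ for some $s \leq u$; equivalently, writing $T_a(B) := \inf\{t\geq 0 : B_t \notin (-a,a)\}$ for the exit time of Brownian motion and using that $H^{\beta,\lambda}$ is the inverse of the additive functional $A_s := \int_\R L^B_s(x)\,\mu^{\beta,\lambda}(\mathrm dx)$, the event $\{\tau_K(Z)\wedge\tau_{-K}(Z) \leq u\}$ is contained in $\{A_{T}\leq u\}$ where $T$ is the first exit time of $B$ from the interval $(S^{\beta,\lambda}(-K^-), S^{\beta,\lambda}(K))$. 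Hence it suffices to show that $A_T \to \infty$ in $\mathbf P$-probability (in fact almost surely) as $K\to\infty$.

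The key point is a lower bound on $A_T = \int_\R L^B_T(x)\,\mu^{\beta,\lambda}(\mathrm dx)$. The occupation-time formula gives $A_T = \int_0^T \mathrm{d}s$ pushed forward appropriately, but more usefully: by the first Ray--Knight theorem (or simply by the fact that $L^B_T(x)$ is strictly positive for every $x$ strictly between the starting point $0$ and each endpoint, with positive probability, and has a known law), the local time $L^B_T(x)$ at any fixed interior point $x$ is a positive random variable whose law does not degenerate. I would fix a small interval $J$ around $0$ in the interior of $(S^{\beta,\lambda}(-K^-), S^{\beta,\lambda}(K))$ (which, for $K$ large, contains any fixed compact set with probability tending to $1$, since $S^{\beta,\lambda}(\pm K) \to \pm\infty$ when $\lambda=0$, and $S^{\beta,\lambda}(K)\uparrow S^{\beta,\lambda}(\infty)$, $S^{\beta,\lambda}(-K^-)\downarrow S^{\beta,\lambda}(-\infty)$ with $0$ in the open interior when $\lambda>0$). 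On $J$, conditional on $S^{\beta,\lambda}$, the measure $\mu^{\beta,\lambda}$ has a strictly positive, locally bounded density $(S^{\beta,\lambda})^{-1}$-pushforward of $e^{2\lambda r/\rho}\,\mathrm dr$; so $\mu^{\beta,\lambda}(J) > 0$ almost surely and is non-decreasing in $K$ once $J$ is fixed. Thus $A_T \geq \inf_{x\in J} L^B_T(x)\cdot \mu^{\beta,\lambda}(J)$, and the lower bound $\inf_{x\in J}L^B_T(x)$ is bounded below in probability (it does not go to zero) as we enlarge the exit interval, since enlarging the domain only makes $T$ larger and the local time on the fixed interval $J$ stochastically larger. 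Letting $K\to\infty$ along the range of $J$ of increasing size, we get $A_T\to\infty$.

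More carefully, I would argue as follows. Fix $M>0$. It is enough to show $\limsup_K \mathbb P^{\beta,\lambda}(A_{T_K} \leq u) = 0$ where $T_K$ is the exit time of $B$ from $I_K := (S^{\beta,\lambda}(-K^-), S^{\beta,\lambda}(K))$. Condition on the subordinator; write $a_K := S^{\beta,\lambda}(-K^-) \to a_\infty \in [-\infty,0)$ and $b_K := S^{\beta,\lambda}(K) \to b_\infty \in (0,\infty]$, with $a_\infty < 0 < b_\infty$ almost surely. Choose $0 < b < b_\infty$ and $a_\infty < a < 0$ so that $\mu^{\beta,\lambda}([a,b]) > u/c$ for a constant $c>0$ to be chosen; this is possible almost surely because $\mu^{\beta,\lambda}$ has no atoms on $\R$ when restricted away from subordinator jumps... actually $\mu^{\beta,\lambda}$ assigns mass at least $\mathbf E(c^{\beta,0}(\omega_0))\,|\!(S^{\beta,\lambda})^{-1}(b)-(S^{\beta,\lambda})^{-1}(a)|\cdot e^{-2\lambda K/\rho}$ wait, I would instead use that $(S^{\beta,\lambda})^{-1}$ is finite and strictly increasing on $(a_\infty, b_\infty)$, so $\mu^{\beta,\lambda}([a,b]) = \mathbf E(c^{\beta,0}(\omega_0))\int_{(S^{\beta,\lambda})^{-1}(a)}^{(S^{\beta,\lambda})^{-1}(b)} e^{2\lambda r/\rho}\,\mathrm dr$ is strictly positive and, as $b\uparrow b_\infty$, $a\downarrow a_\infty$, tends to $\mu^{\beta,\lambda}((a_\infty,b_\infty)) = +\infty$ (the integral of $e^{2\lambda r/\rho}$ over all of $\R$, or over a half-line, diverges; when $\lambda=0$ it is $\mathbf E(c^{\beta,0}(\omega_0))\cdot\infty$, when $\lambda>0$ the exponential growth gives divergence). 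So pick $a,b$ with $\mu^{\beta,\lambda}([a,b]) > N$ for a large $N$. Then for $K$ large enough that $a_K < a < b < b_K$, on the event that $B$ accumulates local time at least $\delta$ uniformly over $[a,b]$ before exiting $I_K$, we have $A_{T_K} \geq N\delta$. Since $T_K \geq$ exit time from $[a,b]$, and the Brownian local time on $[a,b]$ at its own exit time of $[a,b]$ is almost surely positive and bounded below in distribution, we get $\mathbb P^{\beta,\lambda}(A_{T_K} \leq u \mid S^{\beta,\lambda}) \leq \mathbb P(\inf_{x\in[a,b]} L^B_{\sigma}(x) \leq u/N)$ where $\sigma$ is the exit time of $B$ from $[a,b]$; this last quantity does not depend on $K$ and tends to $0$ as $N\to\infty$ by the Ray--Knight description of $(L^B_\sigma(x))_{x\in[a,b]}$ as a squared Bessel-type process that is a.s.\ strictly positive on the interior and continuous up to the boundary $0$ only at the endpoints $a,b$ — so $\inf_{x\in[a,b]}L^B_\sigma(x) = L^B_\sigma(a)\wedge L^B_\sigma(b) > 0$ a.s. Taking $N\to\infty$ after $K\to\infty$, integrating over $S^{\beta,\lambda}$ by dominated convergence, gives the claim.

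The main obstacle is making the interchange of limits rigorous: one must verify that $a,b$ can be chosen (depending on the realisation of $S^{\beta,\lambda}$) with $\mu^{\beta,\lambda}([a,b])$ arbitrarily large while $[a,b]\subset I_K$ for all large $K$, and then control the conditional probability $\mathbb P(\inf_{x\in[a,b]}L^B_\sigma(x)\leq u/N \mid S^{\beta,\lambda})$ uniformly enough to apply dominated convergence in $S^{\beta,\lambda}$. This requires knowing that $\mu^{\beta,\lambda}((a_\infty,b_\infty))=\infty$ almost surely (immediate from $\int e^{2\lambda r/\rho}\mathrm dr = \infty$ over any half-line together with surjectivity of $(S^{\beta,\lambda})^{-1}$ onto a half-line or all of $\R$), and a Ray--Knight / exit-time local-time estimate for Brownian motion, which is classical. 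An alternative, perhaps cleaner, route avoiding Ray--Knight: bound $A_{T_K}$ below using the occupation formula $A_{T_K} = \int_0^{T_K} \mathbf 1\{B_s \in \R\}\,\tilde c(B_s)\,\mathrm ds$ is not quite available since $\mu$ has a density only in the pushed-forward coordinate; so I would instead use the elementary fact that for the time-changed process, the expected hitting time of $\{a,b\}$ started from $0$ equals (up to the harmonic-measure weighting) a Green's-function integral $\int G_{[a,b]}(0,x)\,\mu^{\beta,\lambda}(\mathrm dx)$, where $G_{[a,b]}(0,x) \geq c_0 > 0$ for $x$ in a fixed subinterval; this gives $\mathbb E[\tau_a(Z)\wedge\tau_b(Z)\mid S^{\beta,\lambda}] \geq c_0\,\mu^{\beta,\lambda}([a',b']) \to \infty$, and then Markov's inequality plus dominated convergence in $S^{\beta,\lambda}$ finishes it. Either way the crux is the divergence of $\mu^{\beta,\lambda}$ on the full range of $S^{\beta,\lambda}$, which encodes precisely the "explosion delayed to time $\infty$" phenomenon described before the lemma.
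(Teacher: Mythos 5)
Your reduction of the lemma to showing that the additive functional $A_s=\int_\R L^B_s(x)\,\mu^{\beta,\lambda}(\dd x)$ evaluated at the exit time $T_K$ of $B$ from $(S^{\beta,\lambda}(-K^-),S^{\beta,\lambda}(K))$ diverges is reasonable, and for $\lambda=0$ it can indeed be made to work (even more simply than you propose: $A_{T_K}$ is monotone in $K$ and, by recurrence of $B$ and monotone convergence, increases to $\int L^B_\infty\,\dd\mu=\infty$). However, your quantitative implementation has a genuine flaw: you bound $A_{T_K}\geq \mu^{\beta,\lambda}([a,b])\cdot\inf_{x\in[a,b]}L^B_\sigma(x)$ with $\sigma$ the exit time of $B$ from the \emph{same} interval $[a,b]$, and then claim $\mathbf{P}(\inf_{x\in[a,b]}L^B_\sigma(x)\leq u/N)\to 0$ as $N\to\infty$. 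This is false: at the exit time of $[a,b]$ the local time at the endpoint that has just been hit is exactly zero (your own parenthetical about the Ray--Knight profile vanishing at the boundary says as much), so $\inf_{x\in[a,b]}L^B_\sigma(x)=0$ almost surely and the probability above equals one for every $N$. Your fallback via the Green's function is also a non sequitur: divergence of $\mathbb{E}[\tau_a\wedge\tau_b\,|\,S^{\beta,\lambda}]$ combined with Markov's inequality controls the probability that the exit time is \emph{large}, not that it is small, so it cannot yield $\mathbb{P}(\tau\leq u)\to0$.

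More seriously, for $\lambda>0$ the ``mass of a compact interval times infimum of local time'' strategy cannot be repaired by choosing the interval more cleverly. To make $\mu^{\beta,\lambda}([a,b])\geq N$ you must push $b$ toward the accumulation point $b_\infty:=S^{\beta,\lambda}(\infty)$, since the mass to the left of any fixed level strictly below $b_\infty$ (and, note, the entire mass to the left of $0$) is finite; but by Ray--Knight the local time available at levels near $b$ before $B$ first passes $b$ is of order $b_\infty-b$, so the product $\mu^{\beta,\lambda}([a,b])\cdot\inf_{[a,b]}L^B$ stays of constant order as $b\uparrow b_\infty$ and never diverges. The divergence of $A_{T_K}$ in this regime comes from the full integral of the local-time profile against the exponentially growing speed measure, not from an infimum bound, and establishing it requires a finer estimate. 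This is exactly where the paper's proof does something you do not: it localises at the \emph{largest jump} $M_K$ of $S^{\beta,0}$ on $[1,K]$, which tends to infinity, shows the $\mu^{\beta,\lambda}$-mass immediately to the left of the corresponding gap is at least $\eta e^{2\lambda T_K/\rho}$, and uses the first Ray--Knight theorem to show that the local time that must accumulate there before the gap is first crossed exceeds $u\eta^{-1}e^{-2\lambda T_K/\rho}$ with high probability precisely because $M_K\to\infty$; a separate harmonic-measure (gambler's ruin) estimate rules out a fast exit at $-K$. Some mechanism of this kind, exploiting the jumps of the subordinator or the exact local-time profile near $b_\infty$, is missing from your argument, so as written the proof does not go through for $\lambda>0$ and the key estimate is incorrect even for $\lambda=0$.
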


As a final ingredient for the proof of Theorem \ref{thm:main}, we give some basic properties of $Z^{\beta,\lambda}$.

\begin{lemma}\label{zbllem}
The process $Z^{\beta,\lambda}$ is continuous. Moreover, conditional on $S^{\beta,\lambda}$, it is Markov.
\end{lemma}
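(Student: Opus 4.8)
The plan is to establish the two claimed properties of $Z^{\beta,\lambda}$ separately, using the explicit representation $Z^{\beta,\lambda}_t=(S^{\beta,\lambda})^{-1}(B_{H^{\beta,\lambda}_t})$ together with standard facts about Brownian local time and the structure of the range of a stable subordinator.

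\medskip
\textbf{Continuity.} I would argue conditionally on $S^{\beta,\lambda}$; this fixes a (random) closed set $\mathcal{R}:=\overline{S^{\beta,\lambda}(\R)}$, the support of $\mu^{\beta,\lambda}$, whose complement is the countable union of the open intervals $(S^{\beta,\lambda}(v^-),S^{\beta,\lambda}(v))$ over the jump times $v$ of $S^{\beta,\lambda}$. The time-changed process $\hat{B}_t:=B_{H^{\beta,\lambda}_t}$ is continuous in $t$ (as a time-change of a continuous process by the continuous, non-decreasing clock $t\mapsto H^{\beta,\lambda}_t$, which is genuinely increasing since $\mu^{\beta,\lambda}$ has full support on $\mathcal{R}$ and $B$ accumulates local time everywhere it visits). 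Then $Z^{\beta,\lambda}_t=(S^{\beta,\lambda})^{-1}(\hat{B}_t)$, and although $(S^{\beta,\lambda})^{-1}$ has jumps at the points $S^{\beta,\lambda}(v^-)$, the key observation is that $\hat{B}$ spends zero Lebesgue-clock-time strictly inside any gap interval $(S^{\beta,\lambda}(v^-),S^{\beta,\lambda}(v))$ — indeed $\mu^{\beta,\lambda}$ puts no mass there, so the additive functional $\int L^B_s(x)\mu^{\beta,\lambda}(\dd x)$ does not increase while $B$ is in the gap, meaning $H^{\beta,\lambda}_t$ jumps across such excursions. Consequently $\hat{B}$, viewed at the timescale of $Z$, only ever takes values in $\mathcal{R}$, and on $\mathcal{R}$ the right-continuous inverse $(S^{\beta,\lambda})^{-1}$ is continuous (its only discontinuities are at the left endpoints $S^{\beta,\lambda}(v^-)$, but when $\hat{B}$ approaches such a point from within a gap it does so in the jump of the clock, while approaches from $\mathcal{R}$ itself are continuous). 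Making this precise is the technical heart: I would show that if $\hat{B}_{t_0}=S^{\beta,\lambda}(v^-)$ for some jump time $v$, then for $t$ slightly larger than $t_0$, either $\hat B_t$ stays at or below $S^{\beta,\lambda}(v^-)$ (so $Z$ stays near $(S^{\beta,\lambda})^{-1}(S^{\beta,\lambda}(v^-))=v^-$, i.e. $\le v$) or the clock has already carried $\hat B$ past the gap to values $\ge S^{\beta,\lambda}(v)$ in a way that is continuous in $Z$-space because $H^{\beta,\lambda}$ is continuous and the inverse is continuous at $S^{\beta,\lambda}(v)$. The cleanest route is: $t\mapsto\hat B_t$ is continuous and its range avoids gap interiors; the composition of a continuous function whose range lies in $\mathcal{R}$ with a function continuous on $\mathcal{R}$ is continuous.

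\medskip
\textbf{Markov property started from $0$.} Conditional on $S^{\beta,\lambda}$, the process $\hat{B}=(B_{H^{\beta,\lambda}_t})$ is the Brownian motion time-changed by the speed measure $\mu^{\beta,\lambda}$; by the classical theory of generalized diffusions (e.g.\ \cite{MR984748} as cited), this is a strong Markov process on $\mathcal{R}$, started from $0\in\mathcal{R}$. Since $(S^{\beta,\lambda})^{-1}$ restricted to $\mathcal{R}$ is a deterministic (given $S^{\beta,\lambda}$) Borel bijection onto its image which is \emph{strictly increasing}, the image process $Z^{\beta,\lambda}=(S^{\beta,\lambda})^{-1}(\hat B)$ is obtained from a Markov process by a fixed measurable bijective recoding of the state space, hence is itself Markov (conditional on $S^{\beta,\lambda}$), started from $(S^{\beta,\lambda})^{-1}(0)=0$. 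The only subtlety is that the recoding map is not injective on all of $\R$ — it collapses each closed gap $[S^{\beta,\lambda}(v^-),S^{\beta,\lambda}(v)]$ — but since $\hat B$ never enters the open gaps and $(S^{\beta,\lambda})^{-1}$ is injective on $\mathcal{R}$, this is not an issue: the map is injective on the effective state space $\mathcal{R}$. I would spell this out by noting $(S^{\beta,\lambda})^{-1}$ and its inverse $S^{\beta,\lambda}|_{\mathcal R}$ (appropriately interpreted) intertwine the two processes, so the transition semigroup of $Z^{\beta,\lambda}$ is the pushforward of that of $\hat B$.

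\medskip
\textbf{Expected main obstacle.} The delicate point is the continuity argument at times when $\hat B$ touches a left-endpoint $S^{\beta,\lambda}(v^-)$ of a gap: one must rule out that $Z$ jumps there. The resolution — that the clock $H^{\beta,\lambda}$ instantaneously carries $B$ across the (local-time-null) gap, combined with the fact that the failure of $Z$ to be strong Markov is precisely about \emph{which side} $v$ is approached from rather than about sample-path discontinuity — requires a careful bookkeeping of the relation between $B$-time, the additive functional $A_s:=\int L^B_s(x)\mu^{\beta,\lambda}(\dd x)$, and $Z$-time. I expect that the argument given in \cite{MR984748} for generalized diffusions already contains the needed statement that $\hat B$ has continuous paths taking values only in $\supp(\mu^{\beta,\lambda})$, so the real work is just assembling it and checking continuity of $(S^{\beta,\lambda})^{-1}$ on that support.
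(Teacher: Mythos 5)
There is a genuine gap, and it sits exactly where your Markov argument leans hardest: the claim that $(S^{\beta,\lambda})^{-1}$ is injective on the effective state space $\mathcal{R}=\overline{S^{\beta,\lambda}(\R)}$. It is not: for each jump time $v$ of $S^{\beta,\lambda}$, \emph{both} gap endpoints $S^{\beta,\lambda}(v^-)$ and $S^{\beta,\lambda}(v)$ lie in $\mathcal{R}$, and both are sent by the right-continuous inverse to the single point $v$. This identification is precisely the mechanism by which $Z^{\beta,\lambda}$ fails to be strong Markov (after hitting $v$, the future depends on which endpoint $B_{H^{\beta,\lambda}_t}$ occupies), so an argument that transfers Markovianity through an injective recoding of the state space would prove too much. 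The paper's proof repairs this point: it works first with the reflected process $Z^{\beta,\lambda,K}$ and invokes \cite[Theorem 10.4]{Kigres} to get a transition density for $B_{H^{\beta,\lambda,K}_\cdot}$ with respect to $\mu^{\beta,\lambda,K}$, so that at any \emph{fixed} time $t>0$ the law of $B_{H^{\beta,\lambda,K}_t}$ charges no countable set; since the set $D$ of jump times of $S^{\beta,\lambda}$ is countable, almost surely $B_{H^{\beta,\lambda,K}_t}$ avoids $\bigcup_{s\in D}\{S_{s^-},S_s\}\cup\{S^{\beta,\lambda}_\infty\}$, on whose complement $S^{\beta,\lambda}$ restricted to $\R\setminus D$ is a bijection; only then does the conditioning argument give the (simple, fixed-time) Markov property, and the unrestricted statement follows via Lemma \ref{lem:nonexpl}. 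Your proposal contains no substitute for this absolute-continuity step.

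The continuity part is also built on false premises and is internally inconsistent. The clock $H^{\beta,\lambda}$ is \emph{not} continuous: the additive functional $\int L^B_s(x)\,\mu^{\beta,\lambda}(\dd x)$ is constant while $B$ excurses into a gap $\bigl(S^{\beta,\lambda}(v^-),S^{\beta,\lambda}(v)\bigr)$ (as you yourself note two sentences later), so $H^{\beta,\lambda}$ jumps over such excursions and $B_{H^{\beta,\lambda}_\cdot}$ is only c\`adl\`ag, jumping from one gap endpoint to the other whenever $B$ crosses a gap. Conversely, $(S^{\beta,\lambda})^{-1}$ has no discontinuities at the points $S^{\beta,\lambda}(v^-)$: it is the right-continuous inverse of a strictly increasing function, hence continuous, and constant (equal to $v$) on the whole closed gap. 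The correct short argument is the mirror image of yours: $Z^{\beta,\lambda}=(S^{\beta,\lambda})^{-1}(B_{H^{\beta,\lambda}})$ is continuous because the only jumps of $B_{H^{\beta,\lambda}}$ are across gaps, which the continuous map $(S^{\beta,\lambda})^{-1}$ collapses to single points. The paper avoids this bookkeeping entirely by citing \cite[Corollary 3.1, 2)]{MR984748}, using that $S^{\beta,\lambda}$ is strictly increasing and that the speed measure $e^{2\lambda r/\rho}\dd r$ is continuous with full support.
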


Before proving Lemmas \ref{lem:limsup}, \ref{lem:nonexpl} and \ref{zbllem}, we give the proof of the main theorem.

\begin{proof}[Proof of Theorem \ref{thm:main}]
Writing $Z_n:=(n^{-1}X_{n^{1+1/\rho}t})_{t\geq 0}$, for the convergence part of the result, it is enough to show that for all $f:D([0,T],\mathbb{R})\to\R$ bounded and continuous, where $T>0$,
\begin{align*}
\lim_{n\to\infty}\mathbb{E}^{\beta,\lambda/n}[f(Z_n)]=\mathbb{E}^{\beta,\lambda}[f(Z)].
\end{align*}
Clearly, for any $K\in\N$,
\begin{align*}
\mathbb{E}^{\beta,\lambda/n,Kn}\left[f(Z_n)\1_{\tau_{Kn}(Z_n)\wedge \tau_{-Kn}(Z_n)>Tn^{1/\rho}}\right]&=\mathbb{E}^{\beta,\lambda/n}\left[f(Z_n)\1_{\tau_{Kn}(Z_n)\wedge \tau_{-Kn}(Z_n)>Tn^{1/\rho}}\right],\\
\mathbb{E}^{\beta,\lambda,K}\left[f(Z)\1_{\tau_{K}(Z)\wedge \tau_{-K}(Z)>T}\right]&=\mathbb{E}^{\beta,\lambda/n}\left[f(Z)\1_{\tau_{K}(Z)\wedge \tau_{-K}(Z)>T}\right],
\end{align*}
and therefore
\begin{align*}
\left|\mathbb{E}^{\beta,\lambda/n,Kn}[f(Z_n)]-\mathbb{E}^{\beta,\lambda/n}[f(Z_n)]\right|&\leq \|f\|_\infty \mathbb{P}^{\beta,\lambda/n}\left(\tau_{Kn}(Z_n)\wedge\tau_{-Kn}(Z_n)\leq Tn^{1/\rho}\right),\\
\left|\mathbb{E}^{\beta,\lambda,K}[f(Z)]-\mathbb{E}^{\beta,\lambda}[f(Z)]\right|&\leq \|f\|_\infty \mathbb{P}^{\beta,\lambda}\left(\tau_{K}(Z)\wedge\tau_{-K}(Z)\leq T\right).
\end{align*}
Consequently, using Proposition \ref{prop:fixedK} and Lemma \ref{lem:limsup}, for any $K>0$,
\begin{align*}
\limsup_{n\to\infty}\left|\mathbb{E}^{\beta,\lambda/n}[f(Z_n)]-\mathbb{E}^{\beta,\lambda}[f(Z)]\right| \leq 2\|f\|_\infty \mathbb{P}^{\beta,\lambda}\left(\tau_{K-1}(Z)\wedge\tau_{-K+1}(Z)\leq T+1\right),
\end{align*}
and so we can conclude the proof by taking $K\to\infty$ and applying Lemma \ref{lem:nonexpl}. Finally, the claim that $Z^{\beta,\lambda}$ is continuous is covered by Lemma \ref{zbllem}.
\end{proof}

\begin{proof}[Proof of Lemma \ref{lem:limsup}]
We define $f:D([0,\infty),\R)\to[0,1]$ by
\begin{align*}
f(x):=\int_0^1 \1\{|x(u)|\geq K-s\text{ for some }u\in[0,t+s]\}\dd s.
\end{align*}
Note that $f$ is a continuous function satisfying
\begin{align*}
f(x)
\begin{cases}
=0,&\text{ if }\tau_{K-1}(x)\wedge \tau_{-K+1}(x)>t+1,\\[0mm]
=1,&\text{ if }\tau_{K}(x)\wedge \tau_{-K}(x)\leq t.
\end{cases}
\end{align*}
Hence, by Proposition \ref{prop:fixedK},
\begin{align*}
\lefteqn{\limsup_{n\to\infty}\mathbb{P}^{\beta,\lambda/n,Kn}\left(\tau_{Kn}(X)\wedge \tau_{-Kn}(X)\leq tn^{1/\rho}\right)}\\
&\leq \lim_{n\to\infty}\mathbb{E}^{\beta,\lambda/n,Kn}\left[f(n^{-1}X_{n^{1+1/\rho}\cdot})\right]\\
&= \mathbb{E}^{\beta,\lambda,K}\left[f(Z)\right]\\
&\leq \mathbb{P}^{\beta,\lambda,K}\left(\tau_{K-1}(Z)\wedge \tau_{-K+1}(Z)\leq t+1\right).
\end{align*}
\end{proof}

\begin{proof}[Proof of Lemma \ref{lem:nonexpl}]

We start with the case $\lambda=0$. Note that, by symmetry, it is enough to show that for any $\eps>0$ we can find $K_0$ such that for all $K\geq K_0$,
\begin{align}\label{eq:todo}
\mathbb{P}^{\beta,0}(\tau_K(Z)\leq u)<4\eps.
\end{align}
To check this, it is enough to consider the largest `gap' in $S^{\beta,0}([0,K])$ and the time $B$ has to spend on its left side before crossing it for the first time. Note that, as it attempts to cross the gap, $B$ has to spend some time on the left side. As $K$ increases, the length of the gap increases and $B$ will spend more time of on the left hand side before crossing. We will thus show that the time spent on the left becomes arbitrarily large.

First, choose $\delta>0$ such that
\begin{align}\label{eq:eps0}
\mathbf{P}\left(S^{\beta,0}(1)\leq \delta\right)<\eps.
\end{align}
Let
\begin{align*}
M_K&:=\sup_{t\in[1,K]}\left(S^{\beta,0}(t)-S^{\beta,0}(t^-)\right),\\
T_K&:=\mathop{\text{argmax}}_{t\in[1,K]}\left(S^{\beta,0}(t)-S^{\beta,0}(t^-)\right).
\end{align*}
Recall that the number of jumps of $S^{\beta,0}$ in $[1,K]$ of size at least $m$ has a Poisson distribution with mean $C_\beta m^{-\rho}(K-1)$. For any $m$, we can therefore find $K_0$ such that for all $K\geq K_0$,
\begin{align*}
\mathbf{P}(M_K\leq m)\leq \eps.
\end{align*}
Moreover, we claim that we can choose $\eta>0$ so that, for large enough $K$,
\begin{align}\label{eq:eps1}
\mathbf{P}\left(\mu^{\beta,0}([S^{\beta,0}(T_K^-)-\delta,S^{\beta,0}(T_K^-)])\leq \eta\right)\leq\eps.
\end{align}
Let $D:=\{t:\:S^{\beta,0}(t)\neq S^{\beta,0}(t^-)\}$ denote the set of discontinuities of $S^{\beta,0}$. To check \eqref{eq:eps1}, we first note that, conditional on $(M_K,T_K)$, it holds that
\[\left((t,S^{\beta,0}(t)-S^{\beta,0}(t^-)\right)_{t\in D\cap[1,K]\backslash\{T_K\}},\]
is a Poisson point process on $[1,K]\times (0,M_K)$ with intensity $dt \times C_\beta\rho x^{-\rho-1}\mathbf{1}_{\{x\in(0,M_K)\}}dx$ (cf.\ \cite[Proposition 2.4]{Res}). Moreover, under the conditional law, this is independent of
\[\left((t,S^{\beta,0}(t)-S^{\beta,0}(t^-)\right)_{t\in D\cap[0,1)},\]
which is a Poisson point process on $[0,1)\times (0,\infty)$ with intensity $dt \times C_\beta\rho x^{-\rho-1}\mathbf{1}_{\{x>0\}}dx$.
Since
\begin{align*}
\lefteqn{S^{\beta,0}(T_K^-)-S^{\beta,0}(T_K-\eta)}\\
&=\sum_{t\in D\cap[1\vee(T_K-\eta),T_K)}\left(S^{\beta,0}(t)-S^{\beta,0}(t^-)\right)
+\sum_{t\in D\cap[1\wedge(T_K-\eta),1)}\left(S^{\beta,0}(t)-S^{\beta,0}(t^-)\right),
\end{align*}
we readily deduce that $\mathbf{P}(S^{\beta,0}(T_K^-)-S^{\beta,0}(T_K-\eta)> \delta\:\vline\:M_K,\:T_K)$ is given by
\[\mathbf{P}\left(S^{\beta,0}(\eta)-\sum_{t\in D\cap[0,(T-1)\wedge \eta]}\left(S^{\beta,0}(t)-S^{\beta,0}(t^-)\right)\mathbf{1}_{\{S^{\beta,0}(t)-S^{\beta,0}(t^-)\geq M\}}>\delta\right)\]
with $(M,T)=(M_K,T_K)$. Clearly, regardless of the value of $(M_K,T_K)$, the above expression is bounded above by $\mathbf{P}(S^{\beta,0}(\eta)>\delta)$, and taking expectations thus yields
\[\mathbf{P}\left(S^{\beta,0}(T_K^-)-S^{\beta,0}(T_K-\eta)> \delta\right)\leq \mathbf{P}\left(S^{\beta,0}(\eta)>\delta\right).\]
It follows that there exists an $\eta>0$ such that, for large enough $K$,
\[\mathbf{P}\left(S^{\beta,0}(T_K^-)-S^{\beta,0}(T_K-\eta)>\delta\right)< \varepsilon.\]
On the complement of the event in the probability above, we have that
\[\mu^{\beta,0}\left([S^{\beta,0}(T_K^-)-\delta,S^{\beta,0}(T_K^-)]\right)\geq
\mu^{\beta,0}\left([S^{\beta,0}(T_K-\eta),S^{\beta,0}(T_K^-)]\right)=\mathbf{E}\left(c^{\beta,0}(\omega_0)\right)\eta,\]
and thus, after replacing $\eta$ by $\eta/\mathbf{E}(c^{\beta,0}(\omega_0))$, we obtain \eqref{eq:eps1}.

Next, we introduce some notation for the Brownian motion $B$. Let
\begin{align*}
\sigma&:=\inf\{t>0:B_t=S^{\beta,0}(T_K)\},\\
L^B_t[x,y]&:=\inf_{z\in[x,y]} L^B_{t}(z).
\end{align*}
Observe that $\sigma$ is the first time $B$ has crossed the gap $[S^{\beta,0}(T_K^-),S^{\beta,0}(T_K)]$, and that  $L^B_\sigma[S^{\beta,0}(T_K^-)-\delta,S^{\beta,0}(T_K^-)]$ is a lower bound for the local time accumulated on the left side of the gap before this first crossing time. Let $(W_t)_{t\geq 0}$ denote a two-dimensional Brownian motion, independent of $S^{\beta,0}$, and observe that, conditionally on $S^{\beta,0}$ with $S^{\beta,0}(T^-_K)-\delta>0$, by the strong Markov property and the Ray-Knight theorem,
\begin{align}\label{eq:local}
L^B_{\sigma}[S^{\beta,0}(T^-_K)-\delta,S^{\beta,0}(T_K^-)]\isDistr
\inf_{t\in M_K+[0,\delta]} \|W_t\|_2^2.
\end{align}
We can therefore choose $m$ large enough that, on $\{S^{\beta,0}(T^-_K)-\delta>0,\:M_K\geq m\}$,
\begin{align}\label{eq:eps3}
\mathbf{P}\left(L^B_{\sigma}[S^{\beta,0}(T^-_K)-\delta,S^{\beta,0}(T_K^-)]\leq u\eta^{-1}\:\vline\:S^{\beta,0}\right)\leq\eps.
\end{align}
Finally, we observe that
\begin{equation}\label{eq:conclusion1}
\begin{split}
\{\tau_{K}(Z)> u\}\supseteq &\{S^{\beta,0}(T^-_K)-\delta>0\}\cap\{\mu^{\beta,0}([S^{\beta,0}(T_K^-)-\delta,S^{\beta,0}(T_K^-)])>\eta\}\\
&\cap\{M_K> m\}\cap\{L^B_{\sigma}[S^{\beta,0}(T^-_K)-\delta,S^{\beta,0}(T_K^-)]>u\eta^{-1}\},
\end{split}
\end{equation}
and hence \eqref{eq:todo} follows from \eqref{eq:eps0}--\eqref{eq:eps3}.

We next consider $\lambda>0$. Note that, using well-known harmonic properties of Brownian motion,
\begin{align*}
\mathbb{P}^{\beta,\lambda}\left(\tau_K(Z)\wedge\tau_{-K}(Z)=\tau_{-K}(Z)\:\vline\:S^{\beta,\lambda}\right)=\frac{|S^{\beta,\lambda}(-K)^{-1}|}{|S^{\beta,\lambda}(K)^{-1}|+|S^{\beta,\lambda}(-K)^{-1}|}.
\end{align*}
Since $(S^{\beta,\lambda}(t))_{t\in\R}$ is bounded for $t\to\infty$ and unbounded for $t\to-\infty$, we can choose $K$ large enough that
\begin{align*}
\mathbb{P}^{\beta,\lambda}(\tau_K(Z)\wedge\tau_{-K}(Z)=\tau_{-K}(Z))<\eps.
\end{align*}
The proof of $\mathbb{P}^{\beta,\lambda}(\tau_K(Z)\leq u)<4\eps$ follows the same lines as in the case $\lambda=0$. The main difference is that since $S^{\beta,\lambda}(u)$ is bounded for $u\to\infty$, we have to consider the largest gaps with suitable weighting, and replace the interval to the left of the gap by a smaller one. Since the speed-measure grows exponentially, the process will still accumulate a large local time in the interval before crossing the gap. More precisely, we replace $\sigma$ by
\begin{align*}
\widehat\sigma&:=\inf\{t>0:B_t=S^{\beta,\lambda}(T_K)\},
\end{align*}
and instead of \eqref{eq:conclusion1}, we will bound the probability of the right-hand side of
\begin{equation*}
\begin{split}
\{\tau_{K}(Z)\geq u\}\supseteq &\{S^{\beta,0}(T^-_K)-\delta>0\}\cap\{M_K> m\}\\
&\cap\{\mu^{\beta,\lambda}([S^{\beta,\lambda}(T_K^-)-\delta e^{-2\lambda T_K/\rho},S^{\beta,\lambda}(T_K^-)])\geq\eta e^{2\lambda T_K/\rho}\}\\
&\cap\{L^B_{\widehat \sigma}[S^{\beta,\lambda}(T^-_K)-\delta e^{-2\lambda T_K/\rho},S^{\beta,\lambda}(T_K^-)]>u\eta^{-1}e^{-2\lambda T_K/\rho}\}.
\end{split}
\end{equation*}
Let $\delta$, $M_K$ and $T_K$ be defined as before (i.e., using $S^{\beta,0}$), so that
\begin{align*}
S^{\beta,\lambda}(T_K)-S^{\beta,\lambda}(T_K^-)=M_Ke^{-2\lambda T_K/\rho}.
\end{align*}
Moreover, arguing similarly to \eqref{eq:eps1}, it is possible to check that there exists an $\eta>0$ such that
\[\mathbf{P}\left(\mu^{\beta,\lambda}([S^{\beta,\lambda}(T_K^-)-\delta e^{-2\lambda T_K/\rho},S^{\beta,\lambda}(T_K^-)])\leq \eta e^{2\lambda T_K/\rho}\right)\leq\eps.\]
Then the observation at \eqref{eq:local} becomes
\begin{align*}
L^B_{\widehat \sigma}[S^{\beta,\lambda}(T_K^-)-\delta e^{-2\lambda T_K/\rho},S^{\beta,\lambda}(T_K^-)]&
\isDistr \inf_{t\in [M_K,M_K+\delta]e^{-2\lambda T_K/\rho}}\frac 12\|W_t\|_2^2.
\end{align*}
We can therefore choose $m$ large enough that, on $\{S^{\beta,\lambda}(T^-_K)-\delta>0,\:M_K\geq m\}$,
\begin{align*}
\mathbf{P}\left(L^B_{\widehat\sigma}[S^{\beta,\lambda}(T_K^-)-\delta e^{-2\lambda T_K/\rho},S^{\beta,\lambda}(T_K^-)]\leq u \eta^{-1} e^{-2\lambda T_K/\rho}\:\vline\:S^{\beta,\lambda}\right)<\eps.
\end{align*}
Combined with \eqref{eq:eps0}--\eqref{eq:eps1}, the desired bound for the right-hand side of \eqref{eq:conclusion1} follows.
\end{proof}

\begin{proof}[Proof of Lemma \ref{zbllem}]
The limiting process $Z^{\beta,\lambda}$ is a one-dimensional bi-generalized diffusion process in the sense of \cite{MR984748}, with scale function $S^{\beta,\lambda}$ and speed measure $e^{2\lambda r/\rho}\dd r$. Thus the continuity claim follows directly from \cite[Corollary 3.1, 2)]{MR984748} since $S^{\beta,\lambda}$ is strictly increasing and $e^{2\lambda r/\rho}\dd r$ is a continuous measure with full support.

The Markov property claim for $Z^{\beta,\lambda}$ is also essentially contained in \cite{MR984748}. However, since a part of the proof is left to the reader, we choose to give a direct proof. In the following, all `almost-sure' properties of $B_{H_t}^{\beta,\lambda,K}$ or $Z^{\beta,\lambda,K}_t$ are with respect to the conditional law $P^{\beta,\lambda,K}(\cdot|S^{\beta,\lambda})$. We start by considering the restricted process $Z^{\beta,\lambda,K}$. To this end, we recall that  $B_{H^{\beta,\lambda,K}_\cdot}$ is a Markov process and observe that, by \cite[Theorem 10.4]{Kigres}, it admits a (continuous) transition density with respect to $\mu^{\beta,\lambda,K}$. Hence, for any fixed time $t>0$, the law of $B_{H^{\beta,\lambda,K}_t}$ is absolutely continuous with respect to $\mu^{\beta,\lambda,K}$. Since $D:=\{t:\:S^{\beta,\lambda}(t)-S^{\beta,\lambda}(t^-)\}$, the set of discontinuities of $S^{\beta,\lambda}$, is almost-surely a countable set, it follows that, for fixed $t>0$, it is almost-surely the case that
\begin{align*}
B_{H^{\beta,\lambda,K}_t}\in\overline{S^{\beta,\lambda}(\mathbb{R})}\backslash (\cup_{s\in D}\{S_{s^-},S_s\}\cup\{S_\infty^{\beta,\lambda}\}),
\end{align*}
where $S_\infty^{\beta,\lambda}:=\lim_{t\rightarrow\infty}S^{\beta,\lambda}_t$. Now, it is a straightforward exercise to check that the map
\begin{align*}
S^{\beta,\lambda}:\mathbb{R}\backslash D\rightarrow\overline{S^{\beta,\lambda}(\mathbb{R})}\backslash(\cup_{s\in D}\{S_{s^-},S_s\}\cup\{S_\infty^{\beta,\lambda}\})
\end{align*}
is a bijection. Hence $Z^{\beta,\lambda,K}_t\in\mathbb{R}\backslash D$ almost-surely, and appealing to the Markov property of $B_{H^{\beta,\lambda,K}_\cdot}$ yields, for $s<t$, $z\in \mathbb{R}\backslash D$ and measurable $A\subseteq\mathbb{R}$,
\begin{align*}
&\mathbb{P}^{\beta,\lambda,K}\left(Z_t\in A\:\vline\:Z_s=z,\:(Z_r)_{r\leq s},\:S^{\beta,\lambda}\right)\\
&= \mathbb{P}^{\beta,\lambda,K}\left(S^{\beta,\lambda}\left(B_{H^{\beta,\lambda,K}_t}\right)\in A\:\vline\:B_{H^{\beta,\lambda,K}_s}=S^{\beta,\lambda}(z),\:(Z_r)_{r\leq s},\:S^{\beta,\lambda}\right)\\
&=\mathbb{P}^{\beta,\lambda,K}\left(S^{\beta,\lambda}\left(B_{H^{\beta,\lambda,K}_t}\right)\in A\:\vline\:B_{H^{\beta,\lambda,K}_s}=S^{\beta,\lambda}(z),\:S^{\beta,\lambda}\right)\\
&=\mathbb{P}^{\beta,\lambda,K}\left(Z_t\in A\:\vline\:Z_s=z,\:S^{\beta,\lambda}\right),
\end{align*}
which establishes the Markov property for $Z^{\beta,\lambda,K}$. Combining this result with Lemma \ref{lem:nonexpl}, one readily obtains the corresponding result for $Z^{\beta,\lambda}$.
\end{proof}

\section{Incorporation of heavy-tailed holding times}\label{sec:btm}

We now explain how to deal with the random variables $(\tau_i)_{i\in \mathbb{Z}}$, and thereby prove Theorem \ref{thm:main2}. Since the changes needed are relatively minor, we will be brief with the details. A first observation is that the time change does not affect the effective resistance of the discrete model. Thus we still have the conclusion of Theorem \ref{thm:approx}. This suggests that it is enough to prove the analogue of Theorem \ref{thm:measure} for the new invariant measure $\tilde\mu^{\beta,\lambda/n,Kn}$, defined on $\{\oomega_{-Kn},...,\oomega_{Kn}\}$ by setting
\begin{align*}
\tilde\mu^{\beta,\lambda/n,Kn}(\oomega_i):=\tau_i c^{\beta,\lambda/n,Kn}(\oomega_i).
\end{align*}
A minor technical obstacle is that in order to apply the Skorohod representation theorem as in Theorem \ref{thm:metric} we need to prove that the effective resistance and invariant measure converge in law jointly. (In the model without $(\tau_i)_{i\in \mathbb{Z}}$, that the limiting invariant measure was deterministic meant that such joint convergence was equivalent to the convergence of the marginals.) Specifically, together with our previous arguments, the following result yields Theorem \ref{thm:main2}.

\begin{proposition}
Jointly with \eqref{eq:convergence}, it holds that
\begin{align*}
\lefteqn{\left(n^{-1/\kappa}\tilde \mu^{\beta,\lambda/n,Kn}\left(\{\oomega_{\floor{-Kn}},...\oomega_{\floor{un}}\}\right)\right)_{-K\leq u\leq K}}\\
&\qquad\qquad\xrightarrow[n\to\infty]d\left(\tilde \mu^{\beta,\lambda}\left([S^{\beta,\lambda}(-K),S^{\beta,\lambda}(u)]\right)\right)_{-K\leq u\leq K}
\end{align*}
with respect to the Skorohod $J_1$-topology.
\end{proposition}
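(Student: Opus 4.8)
The plan is to deduce this joint convergence by conditioning on the environment $(\omega,E)$ and applying a \emph{quenched} functional stable limit theorem to the partial sums of the holding-time-weighted conductances. Let $\mathbf{P}$, $\mathbf{E}$ now denote the law of the full environment $(\omega,E,\tau)$, write $S_n$ for the rescaled resistance process on the left-hand side of \eqref{eq:convergence}, and set
\[
F_n(u):=n^{-1/\kappa}\tilde\mu^{\beta,\lambda/n,Kn}\bigl(\{\oomega_{-Kn},\dots,\oomega_{\floor{un}}\}\bigr)=n^{-1/\kappa}\sum_{i=-Kn}^{\floor{un}}\tau_i\,c^{\beta,\lambda/n,Kn}(\oomega_i),
\]
viewed as a random element of $D([-K,K],\R)$. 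Using that $(S^{\beta,\lambda})^{-1}\circ S^{\beta,\lambda}=\Id$ almost surely, the target process is
\[
\tilde F(u):=\tilde\mu^{\beta,\lambda}\bigl([S^{\beta,\lambda}(-K),S^{\beta,\lambda}(u)]\bigr)=\mathbf{E}(c^{\beta,0}(\omega_0)^\kappa)\int_{-K}^{u}e^{2\lambda r/\rho}\,\dd S^\kappa(r),
\]
which is a functional of $S^\kappa$ alone. Thus the statement (jointly with \eqref{eq:convergence}) is equivalent to $(S_n,F_n)\xrightarrow[n\to\infty]{d}(S^{\beta,\lambda},\tilde F)$ in $D([-K,K],\R)^2$ with the two coordinates \emph{independent}, which is consistent with $S^\kappa$ being taken independent of $S^{\beta,\lambda}$.

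First I would reduce, exactly as in the proof of Theorem~\ref{thm:measure}, to the approximation $c^{\beta,\lambda/n,Kn}(\oomega_i)\approx e^{2\lambda i/(\rho n)}c^{\beta,0}(\omega_i)$. On the high-probability events of the type used in the proofs of Theorems~\ref{thm:approx} and \ref{thm:measure}, one has $|c^{\beta,\lambda/n,Kn}(\oomega_i)-e^{2\lambda i/(\rho n)}c^{\beta,0}(\omega_i)|\le Cn^{-1/4}c^{\beta,0}(\omega_i)$ together with contributions from the collapsed boundary vertices $\oomega_{\pm Kn}$ that are exponentially small outside a poly-logarithmic window around $\pm Kn$. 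Since the increments of $n^{-1/\kappa}\sum_i\tau_i c^{\beta,0}(\omega_i)$ have tail $\mathbf{P}(\tau_0 c^{\beta,0}(\omega_0)\ge t)\sim\mathbf{E}(c^{\beta,0}(\omega_0)^\kappa)t^{-\kappa}$ (so this sum is tight) and $\max\{\tau_i:|i\mp Kn|\le C\log n\}=O((\log n)^{1/\kappa})$ with high probability, the per-site discrepancies, once scaled by $n^{-1/\kappa}$ and summed, are $o_{\mathbf{P}}(1)$ in the supremum norm; hence $\sup_u|F_n(u)-\widehat F_n(u)|\to0$ in $\mathbf{P}$-probability, where
\[
\widehat F_n(u):=n^{-1/\kappa}\sum_{i=-Kn}^{\floor{un}}\tau_i\,e^{2\lambda i/(\rho n)}c^{\beta,0}(\omega_i),
\]
and it suffices to prove the convergence with $F_n$ replaced by $\widehat F_n$.

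The core of the argument is a quenched functional stable limit theorem for $\widehat F_n$. Condition on $(\omega,E)$; then the weights $w^{(n)}_i:=e^{2\lambda i/(\rho n)}c^{\beta,0}(\omega_i)$ are deterministic, and $\widehat F_n$ is the partial-sum process of the independent nonnegative summands $n^{-1/\kappa}\tau_i w^{(n)}_i$, each with tail $\mathbf{P}(n^{-1/\kappa}\tau_i w^{(n)}_i>x\mid\omega,E)=n^{-1}(w^{(n)}_i)^\kappa x^{-\kappa}$ above a vanishing threshold. The only environment-dependent input to the limit is the empirical quantity $n^{-1}\sum_{i=-Kn}^{\floor{un}}(w^{(n)}_i)^\kappa=n^{-1}\sum_{i=-Kn}^{\floor{un}}e^{2\kappa\lambda i/(\rho n)}c^{\beta,0}(\omega_i)^\kappa$, which I would show converges $\mathbf{P}$-a.s.\ and uniformly in $u\in[-K,K]$ to $\mathbf{E}(c^{\beta,0}(\omega_0)^\kappa)\int_{-K}^{u}e^{2\kappa\lambda r/\rho}\,\dd r$, by combining the ergodic theorem for the stationary ergodic sequence $(c^{\beta,0}(\omega_i)^\kappa)_{i\in\Z}$ (note $\mathbf{E}(c^{\beta,0}(\omega_0)^\kappa)<\infty$ since $\kappa<1$ and $\mathbf{E}(c^{\beta,0}(\omega_0))<\infty$) with the $m$-fold subdivision of $[-K,K]$ used in the proof of Theorem~\ref{thm:measure}. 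Feeding this into a standard convergence criterion for triangular arrays of independent nonnegative summands in the domain of attraction of a $\kappa$-stable law --- e.g.\ via convergence of the associated Poisson point processes of jumps (cf.\ \cite{Res}) or, as in Proposition~\ref{prop:limits}, via \cite[Theorem~3.1]{kasahara} applied to the unbiased sum followed by the exponential tilt --- yields that, for $\mathbf{P}$-a.e.\ $(\omega,E)$, the conditional law of $\widehat F_n$ given $(\omega,E)$ converges weakly in $D([-K,K],\R)$ (the $J_1$-topology being appropriate here because the summands are nonnegative and the limit is an increasing pure-jump process) to the law of $\tilde F$; crucially, this limiting law does not depend on the realization of $(\omega,E)$. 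I expect this quenched limit theorem to be the main obstacle: one must verify the triangular-array conditions for summands that are independent but not identically distributed, with the normalization governed by the a.s.-convergent empirical measure $n^{-1}\sum_i(w^{(n)}_i)^\kappa\delta_{i/n}$, and one must track the full $J_1$-convergence of the partial-sum process (not merely its finite-dimensional marginals), with the uniform-in-$u$ control of the above Riemann sums.

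Finally I would combine this with Theorem~\ref{thm:approx}. As $S_n$ is $\sigma(\omega,E)$-measurable and, by the previous step, $\mathbf{E}[g(\widehat F_n)\mid\omega,E]\to\mathbf{E}[g(\tilde F)]$ $\mathbf{P}$-a.s.\ for every bounded continuous $g:D([-K,K],\R)\to\R$, for bounded continuous $f,g$ we get, using also the reduction of the second paragraph and the bounded convergence theorem,
\begin{align*}
\mathbf{E}\bigl[f(S_n)g(F_n)\bigr]
&=\mathbf{E}\bigl[f(S_n)\,\mathbf{E}[g(\widehat F_n)\mid\omega,E]\bigr]+o(1)\\
&=\mathbf{E}\bigl[g(\tilde F)\bigr]\,\mathbf{E}\bigl[f(S_n)\bigr]+o(1)\xrightarrow[n\to\infty]{}\mathbf{E}\bigl[f(S^{\beta,\lambda})\bigr]\,\mathbf{E}\bigl[g(\tilde F)\bigr].
\end{align*}
Since $(S_n)$ is tight by Theorem~\ref{thm:approx} and $(F_n)$ is tight (being increasing with $F_n(K)$ convergent in law, hence relatively compact in $D([-K,K],\R)$), the pair $(S_n,F_n)$ is tight, and the display above identifies every subsequential limit as $(S^{\beta,\lambda},\tilde F)$ with independent coordinates. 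This establishes the proposition, which, together with the arguments of Sections~\ref{sec:mms} and \ref{sec:mr} --- now applied with the random limiting measure and with a Skorohod representation of the \emph{joint} law of resistance and invariant measure --- yields Theorem~\ref{thm:main2}.
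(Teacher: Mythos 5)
Your proposal is correct in outline, but it takes a genuinely different route from the paper. The paper stays entirely at the annealed level and reuses the big-edge machinery of Section \ref{sec:res}: it introduces the set $\widehat\B_n$ of big holding times and the truncated conductances $\widehat c_n(i)$, takes i.i.d.\ copies $(\widetilde\chi_n(i))$ and $(\widetilde c_n(i))$ of the local correction variables $\chiu_n(i)$ and of $\widehat c_n(i)$, and shows that on the well-separation event $A_{6,n}$ (big resistances and big holding times mutually far apart and disjoint) the joint law of the two approximating partial-sum processes coincides exactly with that of the decoupled versions (equation \eqref{eq:second}); since in the decoupled pair the resistance sum is a functional of $(\omega,\widetilde\chi_n)$ and the measure sum a functional of $(\tau,\widetilde c_n)$, which are independent, joint convergence with independent limits follows from two marginal i.i.d.-sum limit theorems as in Proposition \ref{prop:limits}. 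You instead condition on $(\omega,E)$ and prove a quenched triangular-array $\kappa$-stable functional limit theorem for the measure, the key observation being that the environment enters the quenched limit only through the ergodic averages $n^{-1}\sum_i e^{2\kappa\lambda i/(\rho n)}c^{\beta,0}(\omega_i)^{\kappa}$, so the limiting law is deterministic; asymptotic independence from the resistance then comes for free because $S_n$ is $\sigma(\omega,E)$-measurable, and the joint statement follows from \eqref{eq:convergence} by the product-test-function argument. Both routes are sound: yours avoids the decoupling construction and the probability estimates for $A_{6,n}$, at the price of carrying out the quenched FLT for independent but non-identically distributed summands (point-process or Laplace-functional convergence with a.s.\ uniform control of the Riemann sums, negligibility of small jumps using $\kappa<1$, and $J_1$ tightness of the monotone partial-sum process), which is exactly the technical burden you identify; the paper's route instead recycles estimates already proved for Theorem \ref{thm:approx}. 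One small repair: your parenthetical tightness argument for $(F_n)$ via monotonicity is unnecessary and slightly shaky in $J_1$ — annealed convergence in law of $\widehat F_n$ already follows from your quenched statement by dominated convergence, so both marginals converge, each marginal sequence is tight, and hence the pair is tight, which is all your subsequential-limit identification needs.
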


\begin{proof}
We first assume $\lambda=0$. Recall that
\[\B_n=\left\{i\in\{-Kn,\dots,Kn-1\}:r^{0,0}(\omega_i,\omega_{i+1})\geq n^{3/(4\rho)}\right\}\]
denotes the sites with a big nearest-neighbor resistance, and that $\chiu_n(i)$ (see \eqref{eq:chiu}) is a random variable depending only on $\{\omega_{i+1+j}-\omega_{i+1}:1\leq j\leq a\log (n)\}$ and $\{\omega_{i}-\omega_{i-j}:1\leq j\leq a\log (n)\}$, and is such that
\begin{align}
n^{-1/\rho}\sup_{-Kn\leq i\leq j\leq Kn}\left|R^{\beta,0}(\oomega_i,\oomega_j)-\sum_{k=i}^{j}\chiu_n(k) \1_{k\in\B_n}r^{0,0}(\oomega_k,\oomega_{k+1})\right|\xrightarrow[n\to\infty]{\mathbf{P}}0,\label{eq:probconvrho}
\end{align}
\begin{align}
\lefteqn{\left(n^{-1/\rho}\sum_{j=\floor{-Kn}}^{\floor{un}} \chiu_n(j)\1_{j\in\B_n}r^{0,0}(\oomega_j,\oomega_{j+1})\right)_{-K\leq u\leq K}}\nonumber\\
&\qquad \qquad\xrightarrow[n\to\infty]d \left(S^{\beta,0}(u)-S^{\beta,0}(-K)\right)_{-K\leq u\leq K}.\label{eq:distconvrho}
\end{align}
To define corresponding approximations for the measure, we introduce
\begin{align*}
\widehat\B_n&:=\{i\in\{-Kn,\dots,Kn\}:\:\tau_i\geq n^{3/(4\kappa)}\}\\
\widehat{c}_n(i)&:=\sum_{j:|j-i|\leq \log (n)} c^{\beta,0,Kn}(\oomega_i,\oomega_j),
\end{align*}
and consider the event that sites having either a big resistance or a big holding time are well-separated:
\[A_{6,n}:=\left\{|i-j|\geq n^{1/4}\mbox{ for all distinct } i,j\in \B_n\cup\widehat \B_n\right\}\cap\left\{\B_n\cap\widehat \B_n= \emptyset\right\}.\]
A rerun of the proof of Lemma \ref{lem:aux} shows that
\begin{align}\label{eq:An}
\mathbf{P}(A_{6,n}^c)\xrightarrow[n\to\infty]{} 0.
\end{align}
Recall that $r^{0,0}(\oomega_i,\oomega_{i+1})$ is independent of $\chiu_n(i)$, and note that, by construction, $\tau_i$ is independent of $\widehat c_n(i)$. Let $(\widetilde\chi_n(i),\widetilde \chi_\infty(i))_{i\in\Z}$ and $(\widetilde c_n(i),\widetilde c_\infty(i))_{i\in\Z}$ denote independent, i.i.d.\ copies of $(\chiu_n(i),\chi^{\beta,0}(i))_{i\in\Z}$ and $(\widehat c_n(i), c^{\beta,0}(\omega_i))_{i\in\Z}$. Following the same argument as in the proof of \eqref{eq:PMprobconv}, we obtain
\begin{align}
n^{-1/\kappa}\sup_{-K\leq a<b\leq K} \left|\sum_{j=\floor{an}}^{\floor{bn}} \widetilde c_n(j)\1_{j\in\widehat B_n}\tau_j-\sum_{j=\floor{an}}^{\floor{bn}} \widetilde c_\infty(j)\tau_j\right|\xrightarrow[n\to\infty]{\mathbf{P}}0.\label{eq:probconvtau1}
\end{align}
Since $(\widetilde c_\infty(j)\tau_j)_{j\in\Z}$ is an i.i.d.\ collection with
\begin{align*}
\mathbf{P}(\widetilde c_\infty(0)\tau_0\geq u)\sim\mathbf{E}\left(c^{\beta,0}(\omega_0)^\kappa\right)u^{-\kappa},
\end{align*}
(cf.\ \eqref{xxx},) we get
\begin{align}
\left(n^{-1/\kappa}\sum_{j=\floor{-Kn}}^{\floor{un}} \widetilde c_n(j)\1_{j\in\widehat B_n}\tau_j\right)_{-K\leq u\leq K}\xrightarrow[n\to\infty]d \left(\widehat \mu^{\beta,0}([S^{\beta,0}(-K),S^{\beta,0}(u)]\right)_{-K\leq u\leq K}. \label{eq:distconvtau}
\end{align}
Moreover, because of the truncation in the definition of $\chiu_n$ and $\widehat c_n$, we have, conditional on $r^{0,0}$ and $\tau$, and on the event $A_{6,n}$,
\begin{equation}
 \begin{split}
&\left(n^{-1/\rho}\sum_{j=\floor{un}}^{\floor{vn}} \chiu_n(j)\1_{j\in\B_n}r^{0,0}(\oomega_j,\oomega_{j+1}),n^{-1/\kappa}\sum_{j=\floor{an}}^{\floor{bn}} \widehat c_n(j)\1_{j\in\widehat B_n}\tau_j\right)_{\substack{-K\leq u<v\leq K\\-K\leq a<b\leq K}}\\
&\isDistr\left(n^{-1/\rho}\sum_{j=\floor{un}}^{\floor{vn}} \widetilde \chi_n(j)\1_{j\in\B_n}r^{0,0}(\oomega_j,\oomega_{j+1}),n^{-1/\kappa}\sum_{j=\floor{an}}^{\floor{bn}} \widetilde c_n(j)\1_{j\in\widehat B_n}\tau_j\right)_{\substack{-K\leq u<v\leq K\\-K\leq a<b\leq K}}.
\end{split}
\label{eq:second}
\end{equation}
Since the coordinates in the second vector are independent, we see, using \eqref{eq:distconvrho} and \eqref{eq:An}, that \eqref{eq:distconvtau} holds jointly with
\begin{align*}
\lefteqn{\left(n^{-1/\rho}\sum_{j=\floor{-Kn}}^{\floor{un}} \widetilde \chi_n(j)\1_{j\in\B_n}r^{0,0}(\oomega_j,\oomega_{j+1})\right)_{-K\leq u\leq K}}\\
&\qquad\qquad \xrightarrow[n\to\infty]d \left(S^{\beta,0}(u)-S^{\beta,0}(-K)\right)_{-K\leq u\leq K}.
\end{align*}
The claim therefore follows from \eqref{eq:second} and \eqref{eq:An}, together with \eqref{eq:probconvrho} and the fact that \eqref{eq:probconvtau1} holds with $\widetilde c_n$ and $\widetilde c_\infty$ replaced by $\widehat c_n$ and $c^{\beta,0}$.

Finally, in the case $\lambda>0$, we argue as in Proposition \ref{prop:limits}, noting that outside of an event with vanishing probability, we have, for all $-Kn\leq i\leq Kn$,
\begin{align*}
\lefteqn{\left(1-4\lambda n^{-1/4}/\rho\right)e^{2\lambda i/(\rho n)}c^{\beta,0,Kn}(\oomega_i)}\\
&\qquad\qquad\leq c^{\beta,\lambda/n,Kn}(\oomega_i)\leq \left(1+4n^{-1/4}\lambda/\rho\right)e^{2\lambda i/(\rho n)}c^{\beta,0,Kn}(\oomega_i).
\end{align*}

\end{proof}

\section{Quenched fluctuations}\label{sec:quenched}

\begin{proof}[Proof of Proposition \ref{prop:quenched}]
Recall that we write $P^{\beta,0,n}$ for the quenched law of a process $(Y(t))_{t\geq 0}$ on the truncated state space $\{\oomega_{-n},...,\oomega_n\}$ with generator defined in \eqref{eq:truncated_generator}, and
\begin{equation*}
\sigma_n=\inf\{t\geq 0:X(t)\in\{...,\omega_{-n-1},\omega_{-n}\}\cup\{\omega_n,\omega_{n+1},...\}\}.
\end{equation*}
Clearly $(X(t))_{0\leq t<\sigma_n}$ has the same law as $(Y(t))_{0\leq t<\sigma_n}$. It is thus enough to prove that, $\mathbf P$-a.s.,
\begin{align*}
\limsup_{n\to\infty}P^{\beta,0,n}\left(T_n\leq \frac{Mn^{\frac{1}{\rho}+1}}{\log\log^{\frac{1}{\rho}-1} n}\right)>0,
\end{align*}
where $T_n:=\inf\{t\geq 0:Y(t)=\oomega_n\}$. From Rayleigh's monotonicity law, we know that the effective resistance between $\oomega_0$ and $\oomega_n$ is smaller than the corresponding nearest-neighbor resistance, i.e.,
\begin{align*}
R^{\beta,0,n}(\oomega_0,\oomega_n)\leq \sum_{i=0}^{n-1}r^{\beta,0}(\oomega_i,\oomega_{i+1}).
\end{align*}
Since $(r^{\beta,0}(\oomega_i,\oomega_{i+1}))_{i=0,...,n-1}$ is an i.i.d.\ sequence, we can apply the LIL for random variables in the domain of attraction of a stable distribution, see \cite[Theorem 1]{lipschutz}. We obtain that there exists a deterministic constant $c(\rho)\in(0,\infty)$ such that $\mathbf P$-a.s.,
\begin{align*}
\liminf_{n\to\infty}\frac{\log\log^{1/\rho-1} n}{n^{1/\rho}}\sum_{i=0}^{n-1}r^{\beta,0}(\oomega_i,\oomega_{i+1})\leq c(\rho).
\end{align*}
Next, by applying the commute time identity (see \cite[Theorem 4.27]{barlowdof}) to the finite graph $\{\oomega_{-n},...,\oomega_n\}$, we obtain
\begin{align*}
E^{\beta,0}[T_n|Y(0)=\oomega_0]+E^{\beta,0}[T_0|Y(0)=\oomega_n]= \mu^{\beta,0}(\oomega_{-n},...,\oomega_n)R^{\beta,0}(\oomega_0,\oomega_n),
\end{align*}
and so
\begin{align*}
E^{\beta,0}[T_n|Y(0)=\oomega_0]&\leq\mu^{\beta,0}(\oomega_{-n},...,\oomega_n)R^{\beta,0}(\oomega_0,\oomega_n).
\end{align*}
Finally, we recall from Theorem \ref{thm:measure} that $\mathbf P$-a.s.,
\begin{align*}
\lim_{n\to\infty}\frac 1n\mu^{\beta,0}(\oomega_{-n},...,\oomega_n)=\mathbf E[c^{\beta,0}(\oomega_0)].
\end{align*}
Combining all these observations and choosing $M$ large enough, we get, $\mathbf P$-a.s.,
\begin{align*}
\liminf_{n\to\infty}P^{\beta,0}\left(T_n>\frac{Mn^{\frac{1}{\rho}+1}}{\log\log^{\frac{1}{\rho}-1} n}\right)&\leq \liminf_{n\to\infty}\frac{\log\log^{1/\rho-1} n}{Mn^{1/\rho+1}}E^{\beta,0}[T_n]\\
&\leq \frac 1{M}c(\rho)\mathbf E[c^{\beta,0}(\oomega_0)]\\
&<1,
\end{align*}
which yields the result.
\end{proof}

\begin{appendix}

\section*{Homogenisation via a resistance scaling limit}\label{homogsec}

The purpose of this appendix is to provide further details for the homogenisation claim made in Remark \ref{homogrem}. In particular, we will establish the following result (cf.\ \eqref{homog}). Whilst we believe that the vanishing drift ($\lambda>0$) case may also be dealt with via resistance arguments, with the limit being Brownian motion with drift, we restrict to the driftless ($\lambda=0$) case in order to present a relatively simple argument (based on Kingman's sub-additive ergodic theorem).

\begin{theorem}\label{thm:diffusive}
For every $\rho>1$ and $\beta \geq 0$, for $\mathbf{P}$-a.e.\ realisation of $(\omega,E)$, it holds that as $n\to\infty$,
\[{P}^{\beta,0}\left((n^{-1}X_{n^{2}t})_{t\geq 0}\in\cdot\right)\]
converge weakly as probability measures on $D([0,\infty),\mathbb{R})$ to the law of $(B_{\sigma^2 t})_{t\geq 0}$, where $(B_{t})_{t\geq 0}$ is standard Brownian motion, and $\sigma^2>0$ is a deterministic constant.
\end{theorem}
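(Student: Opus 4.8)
The plan is to run the same machine as in the proof of Theorem~\ref{thm:main}, replacing the $\rho$-stable analysis of Section~\ref{sec:res} by a deterministic, law-of-large-numbers type limit for the effective resistance. Fix $K\in\N$ and pass to the truncated graph on $\{\oomega_{-Kn},\dots,\oomega_{Kn}\}$ exactly as in Sections~\ref{sec:mms}--\ref{sec:mr}. The first goal is to show that, $\mathbf P$-a.s.,
\[
\Bigl(n^{-1}\sign(v-u)R^{\beta,0,Kn}(\oomega_{\floor{un}},\oomega_{\floor{vn}})\Bigr)_{-K\le u,v\le K}\longrightarrow \bigl(C(v-u)\bigr)_{-K\le u,v\le K}
\]
uniformly, for a deterministic $C\in(0,\infty)$. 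Theorem~\ref{thm:measure} with $\lambda=0$ already gives $n^{-1}\mu^{\beta,0,Kn}(\{\oomega_{an},\dots,\oomega_{bn}\})\to\mathbf E(c^{\beta,0}(\omega_0))(b-a)$, $\mathbf P$-a.s. Since \emph{both} limits are deterministic, these two facts promote the distributional convergence of Theorem~\ref{thm:metric} to an almost-sure convergence in $(\F_c,\Delta)$ of $(\mathcal X_n,d_n,\mu_n,\oomega_0,\Phi_n)$ to $([-K,K],C|\cdot|,\mathbf E(c^{\beta,0}(\omega_0))\,\dd r,0,\mathrm{id})$; feeding this into \cite[Theorem~7.2]{croydon2018} as in Proposition~\ref{prop:fixedK} yields, for $\mathbf P$-a.e.\ environment, quenched convergence of $(n^{-1}X^{\beta,0,Kn}_{n^2t})_{t\ge 0}$ to the generalized diffusion with linear (natural-scale) scale function $x\mapsto Cx$ and speed measure $\mathbf E(c^{\beta,0}(\omega_0))\,\dd x$ reflected at $\{\pm K\}$, which is simply a reflected Brownian motion run at a deterministic speed $\sigma^2$. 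One then lets $K\to\infty$; here the delicate estimate of Lemma~\ref{lem:nonexpl} is not needed, since the limiting resistance metric is unbounded and the limit process is recurrent, so the non-explosion criterion of \cite{croydon2018} phrased in terms of resistance applies directly (equivalently, since $\rho>1$ keeps all relevant sums finite, one may work throughout on the untruncated recurrent network). This gives Theorem~\ref{thm:diffusive} with $\sigma^2=\bigl(C\,\mathbf E(c^{\beta,0}(\omega_0))\bigr)^{-1}$ up to the normalisation convention, matching \eqref{homog}.

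The one-variable resistance limit comes from Kingman's subadditive ergodic theorem. Put $Y_{m,n}:=R^{\beta,0}(\omega_m,\omega_n)$ for $m\le n$. The triangle inequality for effective resistance gives $Y_{0,n}\le Y_{0,m}+Y_{m,n}$; translation invariance of the Palm point process and the i.i.d.\ marks makes $(Y_{m,n})_{m\le n}$ a stationary, ergodic array (indeed $Y_{m,n}$ is a fixed measurable functional of the environment viewed from $\omega_m$); and, using the nearest-neighbour upper bound $R^{\beta,0}(\omega_0,\omega_1)\le r^{\beta,0}(\omega_0,\omega_1)\le e^{\beta}e^{\omega_1-\omega_0}$, we get $\mathbf E[Y_{0,1}]\le e^{\beta}\rho/(\rho-1)<\infty$ since $\rho>1$. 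Thus $n^{-1}Y_{0,n}\to C$ $\mathbf P$-a.s.\ and in $L^1$, with $C=\inf_n n^{-1}\mathbf E[Y_{0,n}]\in[0,\infty)$. To see $C>0$, apply the Nash-Williams cutset inequality (see \cite{DS}) to the disjoint cutsets $\Pi_i:=\{\{\omega_j,\omega_k\}:\,j\le i<i+1\le k\}$, $0\le i\le n-1$, which separate $\omega_0$ from $\omega_n$; recalling \eqref{eq:chi} one has $\sum_{e\in\Pi_i}c^{\beta,0}(e)=\bigl(r^{0,0}(\omega_i,\omega_{i+1})\chi^{\beta,0}(i)\bigr)^{-1}$, so
\[
Y_{0,n}\ \ge\ \sum_{i=0}^{n-1}r^{0,0}(\omega_i,\omega_{i+1})\,\chi^{\beta,0}(i).
\]
Since $(r^{0,0}(\omega_i,\omega_{i+1})\chi^{\beta,0}(i))_{i\in\Z}$ is stationary and ergodic with $0<\mathbf E[r^{0,0}(\omega_0,\omega_1)\chi^{\beta,0}(0)]\le e^{\beta}\mathbf E[r^{0,0}(\omega_0,\omega_1)]<\infty$ (the bound $\chi^{\beta,0}(0)\le e^{\beta}$ coming from the $(j,k)=(0,1)$ term of the defining sum), the ergodic theorem forces $C\ge\mathbf E[r^{0,0}(\omega_0,\omega_1)\chi^{\beta,0}(0)]>0$.

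To upgrade to the uniform two-variable statement, the symmetric form of Kingman (or reflecting the environment) gives $n^{-1}R^{\beta,0}(\omega_0,\omega_{\floor{un}})\to C|u|$ a.s.\ for each fixed $u$, and translation invariance then gives $n^{-1}R^{\beta,0}(\omega_{\floor{q_1n}},\omega_{\floor{q_2n}})\to C(q_2-q_1)$ a.s.\ for all rationals $q_1<q_2$. For uniformity, note that for $i\le i'\le j'\le j$ in $\{-Kn,\dots,Kn\}$ the triangle inequality and the nearest-neighbour upper bound give
\[
\bigl|R^{\beta,0}(\omega_i,\omega_j)-R^{\beta,0}(\omega_{i'},\omega_{j'})\bigr|\ \le\ \sum_{k\in\{i,\dots,i'-1\}\cup\{j',\dots,j-1\}}r^{\beta,0}(\omega_k,\omega_{k+1}),
\]
while the monotone partial-sum process $u\mapsto n^{-1}\sum_{k=0}^{\floor{un}}r^{\beta,0}(\omega_k,\omega_{k+1})$ converges, by the ergodic theorem, uniformly on $[0,K]$ to the continuous function $u\mapsto\mathbf E[r^{\beta,0}(\omega_0,\omega_1)]\,u$. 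Hence $(u,v)\mapsto n^{-1}R^{\beta,0}(\omega_{\floor{un}},\omega_{\floor{vn}})$ is asymptotically equicontinuous with a deterministic modulus, and together with a.s.\ convergence on the rationals this yields the claimed uniform a.s.\ convergence; in particular the \eqref{eq:distortion}-type distortion of the embeddings $\pi_n(\oomega_i):=\sign(i)n^{-1}R^{\beta,0}(\oomega_0,\oomega_i)$ tends to $0$. Replacing $R^{\beta,0}$ by the truncated $R^{\beta,0,Kn}$ and collapsing the boundary sites is handled precisely as in Section~\ref{sec:mms}, the boundary contributions being negligible by the exponential decay of conductances.

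The main obstacle is the resistance limit itself, specifically ruling out that the abundant but weak non-nearest-neighbour edges either make $R^{\beta,0}(\omega_0,\omega_n)$ grow sublinearly or destroy the quasi-additivity along the chain. Positivity of $C$ is dispatched cleanly by Nash-Williams above, but for the quasi-additivity one genuinely uses that when $\rho>1$ the conductance $c^{\beta,0}(\omega_j,\omega_k)\le e^{-|\omega_j-\omega_k|}$ decays exponentially in $|j-k|$, so that — as in the strategy behind Figure~\ref{fig:strategy} — only an $O(\log n)$-window of neighbours matters and the finite-window correction is exactly the stationary ergodic sequence $r^{0,0}(\omega_i,\omega_{i+1})\chi^{\beta,0}(i)$. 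Making this rigorous is a simplified rerun of Propositions~\ref{prop:upper} and \ref{prop:lower}: with no big edges present the bridging vertices never collide, so no low-probability exceptional events need to be excised, and only the exponential tail estimates for the conductances of long edges are required. Once the resistance and measure limits are in hand, the remaining steps — the passage through \cite[Theorem~7.2]{croydon2018}, the limit $K\to\infty$ (with non-explosion automatic from recurrence), and the identification of the single deterministic $\sigma^2$ — are routine, as indicated above.
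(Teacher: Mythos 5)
Your overall strategy coincides with the paper's: a Kingman subadditive-ergodic-theorem limit for the rescaled resistance, the almost-sure measure convergence of Theorem \ref{thm:measure}, and the resistance-form convergence theorem of \cite{croydon2018}, with the non-compactness handled either by truncation at $\pm Kn$ followed by $K\to\infty$ (your route, which is viable here since the fixed-$K$ limit is reflected Brownian motion, whose exit times of $[-K,K]$ trivially diverge) or, as in the paper, by working directly on the untruncated network and \emph{verifying} the resistance non-explosion condition via the test function $\tilde f_n(i)=\frac{|i|}{Kn}\wedge 1$ and the ergodic theorem — note this condition is a statement about the discrete networks and still requires that short computation; it is not automatic from recurrence of the limit. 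A minor bookkeeping slip: the physical embedding is $\Phi(u)=u/\rho$ rather than the identity (the sites satisfy $\omega_{\lfloor un\rfloor}\approx un/\rho$), which is where the paper's $\sigma^2=1/(\rho^2 R_\infty\mathbf{E}(c^{\beta,0}(\omega_0)))$ comes from.

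The genuine gap is your positivity argument for the Kingman constant. The sets $\Pi_i=\{\{\omega_j,\omega_k\}:j\le i<i+1\le k\}$ are cutsets but they are \emph{not} pairwise disjoint: an edge $\{\omega_j,\omega_k\}$ with $k-j\ge 2$ lies in every $\Pi_i$ with $j\le i<k$, and Nash--Williams requires disjoint cutsets. Indeed the inequality you derive,
\[
R^{\beta,0}(\omega_0,\omega_n)\ \ge\ \sum_{i=0}^{n-1}r^{0,0}(\omega_i,\omega_{i+1})\,\chi^{\beta,0}(i),
\]
is false in general: already for $\beta=0$, take three consecutive sites at mutual spacing $\varepsilon$; then $R(\omega_0,\omega_2)=\bigl(e^{-2\varepsilon}+\tfrac12 e^{-\varepsilon}\bigr)^{-1}\to \tfrac23$, while the right-hand side, $2\bigl(e^{-\varepsilon}+e^{-2\varepsilon}\bigr)^{-1}\to 1$, exceeds it; such configurations occur with positive probability in the Poisson environment. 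The only valid shorting bound of this type is $R^{\beta,0}(\omega_0,\omega_n)\ge\max_i r^{0,0}(\omega_i,\omega_{i+1})\chi^{\beta,0}(i)$, which is $o(n)$ when $\rho>1$, so it does not give $C>0$. To close the gap one can argue as the paper does: bound $R^{\beta,0}(\omega_0,\omega_n)^{-1}$ above by the Dirichlet energy $\Sigma(n)$ of the linear test function $f_n(i)=(\frac{i}{n}\vee 0)\wedge 1$, show $n\,\mathbf{E}\Sigma(n)$ is bounded using the exponential decay of the conductances, and deduce $\inf_n n^{-1}\mathbf{E}\bigl(R^{\beta,0}(\omega_0,\omega_n)\bigr)>0$ via Markov's inequality; alternatively, a corrected cutset argument would need cutsets spaced farther apart than the retained edge lengths, with the total conductance of the discarded long edges controlled separately, in the spirit of Proposition \ref{prop:lower}. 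Without such an argument the Kingman limit could a priori vanish, in which case both $\sigma^2>0$ and the non-degeneracy of the limiting metric space would fail; the remaining steps of your proposal are sound.
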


Before proving this result, we give a lemma concerning the scaling limit of the resistance, which replaces Theorem \ref{thm:approx} in the present parameter regime. In this section, we use the abbreviation $R^\beta:=R^{\beta,0}$.

\begin{lemma}\label{reslemdif} For every $\rho>1$ and $\beta \geq 0$, for $\mathbf{P}$-a.e.\ realisation of $(\omega,E)$, it holds that
\[\left(n^{-1}R^{\beta}(\omega_{\lfloor un\rfloor},\omega_{\lfloor vn\rfloor})\right)_{u,v\in\mathbb{R}}\xrightarrow[n\to\infty]{}\left(R_\infty|u-v|\right)_{u,v\in\mathbb{R}}\]
uniformly on compacts, where $R_\infty\in(0,\infty)$ is a deterministic constant.
\end{lemma}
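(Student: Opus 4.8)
The statement to prove is the almost-sure uniform-on-compacts convergence of $n^{-1}R^\beta(\omega_{\lfloor un\rfloor},\omega_{\lfloor vn\rfloor})$ to $R_\infty|u-v|$, where $\rho>1$ so that $e^{\omega_1-\omega_0}$ has finite mean. The natural strategy is to fix a direction, say $u=0$ and $v>0$, establish the pointwise limit $n^{-1}R^\beta(\omega_0,\omega_{\lfloor vn\rfloor})\to R_\infty v$ via Kingman's subadditive ergodic theorem, then upgrade to joint convergence over all $u,v$ by additivity and monotonicity.

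\textbf{Step 1: superadditivity and subadditivity of the effective resistance along the path.} Set $a_{m,n}:=R^\beta(\omega_m,\omega_n)$ for integers $m<n$. The series law gives $a_{0,n}\le a_{0,m}+a_{m,n}$ (the effective resistance is a metric, so the triangle inequality holds), which is the subadditivity needed for Kingman's theorem. The sequence is stationary and ergodic under the shift on the environment $(\omega,E)$ (using that $(\omega_{i+1}-\omega_i)_{i\in\Z}$ and $(E_i)_{i\in\Z}$ are i.i.d.), and the integrability $\mathbf{E}[a_{0,1}]=\mathbf{E}[R^\beta(\omega_0,\omega_1)]<\infty$ follows because $R^\beta(\omega_0,\omega_1)\le r^{\beta,0}(\omega_0,\omega_1)\le e^{\omega_1-\omega_0}$, which has finite mean when $\rho>1$ (this is exactly the integrability that fails when $\rho\le 1$). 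Kingman's subadditive ergodic theorem then yields a deterministic constant $R_\infty:=\lim_n n^{-1}\mathbf{E}[a_{0,n}]\in[0,\infty)$ with $n^{-1}a_{0,n}\to R_\infty$ almost surely and in $L^1$.

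\textbf{Step 2: positivity of $R_\infty$.} The constant $R_\infty$ is finite by Step 1; the nontrivial point is $R_\infty>0$. For this I would use the dual (variational) characterization: a cutset/flow argument shows that $R^\beta(\omega_0,\omega_n)$ is bounded below by the resistance of a suitable ``bottleneck''. Concretely, since the conductances $c^{\beta,0}(\omega_i,\omega_j)=e^{-|\omega_i-\omega_j|-\beta U(E_i,E_j)}$ decay exponentially in spatial distance, the total conductance crossing any fixed cut $\{x<k\}$ vs.\ $\{x\ge k\}$ is an integrable random variable, and $n^{-1}\sum_{k=1}^n (\text{conductance across cut }k)^{-1}\ge$ something of order $1$ by the ergodic theorem applied to these i.i.d.-like cut conductances; then the series law for the cut resistances gives $R^\beta(\omega_0,\omega_n)\ge$ a positive multiple of $n$. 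Alternatively one can invoke the fact that the full model is recurrent in the regime $\rho>1$ (or simply that the one-dimensional nearest-neighbor network with conductances bounded by a summable sequence has linearly growing resistance) to rule out $R_\infty=0$. This positivity step, together with verifying the cut conductances are genuinely integrable and the ergodic averaging applies, is where I expect the main technical care is needed.

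\textbf{Step 3: from pointwise to uniform-on-compacts, and from one direction to all $u,v$.} By Step 1 applied along $0$ and along each rational endpoint, and using $a_{0,\lfloor vn\rfloor}=a_{0,\lfloor un\rfloor}+a_{\lfloor un\rfloor,\lfloor vn\rfloor}$ up to the distortion controlled by the triangle inequality, we get $n^{-1}R^\beta(\omega_{\lfloor un\rfloor},\omega_{\lfloor vn\rfloor})\to R_\infty|u-v|$ almost surely for all rational $u,v$ simultaneously (countable intersection of full-measure events). Monotonicity of $v\mapsto R^\beta(\omega_0,\omega_{\lfloor vn\rfloor})$ (Rayleigh monotonicity: adding the path segment only increases resistance) plus continuity of the limit $v\mapsto R_\infty|u-v|$ then upgrades rational convergence to uniform convergence on compact sets by a standard Dini-type/monotone-limit argument. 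This also yields the two-sided statement (for $u,v$ of either sign) by the same argument on the negative half-line, and the claimed additive structure of the limit makes it the ``scale function'' $C(v-u)$ with $C=R_\infty$ referenced in Remark \ref{homogrem}. Once Lemma \ref{reslemdif} is in hand, the proof of Theorem \ref{thm:diffusive} proceeds exactly as the proof of Theorem \ref{thm:main}: feed this resistance limit together with the invariant-measure limit (Theorem \ref{thm:measure}, whose proof is insensitive to the sign of $\rho-1$) into the metric-measure-space convergence machinery of \cite{croydon2018}, noting that the limiting space is now $(\R,R_\infty|\cdot|,\mathbf{E}(c^{\beta,0}(\omega_0))\,\dd r,0,\mathrm{id}/R_\infty)$, whose associated process is a constant-multiple time-change of Brownian motion.
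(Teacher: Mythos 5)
Your Step 1 is exactly the paper's argument: subadditivity from the triangle inequality for the resistance metric, integrability of $R^\beta(\omega_0,\omega_1)$ from the nearest-neighbor bound (the paper's \eqref{rrest}), and Kingman's theorem. The problems are in Steps 2 and 3, and both come from treating the network as if it were nearest-neighbor. In Step 2, the bound you invoke -- ``series law for the cut resistances'', i.e.\ $R^\beta(\omega_0,\omega_n)\ge\sum_{k=1}^n(\text{conductance across cut }k)^{-1}$ -- is the Nash--Williams inequality, which requires pairwise \emph{disjoint} cutsets; here an edge $\{\omega_i,\omega_j\}$ crosses $j-i$ of your cuts, and with overlapping cutsets the unweighted inequality is false in general (three vertices joined only by the edge $\{0,2\}$ already give a counterexample). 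Your fallbacks do not close the gap either: recurrence gives $R^\beta(\omega_0,\omega_n)\to\infty$ but not linear growth, and comparison with the nearest-neighbor subnetwork goes the wrong way, since deleting the long edges \emph{increases} resistance. The cutset idea is salvageable via the weighted Nash--Williams inequality $R^\beta(\omega_0,\omega_n)\ge\sum_{k=1}^{n}\bigl(\sum_{i<k\le j}(j-i)\,c^{\beta,0}(\omega_i,\omega_j)\bigr)^{-1}$, whose cut weights have finite mean because $\mathbf{E}(c^{\beta,0}(\omega_i,\omega_j))\le c^{|i-j|}$ with $c=\mathbf{E}(e^{-\omega_1})<1$; Birkhoff's theorem then gives a strictly positive a.s.\ lower bound on $\liminf_n n^{-1}R^\beta(\omega_0,\omega_n)$. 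The paper avoids cutsets altogether: it bounds $R^\beta(\omega_0,\omega_n)^{-1}$ by the Dirichlet energy $\Sigma(n)$ of the linear test function $f_n(i)=(\frac{i}{n}\vee 0)\wedge 1$, shows $n\Sigma(n)$ is bounded in probability via Markov's inequality and the same moment bound, and concludes $R_\infty\ge C^{-1}\inf_n\mathbf{P}(n\Sigma(n)\le C)>0$.

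In Step 3, the asserted monotonicity of $v\mapsto R^\beta(\omega_0,\omega_{\lfloor vn\rfloor})$ is also false in this model: Rayleigh's principle concerns modifying conductances, not moving an endpoint, and with long-range edges the resistance to a farther site can be smaller than to a nearer one; so the Dini-type upgrade does not apply as stated. Note also that the claimed additivity $a_{0,\lfloor vn\rfloor}=a_{0,\lfloor un\rfloor}+a_{\lfloor un\rfloor,\lfloor vn\rfloor}$ ``up to distortion'' is only a one-sided inequality here. Both points are repaired by the paper's device: by \eqref{rrest} every increment is dominated by a partial sum of the i.i.d.\ integrable variables $e^{\omega_{k+1}-\omega_k}$, whose rescaled partial-sum process converges a.s.\ uniformly on compacts by the functional law of large numbers; this supplies the a.s.\ equicontinuity and compactness needed for the uniform statement, after which the limit is identified from the Kingman limit at the origin together with stationarity (for increments away from $0$, a block decomposition into stretches of fixed length $L$ and the fact that $L^{-1}\mathbf{E}(R^\beta(\omega_0,\omega_L))\downarrow R_\infty$ provide the matching upper bound that subadditivity alone does not give).
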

\begin{proof} We start by noting that, for $i<j$,
\begin{equation}\label{rrest}
R^\beta(\omega_i,\omega_j)\leq\sum_{k=i}^{j-1}r^{\beta,0}(\omega_k,\omega_{k+1})\leq C\sum_{k=i}^{j-1}e^{\omega_{k+1}-\omega_k}.
\end{equation}
Since $(e^{\omega_{k+1}-\omega_k})_{k\in\mathbb{Z}}$ are i.i.d.\ and have a first moment when $\rho>1$, it follows from the functional law of large numbers for the partial sums of $(e^{\omega_{k+1}-\omega_k})_{k\in\mathbb{Z}}$ (see \cite[Theorem 1.1]{RS}, for example) that the rescaled resistances $(n^{-1}R^{\beta}(\omega_{\lfloor un\rfloor},\omega_{\lfloor vn\rfloor}))_{u,v\in\mathbb{R}}$ are $\mathbf{P}$-a.s.\ sequentially compact. It thus remains to characterise the limit as that given in the statement of the theorem. Given the stationarity of the model under the shift from $(\omega_i,E_i)_{i\in\mathbb{Z}}$ to $(\omega_{i+1}-\omega_1,E_{i+1})_{i\in\mathbb{Z}}$, to do this it will be enough to show that, $\mathbf{P}$-a.s.,
\[n^{-1}R^{\beta}(\omega_{0},\omega_{n})\rightarrow R_\infty\in(0,\infty).\]
To this end, we first observe that the triangle inequality for the resistance metric tells us that $R^\beta(\omega_0,\omega_{i+j})\leq R^\beta(\omega_0,\omega_{i})+R^\beta(\omega_i,\omega_{i+j})$. Moreover, from \eqref{rrest}, we obtain the integrability of the random variables $R^\beta(\omega_{0},\omega_{n})$. Since the shift map $(\omega_i,E_i)_{i\in\mathbb{Z}}\mapsto(\omega_{i+1}-\omega_1,E_{i+1})_{i\in\mathbb{Z}}$ is not only stationary, but also ergodic, we consequently obtain from Kingman's subadditive ergodic theorem that
\[n^{-1}R^{\beta}(\omega_{0},\omega_{n})\rightarrow \inf_{n}n^{-1}\mathbf{E}\left(R^{\beta}(\omega_{0},\omega_{n})\right)\in[0,\infty).\]
To complete the proof, we therefore only need to show that $R_\infty:=\inf_{n}n^{-1}\mathbf{E}(R^{\beta}(\omega_{0},\omega_{n}))$ is strictly positive.

Now, by definition
\begin{align*}
R^{\beta}(\omega_{0},\omega_{n})^{-1}&=\inf\left\{\sum_{i,j\in\mathbb{Z}}c^{\beta,0}(\omega_i,\omega_j)\left(f(\omega_i)-f(\omega_j)\right)^2:\:f:\omega\rightarrow[0,1],\:\omega_0=0,\:\omega_n=1\right\}\\
&\leq \Sigma(n):=\sum_{i,j\in\mathbb{Z}}c^{\beta,0}(\omega_i,\omega_j)\left(f_n(i)-f_n(j)\right)^2,
\end{align*}
where $f_n(i):=(\frac{i}{n}\vee0)\wedge 1$. Hence, for any $C>0$,
\begin{equation}\label{rrrest}
R_\infty\geq\inf_n n^{-1}\mathbf{E}\left(\Sigma(n)^{-1}\right)\geq C^{-1}\inf_n \mathbf{P}\left(n\Sigma(n)\leq C\right).
\end{equation}
By Markov's inequality, we have that
\begin{align*}
\mathbf{P}\left(n\Sigma(n)\geq C\right)&\leq C^{-1}n\mathbf{E}\Sigma(n)\\
&\leq C^{-1}n\sum_{i,j\in\mathbb{Z}}\mathbf{E}\left(e^{-|\omega_i-\omega_j|}\right)\left(f_n(i)-f_n(j)\right)^2\\
&\leq C^{-1}n\sum_{i,j\in\mathbb{Z}}c^{|i-j|}\left(f_n(i)-f_n(j)\right)^2\\
&\leq 2C^{-1}n\sum_{i\leq 0}\sum_{j\geq 0}c^{j-i}\left(\frac{j}{n}\right)^2+2C^{-1}n\sum_{i=1}^n\sum_{j\geq i}c^{j-i}\left(\frac{j}{n}-\frac{i}{n}\right)^2\\
&\leq 2C^{-1}\sum_{i\geq 0}c^i\sum_{j\geq 0}c^{j}j^2+2C^{-1}\sum_{j\geq 0}c^{j}j^2,
\end{align*}
where $c:=\mathbf{E}(e^{-\omega_1})\in(0,1)$. In particular, uniformly in $n$, the upper bound above converges to 0 as $C\rightarrow \infty$. It follows that, for $C$ suitably large,
\[\inf_n \mathbf{P}\left(n\Sigma(n)\leq C\right)\geq \frac{1}{2},\]
which, together with \eqref{rrrest}, means we are done.
\end{proof}

\begin{proof}[Proof of Theorem \ref{thm:diffusive}] The argument of Theorem \ref{thm:measure} still applies when $\rho>1$, and can be used to check that, $\mathbf{P}$-a.s., for every $a<b$,
\[\mu^{\beta}_n\left(\{\omega_{an},\dots,\omega_{bn}\}\right)\xrightarrow[n\to\infty]{} \mathbf{E}\left(c^{\beta,0}(\omega_0)\right)(b-a),\]
where $\mu^{\beta}_n(\omega_i):=n^{-1}c^{\beta,0}(\omega_i)$. It readily follows from this convergence and Lemma \ref{reslemdif} that, $\mathbf{P}$-a.s.,
\[\left(\omega,R_n^{\beta},\mu^{\beta}_n,\omega_0,\Phi_n\right)\xrightarrow[n\to\infty]{} \left(\mathbb{R},R_\infty d_E,\mathbf{E}(c^{\beta,0}(\omega_0))\text{Leb},0,\Phi\right),\]
where $R_n^{\beta}:=n^{-1}R^{\beta}$, $d_E$ is the Euclidean distance on $\mathbb{R}$, $\text{Leb}$ is the Lebesgue measure on $\mathbb{R}$, $\Phi_n(\omega_i):=n^{-1}\omega_i$ and $\Phi(x):=x/\rho$. Here convergence is stated with respect to the locally compact version of the topology introduced in Section \ref{sec:mms} (see \cite[Section 7]{croydon2018} for details). Associating processes with these spaces in the way described at the start of Section \ref{sec:mr}, the result with $\sigma^2=1/\rho^2R_\infty \mathbf{E}(c^{\beta,0}(\omega_0))$ will therefore follow from \cite[Theorem 7.2]{croydon2018} if we can check the following non-explosion condition: $\mathbf{P}$-a.s.,
\[\lim_{K\rightarrow\infty}\liminf_{n\rightarrow\infty}R_n^{\beta}\left(\omega_0,\{\dots,\omega_{-Kn-1},\omega_{-Kn}\}\cup\{\omega_{Kn},\omega_{Kn+1},\dots\}\right)=\infty.\]
(This is a corrected version of \cite[Assumption 1.1(b)]{croydon2018}; in the reference, the liminf was wrongly written as limsup.) To check the latter statement, we estimate the relevant resistance in a similar way to the proof of Lemma \ref{reslemdif}. Namely,
\begin{align*}
\lefteqn{R_n^{\beta}\left(\omega_0,\{\dots,\omega_{-Kn-1},\omega_{-Kn}\}\cup\{\omega_{Kn},\omega_{Kn+1},\dots\}\right)^{-1}}\\
&\leq n\tilde{\Sigma}(n):=n\sum_{i,j\in\mathbb{Z}}c^{\beta,0}(\omega_i,\omega_j)\left(\tilde{f}_n(i)-\tilde{f}_n(j)\right)^2,
\end{align*}
where $\tilde{f}_n(i):=\frac{|i|}{Kn}\wedge 1$. We write
\begin{align*}
n\tilde{\Sigma}(n)&\leq 2n\sum_{i=-Kn}^{Kn} \sum_{j\in\Z}c^{\beta,0}(\omega_i,\omega_j)\left(\tilde{f}_n(i)-\tilde{f}_n(j)\right)^2\notag\\
&\leq \frac{2}{K^2n}\sum_{i=-Kn}^{Kn} \sum_{j\in\Z}c^{\beta,0}(\omega_i,\omega_{i+j})j^2\\
&=:\frac{2}{K^2n}\sum_{i=-Kn}^{Kn} \xi_i.
\end{align*}
Clearly $\mathbf E[\xi_0]=\sum_{j\in\Z}j^2c^{|j|}<\infty$, where we recall $c=\mathbf E[c^{\beta,0}(\omega_0,\omega_1)]\in(0,1)$, and so, by the ergodic theorem, we have that, $\mathbf{P}$-a.s.,
\begin{align*}
\lim_{n\to\infty} \frac{2}{K^2n}\sum_{i=-Kn}^{Kn} \xi_i=\frac {4\mathbf E[\xi_0]}K.
\end{align*}
In conclusion, we have shown that, $\mathbf{P}$-a.s.,
\begin{align*}
\liminf_{n\rightarrow\infty}R_n^{\beta}\left(\omega_0,\{\dots,\omega_{-Kn-1},\omega_{-Kn}\}\cup\{\omega_{Kn},\omega_{Kn+1},\dots\}\right)\geq \frac{K}{4 \mathbf E[\xi_0]}\xrightarrow[K\to\infty]{}\infty.
\end{align*}
\end{proof}

\end{appendix}

\begin{acks}[Acknowledgments]
The authors would like to thank Takashi Kumagai for his contributions in the early part of the discussions that led to this article. They also thank a referee for their very careful reading of an earlier version of the paper and pointing out an important error in the argument.
\end{acks}

\begin{funding}
This research was supported by JSPS Grant-in-Aid for Scientific Research (C) 19K03540, JSPS Grant-in-Aid for Scientific Research (A) 17H01093, a JSPS Postdoctoral Fellowship for Research in Japan, Grant-in-Aid for JSPS Fellows 19F19814, and the Research Institute for Mathematical Sciences, an International Joint Usage/Research Center located in Kyoto University.
\end{funding}


\end{document}